\documentclass[12pt]{amsart}

\usepackage{amsmath,amssymb,amsxtra,anysize,bbm,stmaryrd,pinlabel}
\usepackage{tikz,tikz-cd}
\usepackage{combelow,comment}

\RequirePackage{color}
\definecolor{references}{rgb}{0,0,1}

\RequirePackage[pdftex,
colorlinks = true,
urlcolor = references, 
citecolor = references, 
linkcolor = references, 
]
{hyperref}

\newtheorem{lemma}{Lemma}[section]
\newtheorem{theorem}[lemma]{Theorem}
\newtheorem{conjecture}[lemma]{Conjecture}
\newtheorem{corollary}[lemma]{Corollary}
\newtheorem{proposition}[lemma]{Proposition}
\theoremstyle{definition}
\newtheorem{example}[lemma]{Example}
\newtheorem{remark}[lemma]{Remark}
\newtheorem{definition}[lemma]{Definition}

\newcommand{\inv}{^{-1}}
\renewcommand{\a}{\rho}
\newcommand{\one}{\mathbbm{1}}
\newcommand{\Alt}{\mathrm{Alt}}
\newcommand{\C}{\mathbb{C}}
\newcommand{\Q}{\mathbb{Q}}
\newcommand{\Z}{\mathbb{Z}}
\newcommand{\HY}{\mathrm{HY}}
\newcommand{\CY}{\mathrm{CY}}
\newcommand{\HH}{\mathrm{HH}}
\newcommand{\HHH}{\mathrm{HHH}}
\newcommand{\CHH}{\mathrm{CHH}}
\newcommand{\Br}{\mathrm{Br}}
\newcommand{\FT}{\mathrm{FT}}
\newcommand{\cJ}{\mathcal{J}}
\newcommand{\fgl}{\mathfrak{gl}}
\newcommand{\PP}{\mathbf{P}}
\newcommand{\Id}{\mathrm{Id}}
\newcommand{\id}{\Id}
\newcommand{\colim}{\operatornamewithlimits{colim}}
\newcommand{\hocolim}{\mathrm{hocolim}}
\newcommand{\sgn}{\mathrm{sgn}}
\newcommand{\cA}{\mathcal{A}}
\newcommand{\Cone}{\mathrm{Cone}}
\newcommand{\Span}{\mathrm{Span}}

\newcommand{\SBim}{\mathrm{SBim}}
\newcommand{\Frac}{\mathrm{Frac}}
\newcommand{\wt}{\mathrm{wt}}
\newcommand{\yy}{\mathbf{y}}
\newcommand{\xx}{\mathbf{x}}
\newcommand{\uu}{\mathbf{u}}
\newcommand{\ttheta}{\boldsymbol{\theta}}
\newcommand{\xxi}{\boldsymbol{\xi}}

\newcommand{\sll}{\mathfrak{sl}}
\newcommand{\gll}{\mathfrak{gl}}
\renewcommand{\b}{\beta}

\renewcommand{\k}{\mathbbm{k}}

\newcommand{\Hom}{\mathrm{Hom}}
\newcommand{\End}{\mathrm{End}}
\newcommand{\Tor}{\mathrm{Tor}}
\newcommand{\Ext}{\mathrm{Ext}}
\newcommand{\Ch}{\mathrm{Ch}}
\newcommand{\yBi}{B_i}
\newcommand{\yR}{R}
\newcommand{\yTi}{F^y(\sigma_i)}
\newcommand{\yTii}{F^y\left(\sigma_i\inv\right)}
\newcommand{\tw}{\operatorname{tw}}

\newcommand{\FTy}{\mathbf{FT}}
\newcommand{\cotimes}{\,\widehat{\otimes}\,}

\newcommand{\sT}{\operatorname{sF}}
\newcommand{\sTy}{\operatorname{sF^y}}

\newcommand{\htt}{\mathbf{ht}}
\newcommand{\ft}{\mathbf{ft}}

\title{Stable deformed $\mathfrak{gl}_N$ homology of torus knots}

\author{William Ballinger}
\address{Department of Mathematics, Harvard University
\\
1 Oxford Street
Cambridge, MA 02138}
\email{ballinger@math.harvard.edu}
\author{Eugene Gorsky}
\address{Department of Mathematics, University of California Davis\\ One Shields Avenue, Davis CA 95616}
\email{egorskiy@ucdavis.edu}
\author{Matthew Hogancamp}
\address{Department of Mathematics, Northeastern University \\
360 Huntington Ave, Boston, MA 02115}
\email{m.hogancamp@northeastern.edu}
\author{Joshua Wang}
\address{\parbox{\linewidth}{
School of Mathematics, Institute for Advanced Studies, 1 Einstein Drive, Princeton NJ 08540\\Department of Mathematics, Princeton University, Fine Hall, Washington Road, Princeton NJ 08540
}\vspace{3pt}}
\email{joshuaxw@princeton.edu}

\begin{document}

\begin{abstract}
We compute the $E_2$ page in the Rasmussen spectral sequence from triply graded to $\fgl_N$ Khovanov--Rozansky stable homology of torus knots. This confirms a weak form of the conjecture of the second author, Oblomkov, and Rasmussen. The main tool is the link-splitting deformation, or $y$-ification, of link homology; in the $y$-ified context, the relevant Rasmussen spectral sequence collapses and we explicitly compute the $y$-ified $\fgl_N$ stable Khovanov--Rozansky homology of torus knots for all $N$.
\end{abstract}

\maketitle
\thispagestyle{empty}

\section{Introduction}

The Khovanov homology groups of torus knots $T(n,m)$ are famously intricate. Although there are algorithms \cite{katlas,FoamHo,Khoca} that in principle can compute $\mathrm{Kh}(T(n,m))$ for any given $(n,m)$, the resulting computations do not exhibit clear patterns, and there is no explicit conjectural prediction for general $(n,m)$ in the literature. However, the $m\to\infty$ stable limit $\mathrm{Kh}(T(n,\infty))$ \cite{Stosic1,Stosic2} has a concise conjectural description given by the second author, Oblomkov, and Rasmussen \cite{GOR}. We revisit and partially prove this conjecture together with its $\fgl_N$ Khovanov--Rozansky analogues \cite{GL,GNR,GORS} which we now recall.

Consider the algebra $\Z[u_0,\ldots,u_{n-1}] \otimes \Lambda(\xi_0,\ldots,\xi_{n-1})$ equipped with the differential $d_N$ satisfying \begin{equation}\label{eq:dNdifferential}
    d_N(u_k) = 0 \qquad\qquad d_N(\xi_k) = \sum_{\substack{0 \leq i_1,\ldots,i_N < n\\ i_1 + \cdots + i_N = k}} u_{i_1}\cdots u_{i_N}
\end{equation}extended by the graded Leibniz rule. This complex is just the Koszul complex associated with the non-regular sequence of polynomials $d_N(\xi_k)$. When $u_k$ and $\xi_k$ are equipped with the quantum and cohomological bigradings $(2k + 2, -2k)$ and $(2k + 2N, -2k-1)$ respectively, the homology of this complex is conjecturally isomorphic to the stable $\fgl_N$ Khovanov--Rozansky homology $H_{\fgl_N}(T(n,\infty);\Z)$ \cite[Conjecture 1.8]{GORS}, \cite[Conjecture 1]{GL}.

\begin{theorem}\label{thm:page1theorem}
    For $N,n\ge 1$, there is a spectral sequence with $\Q$ coefficients that abuts to $H_{\fgl_N}(T(n,\infty);\Q)$ whose $E_1$ page is $\Q[u_0,\ldots,u_{n-1}] \otimes \Lambda(\xi_0,\ldots,\xi_{n-1})$ equipped with the differential $d_N$ defined by (\ref{eq:dNdifferential}). 
    
    Furthermore, the spectral sequence is multiplicative, so in particular, the differential on every page of the spectral sequence satisfies the graded Leibniz rule.
\end{theorem}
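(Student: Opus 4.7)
The plan is to exploit the $y$-ification (link-splitting deformation) of Khovanov--Rozansky homology emphasized in the abstract: in the $y$-ified setting the relevant Rasmussen spectral sequence collapses, so the classical spectral sequence of the theorem can be extracted by filtering the $y$-ified complex by powers of the $y$-variables.

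First, construct a $y$-ified chain-level model for the stable $\fgl_N$ homology of $T(n,\infty)$, for instance as a homotopy colimit (or suitable infinite-twist projector) of the $y$-ified $\fgl_N$ complexes for $T(n,m)$ as $m\to\infty$. This complex carries the ordinary Soergel--Rouquier differential together with the Rasmussen deformation $d_N$. The central algebraic task is then to identify the $y$-ified stable triply graded homology explicitly: show that $\HHH^y(T(n,\infty);\Q)$ is a free module over $\Q[y_1,\ldots,y_n]$ of the form $\Q[u_0,\ldots,u_{n-1}]\otimes\Q[y_1,\ldots,y_n]\otimes\Lambda(\xi_0,\ldots,\xi_{n-1})$, with the $u_k$ and $\xi_k$ carrying the bigradings stated in the theorem. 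Combined with the defining collapse property of $y$-ification, this identifies $H^y_{\fgl_N}(T(n,\infty);\Q)$ with the Koszul homology of the sequence $d_N(\xi_k)$ inside $\HHH^y$.

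To obtain the spectral sequence of the theorem, filter the $y$-ified complex computing $H^y_{\fgl_N}$ by powers of the ideal $(y_1,\ldots,y_n)$. The associated graded is $\Q[u_0,\ldots,u_{n-1}]\otimes\Lambda(\xi_0,\ldots,\xi_{n-1})$ equipped with the differential $d_N$ of (\ref{eq:dNdifferential}), producing the desired $E_1$ page. Since setting $y_i=0$ recovers the non-deformed theory, the spectral sequence abuts to $H_{\fgl_N}(T(n,\infty);\Q)$, once completeness of the filtration is verified. Multiplicativity is automatic because the $y$-filtration is by a homogeneous differential ideal compatible with the ring structure of the $y$-ified complex, so the Leibniz rule passes to every page.

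The main obstacle is the explicit identification of $\HHH^y(T(n,\infty);\Q)$ in the stable limit: one needs a tractable chain-level model (most likely via an infinite-twist projector) and precise control of how the $y$-variables interact with the underlying Soergel bimodule structure, including matching the claimed bigradings. A secondary technical issue is verifying that the filtration spectral sequence converges after passing to the $m\to\infty$ stable limit, which requires appropriate finiteness for each graded piece.
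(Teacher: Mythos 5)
Your high-level instinct---use $y$-ification to determine the differential on the $E_1$ page---is indeed the key idea of the paper, but the mechanism you propose for producing the spectral sequence is not correct and would not give the spectral sequence of the theorem.

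The spectral sequence of Theorem~\ref{thm:page1theorem} is the Rasmussen spectral sequence, i.e.\ the spectral sequence of the double complex $\CHH_{\gll_N}(\PP_n)$ filtered by Hochschild ($A$-) degree (Definitions~\ref{def:slN homology} and \ref{def:Rasmussen SS}); it exists directly in the non-$y$-ified setting, with $E_1=\HHH(T(n,\infty);\Q)$ and $E_\infty$ abutting to $H_{\gll_N}(T(n,\infty);\Q)$. You instead propose to filter the $y$-ified complex by powers of the ideal $(y_1,\dots,y_n)$. That filtration produces a genuinely different spectral sequence, and one running in the wrong direction: the $y$-degree-preserving part of the total differential is $\HH(d_X)+d_N$ (the deformation $\sum_i\eta_iy_i$ strictly raises $y$-degree), so the associated graded computes roughly $H_{\gll_N}\otimes\Q[\yy]$, and the spectral sequence abuts to $\HY_{\gll_N}$, not to $H_{\gll_N}$. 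Thus the construction does not yield a spectral sequence with $E_1=\Q[u_i]\otimes\Lambda(\xi_k)$ abutting to $H_{\gll_N}(T(n,\infty);\Q)$. Relatedly, the final paragraph's justification of multiplicativity rests on the $y$-filtration being an ideal filtration, which is irrelevant once the filtration is the wrong one; in the paper multiplicativity is established separately from the lax monoidal structure of $\HH_{\gll_N}$ (Lemma~\ref{lem: bimodule dN multiplicativity} and Proposition~\ref{prop: braid dN multiplicativity}).

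What the paper actually does with $y$-ification is different: it identifies $\HY(\PP_n^y;\Q)$ explicitly via the interpolation description (Theorem~\ref{thm: y-ified projector}), computes the $d_N$ differential there by evaluating at $z=y_i$ (Theorem~\ref{thm: y-ified differential}), and then uses the surjective algebra map $\pi\colon\HY(\PP_n^y)\twoheadrightarrow\HHH(\PP_n)=\HY(\PP_n^y)/(\yy)$, which commutes with $d_N$, to push the formula down to the $E_1$ page of the honest Rasmussen spectral sequence (Lemma~\ref{lem: pi}, Theorem~\ref{thm: HHH differential}). The Leibniz rule on $\HHH$ is then deduced from the Leibniz rule on $\HY$ together with surjectivity of $\pi$. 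You also implicitly invoke a ``defining collapse property of $y$-ification''; but the collapse at $E_2$ of the $y$-ified Rasmussen spectral sequence is a theorem (Theorem~\ref{thm: y-ified differential}) which requires showing that the $d_N(\xi_k)$ form a regular sequence (Lemma~\ref{lem: regular sequence}, via the specialization $\varphi_M$). None of this is automatic, and you would need to supply it.
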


\begin{remark}
Theorem \ref{thm:page1theorem} is a vast improvement of a result in \cite{HogPoly}, which computes the $E_1$ page without precise knowledge of its differential, and only in the case $N=2$.
\end{remark}
\begin{remark}
    The conjectural description of $H_{\fgl_N}(T(n,\infty);
    \Q)$ of \cite{GL, GOR,GORS} is the $E_2$ page of the spectral sequence of Theorem~\ref{thm:page1theorem}, so the conjecture over $\Q$ is equivalent to the collapse of this spectral sequence at the $E_2$ page.
\end{remark}
\begin{remark}
    Collapse of the spectral sequence of Theorem~\ref{thm:page1theorem} at the $E_2$ page is implied by \cite[Conjecture 4]{GL} restated in this paper as Conjecture~\ref{conj: mu}. This conjecture provides a collection of elements in the homology of $d_N$ that conjecturally generate the homology as an algebra. For degree reasons, all higher differentials of the spectral sequence vanish on these elements, so the graded Leibnitz rule implies that all higher differentials vanish if these elements do indeed generate the $E_2$ page. For more details, see section~\ref{sec: algebraic conjecture}. 
\end{remark}

\begin{remark}
We expect a similar spectral sequence over any field. However, the methods of our proof rely on the main result of \cite{GH} which requires characteristic zero.
\end{remark}

\subsection{Motivation: $S^n$-colored link homology}

Given a semisimple Lie algebra $\mathfrak{g}$, or rather the corresponding quantum group $U_q(\mathfrak{g})$, and a representation $V$, classical work of Reshetikhin and Turaev \cite{RT} defines a knot invariant. To a given knot $K$, it assigns a polynomial $P_{\mathfrak{g},V}(K;q)$ in a single variable $q$ that is a topological invariant of $K$, usually called the Reshetikhin-Turaev invariant of $K$ colored by $V$.

For the case where $\mathfrak{g}=\mathfrak{gl}_N$ with $V=V_{\lambda;N}$, the irreducible representation corresponding to a partition $\lambda$, the Reshetikhin-Turaev invariants can be packaged into the two variable colored HOMFLY polynomial $P_{\lambda}(K;a,q)$ satisfying
\begin{equation}
\label{eq: colored HOMFLY}
P_{\mathfrak{gl}_N,V_{\lambda;N}}(K;q)=P_{\lambda}(K;a=q^N,q).
\end{equation}

In recent decades, there has been extensive work on categorifying the above colored invariants by colored link homology theories. For $\lambda=(1)$ the corresponding ``uncolored" $\mathfrak{gl}_N$ homology was constructed by Khovanov and Rozansky in \cite{Kho,KR1}, and HOMFLY homology was constructed in \cite{KhSoergel,KR2}. 
This approach was generalized to the case $\lambda=(1^n)$, or the so-called $\wedge^n$ colored homology in \cite{Wu,Yonezawa} and can be described using the formalism of webs and foams \cite{QR,RW}. The $a = q^N$ specialization of (\ref{eq: colored HOMFLY}) is categorified by Rasmussen's spectral sequence from HOMFLY homology to $\fgl_N$ homology \cite{RasDiff}.

One approach to defining $S^n$-colored homology, which corresponds to $\lambda = (n)$, is as follows. Given a knot $K$ and a link homology theory $\mathcal{H}$, one considers the sequence of cables $K_{n,mn}, m\ge 0$ and defines the colimit
\begin{equation}
\label{eq: colimit intro}
\mathcal{H}_{S^n}(K):=\varinjlim\left[
\mathcal{H}(K_{n,0})\xrightarrow{\a_n}
\mathcal{H}(K_{n,n})\xrightarrow{\a_n}
\mathcal{H}(K_{n,2n})\xrightarrow{\a_n}\cdots
\right].
\end{equation}
Here $\mathcal{H}$ can refer to various flavors of Khovanov and Khovanov--Rozansky homology which we review below, and the connecting maps $\a_n:\mathcal{H}(K_{n,mn})\rightarrow \mathcal{H}(K_{n,(m+1)n})$ need to be carefully constructed. 
In all cases of interest, it is known that the colimit \eqref{eq: colimit intro} is finite-dimensional in each homogeneous degree and its graded Euler characteristic agrees with the $S^n$-colored link polynomial. 
Thus, one can refer to $\mathcal{H}_{S^n}(K)$ as the $S^n$-colored homology of $K$.
See Section \ref{sec: colored homology} for more details and references. 

Computing $\mathcal{H}_{S^n}(K)$ remains a major open problem. When $K=O_1$ is the unknot, the limit \eqref{eq: colimit intro} becomes
\begin{equation}
\label{eq: colimit intro 2}
\mathcal{H}_{S^n}(O_1)=\mathcal{H}(T(n,\infty)):=\varinjlim\left[
\mathcal{H}(T(n,0))\xrightarrow{\a_n}
\mathcal{H}(T(n,n))\xrightarrow{\a_n}
\mathcal{H}(T(n,2n))\xrightarrow{\a_n}\cdots
\right]
\end{equation}
and coincides with the stable homology of $n$-strand torus links $T(n,m)$ at $m\to \infty$
\cite{Stosic1,Stosic2}.  
In particular, the $S^n$-colored $\fgl_N$ homology of the unknot coincides with the stable $\fgl_N$ homology of $T(n,\infty)$, \[
    H_{\fgl_N,S^n}(O_1) = H_{\fgl_N}(T(n,\infty)).
\]Theorem~\ref{thm:page1theorem} may be viewed as progress towards computing the $S^n$-colored $\fgl_N$ homology of the unknot. We note that the invariant of the unknot plays an important role because 
$\mathcal{H}_{S^n}(O_1)$ has a natural algebra structure and $\mathcal{H}_{S^n}(K)$ is a module over this algebra for any knot $K$, for all link homologies in this paper.
For experts, we remark that $\mathcal{H}_{S^n}(K)$ might be called ``projector-colored homology" which is different from several other models for colored Khovanov homology, see \cite{RvM} and references therein. We do not discuss other models or make any computations for them in this paper.

The spectral sequence in Theorem~\ref{thm:page1theorem} is an extension of Rasmussen's spectral sequence from triply graded homology to $\fgl_N$ homology. The spectral sequence abuts to the $S^n$-colored $\fgl_N$ homology of the unknot, and the $E_1$ page is the $S^n$-colored triply graded homology of the unknot, which is computed in \cite{Hog} and stated below. See Section \ref{ss:Pn} for more details.

\begin{theorem}[\cite{Hog}]
\label{thm: HHH intro}
a) For triply graded Khovanov--Rozansky homology $\HHH$ with coefficients in $\Z$, one has
$$
\HHH_{S^n}(O_1;\Z)=\HHH(T(n,\infty);\Z)=\Z[u_0,\ldots,u_{n-1},\xi_0,\ldots,\xi_{n-1}].
$$
Here the variables $u_0,\ldots,u_{n-1}$ are even and $\xi_0,\ldots,\xi_{n-1}$ are odd, and their degrees are given by Theorem \ref{thm:end Pn}.

b) $\HHH(T(n,\infty);\Z)$ is naturally a module over the triply graded homology of the $n$-component unlink
$$
\HHH(O_n;\Z)=\Z[x_1,\ldots,x_n,\theta_1,\ldots,\theta_n]
$$
where all even variables $x_i$ act by $u_0$ and all odd variables $\theta_i$ act by $\xi_0$.
\end{theorem}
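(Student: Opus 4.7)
The plan is to realize the stable homology $\HHH(T(n,\infty);\Z)$ as the Hochschild homology of the categorified Jones--Wenzl projector $\PP_n$ in the homotopy category of Soergel bimodules, and then compute it by induction on $n$. For part (a), I would first express $T(n,mn)$ as the braid closure of the $m$-th power of the full twist $\FT_n = (\sigma_1\cdots\sigma_{n-1})^n \in \Br_n$, so that $\HHH(T(n,mn);\Z)$ is the Hochschild homology of the Rouquier complex $F(\FT_n)^{\otimes m}$. The connecting maps $\a_n$ in (\ref{eq: colimit intro 2}) arise from tensoring with a specific chain map $\one \to F(\FT_n)$ in the homotopy category, so it is formal that $\HHH(T(n,\infty);\Z) = \HHH(\PP_n;\Z)$, where $\PP_n := \hocolim_m F(\FT_n)^{\otimes m}$.

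Next, I would construct $\PP_n$ inductively, starting from $\PP_1 = \Id$. Assuming the complex for $\PP_{n-1}$ on the first $n-1$ strands is known, one builds $\PP_n$ as an explicit periodic infinite complex by exploiting the key annihilation property that $\PP_{n-1}$ kills certain Bott--Samelson bimodules on the first $n-1$ strands, pinning down one new contribution per homological level. Taking Hochschild homology of the resulting inductive complex and running the natural filtration spectral sequence should yield
\[
\HHH(\PP_n;\Z) \cong \HHH(\PP_{n-1};\Z) \otimes \Z[u_{n-1}] \otimes \Lambda(\xi_{n-1}),
\]
with $u_{n-1}$ an even new generator and $\xi_{n-1}$ an odd one. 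The base case $\HHH(\PP_1;\Z) = \Z[u_0,\xi_0]$ is Soergel's Hochschild computation for the identity bimodule $R=\Z[x]$. The main obstacle is ensuring that this spectral sequence degenerates without hidden relations or nontrivial extensions; I would address this by computing the bigraded Poincar\'e series of the inductive complex directly and matching it with the expected free generating function built from the prescribed degrees of $u_k$ and $\xi_k$ in Theorem~\ref{thm:end Pn}.

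For part (b), the $n$-component unlink $O_n$ is the closure of the identity braid on $n$ strands, and a direct Hochschild computation for the polynomial ring $\Z[x_1,\ldots,x_n]$ via its Koszul resolution gives the stated $\HHH(O_n;\Z) = \Z[x_i,\theta_i]$. The natural unit map $\one \to \PP_n$ in the homotopy category induces the module structure $\HHH(O_n;\Z) \to \HHH(\PP_n;\Z)$. To pin down the action, I would use that $\PP_n$ is invariant under each braid generator $\sigma_i$ up to homotopy, which forces $x_i - x_{i+1}$ (and similarly $\theta_i - \theta_{i+1}$) to act null-homotopically after tensoring with $\PP_n$. Hence all $x_i$ act on $\HHH(\PP_n;\Z)$ by a single common even element, and all $\theta_i$ by a single common odd element; a direct degree check then identifies these with $u_0$ and $\xi_0$, the unique generators in the respective minimal bidegrees.
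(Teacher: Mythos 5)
This theorem is not proved in the paper; it is quoted from \cite{Hog}, with the key computational content recorded as Theorem~\ref{thm:end Pn}. Your proposal reconstructs Hogancamp's original strategy rather than anything in the present paper: realize $\HHH(T(n,\infty))$ as $\HHH(\PP_n)$ via the homotopy colimit of full twist powers, build $\PP_n$ as a periodic complex inductively out of $\PP_{n-1}\boxtimes\one$ and Bott--Samelson pieces using the annihilation property, and push a filtration spectral sequence through $\HH$. That is indeed the route in \cite{Hog}. It is worth noting that it is a genuinely different route from the one this paper uses for the $y$-ified analogue (Theorem~\ref{thm: y-ified projector}): there, the authors identify $\HY(\FTy_n^k)$ with powers $\cJ_n^k$ of an explicit ideal in $\HY(O_n)$ via \cite{GH}, recognize the connecting maps $\a_k$ as multiplication by the Vandermonde $\Delta_n$, and compute the colimit as a localization $\cA_n[\Delta_n^{-1}]$. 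That localization method is chosen precisely because the inductive approach does not easily produce the explicit interpolation generators in \eqref{eq: interpolation intro}; the inductive approach would only give abstract generators $\widetilde u_i$, $\widetilde \xi_i$ as in Theorem~\ref{thm:end Pn}.

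Two technical points to tighten. For the degeneration of your spectral sequence, matching Poincar\'e series is an Euler-characteristic-type argument and does not by itself rule out cancelling differentials; the standard argument is parity. Since $\wt(\widetilde u_i)=qt^{1-i}=Q^{2i}T^{2-2i}$ and $\wt(\widetilde\xi_i)=at^{1-i}=Q^{2i-4}T^{2-2i}A$, all generators carry even $T$-degree, so by induction the $E_1$ page is concentrated in even homological degree and every differential (which shifts homological degree by an odd amount) vanishes. For part (b), your conclusion that $x_i - x_{i+1}$ acts null-homotopically on $\PP_n$ is correct, but the cleanest justification is via $B_i$-annihilation rather than sliding past $\sigma_i$: $x_i-x_{i+1}$ factors as $R\xrightarrow{b_i^*}B_i\xrightarrow{b_i}R$, so multiplication by $x_i-x_{i+1}$ on $\PP_n$ factors through $\PP_n\star B_i\simeq 0$; the $\sigma_i$-slide statement is itself a consequence of $B_i$-annihilation (the cone of the skein map between $F(\sigma_i)$ and $F(\sigma_i^{-1})$ is built from $B_i$), so your formulation is equivalent but one step removed.
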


\subsection{$y$-ification}

To prove Theorem \ref{thm:page1theorem}, we use a deformation, called $y$-ification, of triply graded homology developed in \cite{CK,GH} and denoted by $\HY(L)$. The deformation parameters are denoted by $y_1,\ldots,y_n$.  Throughout this section we work over a characteristic zero field $\k$.

Our second main result describes the stable $y$-fied HOMFLY homology of $T(n,\infty)$ or, equivalently, the $S^n$-colored $y$-ified homology of the unknot.

\begin{theorem}
\label{thm: yified stable homfly intro}
Let $\k$ be a field of characteristic zero.

a) The stable $y$-fied HOMFLY homology of $T(n,\infty)$ is isomorphic to the  free polynomial algebra
$$
\HY_{S^n}(O_1;\k)=\HY(T(n,\infty);\k)\simeq \k[y_1,\ldots,y_n,u_0,\ldots,u_{n-1},\xi_0,\dots,\xi_{n-1}].
$$
Here the variables $y_1,\ldots,y_n,u_0,\ldots,u_{n-1}$ are even, $\xi_0,\dots,\xi_{n-1}$ are odd, and their degrees are given in Theorem \ref{thm: y-ified projector}.

b) $\HY(T(n,\infty);\k)$ is naturally a module over the $y$-ified HOMFLY homology of the $n$-component unlink
$$
\HY(O_n;\k)=\k[x_1,\ldots,x_n,y_1,\ldots,y_n,\theta_1,\ldots,\theta_n]
$$
where $y_i$ acts by multiplication and $x_i,\theta_i$ act by
\begin{equation}
\label{eq: interpolation intro}
x_i=u_0+u_1y_i+\ldots+u_{n-1}y_{i}^{n-1},\quad 
\theta_i=\xi_0+\xi_1y_i+\ldots+\xi_{n-1}y_{i}^{n-1}.\\
\end{equation}
c) $\HY(T(n,\infty);\k)$ has a natural action of the symmetric group $S_n$ which simultaneously permutes $x_i,y_i$, and $\theta_i$ and fixes $u_k$ and $\xi_k$.
\end{theorem}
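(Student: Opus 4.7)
The plan is to reduce all three parts to an explicit description of the $y$-ified stable projector $\mathbf{P}_n^y$, from which $\HY(T(n,\infty);\k)$ is obtained as a categorical trace. The non-$y$-ified projector and its trace are computed in \cite{Hog} and summarized in Theorem \ref{thm: HHH intro}; the idea is that the same algebraic skeleton survives $y$-ification, with the $y_i$ appearing as additional free even generators, one per strand. First I would $y$-ify the colimit \eqref{eq: colimit intro 2} strand-by-strand and identify $\HY_{S^n}(O_1;\k)$ with the trace of $\mathbf{P}_n^y$, using the compatibility of the link-splitting deformation of \cite{CK,GH} with the stabilization maps $\a_n$. The essential input is Theorem \ref{thm: y-ified projector}, which constructs $\mathbf{P}_n^y$ and describes its endomorphism ring.

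For part (a), I would then compute the trace directly in the $y$-ified setting. The main result of \cite{GH}, valid in characteristic zero, states that $y$-ified triply graded homology of a positive braid closure is free over $\k[y_1,\ldots,y_n]$ with fiber at $y=0$ equal to the ordinary triply graded homology. Applied to the torus braids $T(n,mn)$ and passed to the $m\to\infty$ limit, this gives that $\HY(T(n,\infty);\k)$ is free over $\k[y_1,\ldots,y_n]$ with fiber $\HHH(T(n,\infty);\k) = \k[u_0,\dots,u_{n-1},\xi_0,\dots,\xi_{n-1}]$, yielding the polynomial algebra on $3n$ generators with the claimed bigradings.

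For part (b), the $\HY(O_n;\k)$-module structure is geometric: $y_i$ acts by multiplication tautologically, while $x_i$ and $\theta_i$ are strand endomorphisms that commute with $\mathbf{P}_n^y$. Because $\mathbf{P}_n^y$ is constructed symmetrically in the strands, $x_i$ acts on its image as a polynomial of degree $<n$ in $y_i$ whose coefficients are strand-independent; identifying these coefficients with the algebra generators $u_k$ gives the interpolation formula, and the odd analogue handles $\theta_i$. I would verify this by checking the identity on the partial closure of a single strand, or equivalently by showing that the difference $x_i-\sum_k u_k y_i^k$ vanishes on $\mathbf{P}_n^y$ for degree reasons. Part (c) is then automatic: the $S_n$ action by strand permutation preserves $\mathbf{P}_n^y$ up to canonical homotopy, permutes $(x_i,y_i,\theta_i)$ accordingly, and fixes $(u_k,\xi_k)$ as the coefficients of the symmetric interpolating polynomial. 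The main obstacle I expect is Theorem \ref{thm: y-ified projector} itself: showing that the $y$-ification of the Hogancamp projector lifts to a genuine idempotent in the $y$-ified homotopy category with the predicted trace requires delicate homological control of the Koszul-type resolution underlying the stable projector, and the standard obstruction theory for lifting idempotents does not pass mechanically to the $y$-deformed setting.
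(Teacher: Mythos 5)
Your route to part (a) is legitimate and is in fact acknowledged in the paper (just after the theorem statement): since $\HHH(T(n,\infty);\k)$ is concentrated in even homological degrees, the parity lemma (Lemma \ref{lem: parity}, applied directly to $\PP_n^y$ as in Lemma \ref{lem:end Pn y}) gives $\HY(\PP_n^y)\cong\HHH(\PP_n)\cotimes\k[\yy]$, which is the desired free polynomial algebra. One caveat: you should apply the parity lemma to the projector $\PP_n^y$ itself rather than ``passing to the limit'' of freeness over $\k[\yy]$ — a colimit of free modules need not be free, and there is no general theorem saying freeness of $\HY(T(n,mn))$ survives $m\to\infty$.

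The genuine gap is in part (b), and it is exactly the one the paper flags: knowing abstractly that $\HY(\PP_n^y)$ is a polynomial algebra on $3n$ generators of the right degrees does not tell you that $x_i$, $y_i$, $\theta_i$ satisfy the interpolation identities \eqref{eq: interpolation intro} with respect to free generators $u_k$, $\xi_k$. Your proposed verification — that $x_i-\sum_k u_k y_i^k$ ``vanishes on $\mathbf{P}_n^y$ for degree reasons'' — does not work. First, it is circular: the classes $u_k$ are \emph{defined} by solving the system $\sum_k u_k y_i^k = x_i$, which requires inverting the Vandermonde matrix, i.e.~dividing by $\Delta_n=\prod_{i<j}(y_i-y_j)$; the $u_k$ do not exist a priori. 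Second, ``degree reasons'' fail: the element $x_i$ has weight $q=Q^2$, and in the polynomial ring $\k[\yy,\uu,\xxi]$ the weight-$q$ graded piece is large (it contains $u_0$, $u_1 y_j$, $u_2 y_j y_{j'}$, \ldots), so weight considerations give no constraint on how $x_i$ is expressed. You also assert that the coefficients of the degree-$<n$ polynomial in $y_i$ representing $x_i$ are ``strand-independent,'' but that is exactly what has to be proven and what forces working with the localization. The paper's actual mechanism is the identification $\HY(\PP_n^y;\C)\cong\cA_n[\Delta_n^{-1}]$ obtained from the colimit of $\HY(\FTy_n^k)\cong\cJ_n^k$ along multiplication by $\Delta_n$ (Lemmas \ref{lem: connecting map Delta} and \ref{lem: homology colimit}); inside the localization, $u_k$ and $\xi_k$ are produced by Cramer's rule (\eqref{eq: Cramer u}, \eqref{eq: Cramer xi}), and the interpolation identities then hold by construction, while freeness is checked via the symmetry argument in Theorem \ref{thm: y-ified projector} and transferred from $\C$ to $\k$ in Corollary \ref{cor: u tilde comparison} using a unitriangularity argument for the change of generators. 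None of this is replaceable by the parity-lemma shortcut.

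Finally, you misplace where the difficulty lies. You say the main obstacle is ``showing that the $y$-ification of the Hogancamp projector lifts to a genuine idempotent.'' In fact that step is comparatively soft: the ext-vanishing $\End(\PP_n)\simeq\Z[\widetilde u_0,\ldots,\widetilde u_{n-1}]$ of \cite{Hog}, combined with Lemma \ref{lemma:obstructions} and the explicit dot-sliding homotopies $\eta_f$, gives the lift $\PP_n^y$ and its uniqueness (Theorem \ref{thm:Py unique}); idempotence and centrality then follow from Lemma \ref{lemma:lift of equiv}. The real work is the computation of $\HY(\PP_n^y)$ as $\cA_n[\Delta_n^{-1}]$ and its explicit algebraic identification — precisely the piece your proposal defers to ``Theorem \ref{thm: y-ified projector}.'' Since Theorem \ref{thm: yified stable homfly intro} is essentially a restatement of Theorem \ref{thm: y-ified projector}, invoking the latter as an input makes the proposal circular.
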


\begin{remark}
One can interpret \eqref{eq: interpolation intro} as an interpolation problem. Indeed, the variables $u_k$ (resp. $\xi_k$) yield one-variable polynomials
$$
u(z)=u_0+u_1z+\ldots+u_{n-1}z^{n-1},\quad 
\xi(z)=\xi_0+\xi_1z+\ldots+\xi_{n-1}z^{n-1}.\\
$$
and  \eqref{eq: interpolation intro} says that the values of these polynomials at $z=y_i$ are determined by 
$$
u(z=y_i)=x_i,\ \xi(z=y_i)=\theta_i.
$$
\end{remark}

Part (a) of Theorem \ref{thm: yified stable homfly intro} can be easily deduced from Theorem \ref{thm: HHH intro} and the fact that $\HHH(T(n,\infty))$ is supported in even homological degrees (compare with \cite[Example 5.27]{Conners}). However, this is insufficient for proving \eqref{eq: interpolation intro}, so we instead reprove part (a) from scratch by using the computations of $\HY(T(n,mn))$ for all $m$ in \cite{GH}. We explicitly describe the graded algebra 
$$
\cA_n:=\bigoplus_{m=0}^{\infty}\HY(T(n,mn))
$$
and identify the connecting maps $\a_n$ from \eqref{eq: colimit intro 2} with multiplication by an explicit element $\Delta_n\in \cA_n$. This allows us to identify the colimit  \eqref{eq: colimit intro 2} with the localization $\cA_n[\Delta_n^{-1}]$ which is then computed by algebraic methods, see Theorem \ref{thm: y-ified projector}.

Next, we develop $y$-ification for $\fgl_N$ homology, denoted $\HY_{\fgl_N}(L)$. For $N=2$, it agrees with the Batson-Seed deformation of Khovanov homology \cite{BS}, and for general $N$, it agrees with the Cautis-Kamnitzer deformation \cite{CK}.
We develop the $y$-ification of the Rasmussen spectral sequence from $\HY(L)$ to $\HY_{\fgl_N}(L)$ for a link $L$, and show that it extends to $T(n,\infty)$.
The following is our next main result:

\begin{theorem}
\label{thm: y ified main intro}
For $N,n \ge 1$, there is a spectral sequence with the $E_1$ page $\HY(T(n,\infty))$ that abuts to $\HY_{\fgl_N}(T(n,\infty))$. The first differential
$d_N$ is defined by
$d_N(y_i)=d_N(u_k)=0$
and
\begin{equation}
\label{eq: dNy intro}
d_N(\xi_0+\xi_1z+\ldots+\xi_{n-1}z^{n-1})=
(u_0+u_1z+\ldots+u_{n-1}z^{n-1})^N\mod p(z).
\end{equation}
Here $z$ is a formal parameter only used to concisely express the differential, and $p(z)=\prod_{i=1}^{n}(z-y_i)$.
Furthermore, the spectral sequence collapses at the $E_2$ page. 
\end{theorem}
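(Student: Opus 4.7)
The plan is to construct the desired spectral sequence by taking a colimit of the $y$-ified Rasmussen spectral sequences for the finite cables $T(n,mn)$, identify the first differential via the $\HY(O_n)$-module structure on $\HY(T(n,\infty))$, and then establish collapse at $E_2$ by computing $H^*(E_1,d_N)$ algebraically and matching with an independent description of the abutment.

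First I would invoke, for each $m\ge 0$, the general $y$-ified Rasmussen spectral sequence $\HY(T(n,mn))\Rightarrow \HY_{\fgl_N}(T(n,mn))$ constructed earlier in the paper.  Compatibility with the connecting maps $\a_n$ in \eqref{eq: colimit intro 2} on both sides is built into the construction, since these maps arise from a canonical cabling operation; the spectral sequences assemble into a directed system, and because each fixed bidegree stabilizes for $m\gg 0$, the colimit is again a multiplicative spectral sequence converging to $\HY_{\fgl_N}(T(n,\infty))$.  By Theorem~\ref{thm: yified stable homfly intro}(a), its $E_1$ page is $\varinjlim_m \HY(T(n,mn))=\HY(T(n,\infty))=\k[y_i,u_k,\xi_k]$.

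Next I would pin down $d_N$.  The classical Rasmussen differential on the unlink homology $\HY(O_n)=\k[x_i,y_i,\theta_i]$ is $d_N(\theta_i)=x_i^N$ and $d_N(x_i)=d_N(y_i)=0$.  Because the spectral sequence is multiplicative and compatible with the $\HY(O_n)$-action of Theorem~\ref{thm: yified stable homfly intro}(b), the interpolation substitutions $x_i=u(y_i)$, $\theta_i=\xi(y_i)$ yield
\[
    d_N(y_i)=0, \qquad d_N(u(y_i))=0, \qquad d_N(\xi(y_i))=u(y_i)^N \qquad (i=1,\dots,n).
\]
The middle identity, together with Vandermonde invertibility of the matrix $(y_i^k)_{i,k}$ over $\Frac(\k[y_1,\dots,y_n])$, forces $d_N(u_k)=0$ for every $k$.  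Since $\xi(z)$ has $z$-degree $<n$, so does $d_N(\xi(z))$; being determined by its values at the $n$ points $z=y_i$ it must equal $u(z)^N\bmod p(z)$, which is \eqref{eq: dNy intro}.

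Finally, for collapse at $E_2$, I would compute $H^*(E_1,d_N)$ algebraically: after working over the generic locus where the $y_i$ are distinct, $d_N$ decouples into $n$ independent Rasmussen-type Koszul differentials $\xi(y_i)\mapsto u(y_i)^N$, one at each root of $p(z)$, and the cohomology is an explicit polynomial algebra in $y_i$ together with lifts of $(u_k,\xi_k)$.  These generators must then be matched against $\HY_{\fgl_N}(T(n,\infty))$, computed independently as the $m\to\infty$ colimit of Cautis--Kamnitzer deformed $\fgl_N$ homology of $T(n,mn)$, so that a Poincar\'e series comparison forces every higher differential to vanish.  The main obstacle is this last step: identifying $E_1$ and $d_N$ is essentially forced by Theorem~\ref{thm: yified stable homfly intro} and multiplicativity, but ruling out every $d_r$ for $r\ge 2$ demands either a global Poincar\'e series match with the abutment or a delicate bidegree argument exhibiting a set of algebra generators of $E_2$ that are manifestly permanent cycles, with the graded Leibniz rule then propagating the vanishing to all of $E_r$.
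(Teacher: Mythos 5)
Your setup of the spectral sequence and identification of $d_N$ are essentially the paper's argument in disguise: the paper works with the $y$-ified projector $\PP_n^y$ (rather than explicitly taking a colimit of finite-cable spectral sequences), identifies $\HY(\PP_n^y)\cong\cA_n[\Delta_n^{-1}]\cong\k[\yy,\uu,\xxi]$ in Theorem~\ref{thm: y-ified projector}, and pins down $d_N(\xi(z))$ exactly as you do, by evaluating at $z=y_i$ and invoking uniqueness of the remainder modulo $p(z)$ (Lemma~\ref{lem: remainder}).  Your Vandermonde argument for $d_N(u_k)=0$ is slightly different in flavor — the paper instead observes that $d_N$ has $a$-degree $-1$, so it must annihilate the $a$-degree-zero generators $y_i$ and $u_k$ — but the two arguments are equivalent.

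The genuine gap is in the collapse at $E_2$.  You correctly identify this as the crux, but neither of the two escape routes you offer is available in the form you describe.  A Poincar\'e-series comparison with the abutment requires an independent computation of $\HY_{\fgl_N}(T(n,\infty))$; the paper has no such computation — indeed, identifying this homology is the whole point, and it is \emph{read off} from the collapsed spectral sequence, not fed into it.  Meanwhile, exhibiting algebra generators of $E_2$ that are permanent cycles (the $\mu_k$ of Section~\ref{sec: algebraic conjecture}) is precisely the content of Conjecture~\ref{conj: mu}, which remains open.  Your idea of decoupling over the generic locus where the $y_i$ are distinct is a step in the right direction, but localizing away from $\Delta_n$ only establishes vanishing of higher Koszul homology after localization; it does not by itself give regularity of the sequence $(d_N(\xi_0),\ldots,d_N(\xi_{n-1}))$ over the full polynomial ring $\k[\yy,\uu]$, which is what is actually needed.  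The paper closes this gap in Lemma~\ref{lem: regular sequence}: it constructs a specialization homomorphism $\varphi_M$ (Section~\ref{sec: interpolation algebra}) sending $u_k\mapsto(-1)^{n-1-k}s_{(M-n+1,1^{n-1-k})}(\yy)$, under which $d_N(\zeta_k)$ becomes $h_{MN-k}(\yy)$ — a manifestly regular sequence of complete homogeneous symmetric polynomials.  Augmenting the sequence by $u_k-\varphi_M(u_k)$ and appealing to the fact that regularity of a homogeneous sequence in a positively graded ring is order-independent then yields regularity of the original $d_N(\xi_k)$, hence $E_2$ concentrated in $a$-degree zero, hence collapse since all $d_r$ for $r\ge 2$ strictly decrease $a$-degree.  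Without this (or some comparable) argument, your proof does not establish collapse.
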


To unpack the equation \eqref{eq: dNy intro}, observe that $p(z)$ is a monic polynomial in $z$ with coefficients in $\k[y_1,\ldots,y_n]$. The remainder in long dividing 
$(u_0+u_1z+\ldots+u_{n-1}z^{n-1})^N$ by $p(z)$ is a degree $n-1$ polynomial in $z$ with coefficients in $\k[y_1,\ldots,y_n,u_0,\ldots,u_{n-1}]$. The coefficient at $z^k$ in the latter defines $d_{N}(\xi_k)$.  We give closed formulas for $d_{N}(\xi_k)$ in Lemma \ref{lem: d_N explicit} and prove the collapse of the spectral sequence in Theorem \ref{thm: y-ified differential}. 

\begin{remark}
If we substitute $z=y_i$ in \eqref{eq: dNy intro} and use \eqref{eq: interpolation intro}, we get $d_N(\theta_i)=x_i^N$ which indeed follows from the definition of the Rasmussen spectral sequence. 
\end{remark}

We deduce Theorem \ref{thm:page1theorem} from Theorem \ref{thm: y ified main intro} by specializing $y_i=0$ and observing that the Rasmussen spectral sequence behaves well with respect to such a specialization. At $y_i=0$ we get $p(z)=z^n$, hence
$$
d_N(\xi_0+\xi_1z+\ldots+\xi_{n-1}z^{n-1})=
(u_0+u_1z+\ldots+u_{n-1}z^{n-1})^N\mod z^n.
$$
which is indeed equivalent to \eqref{eq:dNdifferential}.

Our treatment of $y$-ification of $\fgl_N$ homology also allows us to define $y$-ified $S^n$-colored $\fgl_N$ homology; see section~\ref{sec: colored homology}.

\subsection{More general potentials}

For various applications, it is useful to consider deformations of $\fgl_N$ Khovanov--Rozansky homology where the homology of the unknot is given by $\Q[x]/(\partial W(x))$ for some polynomial $W(x)$. By making the coefficients of $W(X)$ indeterminates, one obtains equivariant Khovanov--Rozansky homology.

In this setting, we can describe both the $y$-ified $(d_{\partial W}^y)$ and the non $y$-ified $(d_{\partial W})$ variants of the first page of the Rasmussen spectral sequence as follows.
As in \eqref{eq: dNy intro}, define the following polynomials in a formal parameter $z$:
$$
u(z)=u_0+u_1z+\ldots+u_{n-1}z^{n-1},\
\xi(z)=\xi_0+\xi_1z+\ldots+\xi_{n-1}z^{n-1},\ p(z)=\prod_{i=1}^{n}(z-y_i).
$$
\begin{theorem}
\label{thm: dW intro}
We have
\begin{equation}
\label{eq: dW intro}
d_{\partial W}^y(\xi(z))=\partial W(u(z))\bmod p(z),\quad\ d_{\partial W}(\xi(z))=\partial W(u(z))\bmod z^n.
\end{equation}
\end{theorem}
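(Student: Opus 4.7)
My plan is to deduce the identity from the interpolation formula~\eqref{eq: interpolation intro} combined with the defining local property of the Rasmussen-type deformation: on each component $i$ of a link, $d_{\partial W}^y(\theta_i) = \partial W(x_i)$. This is the direct $\partial W$-analogue of the identity $d_N^y(\theta_i) = x_i^N$ noted in the remark following Theorem~\ref{thm: y ified main intro}.

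The first step is to evaluate both sides of the claimed identity at $z = y_i$. Since $y_1, \dots, y_n$ are deformation parameters in cohomological degree zero, the differential $d_{\partial W}^y$ is $\k[y_1, \dots, y_n]$-linear, so substituting $z = y_i$ and using $\xi(y_i) = \theta_i$, $u(y_i) = x_i$ from \eqref{eq: interpolation intro} yields
$$
\left[d_{\partial W}^y(\xi(z))\right]_{z = y_i} \;=\; d_{\partial W}^y(\theta_i) \;=\; \partial W(x_i) \;=\; \partial W(u(y_i)) \;=\; \left[\partial W(u(z))\right]_{z = y_i}.
$$
Thus the two sides of the identity agree after evaluation at each $y_i$.

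The second step is to promote this pointwise agreement to equality modulo $p(z)$. Set $R := \k[y_1, \dots, y_n, u_0, \dots, u_{n-1}]$. Since $\xi(z)$ has degree at most $n-1$ in $z$, so does $d_{\partial W}^y(\xi(z))$, and the reduction $\partial W(u(z)) \bmod p(z)$ likewise has degree at most $n-1$. Both therefore lie in the free $R$-module $M := R[z]/(p(z))$ of rank $n$, and the evaluation map $M \to R^n$, $f(z) \mapsto (f(y_1), \dots, f(y_n))$, is represented by the Vandermonde matrix $(y_i^j)$ with determinant $\prod_{i<j}(y_j - y_i)$, a nonzero element of the polynomial domain $R$. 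Hence this map is injective, and the two sides coincide in $M$, which is the desired $y$-ified identity. The non-$y$-ified identity follows by specializing $y_1 = \dots = y_n = 0$, under which $p(z)$ becomes $z^n$; this specialization is compatible with the Rasmussen construction, as is already exploited in deriving Theorem~\ref{thm:page1theorem} from Theorem~\ref{thm: y ified main intro}.

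I expect the main obstacle to be the input to the proof: establishing $d_{\partial W}^y(\theta_i) = \partial W(x_i)$ for a general polynomial potential $W$. Theorem~\ref{thm: y ified main intro} covers only the monomial case $\partial W = x^N$, so extending to arbitrary $\partial W$ requires either a direct construction of the $y$-ified Khovanov--Rozansky complex deformed by a general potential — which should be straightforward, since $W$ enters the differential additively — or a linearity argument, writing $\partial W(x) = \sum_k c_k x^k$ and invoking Theorem~\ref{thm: y ified main intro} for each $d_{x^k}^y$, combined with verification that $d_{\partial W}^y = \sum_k c_k\, d_{x^k}^y$, including in the equivariant setting where the $c_k$ may themselves be formal parameters.
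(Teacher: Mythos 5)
Your proposal follows essentially the same route as the paper (Theorem~\ref{thm: y-ified W differential}, which generalizes the proof of Theorem~\ref{thm: y-ified differential}): evaluate $d_{\partial W}^y(\xi(z))$ at $z=y_i$, use $\xi(y_i)=\theta_i$, $u(y_i)=x_i$ and the local identity $d_{\partial W}^y(\theta_i)=\partial W(x_i)$ to get pointwise agreement, then conclude by uniqueness of the degree-$\le n-1$ interpolant. The paper packages the final step as Lemma~\ref{lem: remainder} (a coprimality argument in a UFD) rather than a Vandermonde injectivity argument, but these are interchangeable. One small point you should make explicit: in claiming both sides lie in $R[z]/(p(z))$ with $R=\k[\yy,\uu]$, you are implicitly using that $d_{\partial W}^y(\xi_k)$ has no $\xi$-terms; the paper justifies this by noting that the $a$-grading (under which $\xi_k,\theta_i$ have degree $1$ and $u_k,y_i$ have degree $0$) is decreased by exactly one by $d_{\partial W}^y$, so $d_{\partial W}^y(\xi_k)\in\k[\yy,\uu]$. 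Alternatively you could run the Vandermonde argument over $\k[\yy,\uu,\xxi]$, where $\prod_{i<j}(y_j-y_i)$ is still a nonzerodivisor, without needing this observation. Your flagged concern about the input $d_{\partial W}^y(\theta_i)=\partial W(x_i)$ for general $W$ is handled in the paper by constructing $d_{\partial W}$ directly at the level of the Koszul complex via $d_{\partial W}(\theta_i)=\bigl(W(x_i)-W(x_i')\bigr)/(x_i-x_i')$, which specializes to $\partial W(x_i)$ on the diagonal; this is indeed linear in $W$ as you anticipated, so no case-by-case reduction to monomials is needed.
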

As before, $d_{\partial W}$ can be obtained from $d_{\partial W}^y$ by specializing $y_i=0$.

\section*{Acknowledgments}

We are grateful to Luke Conners, Tom Mrowka, Alexei Oblomkov, Jacob Rasmussen, and Paul Wedrich for useful discussions. EG was partially supported by the NSF grant DMS-2302305. WB and JW were partially supported by the Simons Collaboration on New Structures in Low-Dimensional Topology, and JW was also partially supported by the NSF MSPRF grant DMS-2303401.

\section{Background}

\subsection{Gradings}
\label{ss:gradings}
The three gradings on triply graded Khovanov--Rozansky link homology are referred to as the $Q$-grading, the $T$-grading, and the $A$-grading.  The $Q$-grading is the quantum (i.e.~the grading internal to all rings and (bi)modules), the $T$-grading is the cohomological grading on complexes, and the $A$-grading is the Hochschild cohomological grading.

If $V=\bigoplus_{i,j,k\in \Z}V^{i,j,k}$ is a $\Z^3$-graded vector space, then the (tri)degree of a homogeneous element $v\in V^{i,j,k}$ may be written $\deg_{Q,T,A}(v):=(\deg_Q(v),\deg_T(v),\deg_A(v))=(i,j,k)$.  We may also express the degree of $v$ exponentially (using the term \emph{weight}), writing $\wt(v)=Q^i T^j A^k$.

We also use the symbol $Q^i T^j A^k$ for the corresponding grading shift functor:
\[
(Q^i T^j A^k V)^{i',j',k'} = V^{i'-i,j'-j,k'-k}
\]

\begin{remark}
When clarification is desired, we may write the various grading groups with subscripts indicating the relevant degrees.  For instance, we may refer to $\Z_Q$-graded modules, or $\Z_Q\times \Z_T$-graded complexes, and so on.
\end{remark}

\begin{remark}
By default the notation $\Hom(-.-)$ will always mean enriched homs in the appropriate sense.  For instance, if $R$ is a $\Z_Q$-graded ring, and $M,N$ are graded $R$-modules, then $\Hom_R(M,N)$ will always mean the $\Z_Q$-graded abelian group of (not necessarily degree preserving) $R$-linear homs. 

Similarly, if $X$ and $Y$ are complexes of $\Z_Q$-graded $R$-modules, then $\Hom_R(X,Y)$ will mean the $\Z_Q\times \Z_T$-graded complex of homs.
\end{remark}

As is common in the link homology literature (e.g., \cite{EH}) we will sometimes change variables to
\begin{equation}
\label{eq: qta to QTA}
q=Q^2,\ t=Q^{-2}T^2,\ a=Q^{-2}A.
\end{equation}
In other words, $(\deg_Q,\deg_T,\deg_A)=(2\deg_q-2\deg_t-2\deg_a, 2\deg_t, \deg_a)$.

Throughout this paper, we will be working with various graded (super)polynomial rings in variables $x_i$, $y_i$, or $\theta_i$; by convention the weights of these variables are
\begin{equation}\label{eq:weights of xyt}
\wt(x_i)=q \, , \qquad \wt(y_i)=t \, , \qquad \wt(\theta_i)=a.
\end{equation}
where $q,t,a$ are as in \eqref{eq: qta to QTA}.  Written additively, the (tri)degrees are
\begin{equation}\label{eq:degrees of xyt}
\deg_{Q,T,A}(x_i)=(2,0,0) \, , \qquad \deg_{Q,T,A}(y_i)=(-2,2,0) \, , \qquad \deg_{Q,T,A}(\theta_i)=(-2,0,1).
\end{equation}
The variables $\theta_i$ are odd (meaning they square to zero and anticommute amongst themselves) since they carry Hochschild cohomological degree 1.  We will also abbreviate by writing alphabets in these various variables as in
\begin{equation}\label{eq:alphabet soup}
\xx=(x_1,\ldots,x_n) \, ,\qquad \yy=(y_1,\ldots,y_n) \, , \qquad \ttheta=(\theta_1,\ldots,\theta_n),
\end{equation}
where the integer $n$ will be understood from context.  For instance, the notation $\k[\xx,\yy,\ttheta]$ denotes the $\Z_Q\times \Z_T\times \Z_A$-graded super polynomial ring generated by even variables $x_i,y_i$, and odd variables $\theta_i$.  

\subsection{Rouquier complexes}
\label{sec: rouquier}

Fix an integer $n\geq 1$ and a commutative ring $\k$ (soon we will specialize $\k$ to a field of characteristic zero). Let $R:=\k[\xx]$ be the $\Z_Q$-graded polynomial ring, which we regard as a $\Z_Q$-graded algebra by declaring that $x_i$ has degree 2, i.e.~$\wt(x_i)=Q^2=q$ in the above notation. Let $R\text{-gmod-}R$ denote the category of $\Z_Q$-graded $(R,R)$-bimodules, with morphisms given by (not necessarily degree zero) bimodule maps.

Introduce a second alphabet of $x$-variables, denoted $\xx'=(x_1',\ldots,x_n')$.  We will identify $R\otimes R=\k[\xx,\xx']$ with $x_i\otimes 1$ identified with $x_i$, and $1\otimes x_i$ identified with $x_i'$.  This gives an identification of categories
\begin{equation}\label{eq:bimod to mod}
\k[\xx]\text{-gmod-}\k[\xx]=\k[\xx,\xx']\text{-gmod}.
\end{equation}
On the left hand side of this identification we have the monoidal structure $\otimes_R$ which we denote by $\star$.  On the right hand side this is most conveniently written as a bilinear functor
\[
(\k[\xx,\xx']\text{-gmod})\times (\k[\xx',\xx'']\text{-gmod})\to \k[\xx,\xx'']\text{-gmod} \, \qquad (M,N)\mapsto M\otimes_{\k[\xx']}N.
\]
The unit object for this tensor product is $R$ which we will sometimes denote as $\one_n$.

Let $\Ch(R\text{-gmod-}R)$ denote the category of complexes of graded bimodules, with grading-preserving differentials.  An object $X$ of this category is $\Z_Q\times \Z_T$-graded with differentials of weight $\wt(d_X)=T$.  The monoidal structure $\star$ extends to  $\Ch(R\text{-gmod-}R)$.

We recall some basic definitions for link homology. 
Let $$B_i=\k[\xx,\xx'] \Big/ (x_i+x_{i+1}=x'_{i}+x'_{i+1},x_ix_{i+1}=x'_{i}x'_{i+1},x_j=x'_j \ \text{for}\ j\neq i)$$

We interpret $B_i$ as a graded $(R,R)$-bimodule, via the identification \eqref{eq:bimod to mod}.  The category of Soergel bimodules $\SBim_n$ is defined as the smallest full subcategory of the category of graded $(R,R)$-bimodules containing $B_i$ and $R$ and closed under tensor product $\star$, direct sums, grading shifts and direct summands.

There are canonical maps $b_i\in \Hom(B_i, R)$ and $b^*_i\in \Hom(R,B_i)$ with $\wt(b_i)=Q^0$, $\wt(b_i^*)=Q^2$, defined by
$$
b_i(1)=1,\ b_i^*(1)=x_i-x'_{i+1}.
$$
We define the Rouquier complexes by 
$$
F_i=\left[ 
\begin{tikzcd}
Q^{-1}\underline{B_i}\arrow {r}{b_i} & Q^{-1}R 
\end{tikzcd}\right],\ 
F^{-1}_i=\left[ 
\begin{tikzcd}
QR\arrow {r}{b^*_i} & Q^{-1}\underline{B_i} 
\end{tikzcd}\right].
$$
The terms in homological degree zero are underlined.
\begin{theorem}\cite{Rouquier}
The complexes $F_i$ and $F_i^{-1}$ satisfy braid relations 
$$
F_i\star F_{i+1}\star F_i\simeq F_{i+1}\star F_i\star F_{i+1},\ F_i\star F_j\simeq F_j\star F_i\ (|i-j|>1),\ F_{i}\star F_i^{-1}\simeq R$$
up to homotopy.
\end{theorem}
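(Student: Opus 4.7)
The plan is to verify the three relations in the order stated, as the difficulty grows. Throughout, I would expand each $F_i^{\pm 1}$ as a two-term complex of Soergel bimodules and compute the resulting tensor products as cubes, then identify contractible summands via Gaussian elimination. For the \emph{far commutation relation} $F_i \star F_j \simeq F_j \star F_i$ with $|i-j|>1$, observe that the defining relations of $B_i$ involve only the variables $x_i, x_{i+1}, x_i', x_{i+1}'$, while those of $B_j$ involve only $x_j, x_{j+1}, x_j', x_{j+1}'$; when $|i-j|>1$ these sets are disjoint. A direct inspection yields grading-preserving isomorphisms of $(R,R)$-bimodules $B_i \star B_j \cong B_j \star B_i$, $R \star B_j \cong B_j \star R$, and so on, and these isomorphisms intertwine the canonical maps $b_i, b_i^*$ on each factor. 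Assembling over the four vertices of the square gives the desired equivalence on the nose, with no higher homotopies needed.

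For the \emph{invertibility relation} $F_i \star F_i^{-1} \simeq R$, I would expand the tensor product as a total complex supported on $B_i \star B_i$, two copies of $B_i$, and $R$. The key computational input is the decomposition $B_i \star B_i \cong Q^2 B_i \oplus Q^{-2} B_i$, which follows from the fact that $B_i$ is a rank-two free right $R$-module, together with an explicit description of the multiplication map $B_i \star B_i \to B_i$ and its section. Under this decomposition, one of the differentials in the total complex becomes an identity map on a copy of $B_i$, so Gaussian elimination of the corresponding contractible summand reduces the complex to $R$ in homological degree zero, up to checking that no residual differential survives.

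The \emph{braid relation} $F_i \star F_{i+1} \star F_i \simeq F_{i+1} \star F_i \star F_{i+1}$ is the main obstacle. The approach is to expand each triple product as a cube complex with eight vertices indexed by the three binary choices (the $B$-term or the $R$-term at each tensor factor). The essential structural input is the decomposition of $B_i \star B_{i+1} \star B_i$ in $\SBim_n$ as the indecomposable bimodule $B_{i,i+1,i}$ plus a grading-shifted copy of $B_i$, together with analogous decompositions of the other nontrivial vertices, and the crucial coincidence $B_{i,i+1,i} \cong B_{i+1,i,i+1}$. Carrying out iterated Gaussian elimination on the contractible summands that appear on each side, both cubes reduce to the same minimal representative, yielding the homotopy equivalence. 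The bookkeeping of signs, grading shifts, and the explicit form of all edge maps is the main technical difficulty; a cleaner organization proceeds by exhibiting $F_i \star F_{i+1} \star F_i$ as an iterated mapping cone whose terms manifestly match those of the opposite triple product after applying the isomorphism $B_{i,i+1,i} \cong B_{i+1,i,i+1}$, constructing the equivalence one cone at a time.
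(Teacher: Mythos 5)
The paper does not give its own proof of this theorem: it is stated with a citation to Rouquier's original work and used as a black box. So there is no in-paper proof to compare against. Your proposal is the standard argument found in the literature (Rouquier, Elias--Krasner, and textbook treatments), and as an outline it is essentially correct: far commutation follows from the disjointness of variables and the resulting bimodule isomorphisms commuting with $b_i,b_i^*$; invertibility reduces to the splitting of $B_i\star B_i$ and Gaussian elimination of one matched $B_i$ summand; and the genuine braid relation uses $B_i\star B_{i+1}\star B_i\cong B_{s_is_{i+1}s_i}\oplus (\text{shift of }B_i)$ together with $B_{s_is_{i+1}s_i}\cong B_{s_{i+1}s_is_{i+1}}$, followed by cancellation on both cubes.

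Two small cautions. First, in the paper's normalization $B_i$ is unshifted ($\wt(b_i)=Q^0$, $\wt(b_i^*)=Q^2$), so the correct decomposition is $B_i\star B_i\cong B_i\oplus Q^2B_i$, not the symmetric $Q^2B_i\oplus Q^{-2}B_i$ you wrote; the latter would arise from a renormalized $B_i$ and with $Q^{\pm1}$ rather than $Q^{\pm2}$. This does not affect the strategy, but you should fix the shifts before attempting the bookkeeping. Second, in the invertibility step ``checking that no residual differential survives'' is not automatic: after the first Gaussian elimination one must verify that the induced map on the remaining $R$ terms is the identity and hence gives a further contractible piece, and that what remains is exactly $R$ in degree zero. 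These are the kinds of details that make the computation nontrivial, and you should carry at least one of them through explicitly to make the proposal into a proof.
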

 By tensoring these, for any braid $\beta$ one can construct a  complex $F(\beta)$   which is well-defined up to homotopy equivalence. We denote the differential in this complex by $d$.

 \begin{example}
 \label{ex: rightmost R}
 If $\beta$ is a positive braid, then $F(\beta)$ has a unique copy of $R$ in the rightmost homological degree shifted by $Q^{-\ell(\beta)}T^{\ell(\beta)}$. It defines a special chain map 
 \begin{equation}
 \label{eq: rightmost R}
 Q^{-\ell(\beta)}T^{\ell(\beta)}R\to F(\beta).
 \end{equation}
 \end{example}

\begin{lemma}\cite{GH}
\label{lem: dot sliding}
Let $\b\in \Br_n$ be a braid with underlying permutation $w\in S_n$.  Then for all $i$ there exists $\eta_i\in \mathrm{End}^{2,-1}(F(\beta))$ such that 
\begin{align}
\label{eq: def xi}
[d,\eta_i]&=x_{i}-x'_{w\inv(i)}\\
[\eta_i,\eta_j]&=0\\
\eta_i^2&=0
\end{align}
\end{lemma}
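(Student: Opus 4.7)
The plan is to induct on the length of a word representing $\beta$, constructing the $\eta_i$'s recursively from elementary generators via the tensor product structure on $F(\beta)$.

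\emph{Base case.} For a single generator $F_j = [Q^{-1}\underline{B_j} \xrightarrow{b_j} Q^{-1}R]$, the underlying permutation is the transposition of $j$ and $j+1$, and any endomorphism of $T$-degree $-1$ is determined by a bimodule map $R \to B_j$ of $Q$-degree $2$. For $i \notin \{j,j+1\}$ we have $x_i = x'_i$ on both $R$ and $B_j$, so $\eta_i := 0$ works; for $i \in \{j,j+1\}$ we take $\eta_i := \pm b_j^*$, whence $[d,\eta_i] = b_j \circ b_j^* = \pm(x_j - x_{j+1})$, matching $x_i - x'_{w^{-1}(i)}$ as a multiplication operator. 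The relations $\eta_i^2 = 0$ and $\eta_i \eta_k + \eta_k \eta_i = 0$ hold trivially, since any $T$-degree $-2$ endomorphism of a two-term complex vanishes. The case $F_j^{-1}$ is analogous, with the roles of $b_j$ and $b_j^*$ swapped.

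\emph{Inductive step.} Suppose $F(\beta) = F(\beta_1) \star F(\beta_2)$ with underlying permutations $w_1, w_2$ (so $w = w_1 w_2$), and let $\eta_i^{(k)}$ be the given endomorphisms of $F(\beta_k)$ satisfying the lemma. Define
\[
\eta_i := \eta_i^{(1)} \otimes 1 + 1 \otimes \eta_{w_1^{-1}(i)}^{(2)}.
\]
Since $\eta$ and $d$ are both $T$-odd, the Koszul sign rule forces the cross terms $[d_1 \otimes 1, 1 \otimes \eta_{w_1^{-1}(i)}^{(2)}]$ and $[1 \otimes d_2, \eta_i^{(1)} \otimes 1]$ to vanish, so
\[
[d,\eta_i] = \bigl(x_i - x'_{w_1^{-1}(i)}\bigr) \otimes 1 + 1 \otimes \bigl(x'_{w_1^{-1}(i)} - x''_{w^{-1}(i)}\bigr) = x_i - x''_{w^{-1}(i)}
\]
after identifying the middle alphabet across $\otimes_{\k[\xx']}$. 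A parallel computation using $(f \otimes g)(f' \otimes g') = (-1)^{|g||f'|}(ff') \otimes (gg')$ shows that in $\eta_i \eta_j + \eta_j \eta_i$ (and similarly in $\eta_i^2$) the cross terms $\pm \eta_i^{(1)} \otimes \eta_{w_1^{-1}(j)}^{(2)}$ cancel, so each identity reduces to its counterpart on the two tensor factors separately, which holds by induction.

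The argument is essentially formal, so the main obstacle is bookkeeping Koszul signs: every $\eta_i$ has odd $T$-degree, and it is precisely these signs that produce both the cross-term cancellations in $[d,\eta_i]$ and the anticommutation relations among the $\eta_i$'s. Once the sign conventions are fixed, each identity reduces cleanly to the inductive hypothesis on the two tensor factors together with the identification of the right action of the middle alphabet on $F(\beta_1)$ with its left action on $F(\beta_2)$.
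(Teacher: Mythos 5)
The paper cites this lemma from [GH] without reproducing the proof, so there is no in-paper argument to compare against; your proposal is therefore being checked against the original construction in [GH]. Your proof is correct and takes essentially the same route as [GH]: on a single Rouquier generator $F_j$ (resp.\ $F_j^{-1}$) the only nonzero $\eta_i$ for $i\in\{j,j+1\}$ is $\pm b_j^*$ (resp.\ $\pm b_j$), while for $i\notin\{j,j+1\}$ one takes $\eta_i=0$; the quadratic relations are vacuous on a two-term complex; and the inductive tensor formula $\eta_i=\eta_i^{(1)}\otimes 1+1\otimes\eta_{w_1^{-1}(i)}^{(2)}$ is exactly the one used in [GH], with the Koszul sign rule and the identification of the middle alphabet across $\otimes_{\k[\xx']}$ doing the work in all three identities. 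One small point worth making explicit: on $F_j$ the identity $[d,\eta_j]=b_j\circ b_j^*+b_j^*\circ b_j$ acts by multiplication by $x_j-x'_{j+1}$ on both terms (on $R$ this equals $x_j-x_{j+1}$; on $B_j$ one uses $x_j+x_{j+1}=x'_j+x'_{j+1}$), and for $i=j+1$ one needs the opposite sign $\eta_{j+1}=-b_j^*$ since $x_{j+1}-x'_j=-(x_j-x'_{j+1})$ on $B_j$; your ``$\pm$'' correctly encodes this.
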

We remind the reader that the notation $\eta_i\in \mathrm{End}^{2,-1}(F(\beta))$ means that $\wt(\eta_i)=Q^2T\inv$.

We will refer to $\eta_i$ as to {\em dot-sliding homotopies.}

\subsection{$y$-fication}

\begin{definition}
Let $V$ be a $\Z_A\times \Z_Q\times \Z_T$ graded (resp. $\Z_Q\times \Z_T$ graded)  $\k$-module. We define the completed tensor product $V\cotimes\k[\yy]=V\cotimes\k[y_1,\ldots,y_n]$ as follows.
A homogeneous element of weight $A^{k}Q^{m}T^{s}$ in $V\cotimes\k[\yy]$  is a (possibly infinite) linear combination 
\begin{equation}
\label{eq: completed tensor}
\sum_{k_1,\ldots,k_n\geq 0} f_{k_1,\ldots,k_n} y_1^{k_1}\cdots y_n^{k_n}
\end{equation}
where $f_{k_1,\ldots,k_n}$ is a homogeneous element of $V$ of weight $A^kQ^{m+2K}T^{s-2K}$, $K=k_1+\ldots+k_n$. 
An arbitrary element of $V\cotimes\k[\yy]$ is a finite linear combination of homogeneous elements.
\end{definition}

\begin{remark}
It is easy to see that 
$$
V[y_1,\ldots,y_n]\subset V\cotimes\k[y_1,\ldots,y_n]\subset V\llbracket y_1,\ldots,y_n\rrbracket.
$$
The second inclusion is strict if $V\neq 0.$
\end{remark}

Recall the conventions from Equations \eqref{eq:alphabet soup}, \eqref{eq:weights of xyt}.  If $X,Y\in \Ch(\SBim_n)$ are   complexes of Soergel bimodues, then we will adjoin variables $y_1,\ldots,y_n$ to the hom complex $\Hom(X,Y)$, obtaining
\begin{equation}\label{eq:complex with ys}
\Hom(X,Y)\cotimes\k[\mathbf{y}].
\end{equation} 
There is a differential on \eqref{eq:complex with ys}, given by
\[
d\left(\sum_{k_1,\ldots,k_n\geq 0} f_{k_1,\ldots,k_n} y_1^{k_1}\cdots y_n^{k_n}\right) := \sum_{k_1,\ldots,k_n\geq 0} d(f_{k_1,\ldots,k_n}) y_1^{k_1}\cdots y_n^{k_n}.
\]
When $X=Y$ also abbreviate by writing $\End(X)\cotimes\k[\mathbf{y}]:=\Hom(X,X)\cotimes\k[\mathbf{y}]$.  We say that $f\in \Hom(X,Y)\cotimes\k[\mathbf{y}]$ is \emph{polynomial} if the terms $f_{k_1,\ldots,k_n}$ are zero for all but finitely many tuples $(k_1,\ldots,k_n)$.  The polynomial elements form a subcomplex, denoted by
\[
\Hom(X,Y)[\mathbf{y}]\subset \Hom(X,Y)\cotimes\k[\mathbf{y}]
\]
\begin{remark}
\label{rem: poly 1}
If $X$ and $Y$ are such that $\Hom^{i+2K,j-2K}(X,Y)=0$ for $K\gg 0$, then we have $\Hom(X,Y)\cotimes\k[\mathbf{y}] = \Hom(X,Y)\otimes \k[\mathbf{y}]$.  In particular, this is the case if $X$ and $Y$ are bounded complexes.
\end{remark}

\begin{definition}\label{def:yified cat}
Let $w\in S_n$ be a given permutation.  We define a category $\mathcal{Y}_{n,w}$ as follows.  Objects of this category are formal expressions $\tw_{\alpha}(X)$ where $X\in \Ch(\SBim_n)$ and $\alpha\in \End(X)\cotimes\k[\mathbf{y}]$ is an element of weight $T$ such that
\begin{equation}\label{eq:y curvature}
(d_X+\alpha)^2 = \sum_{i=1}^n(x_{i}-x_{w\inv(i)}')y_i
\end{equation}
The expression $D:=d_X+\alpha$ is called the \emph{total differential} acting on $\tw_{\alpha}(X)$.  The homs in $\mathcal{Y}_{n,w}$ are the $\Z_Q\times \Z_T$-graded complexes given by
\[
\Hom_{\mathcal{Y}_{n,w}}(\tw_{\alpha'}(X'),\tw_\alpha(X)) = \Hom(X,Y)\cotimes\k[\mathbf{y}]
\]
with differential given by the supercommutator with $D$, i.e.
~$f\mapsto [D,f]$.
\end{definition}

Up to isomorphism, the object $\tw_\alpha(X)$ only depends on the total differential $d_X+\alpha$. Consequently, we have
\[
\tw_{\alpha}(X) \cong \tw_{\alpha_+}(X,d_X+\alpha_0),
\]
where we have written $\alpha_0:=\alpha|_{y_1=\cdots=y_n=0}$ and $\alpha_+:=\alpha-\alpha_0$.  Thus, we can always write objects of $\mathcal{Y}_{n,w}$ as expressions $\tw_\alpha(X)$ with $\alpha_0=0$. In this case we think of $\tw_\alpha(X)$ as $X$ with a deformed differential.

\begin{remark}
Given $X\in \Ch(\SBim_n)$ we may write $X^y$ for an object $\tw_\alpha(X)\in \mathcal{Y}_{n,w}$ with $\alpha_0=0$ (as in the preceding discussion).  In practice, we only write $X^y$ when the choice of deformation is unique up to isomorphism.  
\end{remark}

For establishing the existence and uniqueness of lifts we recall some results from \cite[Section 2]{GH}.

\begin{lemma}\label{lemma:obstructions}
Suppose we are given a permutation $w\in S_n$ and a complex $X\in \Ch(\SBim_n)$ on which the action of $x_i-x_{w\inv(i)}'$ is null-homotopic for all $1\leq i\leq n$.  Suppose $X$ satisfies the ext-vanishing condition that any closed element of $\End(X)$ of weight $Q^{2+2K}T^{-2K}$ (with $K\in \Z_{\geq 1}$) is null-homotopic.  Then there exists a lift $X^y\in \mathcal{Y}_{n,w}$. 
\end{lemma}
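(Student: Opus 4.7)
The plan is to construct $\alpha$ inductively in its $y$-adic filtration by a Maurer--Cartan-type obstruction argument.  Write $\alpha=\sum_{K\geq 1}\alpha_K$ with $\alpha_K=\sum_{|\mathbf{k}|=K}\alpha_{\mathbf{k}}\,\yy^{\mathbf{k}}$ supported in $y$-degree $K$; the constraint $\wt(\alpha)=T$ together with $\wt(y_i)=Q^{-2}T^{2}$ forces each $\alpha_{\mathbf{k}}\in\End^{2K,\,1-2K}(X)$.  The equation $(d_X+\alpha)^{2}=\sum_{i=1}^{n}(x_{i}-x_{w\inv(i)}')y_{i}$ splits by $y$-degree.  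Setting $D_{<K}:=d_X+\alpha_{<K}$ and $E_{<K}:=D_{<K}^{2}-\sum_{i}(x_{i}-x_{w\inv(i)}')y_{i}$, and letting $\beta_K$ denote the $y$-degree $K$ component of $E_{<K}$, the inductive step reduces to solving $[d_X,\alpha_K]=-\beta_K$.  The right hand side depends only on already-constructed data, since $\alpha_K$ contributes to $E_{\leq K}$ only via $[d_X,\alpha_K]$ (the term $\alpha_K^{2}$ lives in $y$-degree $2K>K$).

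Two items must be verified at each step.  First, that $\beta_K$ is closed.  By the super-Jacobi identity $[D_{<K},D_{<K}^{2}]=0$, while $D_{<K}$ supercommutes with the polynomial generators $x_i$ and $x_{w\inv(i)}'$ (because $\alpha_{<K}$ is a bimodule endomorphism under the identification \eqref{eq:bimod to mod}) and trivially commutes with the formal variable $y_i$, so $[D_{<K},E_{<K}]=0$.  Projecting this identity onto $y$-degree $K$ yields $[d_X,\beta_K]=0$, since the $[\alpha_{<K},\beta_K]$ contribution sits in $y$-degrees $\geq K+1$.

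Second, that $\beta_K$ is null-homotopic, so $\alpha_K$ can be chosen.  Each coefficient $\beta_{\mathbf{k}}$ has weight $Q^{2K}T^{2-2K}=Q^{2+2(K-1)}T^{-2(K-1)}$.  For $K=1$, with $\mathbf{k}=\mathbf{e}_j$, one finds $\beta_{\mathbf{e}_j}=-(x_j-x_{w\inv(j)}')$, which is null-homotopic by the first hypothesis, and one may take $\alpha_{\mathbf{e}_j}=\eta_j$ as in Lemma~\ref{lem: dot sliding}.  For $K\geq 2$, the weight falls in the ext-vanishing range $Q^{2+2K'}T^{-2K'}$ with $K'=K-1\geq 1$, so the ext-vanishing hypothesis supplies the required null-homotopy $\alpha_{\mathbf{k}}$.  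The assembled sum $\alpha=\sum_{K\geq 1}\alpha_K$ automatically belongs to $\End(X)\cotimes\k[\yy]$, since each monomial $\yy^{\mathbf{k}}$ receives a contribution from a single $\alpha_K$; and $\alpha_0=0$ by construction.  The resulting $X^y:=\tw_{\alpha}(X)$ is then the desired object of $\mathcal{Y}_{n,w}$.

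The step I expect to be most delicate is the cocycle check for $\beta_K$: it rests crucially on $\alpha_{<K}$ being a genuine bimodule endomorphism, so that it supercommutes with both left and right polynomial multiplication.  The other point to track is the index shift aligning the obstruction weight $Q^{2K}T^{2-2K}$ with the hypothesized ext-vanishing range $Q^{2+2K'}T^{-2K'}$ ($K'\geq 1$): the first hypothesis handles the base case $K=1$, and the ext-vanishing hypothesis takes over for all $K\geq 2$.  Beyond these two points, the construction is a routine induction on $y$-degree.
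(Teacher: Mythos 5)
Your proof is correct and follows essentially the same approach as the paper: a $y$-adic (Maurer--Cartan) obstruction argument, decomposing $\alpha$ by $y$-degree, using the hypothesized null-homotopies at the base step $K=1$ and the ext-vanishing hypothesis (after the weight bookkeeping $Q^{2K}T^{2-2K}=Q^{2+2(K-1)}T^{-2(K-1)}$) for $K\geq 2$. You supply a clean justification of the cocycle condition that the paper leaves to the reader; the only small imprecision is citing $\eta_j$ from Lemma~\ref{lem: dot sliding}, which applies to Rouquier complexes -- for a general $X$ one simply uses the null-homotopies posited in the hypothesis.
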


\begin{proof}
For the reader's convenience, we sketch the proof and refer to \cite{GH} for more details. We write
$$
\alpha=\sum_{k=1}^{\infty}\alpha_k
\ , \quad \text{where}\quad \ \alpha_k=\sum_{k_1+\ldots+k_n=k}\alpha_{k_1,\ldots,k_n}y_1^{k_1}\cdots y_n^{k_n}.
$$
Since $\alpha$ has weight $T$, all coefficients $\alpha_{k_1,\ldots,k_n}$ of $\alpha_k$ have weight $Q^{2k}T^{1-2k}$.
The equation  \eqref{eq:y curvature} unpacks to an infinite sequence of equations:
\begin{align*}
[d,\alpha_1]&=\sum_{i=1}^n(x_i-x_{w\inv(i)}')y_i\\
[d,\alpha_k]+\sum_{i+j=k}\alpha_i\alpha_j&=0,\ k>1.
\end{align*}
For the first equation, we can use null-homotopies for $x_i-x_{w\inv(i)}'$ as coefficients of $\alpha_1$. For $k>1$, we construct  $\alpha_k$ by induction, assume that we have constructed $\alpha_1,\ldots,\alpha_{k-1}$. One can check that the sum $\sum_{i+j=k}\alpha_i\alpha_j$ is closed under $d$, and to construct $\alpha_k$ it is sufficient to prove that it is null-homotopic.

Observe that for $i+j=k$ all coefficients in $\alpha_i\alpha_j$ have weights 
$$
(Q^{2i}T^{1-2i})\cdot (Q^{2j}T^{1-2j})=Q^{2k}T^{2-2k}=Q^{2+2(k-1)}T^{-2(k-1)}. 
$$
By our assumption, this implies that $\sum_{i+j=k}\alpha_i\alpha_j$ is null-homotopic, and the result follows.
\end{proof}

For the uniqueness of lifts of $X$ the following is useful.

\begin{lemma}\label{lemma:lifting maps}
Suppose we are given complexes $X,Y\in \Ch(\SBim_n)$ and lifts $X^y,Y^y\in \mathcal{Y}_{n,w}$, and let $f\colon X\to Y$ be a degree zero chain map.  Suppose that we have the ext-vanishing condition that any closed element of $\Hom(X,Y)$ of weight $Q^{2K}T^{1-2K}$ (with $K\in \Z_{\geq 1}$) is null-homotopic. Then there exists a lift of $f$ to a degree zero chain map $f^y\colon X^y\to Y^y$.
\end{lemma}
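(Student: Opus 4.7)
The plan is to construct $f^y$ as a formal $\yy$-power series by obstruction theory, closely mirroring the existence argument of Lemma~\ref{lemma:obstructions}. Using the normalization described after Definition~\ref{def:yified cat}, I may assume that $X^y = \tw_{\alpha_X}(X)$ and $Y^y = \tw_{\alpha_Y}(Y)$ are presented with $(\alpha_X)_0 = (\alpha_Y)_0 = 0$, so that the original $d_X, d_Y$ are the $\yy$-degree zero parts of the total differentials $D = d + \alpha$. Writing $f^y = \sum_{k \geq 0} f_k$ with $f_k$ homogeneous of $\yy$-degree $k$ and $f_0 := f$, the requirement $\wt(f^y) = Q^0 T^0$ forces each coefficient of $y_1^{k_1} \cdots y_n^{k_n}$ in $f_k$ to have weight $Q^{2k} T^{-2k}$ in $\Hom(X,Y)$, where $k = k_1 + \cdots + k_n$.

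The chain-map condition $[D, f^y] = [d, f^y] + \alpha_Y \circ f^y - f^y \circ \alpha_X = 0$ decomposes by total $\yy$-degree into $[d, f_0] = 0$ (given) and, for $k \geq 1$,
\[
[d, f_k] = -h_k, \qquad h_k := \sum_{\substack{i + j = k \\ i \geq 1}} \bigl((\alpha_Y)_i f_j - f_j (\alpha_X)_i\bigr).
\]
I would construct the $f_k$ by induction on $k$. At stage $k$, the right-hand side $h_k$ is already determined by $f_0, \ldots, f_{k-1}$, and it suffices to produce $f_k$ with $[d, f_k] = -h_k$.

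The key verification is that $h_k$ is $d$-closed, so that the ext-vanishing hypothesis can be applied. For this, the curvature equation \eqref{eq:y curvature} on both $X^y$ and $Y^y$ gives $D^2 = \kappa \cdot \Id$ with $\kappa := \sum_{i=1}^n (x_i - x_{w^{-1}(i)}') y_i$. Since $\kappa$ is given by left and right multiplication by polynomials in $\xx, \xx', \yy$, it commutes with any $R$-bilinear map, so $[D, [D, g]] = [\kappa, g] = 0$ for any $g \in \Hom(X,Y) \cotimes \k[\yy]$ of even $T$-degree. Applying this identity to the partial sum $f^{(k-1)} := \sum_{j < k} f_j$ and extracting the $\yy$-degree $k$ component of $[D, [D, f^{(k-1)}]] = 0$ yields $[d, h_k] = 0$.

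Each monomial coefficient of $h_k$ is therefore a closed element of $\Hom(X,Y)$ of weight $Q^{2k} T^{1-2k}$ with $k \geq 1$, so by the ext-vanishing hypothesis it is null-homotopic. Choosing primitives monomial-wise assembles $f_k$ with $[d, f_k] = -h_k$, closing the induction. The main obstacle in the argument is the closedness $[d, h_k] = 0$, which rests on the centrality of $D^2$ on the $R$-bilinear hom complex and on careful sign and $\yy$-degree bookkeeping; once granted, the null-homotopy step is a direct application of the hypothesis.
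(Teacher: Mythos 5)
Your proposal is correct and follows the same obstruction-theoretic route that the paper sketches for the companion existence result, Lemma~\ref{lemma:obstructions} (and which \cite{GH} uses): decompose by $\yy$-degree, show the degree-$k$ obstruction $h_k$ is $d$-closed, and kill it with the ext-vanishing hypothesis. The weight bookkeeping is right: each $\alpha_i$ contributes coefficients of weight $Q^{2i}T^{1-2i}$, and the partial solution $f_j$ contributes $Q^{2j}T^{-2j}$, so the monomial coefficients of $h_k$ land in weight $Q^{2k}T^{1-2k}$, exactly where the hypothesis applies.

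Two small points worth tightening. First, the identity $[D,[D,g]]=[D^2,g]$ is a general super-Jacobi computation and holds for $g$ of any $T$-parity; the qualifier ``of even $T$-degree'' in your statement is unnecessary (and could mislead a reader into thinking parity matters there). Second, your extraction of $[d,h_k]=0$ from the $\yy$-degree-$k$ component of $[D,[D,f^{(k-1)}]]=0$ silently invokes the inductive hypothesis that the components $([D,f^{(k-1)}])_j$ vanish for $j<k$; without that, the $\alpha_{i}$-terms for $i\geq 1$ would contribute to the degree-$k$ component as well. It is worth saying this explicitly, since it is precisely where the induction is used. Neither issue affects the validity of the argument.
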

One can also formulate a statement on the uniqueness of $f^y$ up to homotopy, but we will not need it. 

\begin{lemma}\label{lemma:lift of equiv}
If $X\in \Ch(\SBim_n)$ is contractible, then so is $X^y$ for any lift $X^y\in \mathcal{Y}_{n,w}$.  If $\phi\colon X\to X'$ is a homotopy equivalence then any lift $\phi^y\colon X^y\to (X')^y$ is a homotopy equivalence in $\mathcal{Y}_{n,w}$.
\end{lemma}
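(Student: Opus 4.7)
The plan is to handle the two assertions in order: contractibility first, then homotopy equivalence as a corollary via a mapping cone.

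For the first part, it suffices to prove that the complex $\End_{\mathcal{Y}_{n,w}}(X^y)=\End(X)\cotimes\k[\yy]$ equipped with the twisted differential $[D,-]$, where $D=d_X+\alpha$, is acyclic; then $\id_{X^y}$, which is closed, must be exact, yielding a contracting homotopy. I would prove acyclicity by filtering by $y$-degree: let $F^p$ consist of those elements $\sum_{\mathbf{k}} f_{\mathbf{k}}y^{\mathbf{k}}$ with $f_{\mathbf{k}}=0$ for $|\mathbf{k}|<p$. Since $\alpha$ has $y$-degree $\geq 1$, the differential $[D,-]$ preserves the filtration and the associated graded is $(\End(X)\otimes\k[\yy],[d_X,-])$. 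Contractibility of $X$ provides $h$ with $[d_X,h]=\id_X$, from which a standard computation shows that every closed $f\in \End(X)$ satisfies $f=\pm[d_X,fh]$; hence $(\End(X),[d_X,-])$ is acyclic, and so is each successive quotient $F^p/F^{p+1}$. Induction on $N$ via the short exact sequence $0\to F^N/F^{N+1}\to F^0/F^{N+1}\to F^0/F^N\to 0$ shows $F^0/F^N$ is acyclic for all $N$. Completeness of the $\cotimes$ construction identifies $F^0=\varprojlim_N F^0/F^N$; since the transition maps are surjective, Mittag-Leffler applies and the standard $\lim^1$ exact sequence yields $H^\ast(F^0)=0$, completing this step.

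For the second part, form the mapping cone $\Cone(\phi^y)$ with underlying complex $X[1]\oplus X'$ and total differential $D_{\Cone}=\bigl(\begin{smallmatrix} -D_X & 0 \\ \phi^y & D_{X'} \end{smallmatrix}\bigr)$. Using $D_X^2=D_{X'}^2=\sum_i(x_i-x'_{w\inv(i)})y_i\cdot\id$ and the chain-map identity $D_{X'}\phi^y-\phi^y D_X=0$, one computes $D_{\Cone}^2=\sum_i(x_i-x'_{w\inv(i)})y_i\cdot\id_{\Cone}$, so $\Cone(\phi^y)\in \mathcal{Y}_{n,w}$ genuinely lifts $\Cone(\phi)$. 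Since $\phi$ is a homotopy equivalence, $\Cone(\phi)$ is contractible, so by part one $\Cone(\phi^y)$ is contractible in $\mathcal{Y}_{n,w}$. Writing a contraction as $H=\bigl(\begin{smallmatrix} a & b \\ c & e \end{smallmatrix}\bigr)$ and unpacking $[D_{\Cone},H]=\id$ into matrix entries produces a chain map $b\colon (X')^y\to X^y$ together with $a$ and $e$ witnessing $b$ as a two-sided homotopy inverse to $\phi^y$ in $\mathcal{Y}_{n,w}$.

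The main subtle step will be the $\lim^1$ argument in the first part. While the acyclicity of each associated graded piece is immediate from contractibility of $X$, propagating acyclicity through the completed tensor product is what actually requires care; the completeness of the filtration built into the definition of $\cotimes$ is precisely the ingredient that makes the spectral-sequence argument converge. Everything else reduces to formal manipulation with mapping cones and matrix identities.
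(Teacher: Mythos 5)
Your proof is correct and complete. The paper itself does not prove this lemma but cites \cite[Section 2]{GH}, so there is no internal proof to compare against; your argument is however a valid and self-contained one. A few remarks: the key filtration by $y$-degree is the natural one, and you are right that the subtle point is passing from acyclicity of the truncated quotients $F^0/F^N$ to acyclicity of the completion. The $\varprojlim$/Mittag-Leffler step handles this correctly, provided one works weight-by-weight (where the completed tensor product really is the inverse limit of the truncations, and where each associated graded piece $F^p/F^{p+1}$ is a finite direct sum of copies of $\End(X)$). The computation that $f = (-1)^{|f|}[d_X, fh]$ for $[d_X, f]=0$ indeed establishes acyclicity of $(\End(X), [d_X, -])$ when $X$ is contractible. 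Your mapping-cone reduction for the second statement is standard and your unpacking of the matrix entries of the contraction $H$ to extract a two-sided homotopy inverse $b$ is accurate. In short, the argument succeeds, and the care you flag around the completion is exactly where care was needed.
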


Combining Lemmas \ref{lemma:lifting maps} and \ref{lemma:lift of equiv} gives the following.

\begin{lemma}\label{lemma:uniqueness of lifts}
Let $X\in \Ch(\SBim_n)$ be given, and assume that we have the ext-vanishing condition that any closed element of $\End(X)$ of weight $Q^{2K}T^{1-2K}$ (with $K\in \Z_{\geq 1}$) is null-homotopic. Then any $X^y\in \mathcal{Y}_{n,w}$ lifting $X$ is unique up to homotopy equivalence.
\end{lemma}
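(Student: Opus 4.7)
The proof is essentially a direct combination of the two preceding lemmas applied to the identity map, so the plan is short.

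First, let $X_1^y, X_2^y\in \mathcal{Y}_{n,w}$ be two lifts of the same underlying complex $X\in \Ch(\SBim_n)$. I want to produce a homotopy equivalence between them in $\mathcal{Y}_{n,w}$. The natural candidate is a lift of the identity map $\id_X\colon X\to X$.

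Second, apply Lemma \ref{lemma:lifting maps} with source $X_1^y$ and target $X_2^y$ (both lifting $X$) and with $f=\id_X$. The hypothesis of that lemma requires that any closed element of $\Hom(X,X)=\End(X)$ of weight $Q^{2K}T^{1-2K}$ with $K\in\Z_{\geq 1}$ be null-homotopic, which is exactly the ext-vanishing hypothesis assumed here. Thus we obtain a degree zero chain map
\[
\id^y\colon X_1^y \longrightarrow X_2^y
\]
in $\mathcal{Y}_{n,w}$ whose reduction modulo the ideal $(y_1,\ldots,y_n)$ recovers $\id_X$.

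Third, since $\id_X$ is manifestly a homotopy equivalence in $\Ch(\SBim_n)$, Lemma \ref{lemma:lift of equiv} applies and guarantees that $\id^y$ is a homotopy equivalence in $\mathcal{Y}_{n,w}$. Hence $X_1^y\simeq X_2^y$, as desired.

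There is no real obstacle: the content is entirely absorbed by the two cited lemmas, and the only thing to check is that the ext-vanishing condition stated here matches the hypothesis of Lemma \ref{lemma:lifting maps} in the special case $Y=X$, which is immediate. One small sanity check is that one should verify the construction of $\id^y$ does not inadvertently require uniqueness of lifts of maps (which the authors explicitly do not assert); fortunately only the existence statement is needed, so no circularity arises.
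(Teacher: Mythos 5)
Your proof is correct and follows exactly the same route as the paper's: apply Lemma \ref{lemma:lifting maps} to lift $\id_X$ to a closed morphism between the two lifts, then invoke Lemma \ref{lemma:lift of equiv} to conclude it is a homotopy equivalence.
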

\begin{proof}
Let $\tw_{\alpha}(X)$ and $\tw_{\alpha'}(X)$ be two lifts of $X$ to $\mathcal{Y}_{n,w}$. The ext-vanishing condition tells us that $\id_X$ has a lift to a closed morphism  $\tw_{\alpha}(X)\to \tw_{\alpha'}(X)$, by Lemma \ref{lemma:lifting maps}. This lift is a homotopy equivalence by Lemma \ref{lemma:lift of equiv}.
\end{proof}

\begin{definition}\label{def:yified Rouquier}
Given a braid $\b\in \Br_n$ with underlying permutation $w\in S_n$, we let $F^y(\b)\in \mathcal{Y}_{n,w}$ be defined by
\[
F^y(\b):=\tw_{\sum_i \eta_i y_i}(F(\b))
\]
where the $\eta_i$ are as in Lemma \ref{lem: dot sliding}. 
\end{definition}

Note that by Lemma \ref{lem: dot sliding} we have
$$
\left(d+\sum_i \eta_i y_i\right)^2=d^2+\sum_{i}[d,\eta_i]y_i+\left(\sum_i \eta_i y_i\right)^2=\sum_{i=1}^n(x_{i}-x_{w\inv(i)}')y_i.
$$
The last term vanishes since $\eta_i$ square to zero and anticommute.

\begin{example}
For the elementary braid $\sigma_i,\sigma_i\inv$  the corresponding deformed Rouquier complex is visualized as in the diagrams
$$
F^y(\sigma_i)=\left[ 
\begin{tikzcd}
Q^{-1}\underline{\yBi}\arrow[bend left]{r}{b_i} & Q^{-1}\yR \arrow[bend left,pos=0.55]{l}{b^*_i(y_i-y_{i+1})}
\end{tikzcd}\right],\ 
F^y(\sigma_i\inv)=\left[ 
\begin{tikzcd}
Q\yR\arrow[bend left,pos=0.55]{r}{b^*_i} & Q^{-1}\underline{\yBi} \arrow[bend left,pos=0.4]{l}{b_i(y_i-y_{i+1})}
\end{tikzcd}\right].
$$
The indicated arrows define the total differential $D$.  Note that the cohomological degree of the $y_i$ is 2, which gives the leftward arrows cohomological degree $+1$, rather than the degree $-1$ which they would otherwise have.
\end{example}

There is a tensor product operation $\mathcal{Y}_{n,w}\star \mathcal{Y}_{n,v}\to \mathcal{Y}_{n,wv}$, defined as follows:
\[
\tw_{\alpha}(X)\star \tw_{\alpha'}(X') := \tw_{\gamma}(X\star X'),
\]
where $\gamma=\alpha\star \id_{X'} + \id_X\star \alpha'|_{y_i\mapsto y_{w(i)}}$.

The properties of $y$-ified Rouquier complexes $F^y(\beta)$ can be summarized as follows.

\begin{theorem}\cite{GH}
a) Each Rouquier complex admits a unique-up-to-homotopy deformation to an object of the appropriate category $\mathcal{Y}_{n,w}$.

b) For any braids $\b,\b'\in \Br_n$ we have $F^y(\b)\star F^y(\b')\simeq F^y(\b\b')$.  In particular the complexes $F^y(\sigma_i^\pm)$ satisfy the braid relations up to homotopy.
\end{theorem}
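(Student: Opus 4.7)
The strategy is to reduce both parts to the obstruction/uniqueness lemmas (\ref{lemma:obstructions}, \ref{lemma:lifting maps}, \ref{lemma:lift of equiv}, \ref{lemma:uniqueness of lifts}) applied to $X=F(\beta)$, the key technical input being an ext-vanishing property of $\End(F(\beta))$. Existence in (a) is essentially immediate from Definition~\ref{def:yified Rouquier}: with $\alpha=\sum_i \eta_i y_i$ and the $\eta_i$ of Lemma~\ref{lem: dot sliding}, the displayed computation after the definition gives
\[
\left(d+\sum_i \eta_i y_i\right)^2 \;=\; \sum_i[d,\eta_i]y_i+\Bigl(\sum_i \eta_i y_i\Bigr)^2 \;=\; \sum_{i=1}^n(x_i-x'_{w\inv(i)})y_i,
\]
which is \eqref{eq:y curvature}; the square term vanishes because $\eta_i^2=0$ and $[\eta_i,\eta_j]=0$.

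The crux is the ext-vanishing step needed to invoke Lemma~\ref{lemma:uniqueness of lifts}: any closed $f\in \End(F(\beta))$ of weight $Q^{2K}T^{1-2K}$ with $K\geq 1$ should be null-homotopic. I would prove this by induction on the braid word length. The base case $\beta=1$ reduces to showing the graded ring $\End_{\SBim_n}(R)=R$ has no elements in those weights (true since $\wt$ of an element of $R$ is a power of $q=Q^2$, with $T$-degree zero). For the inductive step, tensor product with $F^\pm_i$ shifts the $T$-degree in a controlled way, and one uses that $\Hom_{\SBim_n}(B_w,B_v)$ is concentrated in weights of the form $Q^{\ge 0}T^0$ (a standard parity/freeness fact about Soergel bimodules). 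This lets one push any hypothetical closed element of forbidden weight to lie in the image of the differential, giving the null-homotopy. An entirely analogous argument handles the weights $Q^{2+2K}T^{-2K}$ needed for Lemma~\ref{lemma:obstructions}, should one prefer to reprove existence that way.

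Part (b) then follows from uniqueness in (a). Given $\beta\in \Br_n$ and $\beta'\in \Br_n$ with permutations $w,v$, the tensor product $F^y(\beta)\star F^y(\beta')$, formed via the operation $\mathcal{Y}_{n,w}\star \mathcal{Y}_{n,v}\to \mathcal{Y}_{n,wv}$ described before the theorem, is an object of $\mathcal{Y}_{n,wv}$ whose underlying Rouquier complex is $F(\beta)\star F(\beta')$, which is homotopy equivalent to $F(\beta\beta')$ by the classical braid multiplicativity. Lemma~\ref{lemma:lifting maps} (with $X=F(\beta\beta')$ and $Y=F(\beta)\star F(\beta')$, using the ext-vanishing of the preceding paragraph applied to the appropriate Hom space) produces a chain map $F^y(\beta\beta')\to F^y(\beta)\star F^y(\beta')$ lifting the classical equivalence, and Lemma~\ref{lemma:lift of equiv} promotes it to a homotopy equivalence in $\mathcal{Y}_{n,wv}$. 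The braid relations for $F^y(\sigma_i^{\pm})$ follow as a special case.

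The main obstacle is the ext-vanishing claim. Although it is natural and essentially folklore for Rouquier complexes in characteristic zero, a careful proof requires either the Elias--Williamson structure theorem for morphism spaces in $\SBim_n$ (parity/freeness of $\Hom_{\SBim_n}(B_w,B_v)$ as a bigraded $R$-module) or an explicit inductive computation on the Bott--Samelson resolution. Once that parity statement is in hand, the grading-shift bookkeeping used in both Lemma~\ref{lemma:obstructions} and Lemma~\ref{lemma:uniqueness of lifts} makes the rest of the proof essentially formal.
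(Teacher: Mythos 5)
Your identification of the crux — ext-vanishing for $\End(F(\beta))$ — is correct, and part~(b) follows as you say. But your proposed route to the ext-vanishing is considerably harder than the paper's, and as written it has a gap. You propose to prove it by induction on the braid word length, using parity of Hom spaces between Soergel bimodules. The problem is that parity of $\Hom_{\SBim_n}(B_w,B_v)$ (a statement about bimodule morphisms in $T$-degree zero) does not directly control the homology of the hom \emph{complex} $\End(F(\beta))$; after tensoring with $F_i^{\pm}$ one gets a mapping cone and the hom complex acquires a filtration, and propagating the vanishing statement through that filtration requires a genuine spectral-sequence argument that you have not supplied. Moreover, your assertion that $\Hom_{\SBim_n}(B_w,B_v)$ sits in weights $Q^{\ge 0}T^0$ is itself a nontrivial positivity fact (closer in spirit to the Soergel/Elias--Williamson degree bound than to a formal observation), so the route you sketch would import heavy machinery to establish something the paper gets for free.

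The paper's argument is a one-liner: Rouquier complexes are invertible in $K^b(\SBim_n)$, so $\End(F(\beta)) \simeq \End(R) = R$ as a $\Z_Q\times\Z_T$-graded complex up to homotopy. Since $R$ is concentrated in $T$-degree zero, every closed endomorphism of $F(\beta)$ of nonzero $T$-degree is null-homotopic, and both the existence (Lemma~\ref{lemma:obstructions}) and uniqueness (Lemma~\ref{lemma:uniqueness of lifts}) hypotheses are satisfied. The invertibility observation is exactly the ingredient that lets one bypass any structure theory of Soergel bimodules. I would recommend replacing the inductive argument with this invertibility reduction; your treatment of existence via Definition~\ref{def:yified Rouquier} and Lemma~\ref{lem: dot sliding}, and of part~(b) via Lemmas~\ref{lemma:lifting maps} and~\ref{lemma:lift of equiv}, is then entirely adequate.
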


We give a proof for the reader's convenience.

\begin{proof}
For part (a), observe that $F(\beta)$ is invertible hence $\End(F(\beta))=\End(R)=R$, and all closed endomorphisms of nonzero $T$-degree are null-homotopic. Now the result follows from Lemmas \ref{lemma:obstructions} and \ref{lemma:uniqueness of lifts}. Part (b) follows from (a). 
\end{proof}

\begin{definition}
For each reflection $r=(i\ j)$ in $S_n$ we define the Koszul $y$-ification
$$
K_r=K_{ij}=\left[
\begin{tikzcd}
Q^{-1}R\arrow[bend left, pos=0.4]{r}{x_i-x_j} & Q\underline{R} \arrow[bend left]{l}{y_i-y_j}
\end{tikzcd}
\right]\in \mathcal{Y}_{n,r}.
$$
\end{definition}

The following facts were proved in \cite{GH,GHM}.

\begin{lemma}(\cite[Proposition 4.3]{GH})
\label{lem: skein}
a) There is a (unique up to homotopy) degree zero chain map $\psi_i:\yTi\to \yTii$ such that $\Cone(\psi_i)\simeq K_{i,i+1}$. As a consequence, we have an exact triangle in $\mathcal{Y}_{n,s_i}$:
$$
\yTi\xrightarrow{\psi_i} \yTii\to K_{i,i+1}\to \yTi[1]
$$

b) There is a (unique up to homotopy) chain map $\a_i:Q^{-2}T^2\yTii\to \yTi$ such that $\Cone(\a_i)$ is a complex build of two copies of $B_i$. 

c) We have $\a_i\circ \psi_i=\psi_i\circ \a_i=y_i-y_{i+1}$.
\end{lemma}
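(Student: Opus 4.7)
My plan is to prove parts (a) and (b) by explicit construction followed by Gaussian elimination, and to reduce part (c) to an endomorphism-space computation.

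For part (a), first construct $\psi_i = \id_{B_i} + \psi_i^{(1)}$, where the main component $\id_{B_i}$ identifies the two copies of $Q^{-1}B_i$ in homological degree zero of $F^y(\sigma_i)$ and $F^y(\sigma_i^{-1})$, and the correction $\psi_i^{(1)}\colon Q^{-1}R \to QR$ is multiplication by $(y_i - y_{i+1})$ from the source's homological degree $+1$ piece to the target's homological degree $-1$ piece (the $y$-factor absorbs the cohomological shift of $-2$). A direct check on the two types of generators of $F^y(\sigma_i)$ shows that this correction exactly cancels the failure of $\id_{B_i}$ to commute with the $y$-deformed backward arrows. Existence and uniqueness up to homotopy then follow from Lemma~\ref{lemma:lifting maps} applied to the classical skein map $F(\sigma_i) \to F(\sigma_i^{-1})$, which is unique up to scalar. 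Next, compute $\Cone(\psi_i)$ as a complex with pieces $QR \oplus Q^{-1}B_i$ in homological degree $-1$ and $Q^{-1}B_i \oplus Q^{-1}R$ in degree $0$; the identity component of $\psi_i$ enables Gaussian elimination of the two copies of $Q^{-1}B_i$, and the induced forward and backward differentials on the surviving $R$-pieces are $b_i \circ b_i^* = x_i - x_{i+1}$ and $y_i - y_{i+1}$ respectively, identifying the reduced cone with $K_{i,i+1}$.

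Part (b) is analogous: construct $\alpha_i \colon Q^{-2}T^2\,F^y(\sigma_i^{-1}) \to F^y(\sigma_i)$ with main component the identity between the copies of $R$ in appropriate positions of source and target, plus a $y$-correction between the two copies of $B_i$ forced by the chain map condition. Gaussian elimination on $\Cone(\alpha_i)$ via this identity component cancels the $R$-pieces, leaving a two-step complex built from the two copies of $B_i$, as asserted.

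For part (c), both compositions $\alpha_i \circ \psi_i$ and $\psi_i \circ \alpha_i$ are self-maps of $F^y(\sigma_i)$ and $F^y(\sigma_i^{-1})$ respectively, of weight $Q^{-2}T^2$. Since $F^y(\sigma_i^\pm)$ are invertible in $\mathcal{Y}_{n,s_i}$, their endomorphism rings agree up to homotopy with $R\cotimes\k[\mathbf{y}]$; the weight-$Q^{-2}T^2$ sector is the $\k$-span of $y_1,\ldots,y_n$. Setting $y_i = y_{i+1}$ collapses the $y$-deformations back to the classical Rouquier complexes, and a standard computation in the non-$y$-ified skein triangle shows the composition is null-homotopic there; this forces the $y$-ified composition to lie in the ideal $(y_i - y_{i+1})$, hence to be a scalar multiple of $y_i - y_{i+1}$. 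Direct evaluation on a single generator (e.g.~the generator of $B_i$ for $\alpha_i\circ\psi_i$) pins the scalar at $1$. The principal obstacle is tracking the interplay between the main components and the $y$-corrections in $\psi_i$ and $\alpha_i$: the Gaussian eliminations in (a) and (b) must produce precisely the asserted complexes (which may require absorbing higher-order terms from the perturbation series for the deformed differential), and the cross-term cancellations in (c) must combine to the correct scalar. Once the $y$-corrections are fixed by the chain map condition, the remaining work is bookkeeping.
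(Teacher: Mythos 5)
Your construction of $\psi_i$ and $\alpha_i$ by explicit components (identity on the matching summand plus a $(y_i - y_{i+1})$-correction forced by the chain-map condition) reproduces exactly the diagrams displayed in the paper immediately after the lemma statement, and the Gaussian elimination check in parts (a) and (b) is the right verification: cancelling the identity $B_i \to B_i$ leaves the two $R$'s with forward arrow $b_i b_i^* = x_i - x_{i+1}$ and backward arrow the direct $y_i - y_{i+1}$ (no zigzag correction appears because there is no arrow from the surviving $R$ in degree $0$ into $B_i$ in degree $0$, nor from $B_i$ in degree $-1$ to $R$ in degree $-1$). So the core of the proposal matches the source.

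Two caveats are worth recording. First, you invoke Lemma~\ref{lemma:lifting maps} for \emph{both} existence and uniqueness of $\psi_i$; that lemma only asserts existence of a lift, and the paper explicitly remarks that the uniqueness statement ``can also be formulated but will not be needed.'' Since you construct $\psi_i$ and $\alpha_i$ by hand anyway, existence is not at issue; uniqueness up to homotopy requires a short separate Ext-vanishing argument (for $\Hom(F(\sigma_i), F(\sigma_i^{-1}))$ in weight $Q^{2K}T^{-2K}$, $K \geq 1$) which you should not conflate with the cited lemma. Second, for part (c) the indirect argument — locate the composite in the weight-$t$ slice of $\End(F^y(\sigma_i))\simeq R\otimes\k[\yy]$ and pin it down by specializing $y_i=y_{i+1}$ — is valid but unnecessarily roundabout given that you already have the explicit components of $\psi_i$ and $\alpha_i$ from parts (a) and (b): on $B_i$ the composite $\alpha_i\circ\psi_i$ is literally $1\cdot(y_i-y_{i+1})$ and on $R$ it is $(y_i-y_{i+1})\cdot 1$, so $\alpha_i\circ\psi_i = (y_i - y_{i+1})\cdot\id$ on the nose (not just up to homotopy), and similarly for $\psi_i\circ\alpha_i$. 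The indirect argument also only delivers the identity up to homotopy, which is weaker than the on-the-nose statement the explicit maps give.
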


Explicitly, the maps $\psi_i$ and $\a_i$ are given by the following diagram:
$$
\psi_i:
\begin{tikzcd}
Q^{-1}\underline{\yBi}\arrow[bend left]{rr}{b_i} \arrow[swap,pos=0.2]{ddrr}{1} & & Q^{-1}\yR \arrow{ll}{b^*_i(y_i-y_{i+1})} \arrow[pos=0.2]{ddll}{y_i-y_{i+1}}\\ 
 & & \\
Q\yR\arrow{rr}{b^*_i} & & Q^{-1}\underline{\yBi} \arrow[bend left,pos=0.45]{ll}{b_i(y_i-y_{i+1})}
\end{tikzcd}
\quad
\a_i:
\begin{tikzcd}
Q^{-1}\underline{\yBi}\arrow[bend left]{rr}{b_i}  & & Q^{-1}\yR \arrow{ll}{b^*_i(y_i-y_{i+1})} \\ 
 & & \\
Q\yR\arrow{rr}{b^*_i} \arrow[pos=0.2]{uurr}{1}& & Q^{-1}\underline{\yBi} \arrow[bend left,pos=0.45]{ll}{b_i(y_i-y_{i+1})} \arrow[swap,pos=0.2]{uull}{y_i-y_{i+1}}
\end{tikzcd}
$$

\begin{definition}
Let $A_{ij}=\sigma_{j-1}\cdots\sigma_{i+1}\sigma_i^2\sigma_{i+1}\inv\cdots \sigma_{j-1}\inv$ denote Artin's generator of the pure braid group. 
 \[
 A_{ij} \ =\  
 \begin{minipage}{1.5in}
\labellist
\small
\pinlabel $i$ at 15 -7
\pinlabel $j$ at 68 -7
\endlabellist
\includegraphics[scale=.8]{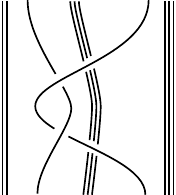}
\end{minipage}
 \]
 \vskip7pt
 \end{definition}

\begin{lemma}
\label{lem: rightmost R y-ified}
Let $\beta=A_{i_1,j_1}\cdots A_{i_k,j_k}$ be a positive pure braid presented as a product of $A_{ij}$. Then there exists morphisms in $\mathcal{Y}_{n,1}$:
$$
\psi_{\beta}: F^y(\beta)\to R,\ \a_{\beta}: Q^{-\ell(\beta)}T^{\ell(\beta)}R\to F^y(\beta)
$$
such that 
$$
\psi_{\beta}\circ \rho_{\beta}=\rho_{\beta}\circ \psi_{\beta}=(y_{i_1}-y_{j_1})\cdots (y_{i_k}-y_{j_k}).
$$
Furthermore, $\a_{\beta}$ deforms the map from \eqref{eq: rightmost R}.
\end{lemma}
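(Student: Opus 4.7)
My plan is to induct on the number $k$ of factors in the decomposition $\beta = A_{i_1,j_1}\cdots A_{i_k,j_k}$, constructing $\a_\beta$ and $\psi_\beta$ as tensor products of per-factor maps obtained from Lemma \ref{lem: skein}. The base case $k=0$ is trivial. For $k=1$ I will first handle $\beta = A_{i,i+1} = \sigma_i^2$ directly by writing $F^y(\sigma_i^2) \simeq F^y(\sigma_i)\star F^y(\sigma_i)$ and defining
\[
\a_{\sigma_i^2}: Q^{-2}T^2 R \simeq Q^{-2}T^2 F^y(\sigma_i^{-1}) \star F^y(\sigma_i) \xrightarrow{\a_i \star \id} F^y(\sigma_i) \star F^y(\sigma_i),
\]
and analogously $\psi_{\sigma_i^2}$ using $\psi_i$ on the right factor together with the equivalence $F^y(\sigma_i)\star F^y(\sigma_i^{-1})\simeq R$. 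By bilinearity of $\star$, both compositions reduce to $\psi_i\circ\a_i = y_i - y_{i+1}$ from Lemma \ref{lem: skein}(c).

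For general $A_{ij}$, I will write $A_{ij}=\gamma\sigma_i^2\gamma^{-1}$ with $\gamma=\sigma_{j-1}\cdots\sigma_{i+1}$, and set $\a_{A_{ij}}:=\id_{F^y(\gamma)}\star\a_{\sigma_i^2}\star\id_{F^y(\gamma^{-1})}$, using the equivalence $F^y(\gamma)\star R\star F^y(\gamma^{-1})\simeq R$ coming from invertibility of Rouquier complexes; likewise for $\psi_{A_{ij}}$. The key computation is that the middle scalar $y_i-y_{i+1}$ is transported to $y_i-y_j$ under this conjugation: the tensor-product formula of Definition \ref{def:yified cat} substitutes $y_m\mapsto y_{w_\gamma(m)}$ in the $y$-variables of the middle factor, and the underlying permutation $w_\gamma$ fixes $i$ while sending $i+1$ to $j$.

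For the inductive step with $k>1$, I will define $\a_\beta$ and $\psi_\beta$ as the tensor products of the per-factor maps, so that each composition factors through the tensor product of the individual scalar compositions. Since each $A_{i_\ell,j_\ell}$ has trivial underlying permutation, no further $y$-substitutions intervene between factors, and one obtains $\prod_\ell(y_{i_\ell}-y_{j_\ell})$ as claimed. For the ``deforms'' statement, I will observe that at $\yy=0$ each $\a_i$ degenerates to the identity map into the rightmost $R$-summand of $F(\sigma_i)$; tensoring these together matches the rightmost-$R$ inclusion of $F(\beta)$ from \eqref{eq: rightmost R}, using that the equivalence $F(\gamma)\star R\star F(\gamma^{-1})\simeq R$ survives the $\yy=0$ specialization.

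The main obstacle is the base case for general $A_{ij}$: carefully tracking the substitution $y_m\mapsto y_{w_\gamma(m)}$ given by the conjugation to confirm the upgrade $y_i-y_{i+1}\rightsquigarrow y_i-y_j$, and verifying compatibility of all grading shifts ($\ell(A_{ij})=2$ in the exponent-sum convention of Example \ref{ex: rightmost R}). A secondary point is that the equivalence $F^y(\gamma)\star R\star F^y(\gamma^{-1})\simeq R$ needs to be canonical enough (up to homotopy) for the resulting maps to be well-defined, which follows from Lemma \ref{lemma:uniqueness of lifts} applied to the invertible complex $F^y(\gamma)$.
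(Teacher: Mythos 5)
Your proof is correct and takes essentially the same route as the paper: handle $\sigma_i^2$ via Lemma \ref{lem: skein}, transport the maps to $A_{ij}$ via conjugation by the invertible $F^y(\gamma)$, and tensor across the factorization of $\beta$. The only difference is one of explicitness — the paper identifies $\Hom$-spaces through the conjugation and simply asserts that the scalar becomes $y_i-y_j$, while you spell out the $y_m\mapsto y_{w_\gamma(m)}$ substitution from the tensor-product formula (and the $\yy=0$ degeneration for the ``deforms'' claim), which is exactly what the paper is implicitly using.
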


\begin{proof}
First, by Lemma \ref{lem: skein} we can construct maps 
$$\psi_{i}: F^y(\sigma_i^2)\to R,\ \a_{i}: Q^{-2}T^{2}R\to F^y(\sigma_i^2)
$$
with the desired properties. Since $A_{ij}$ is conjugate to $\sigma_i^2$, we get 
$$
\Hom_{\mathcal{Y}_{n,1}}\left(F^y(A_{ij}),R\right)=\Hom_{\mathcal{Y}_{n,1}}\left(F^y(\sigma_i^2),R\right),
$$
$$
\Hom_{\mathcal{Y}_{n,1}}\left(R,F^y(A_{ij})\right)=\Hom_{\mathcal{Y}_{n,1}}\left(R,F^y(\sigma_i^2)\right),
$$
and this yields maps
$$\psi_{ij}: F^y(A_{ij})\to R,\ \a_{ij}: Q^{-2}T^{2}R\to F^y(A_{ij})
$$
which compose to $y_i-y_j$. The result follows by multiplying $\psi_{ij}$ and $\a_{ij}.$
 \end{proof}

 \begin{remark}
 The maps $\psi_{\beta}$ and $\rho_{\beta}$ defined above might depend on the choice of factorization of $\beta$ as the product of Artin generators. We plan to study this dependence in the future work, but for the purposes of this paper we simply choose one such factorization. See also Lemma \ref{lem: connecting map Delta} for the case when $\beta$ is the full twist braid.
 \end{remark}

\section{Rasmussen spectral sequences}

In this section we define a family of spectral sequences relating triply graded Khovanov--Rozansky homology to $\mathfrak{gl}(N)$ homology. Our description is slightly different from the original construction of Rasmussen \cite{RasDiff} but equivalent to it, see Remark \ref{rmk: Rasmussen}.  

\subsection{Hoschschild (co)homology}
\label{ss:HH}

We start by reviewing Hochschild homology.   
\begin{definition}\label{def:Delta}
Define the chain complex $\Delta_n(\xx,\xx')$ to have underlying module $\k[\xx,\xx',\ttheta]$  (with conventions as in \eqref{eq:alphabet soup}), and differential
\begin{equation*}
    d_{\HH} = \sum_{i = 1}^n (x_i-x_i') \theta_i
\end{equation*}
We regard $\Delta_n(\xx,\xx')$ as a $\Z_Q\times \Z_A$-graded complex, with gradings as in \eqref{eq:weights of xyt}, the differential $d_{\HH}$ has weight $A=qa$.
\end{definition}

\begin{remark}\label{rmk:Delta_n}
The top monomial in $\Delta_n(\xx,\xx')$ has $\wt(\theta_1\cdots \theta_n)=a^n$.  The dual complex $$\Hom_{\k[\xx,\xx']}\left(\Delta_n(\xx,\xx'),\k[\xx,\xx']\right)$$ is isomorphic to $a^{-n}\Delta_n(\xx,\xx')$.  In turn, this is  the Koszul complex resolving $R=\k[\xx,\xx']/(x_i-x'_i)$ over $\k[\xx,\xx']$.
\end{remark}

\begin{definition}
\label{def: HH}
Let $M\in R\text{-gmod-}R$ be a graded bimodule.  Then we define $\HH(M)$ to be the homology of the complex $\left(M \otimes_{\k[\xx,\xx']} \Delta_n(\xx,\xx'),d_{\HH}\right)$, where we identify $R\otimes R=\k[\xx,\xx']$ as in \S \ref{sec: rouquier}. 
\end{definition}

We will regard $\HH(M)$ as $\Z_Q\times \Z_A$-graded $\k$-module.

\begin{remark}
In light of Remark \ref{rmk:Delta_n}, we have that $\HH(M)$ is the Hochschild cohomology of $M$ or, equivalently, the Hochschild homology of $M$ up to a shift:
$$
\HH(M)=a^n\Tor^{\k[\xx,\xx']}(M,R) = \Ext_{\k[\xx,\xx']}(R,M).
$$

See also \cite[Lemma 3.4]{GHMN}.
\end{remark}

\begin{remark}
\label{rem: HH is a functor}
It follows from the definition that $\HH$ is a functor, that is, for any bimodule morphism $M\to N$ we get a $\k$-linear map $\HH(M)\to \HH(N)$.
\end{remark}

The general properties of the $\Tor$ functor immediately imply the following:

\begin{proposition}
\label{prop: Tor invariance}
Let $F^{\bullet}$ be a free resolution of $M$ over $\k[\xx,\xx']$. Then the homology of $F^{\bullet}\otimes_{\k[\xx,\xx']}R$ is isomorphic to $a^{-n}\HH(M)$.
\end{proposition}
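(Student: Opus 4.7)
The plan is to identify both $a^{-n}\HH(M)$ and $H_\ast(F^\bullet \otimes_{\k[\xx,\xx']} R)$ with the bigraded Tor group $\Tor^{\k[\xx,\xx']}(M,R)$, and then invoke the balance of Tor. The only real content is bookkeeping: keeping track of the $a$-grading shift that appears when one passes between the complex $\Delta_n(\xx,\xx')$ of Definition \ref{def:Delta} and its $\k[\xx,\xx']$-dual.

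First I would unpack the second equality. By Remark \ref{rmk:Delta_n}, the shifted complex $a^{-n}\Delta_n(\xx,\xx')$ is precisely the Koszul resolution of $R = \k[\xx,\xx']/(x_i-x_i'\colon i)$ as a free $\k[\xx,\xx']$-module, the regular sequence being $x_1-x_1',\ldots,x_n-x_n'$. Applying $M\otimes_{\k[\xx,\xx']} -$ to this resolution and taking homology yields, on the one hand, $a^{-n}\HH(M)$ by the very Definition \ref{def: HH} (up to the uniform shift $a^{-n}$), and on the other hand $\Tor^{\k[\xx,\xx']}(M,R)$ by definition of Tor. Thus
$$
a^{-n}\HH(M) \ \cong\ \Tor^{\k[\xx,\xx']}(M,R),
$$
which is also recorded in the remark immediately following Definition \ref{def: HH}.

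Next I would compute the same Tor group using the given resolution $F^\bullet$ of $M$. By the balance of Tor, one also has
$$
\Tor^{\k[\xx,\xx']}(M,R) \ \cong\ H_\ast\bigl(F^\bullet \otimes_{\k[\xx,\xx']} R\bigr).
$$
The standard justification is the bicomplex $F^\bullet \otimes_{\k[\xx,\xx']} a^{-n}\Delta_n(\xx,\xx')$: the two canonical spectral sequences both collapse on the second page (one because $F^\bullet$ is a resolution of $M$ with each $F^i$ free, hence flat; the other because $a^{-n}\Delta_n(\xx,\xx')$ is a resolution of $R$ with each term free), and they converge to the same total homology.

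Combining the two isomorphisms yields $H_\ast(F^\bullet \otimes_{\k[\xx,\xx']} R) \cong a^{-n}\HH(M)$, as claimed. Strictly speaking, there is no substantive obstacle; the only point that warrants care is the grading convention, where the shift $a^{-n}$ enters because $\Delta_n(\xx,\xx')$ is used with top monomial in $A$-degree $n$ rather than in $A$-degree $0$, so that the Koszul resolution of $R$ is $a^{-n}\Delta_n(\xx,\xx')$ rather than $\Delta_n(\xx,\xx')$ itself.
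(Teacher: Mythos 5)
Your proposal is correct and takes the same route the paper implicitly has in mind: the paper simply asserts that "the general properties of the Tor functor immediately imply" the proposition, and your argument (identifying both sides with $\Tor^{\k[\xx,\xx']}(M,R)$ via the Koszul resolution of the diagonal on one side and the given free resolution of $M$ on the other, then invoking balance of Tor) is precisely the standard unpacking of that assertion, with the $a^{-n}$ shift bookkept correctly via Remark \ref{rmk:Delta_n}.
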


Below (in particular, to compare our constructions with \cite{RasDiff}) we will need some explicit models for such resolutions. 

\begin{lemma}
\label{lem: examples resolutions}
a) Let $M$ be a Soergel bimodule. Then $M\otimes_{\k[\xx']}\Delta_n(\xx',\xx'')$ is free a resolution of $M$.

b) Let $M=B_{i_1}\star \cdots \star B_{i_r}$, and let $F_{i}^{\bullet}$ be a free resolution of $B_i$ over $\k[\xx,\xx']$. Then $F_{i_1}^{\bullet}\star \cdots \star F_{i_r}^{\bullet}$ is a free resolution of $M$.
\end{lemma}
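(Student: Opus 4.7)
The plan is to reduce both parts to the standard fact that every Soergel bimodule is free---and hence flat---both as a left $\k[\xx]$-module and as a right $\k[\xx']$-module.

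For part (a), I would first observe that $\Delta_n(\xx',\xx'')$ is the Koszul complex of the sequence $(x_1'-x_1'',\ldots,x_n'-x_n'')$ in $\k[\xx',\xx'']$. Since this sequence is regular, $\Delta_n(\xx',\xx'')$ is a free resolution of $\k[\xx',\xx'']/(x_i'-x_i'')\cong R$ over $\k[\xx',\xx'']$. Freeness of $M$ as a right $\k[\xx']$-module makes $M\otimes_{\k[\xx']}(-)$ exact, so the complex $M\otimes_{\k[\xx']}\Delta_n(\xx',\xx'')$ has cohomology concentrated in degree zero and equal to $M$ (after renaming $\xx''$ back to $\xx'$). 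To finish I would check freeness of each term as a $\k[\xx,\xx'']$-module: the $k$-th term of $\Delta_n(\xx',\xx'')$ is $\k[\xx',\xx'']\otimes_\k\Lambda^k(\ttheta)$, and tensoring with $M$ over $\k[\xx']$ yields $M\otimes_\k\k[\xx'']\otimes_\k\Lambda^k(\ttheta)$, which is free over $\k[\xx,\xx'']$ because $M$ is free as a left $\k[\xx]$-module.

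For part (b), I would induct on $r$, the base case being trivial. For the inductive step, set $N:=B_{i_2}\star\cdots\star B_{i_r}$ and assume $G^\bullet:=F_{i_2}^\bullet\star\cdots\star F_{i_r}^\bullet$ is a free resolution of $N$. Each term of the double complex $F_{i_1}^\bullet\otimes_{\k[\xx']}G^\bullet$ is a tensor product over $\k[\xx']$ of a free $\k[\xx,\xx']$-module with a free $\k[\xx',\xx'']$-module, which is again free over $\k[\xx,\xx'']$ (of infinite rank coming from the middle $\xx'$ variable). To identify the cohomology I would use the spectral sequence that first takes cohomology in the $G^\bullet$ direction: since $F_{i_1}^a$ is flat over $\k[\xx']$, this collapses to $F_{i_1}^\bullet\otimes_{\k[\xx']}N$ in vertical degree zero. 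Then, because $N$ is a Soergel bimodule and therefore flat over $\k[\xx']$, taking cohomology next in the $F_{i_1}$ direction gives $\Tor^{\k[\xx']}_*(B_{i_1},N)=B_{i_1}\otimes_{\k[\xx']}N=B_{i_1}\star N$ concentrated in degree zero, as required.

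The argument is essentially formal and I do not expect any serious obstacle. The main things to be careful about are keeping track of the various alphabets $\xx,\xx',\xx''$ after each tensor product (and correctly identifying the module being resolved after renaming) and justifying convergence of the double-complex spectral sequence, which follows from the fact that each $B_i$ admits a finite free resolution over $\k[\xx,\xx']$ by the Hilbert syzygy theorem.
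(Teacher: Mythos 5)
Your proposal is correct and follows essentially the same approach as the paper: both proofs hinge on the standard fact that Soergel bimodules are free as left and right $R$-modules, which makes all the relevant $\Tor$'s vanish. The paper's proof is more compressed (it states directly that the derived tensor product of Soergel bimodules agrees with the underived one and leaves the bookkeeping implicit), while you unpack this with an explicit induction and a double-complex spectral sequence; the extra detail is sound and the convergence justification via Hilbert's syzygy theorem is appropriate.
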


\begin{proof}
It is well known (e.g., \cite[Corollary 3.6]{GHMN}) that any Soergel bimodule is free as a left or as a right $R$-module. Therefore for any Soergel bimodules $M$ and $N$ and their free resolutions $F_M^{\bullet}$ and $F_N^{\bullet}$ the tensor product $F_M^{\bullet}\star F_N^{\bullet}$ is a free a resolution of $M\star N$. In other words, the derived tensor product of $M$ and $N$ agrees with the underived one. This implies (b).

For (a), observe that $M$ is free over $\k[\xx]$, so 
$M\otimes_{\k[\xx']}\Delta_n(\xx',\xx'')$ is free over $\k[\xx,\xx'']$.
\end{proof}

Next, we discuss well-known multiplicative structure on $\HH$.

\begin{proposition}\label{prop:HH monoidal}
The functor $\HH$ has is lax monoidal, in the sense that we have a canonical multiplication map
\begin{equation}
\label{eq: multiplication HH}
\HH(M)\otimes \HH(N)\to \HH(M\star N).
\end{equation}
for bimodules $M,N\in R\text{-mod-}R$, satisfying appropriate naturality, associativity, and unit constraints.
\end{proposition}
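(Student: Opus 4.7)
The plan is to construct the multiplication at the chain level by exploiting the natural DG algebra structure carried by $\Delta_n(\xx,\xx')$: the exterior wedge product $\theta_I\wedge \theta_J$ makes $\Delta_n$ into a graded-commutative DG $\k[\xx,\xx']$-algebra because $d_{\HH}=\sum_i(x_i-x_i')\theta_i$ is a derivation for $\wedge$. This is the Koszul incarnation of the classical cup product on Hochschild cohomology, and the strategy is to transcribe that construction in our concrete model.

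Introducing three alphabets $\xx,\xx',\xx''$ so that $M\star N=M\otimes_{\k[\xx']}N$, I would define the chain-level multiplication
\[
\mu\colon \bigl(M\otimes_{\k[\xx,\xx']}\Delta_n(\xx,\xx')\bigr)\otimes_\k \bigl(N\otimes_{\k[\xx',\xx'']}\Delta_n(\xx',\xx'')\bigr)\longrightarrow (M\star N)\otimes_{\k[\xx,\xx'']}\Delta_n(\xx,\xx'')
\]
by the formula $(m\otimes \theta_I)\otimes(n\otimes \theta_J)\mapsto (m\otimes_{\k[\xx']}n)\otimes(\theta_I\wedge \theta_J)$, identifying both sets of $\theta$-variables with a single alphabet on the target.

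To verify that $\mu$ is a chain map, one expands each differential via the graded Leibniz rule. The cross-terms involving the middle alphabet $\xx'$---namely $mx_i'\otimes_{\k[\xx']}n$ coming from the right $R$-action on $M$ and $m\otimes_{\k[\xx']}x_i'n$ coming from the left $R$-action on $N$---cancel by the defining relation of $\otimes_{\k[\xx']}$, and what remains matches the image of $d_{\HH}$ on the right after the standard reordering $\theta_I\wedge \theta_i=(-1)^{|I|}\theta_i\wedge \theta_I$. Passing to cohomology, $[\alpha]\otimes [\beta]\mapsto [\mu(\alpha\otimes \beta)]$ is well-defined on classes and yields the desired pairing $\HH(M)\otimes \HH(N)\to \HH(M\star N)$. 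The lax unit $\k\to \HH(R)$ sends $1$ to the class of $1\otimes 1$, and associativity, unitality, and naturality in $M,N$ follow formally from the associativity of $\wedge$ on $\Lambda(\ttheta)$ and the naturality of $\otimes_R$.

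The only nontrivial piece is bookkeeping: checking that $\mu$ is weight-preserving in the $\Z_Q\times \Z_A$-grading (which reduces to $\wt(\theta_i)=a$ and additivity of $\theta$-weights under $\wedge$) and that the Koszul sign convention fixed in Definition~\ref{def: HH} is consistent with the signs arising from reordering $\theta$'s in the exterior product. These are routine, and once settled the argument is entirely parallel to the textbook construction of the cup product on Hochschild cohomology of associative algebras.
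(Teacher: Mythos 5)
Your proposal is correct and follows essentially the same construction as the paper: define the chain-level product by concatenating the exterior factors of $\Delta_n$ over a shared middle alphabet, and verify the chain-map property by observing that the $\xx'$-terms cancel via the relation defining $\otimes_{\k[\xx']}$ (the paper phrases this as ``modulo $x_i' - x_i''$, $(x_i-x_i')+(x_i''-x_i''')=x_i-x_i'''$,'' using four alphabets with an explicit quotient, but this is the same computation). The remaining claims about associativity, unitality, and naturality are handled in the same formal way.
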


\begin{proof}
This is well known, but we include the details for completeness.  We introduce sets of variables $\xx,\xx',\xx'',\xx'''$, so that each of $\k[\xx], \k[\xx'], \k[\xx''], \k[\xx''']$ may be regarded as a copy of $R$.  We regard $M$ as a module over $\k[\xx,\xx']$ and $N$ as a module over $\k[\xx'',\xx''']$.  For convenience we regard $R$ as a module over $\k[\xx',\xx'']$ (in which $x_i'-x_i''$ acts by zero for all $i$).  Thus, the tensor product $M\star N$ may be written as $M\otimes_{\k[\xx']} R\otimes_{\k[\xx'']} N$.

Now, the two sides of \eqref{eq: multiplication HH} may be computed as
\begin{align*}
    \HH(M) \otimes \HH(N) &= \left(M \otimes_{\k[\xx,\xx']} \Delta_n(\xx,\xx')\right) \otimes_\Q \left(N \otimes_{\k[\xx'',\xx''']} \Delta_n(\xx'',\xx''') \right) \\
    & \cong \left(M \otimes_\k N\right) \otimes_{\k[\xx,\xx',\xx'',\xx''']} \left( \Delta_n(\xx,\xx') \otimes_\Q \Delta_n(\xx'',\xx''')\right),
\end{align*}
where the isomorphism simply switches factors in the tensor product, and
\begin{equation*}
    \HH(M \otimes_{\k[\xx']} N) = (M \otimes_{\k[\xx']} R\otimes_{\k[\xx'']} N) \otimes_{\k[\xx,\xx''']} \Delta_n(\xx,\xx''').
\end{equation*}
The multiplication map is then a composite of two parts: first, the quotient identifying $x_i'$ with $x_i''$, and second the map
\begin{equation*}
    \left( \Delta_n(\xx,\xx') \otimes_\k \Delta_n(\xx'',\xx''') \right) / (x_i'-x_i'') \to \Delta_n(\xx,\xx''')
\end{equation*}
given by multiplication on the exterior algebras underlying the $\Delta_n$. This multiplication gives a chain map since, modulo $x_i'-x_i''$, $$(x_i - x_i') + (x_i'' - x_i''') = x_i-x_i'''.$$
Clearly the multiplication map just constructed is natural in the arguments $M,N$, since it involves only terms acting on the $\Delta_n$ so commutes with any map on the other tensor factors $M$ and $N$.
\end{proof}

Now, we forget its cohomological origins, and regard $\HH$ as a functor from $\SBim_n$ to $\Z_Q\times \Z_A$-graded $\k$-modules.  
\begin{definition}\label{def:HHH}
If $X\in \Ch(\SBim_n)$ then we let $\CHH(X)$ denote the $\Z_Q\times \Z_T\times \Z_A$-graded complex obtained by applying $\HH$ termwise to $X$.  Let $\HHH(X)$ denote the homology of $\CHH(X)$. 

For a braid $\b\in \Br_n$ we will abbreviate by writing $\CHH(\b):=\CHH(F(\b))$, and similarly for $\HHH(\b)$. 
\end{definition}

\begin{theorem}[\cite{KhSoergel}]
The homology $\HHH(\beta)$ is a topological invariant of the link obtained as the closure of $\beta$, up to an overall shift in tridegree.
\end{theorem}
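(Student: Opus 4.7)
The plan is to prove invariance by verifying the two Markov moves, since any two braid closure presentations of isotopic links are related by a sequence of braid equivalences together with conjugation (Markov I) and $\pm$-stabilization (Markov II). Before addressing these, I would first observe that braid invariance of the Rouquier assignment $\beta \mapsto F(\beta)$ (up to homotopy equivalence) already appeared earlier in the excerpt, so $\CHH(\beta) := \CHH(F(\beta))$ is well-defined up to chain homotopy, and therefore $\HHH(\beta)$ is a well-defined $\Z_Q \times \Z_T \times \Z_A$-graded $\k$-module depending only on $\beta$.

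Next I would establish Markov I, i.e.\ conjugation invariance $\HHH(\alpha\beta\alpha^{-1}) \cong \HHH(\beta)$ for any $\alpha, \beta \in \Br_n$. The key input is the trace property of Hochschild (co)homology: for any Soergel bimodules $M, N$ there is a natural isomorphism $\HH(M \star N) \cong \HH(N \star M)$, because both compute the same derived $\Tor$ (or $\Ext$) of $R$ against $M \otimes N$ by a cyclic permutation of tensor factors. This extends termwise to complexes, giving $\CHH(F(\alpha) \star F(\beta)) \simeq \CHH(F(\beta) \star F(\alpha))$, and combined with $F(\alpha\beta\alpha^{-1}) \simeq F(\alpha) \star F(\beta) \star F(\alpha^{-1}) \simeq F(\alpha^{-1}) \star F(\alpha) \star F(\beta) \simeq F(\beta)$, we conclude invariance under conjugation. (Alternatively the monoidal/lax structure of Proposition~\ref{prop:HH monoidal} and the invertibility of $F(\alpha)$ suffice.)

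Then I would verify Markov II, i.e.\ $\HHH(\beta \sigma_n^{\pm 1}) \cong \HHH(\beta)$ up to an overall tridegree shift, for $\beta \in \Br_n$ regarded inside $\Br_{n+1}$ (so the added strand is unlinked). The calculation reduces to applying $\HH$ to the two-term Rouquier complexes $F_n$ and $F_n^{-1}$ of the elementary generator $\sigma_n^{\pm 1}$ in $\Br_{n+1}$. I would resolve $B_n$ explicitly via Lemma~\ref{lem: examples resolutions} and compute $\HH(R)$ and $\HH(B_n)$ as well as the bimodule maps $b_n, b_n^*$ in the Hochschild complex. Standard bookkeeping yields the two-variable polynomial/exterior algebras of the unknot homology shifted appropriately, and shows the differentials in the resulting complex $\CHH(F(\beta) \star F_n^{\pm 1})$ split off a contractible piece, leaving $\CHH(F(\beta))$ tensored with a fixed graded $\k$-module. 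The grading shift is read off from the explicit formula for $F_n^{\pm 1}$.

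I expect the main obstacle to be the Markov II computation: one must be careful with the Hochschild differential interacting with the Rouquier differential through the last strand, and with the identification of the surviving summand as precisely $\CHH(\beta)$ up to shift. The cleanest way is to exhibit an explicit deformation retract at the chain level (using the idempotent that kills the contractible $B_n$-summand), rather than invoke a spectral sequence; this keeps the argument inside $\Ch(\SBim_{n+1})$ and avoids any extension problems in passing to homology. Markov I is essentially formal once the trace property of $\HH$ is in hand, so the real content of the theorem lies in making the stabilization calculation transparent.
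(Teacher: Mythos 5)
The paper does not prove this theorem: it is stated with the citation marker \cite{KhSoergel} and is simply quoted as background, so there is no in-paper argument to compare against. Your outline, however, does track Khovanov's original strategy in the cited reference: well-definedness up to homotopy from Rouquier's theorem, Markov~I from the trace (cyclicity) property of Hochschild homology, and Markov~II from a direct computation on the extra strand.

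Two places where your Markov~II sketch is loose. First, stabilization changes the base ring from $\k[x_1,\ldots,x_n]$ to $\k[x_1,\ldots,x_{n+1}]$, so $\CHH(F(\beta))$ computed for $\beta\in\Br_n$ and $\CHH(F(\beta)\star F_n^{\pm 1})$ for the same word in $\Br_{n+1}$ do not live in the same category; the comparison requires a partial-trace computation identifying the $(n+1)$-strand Hochschild homology of a bimodule supported on the first $n$ strands with the $n$-strand version tensored against an explicit small complex in $x_{n+1},\theta_{n+1}$, after which a Gaussian elimination along the Rouquier differential in the last crossing does the rest. Second, ``the contractible $B_n$-summand'' is not the right picture: $B_n$ is a \emph{chain group} of $F_n^{\pm 1}$, not a direct summand of the complex, and it is not contractible; what is being cancelled is a factor of the resulting double complex \emph{after} applying $\HH$. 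Finally, positive and negative stabilizations produce \emph{different} overall tridegree shifts (this asymmetry is precisely why the $A$-grading is needed and why the theorem only asserts invariance up to shift); your argument should treat $\sigma_n$ and $\sigma_n^{-1}$ separately and record the two shifts. With those corrections the proof is the standard one from \cite{KhSoergel}.
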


\begin{remark}
We recall how to normalize $\HHH(\b)$ to obtain a well-defined link invariant.  Given $\b\in \Br_n$, let $e=e(\b)$ be the number of positive crossings minus the number of negative crossings, and let $c=c(\b)$ denote the number of components of the link represented by $\b$.  We define the \emph{normalizing exponent} of the braid $\b$ by the formula
\begin{equation}\label{eq:normalizing exponent}
\delta(\b):=\frac{1}{2}(e(\b)+c(\b)-n(\b)).
\end{equation} 
The normalized Khovanov--Rozansky complex of $\b$ is
\[
(a t^{-1/2}q^{-1/2})^{\delta(\b)} \Big(t^{(c-n)/2} \CHH(F(\b))\Big)
\]
\end{remark}

\begin{remark}
Sometimes we will need to emphasize the role of the coefficient ring $\k$ below. In this case, we will write $\HHH(\beta;\k)$ instead of $\HHH(\beta)$.
\end{remark}

\begin{example}
Recall that $O_n$ denotes the $n$-component unlink, which is the closure of the $n$-strand identity braid.  The associated Rouquier complex is $R$, and (using notation from \S \ref{ss:gradings}) we get
$$
\HHH(O_n)=\HH(R)=\k[\xx,\ttheta]
$$
\end{example}

The lax monoidal structure on the functor $\HH$ from Proposition \ref{prop:HH monoidal} immediately gives us a (degree zero, closed) multiplication map
\begin{equation}
\label{eq: multiplication HHH}
\CHH(X)\otimes \CHH(X')\to \CHH(X\star X').
\end{equation}

When $X=R$ and $X'=F^y(\b)$ we obtain the following corollary.

\begin{corollary}
There is an action of $\HHH(O_n)=\k[\xx,\ttheta]$ on $\HHH(\beta)$ for any $\beta$. If $w_{\beta}$ is the corresponding permutation then the action factors through
$$
\k[\xx,\ttheta]/(x_i-x_{w_{\beta}(i)},\theta_i-\theta_{w_{\beta}(i)}).
$$
\end{corollary}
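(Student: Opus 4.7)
The existence of the action is immediate from the lax monoidal structure on $\HH$ established in Proposition \ref{prop:HH monoidal}: since $R=\one_n$ is the monoidal unit we have $R\star F(\beta)\cong F(\beta)$ in $\Ch(\SBim_n)$, and the multiplication map
\[
\CHH(R) \otimes \CHH(F(\beta)) \to \CHH(R \star F(\beta)) = \CHH(F(\beta))
\]
descends on homology to the asserted $\HHH(O_n)=\k[\xx,\ttheta]$-module structure on $\HHH(\beta)$. Associativity and unitality follow from the corresponding axioms of the lax monoidal structure.

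For the factorization through the ideal generated by $x_i - x_{w_\beta(i)}$, I would combine two homotopies on the chain complex $\CHH(F(\beta)) = F(\beta) \otimes_{R\otimes R} \Delta_n(\xx,\xx')$ equipped with the total differential $D = d_{F(\beta)} + d_{\HH}$. First, for any bimodule $M$ the odd partial derivative $\partial_{\theta_i}$ on the exterior-algebra factor of $M\otimes\Delta_n$ satisfies $[D,\partial_{\theta_i}] = x_i - x'_i$, so that passage to homology identifies the left and right $R$-actions on $\HH(M)$. Second, the dot-sliding homotopy $\eta_i\in\End^{2,-1}(F(\beta))$ of Lemma \ref{lem: dot sliding} is a bimodule morphism, hence commutes with $d_{\HH}$ on the tensor product, so $[D,\eta_i] = x_i - x'_{w_\beta^{-1}(i)}$. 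Combining these, on $\HHH(\beta)$ the operator of left multiplication by $x_i$ agrees simultaneously with right multiplication by $x'_i$ (via Hochschild) and by $x'_{w_\beta^{-1}(i)}$ (via dot-sliding), forcing $x_i = x_{w_\beta(i)}$ as operators on $\HHH(\beta)$.

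For the $\theta$-relation $\theta_i = \theta_{w_\beta(i)}$, the plan is to carry out an analogous computation with an odd version of the dot-sliding homotopy. Concretely, I would construct an operator $H_i$ on $\CHH(F(\beta))$ of bigrading $(\deg_T,\deg_A)=(-1,1)$, assembled from the $\eta_j$'s together with the exterior operators $\theta_k$ and $\partial_{\theta_k}$, such that $[D,H_i]$ equals $\theta_i - \theta_{w_\beta^{-1}(i)}$ modulo terms that already act trivially on $\HHH(\beta)$ by virtue of the $x$-relation just established. The main obstacle is to identify the correct combination and carefully track Koszul signs, since the natural candidates involve products of three or more odd operators; once the right $H_i$ is written down, the verification is a direct commutator calculation parallel to Lemma \ref{lem: dot sliding} in the $x$-case.
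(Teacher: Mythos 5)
Your argument for the existence of the module structure (via the lax monoidal structure on $\HH$) and for the $x$-relation (combining $\partial_{\theta_i}$ with the dot-sliding homotopy $\eta_i$) is correct, and matches the paper's reference to Lemma~\ref{lem: dot sliding}. However, your plan for the $\theta$-relation has a degree obstruction that shows it cannot succeed with the ingredients you propose. A null-homotopy $H_i$ for the chain map $\theta_i - \theta_{w^{-1}(i)}$ on $\CHH(F(\beta))$ must have $\deg_{Q,T,A}(H_i) = (-2,-1,1)$: the target has tridegree $(-2,0,1)$, and the differential $\HH(d_F)$ has tridegree $(0,1,0)$. Now the building blocks you list have tridegrees $\eta_j\colon (2,-1,0)$, $\theta_k\colon (-2,0,1)$, $\partial_{\theta_l}\colon (2,0,-1)$, and (if you also allow) $x_m\colon (2,0,0)$. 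Any monomial in these with $T$-degree $-1$ must contain exactly one $\eta$; any with $A$-degree $+1$ must have $(\#\theta)-(\#\partial_\theta)=1$; and then its $Q$-degree is $2 - 2\cdot 1 + 2\cdot(\#x) = 2\,(\#x)\geq 0$, never $-2$. So no such $H_i$ exists inside the algebra generated by $\eta_j$, $\theta_k$, $\partial_{\theta_l}$, and $x_m$.

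The paper deliberately does not construct such a chain-level homotopy; it instead cites \cite[Section 3.4]{GH}, where the $\theta$-action on $\HHH$ is related to the dot-sliding homotopies through a derived/Ext-algebra identification rather than via an explicit null-homotopy built from the above operators. You should either reproduce that argument or find another route (for instance, showing that $\theta_i$ and $\theta_{w(i)}$ determine the same class in the appropriate $\Ext^1$, so that their difference is null on homology for soft reasons), since the commutator calculation you envision cannot be carried out in the proposed form. Also, a small but real flaw: you conflate the two-step definition of $\HHH$ (take $\HH$ termwise, then take homology of $\CHH$) with the total complex $F(\beta)\otimes\Delta_n$ equipped with $D = d_{F(\beta)}+d_{\HH}$; for bounded $F(\beta)$ these agree, but the degree bookkeeping is cleaner, and the obstruction above is visible, if you keep to the paper's two-step framework.
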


\begin{proof}
The only nontrivial part is the relations for $x_i$ and $\theta_i$ which follow from \ref{lem: dot sliding} and \cite[Section 3.4]{GH}.
\end{proof}

\subsection{Rasmussen differentials}
\label{ss: Rasmussen}

In this section we recall the family of differentials $d_N$ on Hochschild homology of Soergel bimodules, constructed in \cite{RasDiff}. 
From now on we assume that $\k$ is a field of characteristic zero.

Recall that $p_{N+1}(\xx)=\sum_{i=1}^{n}x_i^{N+1}$ is the power sum in $x_i$.

\begin{lemma}\label{lemma:res and h}
Let $F^{\bullet}$ be a free resolution of $M\in \SBim_n$.  Then there exists $d_N\in \End(F^{\bullet})$ of weight $q^Na\inv$ such that $[d_F,d_N]=\frac{1}{N+1}(p_{N+1}(\xx)-p_{N+1}(\xx'))$. Moreover, $d_N$ is unique up to homotopy in the sense that if $(F'^{\bullet},d_N')$ is such pair and $\phi\colon F^{\bullet}\to F'^{\bullet}$ is a lift of $\id_M$, then $d_N'\circ \phi-\phi\circ d_N$ is closed and exact.  Furthermore, $d_N\circ d_N$ is closed and exact.
\end{lemma}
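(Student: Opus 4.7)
The plan rests on one key fact: any $M\in \SBim_n$ is a module over the quotient $R\otimes_{R^{S_n}} R$ of $\k[\xx,\xx']$, so every $f\in R^{S_n}$ satisfies $f(\xx)\cdot m=f(\xx')\cdot m$ for $m\in M$. In particular, the element $c:=p_{N+1}(\xx)-p_{N+1}(\xx')$ annihilates $M$. On Bott--Samelson generators this reduces to the defining relations $x_i+x_{i+1}=x_i'+x_{i+1}'$ and $x_ix_{i+1}=x_i'x_{i+1}'$ for each $B_i$, and is preserved under tensor products.

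For existence, I would observe that multiplication by $c$ on $F^\bullet$ is a chain endomorphism inducing zero on $H^0(F^\bullet)=M$ (and trivially in other degrees). Since $F^\bullet$ is a bounded-above complex of free (hence projective) $\k[\xx,\xx']$-modules resolving $M$, standard homological algebra produces a null-homotopy $h$ with $[d_F,h]=c$. Weight bookkeeping gives $\wt(h)=\wt(c)/\wt(d_F)=q^{N+1}/(qa)=q^Na\inv$, as required, and setting $d_N:=\tfrac{1}{N+1}h$ yields the desired endomorphism.

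For uniqueness up to homotopy, given another pair $(F'^\bullet,d_N')$ and a chain-lift $\phi$ of $\id_M$, I would form $g:=d_N'\circ\phi-\phi\circ d_N\in\Hom^{-1}(F^\bullet,F'^\bullet)$. Using that $\phi$ is a chain map and that $c\in\k[\xx,\xx']$ commutes with the linear map $\phi$, a direct graded-commutator computation gives
\[
[d,g]=[d_{F'},d_N']\circ\phi-\phi\circ[d_F,d_N]=\tfrac{1}{N+1}(c\phi-\phi c)=0.
\]
Hence $g$ is a cocycle, and it must be a coboundary because the hom complex of two projective resolutions of $M$ computes $\Ext^{*}(M,M)$, which vanishes in negative cohomological degrees. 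The same strategy handles $d_N\circ d_N$: since $d_N$ is odd of cohomological degree $-1$ and $[d_F,d_N]=c/(N+1)$ is central, a graded Leibniz computation yields $[d_F,d_N^2]=[d_F,d_N]\,d_N-d_N\,[d_F,d_N]=0$, so $d_N^2$ is a cocycle in $\End^{-2}(F^\bullet)$, exact because $\Ext^{-2}(M,M)=0$.

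The only genuinely substantive input is the centrality of $p_{N+1}\in R^{S_n}$ acting on Soergel bimodules; the remainder is sign and weight bookkeeping together with the vanishing of negative-degree $\Ext$ for projective resolutions. I foresee no serious obstacle.
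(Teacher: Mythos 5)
Your proposal is correct and follows essentially the same route as the paper: the central element $p_{N+1}(\xx)-p_{N+1}(\xx')$ annihilates every Soergel bimodule, so it is null-homotopic on any free resolution, and both uniqueness and $d_N^2\simeq 0$ follow from a graded-commutator computation together with the vanishing of negative-degree $\Ext(M,M)$. The only cosmetic difference is that you spell out \emph{why} $p_{N+1}$ acts diagonally (via the $R\otimes_{R^{S_n}}R$-module structure on $\SBim_n$) and do the weight bookkeeping explicitly, both of which the paper leaves implicit.
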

\begin{proof}
Let $F^{\bullet}$ be a free resolution of $M$.  We know that $\frac{1}{N+1}(p_{N+1}(\xx)-p_{N+1}(\xx'))$ acts by zero on $M$, hence is null-homotopic when regarded as an endomorphism of $F^{\bullet}$.  This shows that $d_N$ exists.  Now, let $F'^{\bullet}$ be another free resolution, with homotopy $d_N'\in \End(F'^{\bullet})$.  Since $[[d_F,d_N],\phi]=0$, it follows that $[d_F,[d_N,\phi]]=0$.  In other words, $[d_N,\phi]$ a closed element of $\Hom(F^{\bullet},F'^{\bullet})$ of weight $q^Na\inv$.  The homology of $\Hom(F^{\bullet},F'^{\bullet})$ is isomorphic to the ext algebra $\Ext(M,M)$, which has no nonzero elements of negative $A$-degree.  This shows that $[d_N,\phi]$ is exact, as claimed.  A similar argument shows that $d_N^2$ is exact.
\end{proof}

Given a free resolution $F^{\bullet}$ and endomorphism $d_N$ as above, $d_N$ becomes closed when acting on $F^{\bullet}\otimes_{\k[\xx,\xx']} R$.  Recall that by Proposition \ref{prop: Tor invariance} the homology of this complex $F^{\bullet}\otimes_{\k[\xx,\xx']} R$ is isomorphic to $a^{-n}\HH(M)$. Since $d_N^2$ is exact on $F^{\bullet}$, we get $d_N^2=0$ on $\HH(M)$ and we can regard $d_N$ as a differential on $\HH(M)$. By Lemma \ref{lemma:res and h} it does not depend on a choice of resolution $F^{\bullet}$.

\begin{definition}\label{def:dN general}
Given $M\in \SBim_n$, let $d_N$ denote the endomorphism of $\HH(M)$ induced from the $d_N$ from Lemma \ref{lemma:res and h} on a free resolution $F$ of $M$.  Let $\HH_{\gll_N}(M):=\tw_{d_N}(\HH(M))$ denote the complex $\HH(M)$ with differential given by $d_N$.  We regard $\tw_{d_N}(\HH(M))$ as graded by the group
\[
\Z_Q\times \Z_T \ \cong \ \Z_Q\times \Z_T\times \Z_A/(2+2N,-1,-1)
\]
\end{definition}
When ``turning on'' the differential $d_N$, we must collapse the trigrading to a bigrading as in the above, since the weight of all differentials ought to be $T$, but the weight of $d_N$ is $Q^{2+2N}A\inv$.

It will be useful to have a concrete model for $\tw_{d_N}(\HH(M))$.  To this end, recall the definition of $\Delta_n(\xx,\xx')$ from \S \ref{ss:HH}. By Lemma \ref{lem: examples resolutions} $M\star \Delta_n(\xx,\xx')$ is a free resolution of $M$ over $\k[\xx,\xx']$, so we can use it as a model for $F^{\bullet}$.

Define an endomorphism $d_N$ of $\Delta_n(\xx,\xx')$ by
\begin{equation}
\label{eq: dN Koszul}
    d_N(\theta_i) = \frac{1}{N+1}h_{N}(x_i,x_i')
\end{equation}
and extend to the rest of $\Delta_n(\xx,\xx')$ via the graded Leibniz rule. Note that $\wt(d_N)=q^N a\inv$. This will not be a chain map, but $d_N^2 = 0$ and $d_{\HH}d_N + d_Nd_{\HH} =\frac{1}{N+1}( p_{N+1}(\xx) - p_{N+1}(\xx'))$ (so $d_{\HH} + d_N$ turns $\Delta_n$ into a matrix factorization; this is the $\gll_N$-matrix factorization associated to $n$ vertical arcs in \cite{RasDiff}). However, on any Soergel bimodule $M$, the difference $p_{N+1}(x_i) - p_{N+1}(x_i')$ vanishes, so $d_N$ yields a well defined chain map on $M\otimes_{\k[\xx,\xx']} \Delta_n(\xx,\xx')$.  Taking homology yields the action of $d_N$ on $\HH(M)=H\left(M\otimes_{\k[\xx,\xx']}\Delta_n(\xx,\xx')\right)$. 

\begin{example}
\label{ex: dN unknot}
For $M=R$ we get $\HH(M)=\k[\xx,\ttheta]$. The differential $d_N$ is given by
$$
d_N(\theta_i)=x_i^N
$$
which is a specialization of \eqref{eq: dN Koszul} at $\xx=\xx'$.
\end{example}

The following is clear from definitions:
\begin{proposition}\label{prop:HH with dN is functor}
For bimodules $M,N\in \SBim_n$, any bimodule map $f\colon M\to N$ defines a map $\HH(f):\HH(M)\to \HH(N)$ which commutes with $d_N$.  In other words, $\HH_{\gll_N}$ is a functor from $\SBim_n$ to the category of $\Z_Q\times \Z_A$-graded modules over the exterior algebra $\k[d_N]$ (where $d_N$ has weight $\wt(d_N)=Q^{2+2N}A\inv=q^Na\inv$).
\end{proposition}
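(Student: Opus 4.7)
The plan is to reduce the proposition to a direct computation using the explicit model for $d_N$ constructed just above in the preceding paragraphs of \S\ref{ss: Rasmussen}. Specifically, represent $\HH(M)$ as the homology of the complex $M \otimes_{\k[\xx,\xx']} \Delta_n(\xx,\xx')$ with $d_{\HH}$ as in Definition \ref{def:Delta} and with $d_N$ given on $\Delta_n(\xx,\xx')$ by the formula \eqref{eq: dN Koszul}, i.e.\ $\theta_i \mapsto \tfrac{1}{N+1}h_N(x_i,x_i')$, extended by the graded Leibniz rule. Recall that on Soergel bimodules the power sums $p_{N+1}(\xx)$ and $p_{N+1}(\xx')$ agree (since the defining relations of $B_i$ equate elementary, hence all, symmetric functions of $\xx$ and $\xx'$), so $d_N$ really does descend to a chain map on $M \otimes_{\k[\xx,\xx']} \Delta_n(\xx,\xx')$.

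Given a bimodule map $f\colon M\to N$, I would consider the tensored map $f \otimes \id_{\Delta_n} \colon M \otimes_{\k[\xx,\xx']} \Delta_n(\xx,\xx') \to N \otimes_{\k[\xx,\xx']} \Delta_n(\xx,\xx')$. This commutes with $d_{\HH}$ automatically (as $f$ is a bimodule map), and it commutes with $d_N$ because $d_N$ acts purely on the $\Delta_n$ factor by multiplication by elements of $\k[\xx,\xx']$, and $f$ is $\k[\xx,\xx']$-linear. Passing to homology gives $\HH(f)\circ d_N = d_N \circ \HH(f)$, which is the content of the proposition.

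The only remaining verification is that the concrete $d_N$ on this particular resolution indeed represents the canonical operator from Definition \ref{def:dN general}. This is exactly the uniqueness clause of Lemma \ref{lemma:res and h}: both operators satisfy $[d_{\HH}, d_N] = \tfrac{1}{N+1}(p_{N+1}(\xx) - p_{N+1}(\xx'))$ on the free resolution $M\otimes_{\k[\xx']}\Delta_n(\xx',\xx'')$ (or equivalently on $\Delta_n(\xx,\xx')$ tensored with $M$), so they agree up to null-homotopic terms and hence induce the same operator on $\HH(M)$. Functoriality---that $\HH_{\gll_N}$ respects identities and compositions---is immediate from the corresponding facts for $f \mapsto f\otimes \id_{\Delta_n}$ at the chain level, and the exterior-algebra module structure is encoded in $d_N^2 = 0$, which holds on $\Delta_n(\xx,\xx')$ by inspection.

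I do not anticipate any substantive obstacle: the proposition is essentially an unpacking of definitions once one commits to the Koszul-resolution model. The only mild point to be careful about is working with the explicit $d_N$ rather than the implicit one from Lemma \ref{lemma:res and h}, but the uniqueness statement there takes care of this cleanly.
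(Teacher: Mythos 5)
Your proposal is correct and matches what the paper intends: the paper states this proposition with no proof ("The following is clear from definitions:") precisely because, in the concrete model via $M\otimes_{\k[\xx,\xx']}\Delta_n(\xx,\xx')$ with $d_N$ given by \eqref{eq: dN Koszul}, the operator $d_N$ acts entirely through a $\k[\xx,\xx']$-linear endomorphism of the $\Delta_n$ factor, so $f\otimes\id_{\Delta_n}$ manifestly commutes with it. Your invocation of the uniqueness clause of Lemma \ref{lemma:res and h} to identify the concrete $d_N$ with the canonical one from Definition \ref{def:dN general} is exactly the point the paper already settles just before this proposition.
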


Furthermore, $d_N$ interacts well with the multiplicative structure.

\begin{lemma}
\label{lem: bimodule dN multiplicativity}
    For Soergel bimodules $M,M'$, the multiplication map
    \begin{equation*}
        \HH(M) \otimes \HH(M') \to \HH(M \star M')
    \end{equation*}
    commutes with $d_N$, where $d_N$ acts on the left via the graded Leibniz rule.  In other words, the functor $\HH_{\gll_N}$ is lax monoidal with respect to the usual tensor product on modules over the Hopf algebra $\k[d_N]$.
\end{lemma}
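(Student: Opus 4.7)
The plan is to verify compatibility directly at the chain level using the concrete Koszul model with $d_N(\theta_i) = \tfrac{1}{N+1} h_N(x_i, x_i')$ from Equation~(\ref{eq: dN Koszul}) and the chain-level multiplication map $\mu$ from the proof of Proposition~\ref{prop:HH monoidal}. The strategy is to show that although $\mu$ does not strictly intertwine $d_N^{(1)} + d_N^{(2)}$ with $d_N$ on the nose, the discrepancy becomes zero on Hochschild cohomology.

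Concretely, on $\theta_i^{(1)}$ the map $\mu \circ (d_N^{(1)} + d_N^{(2)})$ yields $\tfrac{1}{N+1} h_N(x_i, x_i')$ while $d_N \circ \mu$ yields $\tfrac{1}{N+1} h_N(x_i, x_i''')$, with analogous expressions on $\theta_i^{(2)}$. Using the telescoping identity
\[
h_N(a,b) - h_N(a,c) = (b-c)\sum_{k=1}^N a^{N-k} h_{k-1}(b,c),
\]
the chain-level discrepancy $\psi := \mu \circ (d_N^{(1)} + d_N^{(2)}) - d_N \circ \mu$ factors on generators through either $(x_i' - x_i''')$ or $(x_i - x_i')$. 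Each of these linear factors is a $d_{\HH}^{\mathrm{tot}}$-boundary in the source complex after the identification $x_i' = x_i''$, namely $d_{\HH}^{(1)}(\theta_i^{(1)})$ and $d_{\HH}^{(2)}(\theta_i^{(2)})$ respectively. Evaluating $\psi$ on a typical cocycle $c = \sum_i (m_i \theta_i^{(1)} + m_i' \theta_i^{(2)}) + \cdots$ and using the cocycle relation $\sum_i [(x_i - x_i') m_i + (x_i' - x_i''') m_i'] = 0$ in $M \star M'$, these factorizations combine to express $\psi(c)$ as an explicit $d_{\HH}$-coboundary in the target complex $(M \star M') \otimes_{\k[\xx, \xx''']} \Delta_n(\xx, \xx''')$.

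The main technical obstacle is the bookkeeping for higher $A$-degree cocycles, where products like $\theta_i^{(1)} \theta_j^{(2)}$ interact with the exterior multiplication in $\mu$ and the Leibniz extensions of the $d_N^{(k)}$. A cleaner abstract route is to invoke the uniqueness clause of Lemma~\ref{lemma:res and h}: both $\mu \circ (d_N^{(1)} + d_N^{(2)})$ and $d_N \circ \mu$ are chain-level witnesses to a null-homotopy of $\tfrac{1}{N+1}(p_{N+1}(\xx) - p_{N+1}(\xx'''))$ acting between resolutions of $M \star M'$, so by the same uniqueness-up-to-homotopy argument they induce the same map on $\HH(M \star M')$.
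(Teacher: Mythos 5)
Your second (``cleaner abstract'') route is essentially the paper's proof, and it is correct. The paper shows directly that $d_N\circ m - m\circ d_N$ commutes with $d_{\HH}$ (because $[d_{\HH},d_N]=\frac{1}{N+1}(p_{N+1}(\xx)-p_{N+1}(\xx'''))$ is central and the same on both sides of $m$), so this discrepancy is a closed map of $A$-degree $-1$, and the corresponding $\Ext^{-1}$ vanishes over a polynomial ring. Your invocation of the uniqueness clause of Lemma~\ref{lemma:res and h} with $\phi=\mu$, $d_N^{\mathrm{source}}=d_N^{(1)}+d_N^{(2)}$, $d_N^{\mathrm{target}}=d_N$ packages exactly this mechanism, so the two proofs differ only in presentation. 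One wrinkle worth tightening in your phrasing: $\mu\circ(d_N^{(1)}+d_N^{(2)})$ and $d_N\circ\mu$ are not themselves ``null-homotopies''; the precise statement is that $\mu$ is a chain map and the two $d_N$'s have the same supercommutator with the respective differentials, so $d_N\circ\mu-\mu\circ(d_N^{(1)}+d_N^{(2)})$ is closed of negative $\Ext$-degree and hence exact. Also be aware that Lemma~\ref{lemma:res and h} is stated for genuine free resolutions (the Tor model of Proposition~\ref{prop: Tor invariance}), whereas $\mu$ is built in the $\Delta_n$ cohomology model of Definition~\ref{def: HH}; these are dual and the argument transfers, but the transfer is worth acknowledging.

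Your first (concrete) route has a genuine error that you should be aware of even though you ultimately set it aside: the claim that $(x_i-x_i')$ is the $d_{\HH}$-boundary $d_{\HH}^{(1)}(\theta_i^{(1)})$ does not hold in the paper's model, where $d_{\HH}$ acts by \emph{multiplication} by the element $\sum_j(x_j-x_j')\theta_j$ and therefore \emph{raises} $A$-degree; $d_{\HH}(\theta_i^{(1)})$ is a sum of quadratic $\theta$ terms, not a scalar. The relation $\theta_i\mapsto(x_i-x_i')$ holds in the dual Koszul chain (Tor) model, so the concrete route would need to be rerouted through that model and carefully translated. The explicit homotopy that closes the concrete route does exist and is spelled out in the paper (in the remark following the lemma): it is $\nu(\theta_i\otimes\theta_j)=\delta_{ij}h_{N-1}(x_i,x_i',x_i'')$, which is not obviously what your telescoping calculation would have produced. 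Since you correctly fell back on the abstract route, this does not invalidate your final argument, but the concrete portion as written should not be taken as a proof.
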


\begin{proof}
We we need to prove that the diagram:
$$
\begin{tikzcd}
\Delta_n(\xx,\xx')\otimes_{\k[\xx']} \Delta_n(\xx',\xx'') \arrow{r}{m} \arrow{d}{d_N} \arrow[dotted]{dr}{\nu}& \Delta_n(\xx,\xx'') \arrow{d}{d_N}\\
\Delta_n(\xx,\xx')\otimes_{\k[\xx']} \Delta_n(\xx',\xx'') \arrow{r}{m}& \Delta_n(\xx,\xx'')
\end{tikzcd}
$$
with horizontal arrows given by multiplication and vertical by $d_N$, commutes up to homotopy. Recall that on $\Delta_n(\xx,\xx')$ we have $[d_{\HH},d_N]=p_{N+1}(\xx)-p_{N+1}(\xx')$. Therefore on $\Delta_n(\xx,\xx')\otimes_{\k[\xx']} \Delta_n(\xx',\xx'')$ we have
$$
[d_{\HH},d_N]=\frac{1}{N+1}(p_{N+1}(\xx)-p_{N+1}(\xx'))+\frac{1}{N+1}(p_{N+1}(\xx')-p_{N+1}(\xx''))=
$$
$$
\frac{1}{N+1}(p_{N+1}(\xx)-p_{N+1}(\xx'')).
$$
In particular, $[d_{\HH},d_N]$ commutes with $m$ and 
$$
[d_{\HH},d_N\circ m-m\circ d_N]=0.
$$
Now $d_N\circ m-m\circ d_N$ is a chain map of $A$-degree $-1$, but the corresponding $\Ext^{-1}$ group vanishes (there are no negative $\Ext$ between modules over polynomial rings), so $d_N\circ m-m\circ d_N$ is nullhomotopic.
\end{proof}

See also \cite{Becker} for a closely related lax monoidal functor on Soergel bimodules.

\begin{remark}
Alternatively, one can construct a nullhomotopy $\nu$ for $[d_N,m]$ explicitly by writing
$$
\nu(\theta_i\otimes \theta_j)=\delta_{i,j}h_{N-1}(x_i,x'_i,x''_i).
$$
We have  
$$
d_N(m(\theta_i\otimes 1))-m(d_N(\theta_i\otimes 1))=h_{N}(x_i,x''_i)-h_{N}(x_i,x'_i)=(x''_i-x'_i)h_{N-1}(x_i,x'_i,x''_i).
$$
On the other hand, 
$$
\nu(d(\theta_i\otimes 1))=\nu(\sum_{j\neq i}(x_j-x'_j)\theta_i\theta_j\otimes 1)-\nu(\sum_{j}(x'_j-x''_j)\theta_i\otimes \theta_j)=0-(x'_i-x''_i)h_{N-1}(x_i,x'_i,x''_i).
$$
Similarly,
$$
d_N(m(1\otimes \theta_i))-m(d_N(1\otimes \theta_i))=h_{N}(x_i,x''_i)-h_{N}(x_i',x''_i)=
$$
$$
(x_i-x'_i)h_{N-1}(x_i,x'_i,x''_i)=\nu(d(1\otimes \theta_i)).
$$
One can now check that $\nu$ extends to the map between exterior algebras such that 
$$
d_N\circ m-m\circ d_N=-[d,\nu]
$$
and the result follows. 
\end{remark}

Now we can define our version of the Rasmussen spectral sequence \cite{RasDiff}. 
\begin{definition}\label{def:slN homology}
For a complex $X\in \Ch(\SBim_n)$, we let $\CHH_{\gll_N}(X)$ be the double complex $\CHH(X)$ with horizontal differential $\HH(d_X)$ and vertical differential given by $d_N$. Let $C_{\gll_N}(X)=\mathrm{Tot}(\CHH_{\gll_N}(X))$.  Denote the homology of this complex by $H_{\gll_N}(X)$. 
\end{definition}
Note that $C_{\gll_N}(X)$ is graded by $\Z_Q\times\Z_T\times\Z_A/(2+2N,-1,-1)$.

\begin{definition}\label{def:Rasmussen SS}
 Given a complex $M\in \Ch(\SBim_n)$, the Rasmussen spectral sequence is that of a double complex, with terms given by $\CHH^{\bullet,k,i}(M)$ 
 horizontal differential $\HH(d_M)$ and vertical differential $d_N$, converging to $H_{\gll_N}(M)$.  
\end{definition}

\begin{remark}
\label{rmk: Rasmussen}
    In \cite{RasDiff}, Rasmussen originally defined these $d_N$ differently, by working with particular free resolutions $F_i^\bullet$ of each $B_i$, and building the homotopy $h$ as in Lemma~\ref{lemma:res and h} on $F_{i_1}^\bullet \star \cdots \star F_{i_n}^\bullet$ via a concrete choice of $h$ for each $F_i^\bullet$. Lemma~\ref{lem: examples resolutions} and Proposition \ref{prop: Tor invariance} now say that the complex coming from closing up $F_{i_1}^\bullet \star \cdots \star F_{i_n}^\bullet$, which in \cite{RasDiff} is called the positive homology of a MOY graph, agrees with $\CHH(M)$. Similarly, $\HH(d_M)$ agrees with Rasmussen's $d_v$, and, using Lemma~\ref{lemma:res and h}, $d_N$ agrees with Rasmussen's $d_-$. (Note that our conventions for which of the two differentials in this double complex should be drawn vertically are reversed). All of this is more-or-less the statement (see Proposition \ref{prop: Tor invariance}) that $\HH(M)$ may be computed either by resolving $M$ or by resolving the diagonal; we work with a resolution of the diagonal so that all of the constructions manifestly commute with any map of Soergel bimodules.

    In particular, while what we call $H_{\gll(N)}$ above does not obviously agree with the original Khovanov--Rozansky definition of $\gll(N)$ link homology from \cite{KR2}, it does agree with the group to which the spectral sequence from \cite{RasDiff} converges. Rasmussen's detailed arguments from \cite{RasDiff} about the collapse of this spectral sequence on a fixed Soergel bimodule (as opposed to a complex) are then needed to show that this $H_{\gll(N)}$ agrees with the version from \cite{KR2}.
\end{remark}

\begin{proposition}
\label{prop: braid dN multiplicativity}
    For braids $\beta$, $\beta'$, the multiplication map
    \begin{equation*}
        \HHH(\beta) \otimes \HHH(\beta') \to \HHH(\beta \beta')
    \end{equation*}
    commutes with all pages of the Rasmussen $\gll_N$ spectral sequence. 
\end{proposition}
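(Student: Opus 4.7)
The plan is to realize the multiplication as a morphism of the underlying double complexes, from which compatibility with every page of the associated spectral sequence follows for free. Recall from Definition~\ref{def:Rasmussen SS} that the Rasmussen spectral sequence for a braid $\beta$ is the one attached to the bigraded double complex $\CHH_{\gll_N}(F(\beta))$, with horizontal differential $\HH(d_{F(\beta)})$ and vertical differential $d_N$. It therefore suffices to prove that the chain-level multiplication
\begin{equation*}
m\colon \CHH(F(\beta)) \otimes \CHH(F(\beta')) \longrightarrow \CHH\bigl(F(\beta)\star F(\beta')\bigr)
\end{equation*}
(with the tensor-product double-complex structure on the source, $d_N$ acting via the graded Leibniz rule) is a morphism of bigraded double complexes; any such map respects the filtration and hence induces a morphism on every page $E_r$ commuting with $d_r$.

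Compatibility with the horizontal differential is just the assertion that \eqref{eq: multiplication HHH} is a chain map, which I would deduce directly from Proposition~\ref{prop:HH monoidal}: naturality of $m$ in the bimodule arguments forces $m$ to intertwine $\HH(d_{F(\beta)})\otimes \id + \id\otimes \HH(d_{F(\beta')})$ with $\HH(d_{F(\beta)\star F(\beta')})$, up to the standard Koszul signs. Compatibility with the vertical differential $d_N$ is exactly Lemma~\ref{lem: bimodule dN multiplicativity}, applied term-by-term to each pair of Soergel bimodules $F(\beta)_k$ and $F(\beta')_\ell$ and then assembled. Crucially, both compatibilities are \emph{strict} on $\CHH$, not merely up to homotopy: the chain-level nullhomotopy $\nu$ produced in the proof of Lemma~\ref{lem: bimodule dN multiplicativity} becomes zero upon passing to $\HH$, which is the functor used to build $\CHH$ by Definition~\ref{def:HHH}.

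Once $m$ is identified as a morphism of double complexes, it automatically induces a morphism of the two associated spectral sequences that commutes with $d_r$ on every page. Specializing to the filtration used for the Rasmussen spectral sequence (by $A$-degree, on which $\HH(d_X)$ preserves $A$ and $d_N$ lowers it by one), the induced $E_1$-page map is precisely the multiplication $\HHH(\beta)\otimes\HHH(\beta')\to\HHH(\beta\beta')$ appearing in the corollary after Proposition~\ref{prop:HH monoidal}, and the higher-page compatibilities come along for free. The substantive work has already been done in Lemma~\ref{lem: bimodule dN multiplicativity}; the main point to watch is the bookkeeping of Koszul signs when upgrading the single-bimodule statement to one about complexes of bimodules, but this is essentially formal.
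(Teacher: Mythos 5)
Your proof is correct and takes essentially the same approach as the paper's: the multiplication on $\CHH$ is column-by-column the Hochschild multiplication, it commutes with both differentials of the double complex (with the horizontal one by naturality, with $d_N$ by Lemma~\ref{lem: bimodule dN multiplicativity}), and therefore induces a morphism of spectral sequences commuting with $d_r$ on every page. Your added remark that the commutativity with $d_N$ is strict on $\CHH$ (because $\HH$ is already a homology functor, so the nullhomotopy $\nu$ from Lemma~\ref{lem: bimodule dN multiplicativity} has already done its job) is exactly the right thing to be careful about, and is left implicit in the paper's one-line proof.
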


\begin{proof}
The multiplication map $\HHH(\beta) \otimes \HHH(\beta') \to \HHH(\beta \beta')$ is just column-by-column the multiplication map on Hochschild homology, so commutes with both differentials of the double complex by Lemma~\ref{lem: bimodule dN multiplicativity}. Therefore, it commutes with the differential on each page of the spectral sequence.
\end{proof}

\subsection{$y$-ified homology and spectral sequences}

Next we recall the deformed (or $y$-ified) triply graded link homology from \cite{GH}.

\begin{definition}\label{def:HY}
Given $\tw_\alpha(X)\in \mathcal{Y}_{n,w}$, we let
\[
\CY(\tw_\alpha(X)):= \HH(X)\cotimes\k[\yy]/ (y_i = y_{w(i)})_{1\leq i\leq n}
\]
with differential given by applying the functor $\HH$ componentwise to the total differental $d_X+\alpha$. Then $\CY(\tw_\alpha(X))$ is a $\Z_Q\times \Z_T\times \Z_A$-graded complex; we denote the homology of this complex by $\HY(\tw_\alpha(X))$.
 If $X=F^y(\b)$ then we will abbreviate by writing $\CY(\b):=\CY(F^y(\b))$, and similarly for $\HY(\b)$.
\end{definition}
\begin{definition}
Similarly we let
\[
\CY_{\gll_N}(\tw_\alpha(X)):= C_{\gll_N}(X)\cotimes\k[\yy] / (y_i = y_{w(i)})_{1\leq i\leq n}
\]
with differential given by applying the functor $\HH_{\gll_N}$ componentwise to the total differential $d_X+\alpha$.  This is a graded by $\Z_Q\times \Z_T\times \Z_A/(2+2N,-1,-1)$.

 If $X=F^y(\b)$ then we will abbreviate by writing $\CY_{\gll_N}(\b):=\CY_{\gll_N}(F^y(\b))$, and similarly for $\HY_{\gll_N}(\b)$. 
\end{definition}

\begin{remark}\label{rmk:poly 2}
As mentioned already in Remark \ref{rem: poly 1}, if $X$ is such that $\HH^{i+2L,j-2L,k}(X)=0$ for $L\gg 0$, then every element of $\CY$ is polynomial, i.e.~ $\CY(\tw_\alpha(X)) = \HH(X)\otimes \k[y_1,\ldots,y_n]$.  This is the case whenever $X$ is a bounded complex.
\end{remark}

\begin{theorem}\label{thm:HY}[\cite{CK,GH}]
If $\b$ and $\b'$ represent the same link, then $\CY(\b)\simeq\CY(\b')$ and $\CY_{\gll_N}(\b)\simeq\CY_{\gll_N}(\b')$  up to an overall shift in tridegree.
\end{theorem}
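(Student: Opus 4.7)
I would follow the standard Markov-moves strategy, verifying invariance of $\CY(\beta)$ under (i)~braid relations, (ii)~conjugation in $\Br_n$, and (iii)~Markov stabilization, and then bootstrapping from $\CY$ to $\CY_{\gll_N}$.

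Step (i) is essentially free: the theorem on $y$-ified Rouquier complexes quoted earlier gives $F^y(\beta) \star F^y(\beta') \simeq F^y(\beta\b')$ in $\mathcal{Y}_{n,w}$, so $F^y(\beta)$ depends only on $\beta\in \Br_n$ up to homotopy equivalence, and applying the functor $\CY(-) = \HH(-)\cotimes\k[\yy]/(y_i=y_{w(i)})$ preserves such equivalences.  For step (ii), to establish $\CY(\alpha\beta\alpha\inv) \simeq \CY(\beta)$, I would extend the cyclic (trace) symmetry of Hochschild homology, $\HH(M \star N)\cong \HH(N \star M)$, to the $y$-ified setting. Concretely, given $\tw_{\alpha_1}(X)\in \mathcal{Y}_{n,v}$ and $\tw_{\alpha_2}(Y)\in \mathcal{Y}_{n,w}$, the trace isomorphism $\HH(X\star Y)\cong \HH(Y\star X)$ intertwines the $y$-deformed total differentials provided one relabels the $y$-variables by the permutations involved; the quotients $y_i=y_{w(i)}$ built into the definition of $\CY$ precisely absorb this relabelling.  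Applied to $X=F^y(\alpha)$ and $Y=F^y(\beta\alpha\inv)$, this yields the claim.

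Step (iii) is the main obstacle: $\CY(\iota(\beta)\sigma_n^{\pm 1}) \simeq \CY(\beta)$ up to an overall tridegree shift, where $\iota:\Br_n\hookrightarrow \Br_{n+1}$ is the standard inclusion.  This reduces to an explicit calculation with the single-crossing complex $F^y(\sigma_n^{\pm 1})$ after taking Hochschild: one must produce a chain contraction that eliminates the rank-one summand on the destabilized strand while simultaneously killing the extra variable $y_{n+1}$.  The key point is that the braid closure permutation for $\iota(\beta)\sigma_n^{\pm 1}$ identifies $y_n$ with $y_{n+1}$ in the quotient defining $\CY$, so that after this identification one may use a $y$-ified analogue of the classical Markov II homotopy.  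I would model this computation directly on the corresponding calculation in \cite{GH}.

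Once invariance is proved for $\CY$, invariance for $\CY_{\gll_N}$ follows with essentially no extra work.  By Proposition~\ref{prop:HH with dN is functor} and Lemma~\ref{lem: bimodule dN multiplicativity}, the differential $d_N$ is functorial and lax monoidal on Hochschild homology, hence compatible with all bimodule maps used to realize the three Markov moves.  Consequently, each chain equivalence appearing in (i)--(iii) commutes with $d_N$, and totalizing the resulting double complexes gives the desired equivalence $\CY_{\gll_N}(\beta) \simeq \CY_{\gll_N}(\beta')$.  The only nontrivial check here is that the chain homotopies, not merely the chain maps, can be lifted through $d_N$; this follows from the same ext-vanishing arguments (analogous to Lemma~\ref{lemma:lifting maps}) that govern the existence of $y$-ified lifts in $\mathcal{Y}_{n,w}$.
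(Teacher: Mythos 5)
Your overall plan is the right one and follows the same lines as what \cite{GH} does for $\CY$ and what the paper's remark after the theorem does for $\CY_{\gll_N}$: Markov-move invariance for the $y$-ified Hochschild complex, then a bootstrap to the $\gll_N$ version via functoriality of $d_N$. Note that the paper itself only cites \cite{CK,GH} for this theorem and explains the $\gll_N$ bootstrap in the remark that follows, so your proposal is a reconstruction of those arguments rather than a comparison against a proof in the text.

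Your steps (i) and (ii) are fine and match the standard strategy. Your step (iii) correctly identifies that $y_n = y_{n+1}$ in the quotient for $\CY(\iota(\beta)\sigma_n^{\pm 1})$ because $n$ and $n+1$ lie on the same link component; this is exactly the paper's observation. But then your final bootstrap paragraph is where the framing drifts. You assert that ``each chain equivalence appearing in (i)--(iii) commutes with $d_N$,'' and try to repair the stabilization case with an ext-vanishing/homotopy-lifting argument in the spirit of Lemma~\ref{lemma:lifting maps}. That is not the mechanism the paper uses, and it is not clearly adequate as stated: the Markov 2 equivalence is not a chain of bimodule maps living in a single category (the number of strands changes), so functoriality of $d_N$ does not apply directly, and ext-vanishing on $\mathcal{Y}_{n,w}$ is not obviously the right tool for lifting a homotopy equivalence of double complexes on different strand numbers. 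The paper's route is cleaner and actually closes the gap: because $y_n - y_{n+1} = 0$ in the quotient, all $y$-terms vanish in the part of the total differential coming from $F^y(\sigma_n^{\pm1})$, so near the destabilized crossing the $y$-ified complex literally coincides with the undeformed one, and the stabilization argument becomes identical to the already-known $\HH_{\gll_N}$ case. I would recommend replacing your ext-vanishing appeal with this observation; it makes the $\gll_N$ bootstrap genuinely free.
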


\begin{remark}
    In fact, once it is known that $\CY(\beta)$ and $\HH_{\gll_N}(\beta)$ are link invariants, there is nothing further to check to see that $\CY_{\gll_N}$ is: since we know from Proposition~\ref{prop:HH with dN is functor} that the action of $d_N$ on complexes of Soergel bimodules is functorial, the isomorphisms $\CY(\beta) \cong \CY(\beta')$ when $\beta$ and $\beta'$ differ by a second or third Reidemeister move automatically commute with $d_N$ and so descend to $\CY_{\gll_N}$. Therefore, the only possible issue is with the first Reidemeister move, but here the relation $y_i = y_{w(i)}$ makes all terms with $y$'s drop out of the relevant part of the differential, and now the proof is identical to the one for $\HH_{\gll_N}$.
\end{remark}

\begin{proposition}\label{prop:CY is multipicative}
The constructions $X\mapsto \CY(X)$ and $X\mapsto \CY_{\gll_N}(X)$ are multiplicative  (lax monoidal) in the following sense.  Given $\tw_{\alpha}(X),\tw_{\alpha'}(X')\in \mathcal{Y}_{n,1}$ we have chain maps
\[
\CY(\tw_{\alpha}(X))\otimes \CY(\tw_{\alpha'}(X')) \to \CY(\tw_{\alpha}(X)\star \tw_{\alpha'}(X'))
\]
(and similarly for $\CY_{\gll_N}$) satisfying appopriate associativity and unit constraints.
\end{proposition}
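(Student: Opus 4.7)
The plan is to extend the base lax monoidal structures on $\HH$ (Proposition~\ref{prop:HH monoidal}) and $\HH_{\gll_N}$ (Lemma~\ref{lem: bimodule dN multiplicativity}) to the $y$-ified setting by applying them termwise and $\k[\yy]$-linearly, and then to verify chain-map compatibility with each summand of the deformed total differential separately. Since we work in $\mathcal{Y}_{n,1}$, the permutation $w$ is trivial, so the identification $y_i = y_{w(i)}$ in Definition~\ref{def:HY} is vacuous, and $\CY(\tw_\alpha(X)) = \HH(X) \cotimes \k[\yy]$ as a graded module, equipped with the differential $\HH(d_X) + \HH(\alpha)$ obtained by applying $\HH$ coefficient-by-coefficient to the $\yy$-expansion of $\alpha$; analogously $\CY_{\gll_N}$ carries an additional $d_N$ summand. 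The definition of $\star$ on $\mathcal{Y}_{n,1}$ presents the target as $\tw_\gamma(X \star X')$ with $\gamma = \alpha \star \id + \id \star \alpha'$. Accordingly, I would define the multiplication map
\[
\mu \colon \CY(\tw_\alpha(X)) \otimes \CY(\tw_{\alpha'}(X')) \longrightarrow \CY(\tw_\gamma(X \star X'))
\]
as the $\k[\yy]$-linear extension of the termwise application of the base multiplication $m \colon \HH(X) \otimes \HH(X') \to \HH(X \star X')$ of Proposition~\ref{prop:HH monoidal}; the same formula defines the $\CY_{\gll_N}$ version.

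The chain-map verification splits into four pieces. The differential on the source is $(\HH(d_X) + \HH(\alpha)) \otimes \id + \id \otimes (\HH(d_{X'}) + \HH(\alpha'))$ with the usual Koszul signs, and on the target it is $\HH(d_{X \star X'}) + \HH(\gamma)$. Using $d_{X \star X'} = d_X \star \id + \id \star d_{X'}$ and $\gamma = \alpha \star \id + \id \star \alpha'$, the four pieces match as follows. Compatibility with $\HH(d_X)$ and $\HH(d_{X'})$ is exactly Proposition~\ref{prop:HH monoidal} applied termwise. Compatibility with the $\alpha$-summands follows from naturality of $m$ (Remark~\ref{rem: HH is a functor}): for any bimodule morphism $f$ one has $m \circ (\HH(f) \otimes \id) = \HH(f \star \id) \circ m$, and applying this coefficient-by-coefficient in $\yy$ gives compatibility with $\HH(\alpha \star \id)$, and symmetrically for $\HH(\id \star \alpha')$. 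For $\CY_{\gll_N}$, compatibility with $d_N$ is exactly Lemma~\ref{lem: bimodule dN multiplicativity}. The associativity and unit constraints are then inherited from the corresponding properties of $m$ at the base level via the same naturality argument.

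The main subtlety is the $d_N$-compatibility in the $\gll_N$ case. At the chain level of the Koszul complex $\Delta_n(\xx,\xx')$, multiplication does not strictly commute with $d_N$: the proof of Lemma~\ref{lem: bimodule dN multiplicativity} only exhibits $d_N \circ m - m \circ d_N$ as a $d_{\HH}$-coboundary via an explicit homotopy $\nu$. What rescues the argument is that $\CY$ and $\CY_{\gll_N}$ are built from the actual Hochschild homology $\HH$, not from the Koszul complex computing it, so this defect vanishes precisely where strict commutativity is needed. The remaining bookkeeping—tracking Koszul signs and checking that the $\k[\yy]$-linear extension is well-defined on $\cotimes \k[\yy]$, which is guaranteed by the weight-preservation of $m$—is routine.
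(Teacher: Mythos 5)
Your proof is correct and takes essentially the same route as the paper: the paper's own proof is a one-line citation of the naturality and lax monoidal structure on $\HH$ (Proposition~\ref{prop:HH monoidal}) together with the compatibility of the multiplication with $d_N$ (Lemma~\ref{lem: bimodule dN multiplicativity}). Your expanded verification — splitting the total differential into the $\HH(d_X)$, $\HH(d_{X'})$, $\HH(\alpha)$, $\HH(\alpha')$, and $d_N$ pieces, and noting that the Koszul-level defect $[d_N,m]$ is only a $d_{\HH}$-coboundary but this is immaterial because both $d_N$ and $m$ are already defined on the Hochschild homology $\HH$ where they commute strictly — correctly unwinds what the paper's one-liner leaves implicit.
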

\begin{proof}
This follows from the fact that the multiplication $\HH(M)\otimes \HH(M')\to \HH(M\star M')$ is natural in $M$ and $M'$ (Proposition \ref{prop:HH monoidal}), and commutes with with $d_N$ (Lemma \ref{lem: bimodule dN multiplicativity}).
\end{proof}

\begin{lemma}(\cite[Theorem 4.20]{GH})
\label{lem: parity}
Assume that $\k$ is a field, $\tw_{\alpha}(X)\in \mathcal{Y}_{n,w}$ and $\HHH(X)$ is supported in {\bf even} homological degrees. Then we have an isomorphism of $\k[\yy]$-modules
$$
\HY(\tw_{\alpha}(X))=\HHH(X)\cotimes\k[\yy] / (y_i = y_{w(i)})_{1\leq i\leq n} 
$$
and an isomorphism of $\k$-vector spaces
$$
\HHH(X)=\HY(X)/(y_1,\ldots,y_n).
$$
\end{lemma}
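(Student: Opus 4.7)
The plan is to equip $\CY(\tw_{\alpha}(X))$ with the decreasing $y$-adic filtration $F^p$ consisting of elements of total $y$-degree $\geq p$. After normalizing so that $\alpha_0 = 0$ (which we may do as remarked after Definition \ref{def:yified cat}), we may write $\alpha = \alpha_1 + \alpha_2 + \cdots$ where $\alpha_k$ raises $y$-degree by exactly $k$, so the total differential $D = \HH(d_X) + \alpha_1 + \alpha_2 + \cdots$ preserves the filtration. On the associated graded, only the $\HH(d_X)$ piece survives, so the $E_0$ page is $\HH(X) \otimes \k[\yy]/(y_i - y_{w(i)})$ with differential $\HH(d_X)$, and therefore
\[
E_1 \ = \ \HHH(X) \cotimes \k[\yy]/(y_i = y_{w(i)})_{1\leq i\leq n}.
\]

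The next step is to show this spectral sequence collapses at $E_1$. For $r \geq 1$, the differential $d_r$ raises $y$-degree by $r$ while the total differential $D$ has $T$-degree $+1$; since each $y_i$ carries $T$-degree $+2$, the induced operation on $\HHH(X)$ shifts $T$-degree by $1 - 2r$, which is odd for every $r \geq 1$. By the parity hypothesis on $\HHH(X)$ all such differentials vanish, so $E_\infty = E_1$. This already establishes the stated isomorphism as graded $\k$-vector spaces.

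It remains to upgrade the associated-graded statement to an honest isomorphism of $\k[\yy]/\sim$-modules. The plan is to fix a homogeneous $\k$-basis $\{v_\lambda\}$ of $\HHH(X)$ and to produce $D$-closed lifts $\tilde v_\lambda \in \CY(\tw_{\alpha}(X))$ by iteration: starting from any $\HH(d_X)$-closed representative $\tilde v_\lambda^{(0)}$, the defect $D\tilde v_\lambda^{(0)} = \alpha \tilde v_\lambda^{(0)}$ lies in $F^1$ and is $D$-closed, hence by collapse of the spectral sequence equals $D$ applied to a term of strictly higher $y$-degree modulo $F^2$; iterating yields corrections $\tilde v_\lambda^{(k)} \in F^k$ that sum to a $D$-closed lift $\tilde v_\lambda$ in the $y$-complete complex. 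The $\k[\yy]/\sim$-linear extension $v_\lambda \otimes f \mapsto f\cdot \tilde v_\lambda$ defines a map of $\k[\yy]/\sim$-modules that induces the identity on the associated graded, and hence is an isomorphism by completeness of the filtration.

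The hardest point is ensuring convergence of these iterative lifts inside the completed tensor product; this is resolved precisely because the filtration is by $y$-degree and the complex uses $\cotimes$, so infinite sums with at most finitely many terms in each $(Q,T,A)$-multidegree make sense. Finally, the second isomorphism follows immediately from the first by setting $y_i = 0$: on the right-hand side one obtains $\HHH(X) \cotimes \k[\yy]/(\yy) = \HHH(X)$, which matches $\HY(X)/(y_1,\ldots,y_n)$ under the module isomorphism just constructed.
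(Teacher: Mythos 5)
Your argument is correct and is the standard filtration-by-$y$-degree with parity-collapse proof, which is exactly the content of \cite[Theorem 4.20]{GH} that the paper cites; the paper itself offers no independent proof, only the remark that the argument carries over to the completed tensor product, a point your final paragraph addresses. The degree bookkeeping is right: $D$ has $T$-degree $1$, each $y_i$ has $T$-degree $2$, so the $r$-th differential shifts the $\HHH(X)$-component by $T$-degree $1-2r$, odd for all $r\geq 1$, and the even-support hypothesis kills it.

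One minor point of phrasing in the lifting step: the mechanism producing the correction $w_1$ is the parity hypothesis applied directly, not ``collapse of the spectral sequence'' per se. The defect $\alpha_1\tilde v_\lambda^{(0)}$ is $\HH(d_X)$-closed and its $\HHH(X)$-component sits in odd $T$-degree, hence is zero, hence the defect is $\HH(d_X)$-exact, which produces $w_1$ of $y$-degree exactly $1$ and the same $(Q,T,A)$-degree as $v_\lambda$. Iterating gives $w_k$ of $y$-degree exactly $k$ with fixed tridegree, which is precisely the admissibility condition defining $\cotimes$ — this could be said more crisply than your appeal to ``at most finitely many terms in each multidegree'', though the substance is the same. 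The collapse of the spectral sequence is of course a \emph{consequence} of this same parity fact applied page by page, so the two are intertwined, but the inductive lift needs only the cruder parity statement at each stage.
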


Note that \cite{GH} did not use completed tensor product since it focused on bounded complexes. Nevertheless, the same arguments apply verbatim if we use bounded above complexes and completed tensor product. 

\section{Projectors and symmetric colored homology}

\subsection{Categorified symmetrizers}
\label{ss:Pn}

We recall some constructions from \cite{Hog,idempotents}.  

\begin{definition}\label{def:Pn}
Define $\PP_n\in \Ch^-(\SBim_n)$ by the formula
\[
\PP_n \ := \ \left[\cdots \to T^{-3}\bigoplus_{i_1,i_2,i_3} B_{i_1}\star B_{i_2}\star B_{i_3}\to T^{-2}\bigoplus_{i_1,i_2} B_{i_1}\star B_{i_2} \to T\inv \bigoplus_{i_1} B_{i_1}\to \underline{R}\right]
\]
in which the indices $i_j$ range from $1$ to $n-1$, 
and the differential is an alternating sum $\sum_{j=1}^r(-1)^{j-1}\id^{\star j-1}\star b_{i_j}\star \id^{\star r-j}$. We will be often omitting the tensor product symbol $\star$ for brevity.

Let $\upsilon\colon R\to \PP_n$ denote the inclusion of the degree zero bimodule.
\end{definition}

\begin{theorem}\label{thm:P unique}(\cite{Hog,idempotents})
The complex $\PP_n$ is uniquely characterized up to homotopy equivalence in $\Ch^-(\SBim_n)$ by (1) $\PP_n\star B_i\simeq 0 \simeq B_i\star \PP_n$ for all $1\leq i\leq n-1$ and (2) there exists a chain map $\upsilon\colon R\to \PP_n$ such that $\Cone(\upsilon)$ is homotopy equivalent to a complex whose chain groups are (direct sums of shifts of) nontrivial tensor products of the $B_i$.\qedhere
\end{theorem}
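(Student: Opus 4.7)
The plan is to prove uniqueness by the standard absorbing/convolution technique for categorified idempotents. Suppose $\PP_n$ and $\PP_n'$ are two complexes in $\Ch^-(\SBim_n)$ both satisfying properties (1) and (2), with associated unit maps $\upsilon\colon R\to \PP_n$ and $\upsilon'\colon R\to \PP_n'$. The goal is to construct a homotopy equivalence $\PP_n\simeq \PP_n'$; a symmetric argument then gives the uniqueness of the unit map and of the equivalence up to homotopy, but for the statement as written only the object-level claim is needed.

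First I would tensor the triangle $R\xrightarrow{\upsilon'}\PP_n'\to \Cone(\upsilon')$ on the left with $\PP_n$, obtaining an exact triangle
\[
\PP_n \ \longrightarrow \ \PP_n\star \PP_n' \ \longrightarrow\  \PP_n\star \Cone(\upsilon').
\]
By hypothesis (2) for $\PP_n'$, the complex $\Cone(\upsilon')$ is homotopy equivalent to an iterated convolution built from (shifts of) tensor products $B_{i_1}\star \cdots \star B_{i_r}$ with $r\geq 1$. Applying $\PP_n\star(-)$ termwise and using $\PP_n\star B_{i_1}\simeq 0$ from hypothesis (1), each such term is contractible. The key step is to promote this termwise contractibility to contractibility of the full (potentially unbounded above) convolution $\PP_n\star \Cone(\upsilon')$; one does this by an inductive argument along the filtration coming from the convolution, using that homotopy colimits of contractible pieces are contractible (this is safe because $\PP_n$ is in $\Ch^-$ and the tensor products are taken in $\Ch^-(\SBim_n)$, where such convolutions behave well). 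Consequently the map $\PP_n\to \PP_n\star \PP_n'$ induced by $\upsilon'$ is a homotopy equivalence.

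By the entirely symmetric argument, tensoring $R\xrightarrow{\upsilon}\PP_n\to \Cone(\upsilon)$ on the right with $\PP_n'$ and using hypothesis (1) for $\PP_n'$ (i.e. $B_i\star \PP_n'\simeq 0$) shows that $\PP_n'\to \PP_n\star \PP_n'$ induced by $\upsilon$ is a homotopy equivalence. Composing one equivalence with the inverse of the other produces the desired homotopy equivalence $\PP_n\simeq \PP_n'$.

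The main obstacle I anticipate is the convolution/contractibility step in the second paragraph: one must show that tensoring $\PP_n$ with a (possibly infinite) iterated cone of objects, each of which becomes contractible after this tensor, itself yields a contractible complex. This is a standard but nontrivial point about convolutions in triangulated (or dg) categories of bounded above complexes, and to make it rigorous one typically argues via a telescoping/filtration argument, or appeals to a general lemma of the ``absorption implies contractibility'' type already used in \cite{Hog, idempotents}. All the other steps reduce to naturality of $\star$, the axioms of triangulated categories, and the two defining properties of $\PP_n$.
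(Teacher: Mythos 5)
The paper does not contain its own proof of Theorem \ref{thm:P unique}; it is stated as a citation to \cite{Hog,idempotents}, and the \verb|\qedhere| in the theorem environment merely terminates the statement. Your argument is precisely the standard uniqueness proof for counital idempotents that appears in those references (compare \cite[Proposition 4.16 and the surrounding discussion]{idemptriang}, which the paper itself invokes elsewhere), and it is correct. Two small points of precision are worth noting. First, the potential unboundedness is \emph{below}, not above: $\Cone(\upsilon')$ lives in $\Ch^-$, so $\PP_n\star \Cone(\upsilon')$ is bounded above but may extend infinitely far in negative cohomological degree. Second, the contractibility step is best not phrased as a homotopy-colimit statement; what actually makes it work is that both factors are bounded above, so the total complex has only finitely many terms in each total degree, the column filtration is exhaustive, and one can assemble a global contraction from the column contractions by a homological perturbation / telescoping argument whose infinite sum is locally finite in each degree. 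With those phrasings tightened, the argument matches the one in the literature the paper cites.
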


The complex $\PP_n$ admits the structure of a dg algebra object in $\Ch^-(\SBim_n)$, with multiplication given componentwise by concatenation
\begin{equation}\label{eq:concatenate Bs}
(B_{i_1}\cdots  B_{i_r})\star (B_{j_1} \cdots  B_{j_s})\buildrel\cong\over\to B_{i_1} \cdots B_{i_r} B_{j_1}\cdots  B_{j_s},
\end{equation}
and unit map given by $\upsilon\colon R\to \PP_n$.

One of the main results in \cite{Hog} is the computation of $\End(\PP_n)$ and $\HHH(\PP_n)$.

\begin{theorem}\label{thm:end Pn}(\cite{Hog})
We have quasi-isomorphisms of dg algebras 
$$
\End(\PP_n;\Z)\simeq \Hom(R,\PP_n)\simeq \Z\left[\widetilde{u}_0,\ldots,\widetilde{u}_{n-1}\right]
$$ 
where the $\widetilde{u}_i$ are even variables of weight $qt^{1-i}$. Moreover, 
$$
\HHH(\PP_n;\Z)\simeq \Z\left[\widetilde{u}_0,\ldots,\widetilde{u}_{n-1},\widetilde{\xi}_0,\dots,\widetilde{\xi}_{n-1}\right]
$$ 
where the $\widetilde{\xi}_i$ are odd variables of weight $at^{1-i}$.
\end{theorem}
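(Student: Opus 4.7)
The plan is to prove the three claimed isomorphisms using the uniqueness and idempotency properties of $\PP_n$ from Theorem \ref{thm:P unique}, proceeding by induction on $n$.

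First, to establish $\End(\PP_n) \simeq \Hom(R,\PP_n)$, I would apply the functor $\Hom(-,\PP_n)$ to the distinguished triangle $R \xrightarrow{\upsilon} \PP_n \to \Cone(\upsilon) \to R[1]$. It suffices to show that $\Hom(\Cone(\upsilon),\PP_n) \simeq 0$. By Theorem \ref{thm:P unique}, the chain groups of $\Cone(\upsilon)$ are direct sums of shifts of nontrivial tensor products $B_{i_1}\star\cdots\star B_{i_r}$ with $r\geq 1$. Each such summand satisfies $\Hom(B_{i_1}\star\cdots\star B_{i_r}, \PP_n)\simeq 0$: by the self-biadjointness of $B_{i_r}$ up to grading shift, this hom complex is quasi-isomorphic to $\Hom(B_{i_1}\star\cdots\star B_{i_{r-1}}, B_{i_r}\star\PP_n)\simeq 0$, and the vanishing for the whole $\Cone(\upsilon)$ follows by a standard convergent spectral sequence for the filtration by the columns of the bar-type complex defining $\Cone(\upsilon)$.

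Second, to compute $\Hom(R,\PP_n)$ as a dg algebra, I would use induction on $n$ via a recursive description of $\PP_n$ built from $\PP_{n-1}$. The base case $n=1$ gives $\PP_1=R$, so $\End(\PP_1)=R=\Z[\widetilde{u}_0]$. For the inductive step, explicit cocycles $\widetilde{u}_i\colon R\to \PP_n$ may be constructed: $\widetilde{u}_0$ arises from the multiplication by an appropriate symmetric function in $\xx$, while the higher $\widetilde{u}_i$ are built inductively from the recursive structure, each landing in the proper bidegree $qt^{1-i}$. To prove that these generate freely, one combines the explicit construction with a Poincar\'e series check: the graded dimensions of $\Hom(R,\PP_n)$ can be computed from the defining complex of $\PP_n$ by working out the bimodule centers $Z(B_{i_1}\star\cdots\star B_{i_r})$ of its chain groups (these centers have well-known descriptions in terms of symmetric and partially symmetric polynomials) and then taking the Euler characteristic of the resulting complex. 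Matching this count against the Hilbert series of $\Z[\widetilde{u}_0,\ldots,\widetilde{u}_{n-1}]$ forces freeness, while algebraic independence is verified by evaluating the $\widetilde{u}_i$ on a suitable ``test'' module.

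Third, the $\HHH$ computation follows by applying Hochschild homology to $\PP_n$ via the Koszul model $\CHH(\PP_n)=\PP_n\otimes_{\k[\xx,\xx']}\Delta_n(\xx,\xx')$. By Proposition \ref{prop:HH monoidal} and the dg algebra structure on $\PP_n$, the resulting $\HHH(\PP_n)$ is a graded-commutative $\Z$-algebra whose Hochschild-degree-zero part recovers $\End(\PP_n)$ from the first two steps. The odd generators $\widetilde{\xi}_i$ of weight $at^{1-i}$ are then constructed by pairing the even generators $\widetilde{u}_i$ with a canonical Hochschild-degree-one class arising from the $\theta_j$ variables in $\Delta_n$. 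Freeness of the bigraded polynomial algebra in both even and odd generators can be verified by a final Poincar\'e series comparison against the known stable HOMFLY polynomial of $T(n,\infty)$. The main obstacle is the second step: constructing the generators $\widetilde{u}_i$ in the correct bidegrees and proving algebraic independence requires carefully tracking bimodule centers through the iterated bar construction defining $\PP_n$, which is where the heart of the argument in \cite{Hog} lies; once $\End(\PP_n)$ is controlled, the promotion to $\HHH(\PP_n)$ is comparatively formal.
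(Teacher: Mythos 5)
This theorem is cited from \cite{Hog}; the present paper offers no proof, so the comparison is against Hogancamp's original argument. Your first step --- reducing $\End(\PP_n)\simeq\Hom(R,\PP_n)$ via the triangle $R\to\PP_n\to\Cone(\upsilon)$ and biadjointness of the $B_i$ --- is correct and is indeed how this reduction is done (modulo the convergence caveat for the column filtration of the unbounded complex, which you rightly flag). Your outline of the second step is where the real content lies, and you acknowledge this, but the specific mechanism you propose has a gap: an Euler-characteristic match against the Hilbert series of $\Z[\widetilde{u}_0,\ldots,\widetilde{u}_{n-1}]$ does \emph{not} force freeness. The Euler characteristic collapses the $T$-degree into signs, so cancelling pairs of extra classes in adjacent parities cannot be ruled out this way. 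What makes the Euler-characteristic count decisive is the separate \emph{parity} statement that the homology is concentrated in even $T$-degree (all $\widetilde{u}_i$ have $T$-degree $-2i$); proving that parity is itself a substantial part of Hogancamp's induction, which proceeds not by counting but via an explicit recursion expressing $\PP_n$ in terms of $\PP_{n-1}$ (and partial/Markov trace identities). Over $\Z$ one also has to separately rule out torsion, which Euler characteristics cannot see.

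Your third step is also problematic as written. You propose building $\widetilde{\xi}_i$ by ``pairing'' $\widetilde{u}_i$ with a fixed Hochschild-degree-one class coming from the $\theta_j$ variables. But $\wt(\widetilde{\xi}_i)/\wt(\widetilde{u}_i)=a/q$, whereas every $\theta_j$ has weight $a$; there is no canonical degree-one class in the Koszul model of weight $a/q$ to multiply against, so the naive product $\widetilde{u}_i\cdot\theta_j$ lands in the wrong $Q$-degree. In \cite{Hog} the odd generators do not arise as scalar multiples of the $\widetilde{u}_i$ by a single universal class; they are produced by the same Markov-type recursion that controls the even part, which is precisely what also delivers the crucial even-parity statement you implicitly need in step two.
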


Next, we define a lift of $\PP_n$ to the category $\mathcal{Y}_{n,1}$.  For each linear polynomial $f\in R$, let $\eta_f\in \End(\PP_n)$ be the weight $T\inv$ endomorphism defined as follows.  Given a sequence $i_1,\ldots,i_r$ and an index $1\leq j\leq r$, let $(\eta_f)_{i_1,\ldots,i_j,\ldots,i_r}^{i_1,\ldots,i_j,i_j,\ldots,i_r}$ be the composition of maps
\[
\begin{tikzpicture}
\node (a) at (0,0) {$B_{i_1}\cdots B_{i_j}\cdots B_{i_r}$};
\node (b) at (6,0) {$B_{i_1}\cdots B_{i_j}B_{i_j}\cdots B_{i_r}$};
\node (c) at (12,0) {$B_{i_1}\cdots B_{i_j}B_{i_j}\cdots B_{i_r}$};
\path[-stealth]
(a) edge node[above] {\tiny$\id^{\star j-1}\star \Delta \star \id^{\star r-j}$} (b)
(b) edge node[above] {\tiny$\id^{\star j}\star f\star \id^{\star r-j}$} (c);
\end{tikzpicture}
\]
where the $\Delta$ here is the comultiplication $B_{i_j}\to B_{i_j}\star B_{i_j}$ and in the second map, $f$ acts by ``middle multiplication'' on $B_{i_j}\star B_{i_j}$.   Then we let $\eta_f$ be the sum of all such maps, over all sequences $i_1,\ldots,i_r\in \{1,\ldots,n-1\}$ and all indices $1\leq j\leq r$, times the sign $(-1)^{j-1}$.

 It is a straightforward exercise to verify that $[d_{\PP},\eta_f]=f(\xx)-f(\xx')$, and that the $\eta_f$ anti-commute and square to zero.  Let us abbreviate by writing $\eta_i:=\eta_{x_i}$.   Thus, we have a well defined lift
\[
\PP_n^y := \tw_{\sum_i \eta_iy_i}(\PP_n) \in \mathcal{Y}_{n,1}
\]

The deformed projector $\PP_n^y$ has the structure of a dg algebra object in $\mathcal{Y}_{n,1}$, with multiplication and units defined by the exact same formulas for $\PP_n$.

The following says that lifts of $\PP_n$ to $\mathcal{Y}_{n,1}$ are unique up to equivalence.
\begin{theorem}\label{thm:Py unique}
Suppose $\PP_n', \PP_n$ are two different models for the symmetrizer, and suppose $\tw_{\alpha}(\PP_n)$, $\tw_{\alpha'}(\PP_n')$ are lifts to objects of $\mathcal{Y}_{n,1}$. 
 Then $\tw_{\alpha}(\PP_n)\simeq \tw_{\alpha'}(\PP_n')$.
\end{theorem}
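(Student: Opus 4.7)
The plan is to apply the lifting/uniqueness machinery of Section 2.3 to an underlying homotopy equivalence between the two symmetrizer models. The homology computation of Theorem~\ref{thm:end Pn} will provide the required ext-vanishing input. In essence, this is Lemma~\ref{lemma:uniqueness of lifts} extended from two lifts of the same $X$ to two lifts of the homotopy-equivalent complexes $\PP_n$ and $\PP_n'$.

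First, by Theorem~\ref{thm:P unique}, the models $\PP_n$ and $\PP_n'$ are homotopy equivalent in $\Ch^-(\SBim_n)$; fix a homotopy equivalence $\phi\colon \PP_n\to \PP_n'$, ignoring the $y$-deformation. The core step is to verify the ext-vanishing hypothesis of Lemma~\ref{lemma:lifting maps} for the pair $(\PP_n,\PP_n')$, namely that every closed element of $\Hom(\PP_n,\PP_n')$ of weight $Q^{2K}T^{1-2K}$ with $K\geq 1$ is null-homotopic. Post-composition with $\phi$ is a quasi-isomorphism
\[
\End(\PP_n)\xrightarrow{\ \phi\circ -\ }\Hom(\PP_n,\PP_n'),
\]
so the two cohomologies agree in every bidegree. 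By Theorem~\ref{thm:end Pn},
\[
H^*(\End(\PP_n))\cong \k[\widetilde{u}_0,\ldots,\widetilde{u}_{n-1}],
\]
with $\wt(\widetilde{u}_i)=qt^{1-i}=Q^{2i}T^{2-2i}$ under the conventions of \eqref{eq: qta to QTA}. Every monomial in the $\widetilde{u}_i$, and therefore every cohomology class, carries even $T$-degree; weights of the form $Q^{2K}T^{1-2K}$ have odd $T$-degree. Hence the cohomology vanishes in the relevant weights, so any closed element of $\Hom(\PP_n,\PP_n')$ of such weight is exact, in particular null-homotopic.

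With the hypothesis verified, Lemma~\ref{lemma:lifting maps} yields a closed degree-zero lift $\phi^y\colon \tw_{\alpha}(\PP_n)\to \tw_{\alpha'}(\PP_n')$ in $\mathcal{Y}_{n,1}$ of $\phi$, and Lemma~\ref{lemma:lift of equiv} promotes this lift to a homotopy equivalence in $\mathcal{Y}_{n,1}$, finishing the proof.

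The only non-formal ingredient is the ext-vanishing check, which I expect to be the main obstacle in principle but in practice collapses to a parity argument on the $T$-grading once the endomorphism algebra of $\PP_n$ is known; all degree generators of $H^*(\End(\PP_n))$ live in even $T$-degree, and the deformation obstructions/lift obstructions produced by the machinery of Section 2.3 land in odd $T$-degree.
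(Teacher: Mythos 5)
Your proof is correct and follows essentially the same route as the paper: both reduce to the homotopy equivalence $\PP_n \simeq \PP_n'$ (Theorem~\ref{thm:P unique}), verify the ext-vanishing hypothesis via the $T$-parity argument on $H^*(\End(\PP_n)) \cong \k[\widetilde{u}_0,\ldots,\widetilde{u}_{n-1}]$ from Theorem~\ref{thm:end Pn}, and then invoke the lifting machinery of Section 2.3. The only small difference is that the paper cites Lemma~\ref{lemma:uniqueness of lifts} directly (which as stated concerns two lifts of a single $X$), whereas you unpack it into Lemmas~\ref{lemma:lifting maps} and~\ref{lemma:lift of equiv}, which is strictly speaking more careful when the two underlying complexes are different models $\PP_n\ne\PP_n'$.
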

\begin{proof}
The complexes $\PP_n'$ and $\PP_n$ are homotopy equivalent by the unique characterization from Theorem \ref{thm:P unique}.  By Theorem \ref{thm:end Pn} all elements of $\Hom(\PP_n,\PP_n')$ of weight $T^{1-2r}Q^{2r}$ vanish, so by Lemma \ref{lemma:uniqueness of lifts} we get  $\tw_{\alpha}(\PP_n)\simeq \tw_{\alpha'}(\PP_n')$.
\end{proof}

The following are some useful properties of $\PP_n^y$.
\begin{proposition}\label{prop:Py is idempt}
The $y$-ified symmetrizer $\PP_n^y$ is a unital idempotent in $\mathcal{Y}_{n,1}$.
\end{proposition}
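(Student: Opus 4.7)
The plan is to deduce the $y$-ified idempotence from the undeformed idempotence via the uniqueness/lifting machinery already established (Lemmas \ref{lemma:obstructions}, \ref{lemma:lifting maps}, \ref{lemma:lift of equiv}, \ref{lemma:uniqueness of lifts}, and Theorem \ref{thm:Py unique}). By a unital idempotent in $\mathcal{Y}_{n,1}$ I mean an object $\PP_n^y$ equipped with a unit map $\upsilon^y\colon R\to\PP_n^y$ such that both $\upsilon^y\star\id_{\PP_n^y}$ and $\id_{\PP_n^y}\star\upsilon^y$ are homotopy equivalences $\PP_n^y\xrightarrow{\sim}\PP_n^y\star\PP_n^y$. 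Equivalently, the algebra multiplication on $\PP_n^y$ is a homotopy equivalence $\PP_n^y\star\PP_n^y\xrightarrow{\sim}\PP_n^y$ admitting a two-sided unit.

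First I would recall that the undeformed $\PP_n$ is a unital idempotent: this is essentially contained in Theorem \ref{thm:P unique}, since $\PP_n\star\PP_n$ also satisfies the two characterizing properties of the symmetrizer and the multiplication map intertwines the two unit maps. Concretely, $\upsilon\star\id\colon \PP_n\to\PP_n\star\PP_n$ is a homotopy equivalence in $\Ch^-(\SBim_n)$, and similarly for $\id\star\upsilon$. Next I would observe that $\PP_n^y\star\PP_n^y$, formed via the tensor product $\mathcal{Y}_{n,1}\star\mathcal{Y}_{n,1}\to\mathcal{Y}_{n,1}$ defined in the excerpt, is by construction an object of $\mathcal{Y}_{n,1}$ whose underlying complex of Soergel bimodules is $\PP_n\star\PP_n$; that is, $\PP_n^y\star\PP_n^y$ is a lift of $\PP_n\star\PP_n$ to $\mathcal{Y}_{n,1}$. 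Likewise, the morphism $\upsilon^y\star\id_{\PP_n^y}\colon \PP_n^y\to\PP_n^y\star\PP_n^y$ coming from the dg-algebra structure on $\PP_n^y$ (which exists because $\upsilon^y$ is part of that structure) is a morphism in $\mathcal{Y}_{n,1}$ whose underlying chain map is exactly $\upsilon\star\id_{\PP_n}$.

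With these two facts in hand, the conclusion follows immediately from Lemma \ref{lemma:lift of equiv}: any lift of a homotopy equivalence is itself a homotopy equivalence in $\mathcal{Y}_{n,1}$, so $\upsilon^y\star\id_{\PP_n^y}$ is a homotopy equivalence, and symmetrically for $\id_{\PP_n^y}\star\upsilon^y$. Alternatively one can apply Theorem \ref{thm:Py unique}: both $\PP_n^y\star\PP_n^y$ and $\PP_n^y$ are $y$-ifications of symmetrizers (since $\PP_n\star\PP_n$ is again characterized by the conditions of Theorem \ref{thm:P unique}), hence are canonically equivalent in $\mathcal{Y}_{n,1}$, and the equivalences are realized by the unit maps.

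The main potential obstacle is the very first observation — making sure that the multiplication and unit maps on $\PP_n$ defined by \eqref{eq:concatenate Bs} and $\upsilon$ actually witness idempotence in $\Ch^-(\SBim_n)$, and that the lift of $\upsilon\star\id$ supplied by the dg-algebra structure on $\PP_n^y$ genuinely reduces to $\upsilon\star\id$ modulo $y_1,\ldots,y_n$. The latter is clear from how the deformed algebra structure is defined (identical formulas to the undeformed case), while the former is a known feature of the categorified symmetrizer construction of \cite{Hog,idempotents}. Once both are in place, the lifting machinery does the rest with no further computation.
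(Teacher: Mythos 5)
Your proof is correct and takes essentially the same approach as the paper: the paper states the idempotence criterion as contractibility of $\PP_n^y\star\Cone(\one_n\to\PP_n^y)$ and $\Cone(\one_n\to\PP_n^y)\star\PP_n^y$, which is equivalent to your formulation in terms of $\upsilon^y\star\id$ and $\id\star\upsilon^y$ being homotopy equivalences, and then deduces both from the undeformed case via Lemma~\ref{lemma:lift of equiv}, exactly as you do.
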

\begin{proof}
We must show that $\PP_n^y\star \Cone(\one_n\to \PP_n^y)\simeq 0 \simeq \Cone(\one_n\to \PP_n^y)\star \PP_n^y$.  Both of these homotopy equivalences follow from their undeformed versions \cite{Hog}, by Lemma \ref{lemma:lift of equiv}.
\end{proof}

\begin{proposition}\label{prop:Py is central}
$\PP_n^y$ has the structure of an object in the Drinfeld center of the triangulated monoidal category $\bigoplus_w H^0(\mathcal{Y}_{n,w})$.
\end{proposition}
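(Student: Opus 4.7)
The plan is to lift the centrality of the undeformed projector $\PP_n$, which is established in \cite{Hog}, to $\mathcal{Y}_{n,w}$ by using the rigidity of $y$-ifications exactly as in Theorem~\ref{thm:Py unique}. Concretely, for every object $Z=\tw_\alpha(X)\in \mathcal{Y}_{n,w}$ one must construct a homotopy equivalence
\[
c_Z : \PP_n^y\star Z \ \to\ Z\star \PP_n^y
\]
that is natural in $Z$ and satisfies the hexagon identity $c_{Z\star Z'}\simeq (\id_Z\star c_{Z'})\circ (c_Z\star \id_{Z'})$. The undeformed centrality from \cite{Hog} already supplies homotopy equivalences $c_X\colon \PP_n\star X\to X\star \PP_n$ together with the hexagon and naturality relations, so the entire task is to lift these data from $\Ch(\SBim_n)$ to $\mathcal{Y}_{n,w}$.

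First I would construct $c_Z$. Both $\PP_n^y\star Z$ and $Z\star \PP_n^y$ are objects of $\mathcal{Y}_{n,w}$ whose underlying complexes are $\PP_n\star X$ and $X\star \PP_n$, and the $y$-deformation on each is constructed coherently from $\alpha$ and the deformation of $\PP_n^y$. Lemma~\ref{lemma:lifting maps} reduces the existence of a lift of $c_X$ to the ext-vanishing condition that every closed element of $\Hom(\PP_n\star X,X\star \PP_n)$ of weight $Q^{2K}T^{1-2K}$ with $K\geq 1$ is null-homotopic. When $X$ is bounded, the idempotence $\PP_n\star \PP_n\simeq \PP_n$ and Theorem~\ref{thm:end Pn} control this hom complex: its homology is built from tensor products of the polynomial generators $\widetilde{u}_k$ of weight $Q^{2k}T^{2-2k}$, none of which hit the forbidden slopes $Q^{2K}T^{1-2K}$. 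Lemma~\ref{lemma:lift of equiv} then automatically promotes the lift $c_Z$ to a homotopy equivalence.

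Next I would verify naturality and the hexagon. For a closed morphism $f^y\colon Z\to Z'$ in $\mathcal{Y}_{n,w}$, the two composites $(f^y\star \id)\circ c_Z$ and $c_{Z'}\circ (\id\star f^y)$ are closed morphisms in $\Hom_{\mathcal{Y}_{n,w}}(\PP_n^y\star Z,Z'\star \PP_n^y)$ that lift the same underlying morphism (since undeformed naturality holds). The uniqueness half of the lifting argument, in the style of Lemma~\ref{lemma:uniqueness of lifts}, forces them to be homotopic. The hexagon is checked identically: both sides of $c_{Z\star Z'}\simeq (\id_Z\star c_{Z'})\circ (c_Z\star \id_{Z'})$ are closed morphisms lifting the hexagon for $\PP_n$, so they agree up to homotopy by the same obstruction theory.

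The main obstacle is checking the ext-vanishing hypothesis uniformly across all objects of $\mathcal{Y}_{n,w}$ rather than just for the $y$-ified Rouquier complexes $F^y(\beta)$, since the reduction to $\End(\PP_n)$ is cleanest when $X$ is a bounded Rouquier complex. To handle this, I would observe that $\bigoplus_w H^0(\mathcal{Y}_{n,w})$ is generated as a triangulated monoidal category by the $F^y(\sigma_i^{\pm 1})$, and construct $c_Z$ first on these generators via the absorption $\PP_n^y\star F^y(\sigma_i)\simeq Q^{-1}T\,\PP_n^y\simeq F^y(\sigma_i)\star \PP_n^y$ (which follows from Proposition~\ref{prop:Py is idempt} applied to the $y$-ified Rouquier complex), and then extend over distinguished triangles using naturality. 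Once the half-braidings are defined on generators and shown to satisfy the hexagon on generators, the triangulated structure propagates them to all of $\bigoplus_w H^0(\mathcal{Y}_{n,w})$.
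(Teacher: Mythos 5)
Your proposal takes a genuinely different route from the paper's, and it has gaps that cannot be filled as written.

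The paper's proof is a one-liner that invokes the general theory of unital idempotents in triangulated monoidal categories (specifically \cite[Theorem 4.23]{idemptriang}): once $\PP_n^y$ is known to be a unital idempotent --- which is exactly the content of Proposition~\ref{prop:Py is idempt}, proven immediately beforehand --- the half-braiding on any $X^y$ is manufactured from the two canonical maps $X^y\star \PP_n^y\to \PP_n^y\star X^y\star \PP_n^y\leftarrow \PP_n^y\star X^y$ given by the unit map, both of which are equivalences by the abstract idempotent machinery, and naturality and the hexagon come for free. Nothing needs to be lifted from the undeformed case. Your proposal, by contrast, tries to lift an existing half-braiding on $\PP_n$ object-by-object via the obstruction theory of Lemmas~\ref{lemma:lifting maps}--\ref{lemma:uniqueness of lifts}; note also that the centrality of the undeformed $\PP_n$ in \cite{Hog,idemptriang} is itself obtained from the same idempotent argument, so there is no simpler ``$c_X$'' sitting there to be lifted.

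The concrete gaps: (1) The ext-vanishing hypothesis of Lemma~\ref{lemma:lifting maps} would have to be verified for $\Hom(\PP_n\star X, X\star \PP_n)$ for \emph{every} $X\in\Ch(\SBim_n)$, not just for Rouquier complexes, and you have no control on this hom complex for general (unbounded) $X$; your reduction to $\End(\PP_n)$ only works when $X$ is invertible. You flag this, but the fix does not work. (2) The claim that $\bigoplus_w H^0(\mathcal{Y}_{n,w})$ is generated as a triangulated monoidal category by the $F^y(\sigma_i^{\pm 1})$ is false: the objects of $\mathcal{Y}_{n,w}$ include $y$-ifications $\tw_\alpha(X)$ of arbitrary complexes $X$, in particular unbounded ones like $\PP_n^y$ itself, which are not obtained from finitely many Rouquier complexes by triangulated operations. (3) Even granting generation, ``define the half-braiding on generators and extend over distinguished triangles'' is not a valid construction in the homotopy category: cones of morphisms in a triangulated category are not functorial, so the extension involves choices, and neither naturality nor the hexagon is automatic. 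This is precisely the kind of coherence problem that forces one either into the dg/$A_\infty$ setting (as in \cite{EHdrinfeld}) or into an abstract uniqueness argument such as the idempotent theory the paper actually uses. Finally, a minor point: the absorption $\PP_n^y\star F^y(\sigma_i)\simeq F^y(\sigma_i)\star \PP_n^y$ you invoke comes from Proposition~\ref{prop:twisted projectors}, not from Proposition~\ref{prop:Py is idempt}.
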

\begin{proof}
For any $X^y\in \mathcal{Y}_{n,w}$ we have homotopy equivalences
\[
X^y\star \PP_n^y\rightarrow \PP_n^y\star X^y\star \PP_n^y\leftarrow \PP_n^y\star X^y
\]
given by applying the unit map of $\PP_n^y$ on the left (respectively right) of $X^y$ as in \cite[Theorem 4.23]{idemptriang}. These homotopy equivalences are obviously natural in $X^y$.
\end{proof}

\begin{corollary}
Let $X_1^y,X_2^y\in \mathcal{Y}_{n,w}$ be such that $X_2^y\star \PP^y_n\simeq X_2^y$.  Then
\[
\Hom(X_1^y\star \PP^y_n , X_2^y)\simeq \Hom(X_1^y, X_2^y)
\]
\end{corollary}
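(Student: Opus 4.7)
The plan is to produce explicit homotopy inverses using the unit $\upsilon\colon \one_n\to \PP_n^y$ of the dg algebra structure on $\PP_n^y$ together with the idempotency property from Proposition \ref{prop:Py is idempt}. Specifically, the forward map
\[
\Phi\colon \Hom(X_1^y\star \PP_n^y,X_2^y)\to \Hom(X_1^y,X_2^y)
\]
is simply precomposition with $\id_{X_1^y}\star\upsilon$. To define the backward map, I first observe that the hypothesis $X_2^y\star \PP_n^y\simeq X_2^y$ is equivalent to the statement that $\mu:=\id_{X_2^y}\star\upsilon\colon X_2^y\to X_2^y\star \PP_n^y$ is a homotopy equivalence (its cone is $X_2^y\star\Cone(\upsilon)$, which vanishes iff $X_2^y\star\PP_n^y\simeq X_2^y$). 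Then I set
\[
\Psi(f):=\mu^{-1}\circ (f\star\id_{\PP_n^y})\colon X_1^y\star \PP_n^y\to X_2^y.
\]

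The composite $\Phi\circ\Psi$ is immediately the identity: by naturality of $\upsilon$ one has $(f\star\id_{\PP_n^y})\circ(\id_{X_1^y}\star\upsilon)=\mu\circ f$, so $\Phi(\Psi(f))=\mu^{-1}\circ\mu\circ f=f$. The subtle composite is $\Psi\circ\Phi$. Given $g\colon X_1^y\star \PP_n^y\to X_2^y$, unwinding the definitions and using naturality of $\upsilon$ on the target shows that $\Psi(\Phi(g))\simeq g$ reduces to the homotopy
\[
(g\star\id_{\PP_n^y})\circ(\id_{X_1^y}\star\upsilon\star\id_{\PP_n^y})\ \simeq\ (g\star\id_{\PP_n^y})\circ(\id_{X_1^y\star\PP_n^y}\star\upsilon),
\]
i.e.\ to showing that the two maps $\upsilon\star\id_{\PP_n^y},\ \id_{\PP_n^y}\star\upsilon\colon \PP_n^y\to \PP_n^y\star \PP_n^y$ become homotopic after post-composition with $(g\star\id_{\PP_n^y})$.

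This is where the idempotency is essential, and will be the main obstacle to verify carefully. Both maps are sections of the dg algebra multiplication $m\colon \PP_n^y\star \PP_n^y\to \PP_n^y$ (unit axioms), so their difference lies in $\ker(m)$. By Proposition \ref{prop:Py is idempt} the multiplication $m$ is itself a homotopy equivalence (this is a general fact about unital idempotents: $m$ is homotopy inverse to either unit section $\upsilon\star\id_{\PP_n^y}$ or $\id_{\PP_n^y}\star\upsilon$), so $\ker(m)$ is null-homotopic, yielding the required homotopy and completing the argument.

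An alternative, more abstract route would be to apply $\Hom(X_1^y\star -,X_2^y)$ to the exact triangle $\one_n\to \PP_n^y\to \Cone(\upsilon)\to \one_n[1]$ and argue that $\Hom(X_1^y\star \Cone(\upsilon),X_2^y)\simeq 0$ using the hypothesis $X_2^y\star \Cone(\upsilon)\simeq 0$ together with the centrality of $\PP_n^y$ (Proposition \ref{prop:Py is central}); however, turning this into a vanishing statement still amounts, at the end, to using that $\PP_n^y$ is a unital idempotent, so the direct homotopy-inverse approach seems the cleanest presentation.
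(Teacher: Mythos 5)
Your proposal is correct, and it takes a more explicit route than the paper, which simply cites \cite[Proposition 4.16]{idemptriang} (whose proof is closer in spirit to the ``alternative, more abstract route'' you sketch at the end: applying $\Hom(X_1^y\star -, X_2^y)$ to the triangle $\one_n\to\PP_n^y\to\Cone(\upsilon)$ and showing the relevant Hom vanishes). Your direct construction of homotopy inverses is self-contained and a perfectly acceptable substitute; the key ingredient in both arguments is the same, namely the idempotence from Proposition~\ref{prop:Py is idempt}.

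Two points deserve slightly more care. First, the parenthetical ``which vanishes iff $X_2^y\star\PP_n^y\simeq X_2^y$'' is asserted without proof, and the nontrivial direction (from $X_2^y\star\PP_n^y\simeq X_2^y$ to $X_2^y\star\Cone(\upsilon)\simeq 0$) is exactly where the idempotence is used: one writes $X_2^y\star\Cone(\upsilon)\simeq(X_2^y\star\PP_n^y)\star\Cone(\upsilon)=X_2^y\star(\PP_n^y\star\Cone(\upsilon))\simeq 0$, invoking Proposition~\ref{prop:Py is idempt}. This is worth making explicit rather than burying it in a parenthesis, since it is not a tautology. Second, the phrase ``$\ker(m)$ is null-homotopic'' is loose; the actual argument is the standard one that if $m$ is a homotopy equivalence and $m\circ a\simeq m\circ b$ (here both $\simeq\id$ by the unit axioms), then $a\simeq m^{-1}\circ m\circ a\simeq m^{-1}\circ m\circ b\simeq b$. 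With those clarifications, the argument is complete: $m$ is a homotopy equivalence because $\id_{\PP_n^y}\star\upsilon$ is one (again by Proposition~\ref{prop:Py is idempt}) and $m$ is a one-sided homotopy inverse to it by the unit axiom, hence a two-sided one; therefore $\upsilon\star\id_{\PP_n^y}\simeq\id_{\PP_n^y}\star\upsilon$, and tensoring that homotopy with $\id_{X_1^y}$ and postcomposing with $g\star\id_{\PP_n^y}$ gives the homotopy $\Psi(\Phi(g))\simeq g$.
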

\begin{proof}
See \cite[Proposition 4.16]{idemptriang}.
\end{proof}
In the special case $X_1^y=X_2^y=R$, we have the following.

\begin{lemma}\label{lem:end Pn y}
We have quasi-isomorphisms of dg algebras 
$$
\End_{\mathcal{Y}_{n,1}}(\PP_n^y;\Z)\simeq \Hom_{\mathcal{Y}_{n,1}}(R,\PP_n^y)\simeq \Z\left[\yy,\widetilde{u}_0,\ldots,\widetilde{u}_{n-1}\right]
$$ 
where the $\widetilde{u}_i$ are even variables of weight $qt^{1-i}$. Moreover, 
$$
\HY(\PP_n^y;\Z)\simeq \Z\left[\yy,\widetilde{u}_0,\ldots,\widetilde{u}_{n-1},\widetilde{\xi}_0,\dots,\widetilde{\xi}_{n-1}\right]
$$ 
where the $\widetilde{\xi}_i$ are odd variables of weight $at^{1-i}$.
\end{lemma}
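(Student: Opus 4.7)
The plan is to reduce the computation of $\End_{\mathcal{Y}_{n,1}}(\PP_n^y)$ to that of $\Hom_{\mathcal{Y}_{n,1}}(R, \PP_n^y)$, then unwind the latter via a $\yy$-adic filtration whose spectral sequence collapses by a parity argument. First, applying the immediately preceding corollary with $X_1^y = R$ and $X_2^y = \PP_n^y$, and using both idempotence $\PP_n^y \star \PP_n^y \simeq \PP_n^y$ (Proposition~\ref{prop:Py is idempt}) and the monoidal unit identity $R \star \PP_n^y = \PP_n^y$, one obtains
\[
\End_{\mathcal{Y}_{n,1}}(\PP_n^y) = \Hom_{\mathcal{Y}_{n,1}}(\PP_n^y, \PP_n^y) \simeq \Hom_{\mathcal{Y}_{n,1}}(R, \PP_n^y),
\]
which establishes the first claimed quasi-isomorphism.

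Next, by definition $\Hom_{\mathcal{Y}_{n,1}}(R, \PP_n^y) = \Hom(R, \PP_n) \cotimes \Z[\yy]$ carries the twisted differential $[d_\PP + \sum_i \eta_i y_i,\, -]$. I would filter by $\yy$-adic degree. The $E_0$ differential is the undeformed $d_\PP \otimes \id$, and since $\Z[\yy]$ is free over $\Z$, the Künneth formula combined with Theorem~\ref{thm:end Pn} yields $E_1 = \Z[\widetilde{u}_0, \ldots, \widetilde{u}_{n-1}] \otimes \Z[\yy]$. The key parity observation driving the rest is: $\widetilde{u}_i$ has weight $qt^{1-i} = Q^{2i}T^{2-2i}$ and $y_j$ has weight $t = Q^{-2}T^2$, both of even $T$-degree, so $E_1$ is concentrated in even $T$-degree. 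The same holds for every subsequent $E_r$ as a subquotient; since each $d_r$ raises $T$-degree by $1$, all higher differentials vanish and the spectral sequence collapses at $E_1$, giving the claimed bigraded additive structure.

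For the dg-algebra quasi-isomorphism, I would use Lemma~\ref{lemma:lifting maps} to lift each $\widetilde{u}_i \in \Hom(R, \PP_n)$ to a closed element $\widetilde{u}_i^y \in \Hom_{\mathcal{Y}_{n,1}}(R, \PP_n^y)$, whose requisite ext-vanishing hypothesis holds because the obstructions lie in odd $T$-degree strata which are zero on homology. Together with the natural action of $\Z[\yy]$, these $\widetilde{u}_i^y$ assemble into a map $\Z[\yy, \widetilde{u}_*] \to \End_{\mathcal{Y}_{n,1}}(\PP_n^y)$, a quasi-isomorphism by the previous paragraph. The computation of $\HY(\PP_n^y)$ then proceeds in complete parallel: filter $\CY(\PP_n^y) = \CHH(\PP_n) \cotimes \Z[\yy]$ by $\yy$-degree, so that the $E_0$-homology (combining the Hochschild and $d_\PP$ differentials) becomes $\HHH(\PP_n) \otimes \Z[\yy] = \Z[\widetilde{u}_*, \widetilde{\xi}_*, \yy]$ by Theorem~\ref{thm:end Pn}; since $\widetilde{\xi}_i$ has weight $at^{1-i}$ with $T$-degree $2-2i$ (again even), parity once more annihilates every higher differential.

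The main obstacle I anticipate is not the module-level collapse, which follows cleanly from parity, but rather the strict rectification of the dg-algebra structure. Lifts of generators exist by standard obstruction theory, yet showing they commute on the nose (rather than merely up to homotopy) requires tracking the chain-level homotopies; the relevant obstructions once again lie in odd $T$-degree cohomology and thus vanish, but this step—together with verifying that the Künneth and spectral-sequence arguments go through integrally given that $\HHH(\PP_n;\Z)$ is $\Z$-free—is where I would spend the most care.
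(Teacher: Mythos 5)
Your spectral-sequence and parity argument is the right idea for proving that the $\yy$-deformation of the differential has no effect on homology, but that content is essentially what the paper delegates to the cited Lemma~\ref{lem: parity} (i.e.~\cite[Theorem~4.20]{GH}). The part of the paper's proof you have skipped is the one that actually occupies the whole proof: passing from the \emph{completed} tensor product to the ordinary one. The underlying module of $\Hom_{\mathcal{Y}_{n,1}}(R,\PP_n^y)$ is $\Hom(R,\PP_n)\cotimes\Z[\yy]$, which is a completion, and the $\yy$-adic filtration you put on it is complete. When a spectral sequence of a complete filtered complex degenerates, the conclusion is that the associated graded of the homology is the $E_1$ page; the homology itself is still complete and could a priori be the completed tensor product $\Z[\widetilde{u}_*,\widetilde{\xi}_*]\cotimes\Z[\yy]$ rather than the polynomial ring $\Z[\yy,\widetilde{u}_*,\widetilde{\xi}_*]$. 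The paper closes this gap by a degree-counting argument: since each $\widetilde{u}_i$, $\widetilde{\xi}_i$ has non-positive $t$-degree while each $y_j$ has $t$-degree $+1$, a fixed tridegree in the completed tensor product admits only finitely many contributing monomials, so the ``infinite sums'' in the completion are automatically finite and the completion coincides with the ordinary polynomial ring. Your proposal asserts $E_1=\Z[\widetilde{u}_*]\otimes\Z[\yy]$ and moves on without confronting this; add the finiteness-in-each-degree argument to make the identification of the homology with the (uncompleted) polynomial ring legitimate.

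Minor remarks: your reduction $\End_{\mathcal{Y}_{n,1}}(\PP_n^y)\simeq\Hom_{\mathcal{Y}_{n,1}}(R,\PP_n^y)$ via the preceding corollary and idempotence is exactly what the paper has in mind; and the dg-algebra rectification concern you flag is reasonable but secondary, since the multiplicative structure on the target is polynomial and the relevant obstruction groups vanish by the same parity reasoning, as you say.
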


\begin{proof}
We prove the last statement about $\HY$, the proofs of other statements are identical.
By Lemma \ref{lem: parity} and Theorem \ref{thm:end Pn} we have
\begin{equation}
\label{eq: HY Pn completed}
\HY(\PP_n^y;\Z)\simeq \Z\left[\widetilde{u}_0,\ldots,\widetilde{u}_{n-1},\widetilde{\xi}_0,\dots,\widetilde{\xi}_{n-1}\right]\cotimes\Z[\yy].
\end{equation}
It remains to check that the completed tensor product with $\Z[\yy]$ coincides with the usual one.
Let us describe the subspace of homogeneous elements with weight $a^{k}q^{m}t^{s}$ in \eqref{eq: HY Pn completed}. Recall that $\widetilde{u}_i$ have weight $qt^{-i}$ and $\widetilde{\xi}_i$ have weight $at^{-i}$. The $(a,q)$-weights determine the monomials in $\widetilde{\uu}$ and $\widetilde{\xx_i}$ which might appear, and the degree in $\yy$ is determined from the $t$-weight. 

Therefore the homogeneous subspace
is spanned by the expressions
\begin{equation}
\label{eq: basis HY}
\widetilde{u}_{i_1}\cdots \widetilde{u}_{i_m}\widetilde{\xi}_{j_1}\cdots \widetilde{\xi}_{j_k}g(\yy)
\end{equation}
where $g(\yy)$ is a homogeneous polynomial of degree $i_1+\ldots+i_m+j_1+\ldots+j_k+s$. Here $k,m\ge 0$ and $s\in \Z$, but we sum over terms with
$i_1+\ldots+i_m+j_1+\ldots+j_k\ge -s$.
In particular, this subspace  is finite-dimensional for fixed $k,m,s$ and 
$$
\Z[\widetilde{\uu},\widetilde{\xxi}]\cotimes\Z[\yy]=\Z[\yy,\widetilde{\uu},\widetilde{\xxi}].
$$
\end{proof}

\subsection{Symmetrizers and Rouquier complexes}
\label{ss:symmetrizer and Rouquier}

Let $e\colon \Br_n\to \Z$ be the group homomorphism sending $\sigma_i^{\pm}\mapsto \pm 1$ for all $i$.

\begin{definition}
Let $\sT(\b):=(TQ\inv)^{-e(\b)} F(\b)$ and $\sTy(\b):=(TQ\inv)^{-e(\b)} F^y(\b)$.
\end{definition}

Note that the unique copy of $R$ sits in degree zero in $\sT(\b)$ and $\sTy(\b)$. Also, it is clear that
$$
\sTy(\beta)\star \sTy(\gamma)=\sTy(\beta\gamma).
$$

\begin{proposition}\label{prop:twisted projectors}
There exists an object $\PP_{n,w}^y\in \mathcal{Y}_{n,w}$, depending on a choice of permutation $w\in S_n$, with the property that $\sT^y(\b)\star \PP_n^y\simeq  \PP_{n,w}^y$ for any braid $\b\in \Br_n$ with underlying permutation $w$.
\end{proposition}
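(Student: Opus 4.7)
The plan is to \emph{define} $\PP_{n,w}^y := \sTy(\b_0)\star \PP_n^y$ for an arbitrarily chosen braid $\b_0$ with underlying permutation $w$, and then invoke a uniqueness-of-lifts argument to show that the homotopy class of $\sTy(\b)\star \PP_n^y$ is independent of the choice of braid $\b$ lifting $w$. This will suffice, since $\sTy(\b)\star \PP_n^y\in \mathcal{Y}_{n,w}$ whenever $\b$ has underlying permutation $w$.

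First I would verify the claim at the level of underlying Soergel bimodule complexes (i.e.\ after forgetting the $y$-deformation). From Theorem \ref{thm:P unique}, $B_i\star \PP_n\simeq 0\simeq \PP_n\star B_i$ for every $1\le i\le n-1$, so tensoring $\PP_n$ with $F(\sigma_i)=[Q^{-1}\underline{B_i}\xrightarrow{b_i}Q^{-1}R]$ kills the cohomological degree zero term and yields $F(\sigma_i)\star \PP_n\simeq Q^{-1}T\PP_n$; similarly $F(\sigma_i^{-1})\star \PP_n\simeq QT^{-1}\PP_n$. These shifts are precisely cancelled by the normalization $(TQ^{-1})^{-e(\b)}$ built into $\sT$, so an induction on the length of $\b$ yields $\sT(\b)\star \PP_n\simeq \PP_n$ in $\Ch(\SBim_n)$. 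Consequently $\sTy(\b)\star \PP_n^y$ is a lift of $\PP_n$ to $\mathcal{Y}_{n,w}$ in the sense of Definition \ref{def:yified cat}.

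To conclude I would apply Lemma \ref{lemma:uniqueness of lifts} with $X=\PP_n$. By Theorem \ref{thm:end Pn}, $\End(\PP_n)$ is quasi-isomorphic to the polynomial algebra $\k[\widetilde u_0,\ldots,\widetilde u_{n-1}]$ whose generators satisfy $\wt(\widetilde u_i)=qt^{1-i}=Q^{2i}T^{2(1-i)}$, so every homogeneous element of $\End(\PP_n)$ has \emph{even} $T$-degree. Since $Q^{2K}T^{1-2K}$ has odd $T$-degree for each $K\ge 1$, the ext-vanishing hypothesis of Lemma \ref{lemma:uniqueness of lifts} is satisfied vacuously, and any two lifts of $\PP_n$ to $\mathcal{Y}_{n,w}$ are homotopy equivalent. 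Hence $\sTy(\b)\star \PP_n^y\simeq \sTy(\b_0)\star \PP_n^y=\PP_{n,w}^y$ as required. The only genuine work here is the tridegree calculation that supplies the ext-vanishing; once that is in hand Lemma \ref{lemma:uniqueness of lifts} does all the remaining work, so the main obstacle amounts to careful bookkeeping of the $Q,T$-shifts in the normalization of $\sT$ rather than any deep technical hurdle.
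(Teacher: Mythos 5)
Your proof is correct, but it takes a genuinely different route from the paper's. The paper argues by showing that a single crossing change $\sigma_i\leftrightarrow\sigma_i^{-1}$ inside $\b$ does not change $\sTy(\b)\star\PP_n^y$ up to homotopy: the crossing-change map $\a_i$ from Lemma~\ref{lem: skein} has cone filtered by $F^y(\b_1)\star B_i\star F^y(\b_2)$, which becomes contractible after tensoring with $\PP_n^y$ by the annihilation properties of $\PP_n$ and Lemma~\ref{lemma:lift of equiv}. Since any two braids with the same underlying permutation differ by crossing changes and braid relations, this suffices. Your argument instead establishes the statement at the undeformed level ($\sT(\b)\star\PP_n\simeq\PP_n$, via the same annihilation properties) and then invokes uniqueness of $y$-lifts. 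What your approach buys is a cleaner separation: the topological content sits entirely in the undeformed calculation, and the deformation is handled abstractly. What the paper's approach buys is an explicit chain-level map $\sTy(\b')\to\sTy(\b)$ that becomes an equivalence after tensoring, which it uses later (for instance, the observation that $\a_k\star\Id$ is a homotopy equivalence is used inside the proof that $\HY(\PP_n^y)\simeq\cA_n[\Delta_n^{-1}]$).

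One small technicality in your writeup: you say you ``apply Lemma~\ref{lemma:uniqueness of lifts} with $X=\PP_n$,'' but the underlying Soergel complex of $\sTy(\b)\star\PP_n^y$ is $\sT(\b)\star\PP_n$, which is only homotopy equivalent to $\PP_n$, not equal to it. Lemma~\ref{lemma:uniqueness of lifts} as stated treats lifts of one and the same $X$. You in fact need the mild extension used in the proof of Theorem~\ref{thm:Py unique}: given two homotopy equivalent complexes $X\simeq X'$ with lifts to $\mathcal{Y}_{n,w}$, one lifts the equivalence via Lemma~\ref{lemma:lifting maps} (the ext-vanishing now living in $\Hom(X,X')$, quasi-isomorphic to $\End(\PP_n)$ in your setting, so the parity argument still applies) and then Lemma~\ref{lemma:lift of equiv} upgrades it to a homotopy equivalence of the lifts. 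This is routine, but worth stating precisely since the lemma as written does not literally cover your situation.
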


\begin{proof}
Suppose that $\beta=\beta_1\sigma_i\beta_2$ and $\beta'=\beta_1\sigma_i^{-1}\beta_2$, it is sufficient to show that $\sTy(\beta)$ and $\sTy(\beta')$ become homotopy equivalent after tensoring with $\PP_n^y$.
For this, recall the map $\rho_i\colon (TQ\inv) \yTii\to(TQ\inv)\inv \yTi$ from Lemma \ref{lem: skein}.  The cone of this map is homotopy equivalent to a deformation of a complex constructed from two copies of $B_i$. We get a map
$$
\Id_{\beta_1}\star \rho_i\star \Id_{\beta_2}:\sTy(\beta')\to \sTy(\beta)
$$
with the cone filtered by $F^y(\beta_1)\star B_i\star F^y(\beta_2)$. Therefore $\Cone(\Id_{\beta_1}\star \rho_i\star \Id_{\beta_2})$ belongs to the tensor ideal generated by $B_i$ and becomes contractible after tensoring with $\PP_n^y$: this follows from Lemma \ref{lemma:lift of equiv} together with the annihilating properties of $\PP_n$ from Theorem \ref{thm:P unique}.  This shows that $\Id_{\beta_1}\star \rho_i\star \Id_{\beta_2}$ becomes a homotopy equivalence after tensoring with $\PP_n^y$, and the proposition follows.
\end{proof}

Proposition \ref{prop:twisted projectors} says that $\sTy(\b)\star \PP_n^y\simeq \PP_{n,w}^y$ depends only on the permutation $w$ represented by $\b$, up to homotopy equivalence.  Precomposing with the unit map of $\PP_n^y$ gives us a map $\upsilon_{\b}:\sTy(\b)\to \PP_{n,w}^y$. More precisely, we get the following result.

\begin{theorem}\label{thm:map of diagrams}
There is a family of maps $\mu_{w,v}\colon \PP^y_{n,w}\star \PP^y_{n,v}\to \PP^y_{n,wv}$, $\upsilon_\b\colon \sTy(\b)\to \PP_{n,w}^y$ such that 
\begin{enumerate}
\item $\upsilon_{\one_n}=\upsilon_n$. 
\item the $\mu_{w,v}$ are homotopy equivalences.
\item the $\mu_{w,v}$ satisfy the associative axiom up to homotopy.
\item the $\mu_{w,v}$ satisfy the unit axiom up to homotopy, with unit given by $\upsilon_n$.
\item the following diagram commutes up to homotopy for any braids $\b,\gamma\in \Br_n$ with underlying permutations $w,v\in S_n$:
\[
\begin{tikzpicture}
\node (a) at (0,0) {$\sTy(\b)\star \sTy(\gamma)$};
\node (b) at (4,0) {$\sTy(\b\gamma)$};
\node (c) at (0,-2) {$\PP^y_{n,w}\star \PP^y_{n,v}$};
\node (d) at (4,-2) {$\PP^y_{n,wv}$};
\path[-stealth]
(a) edge node[above] {} (b)
(c) edge node[above] {$\mu_{w,v}$} (d)
(a) edge node[left] {$\upsilon_\b\star \upsilon_\gamma$} (c)
(b) edge node[right] {$\upsilon_{\b\gamma}$} (d);
\end{tikzpicture}
\]
\end{enumerate}
\end{theorem}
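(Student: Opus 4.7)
My plan is to construct $\PP^y_{n,w}$, $\upsilon_\b$, and $\mu_{w,v}$ directly from the dg algebra structure on $\PP^y_n$ together with its centrality in $\bigoplus_w H^0(\mathcal{Y}_{n,w})$, then verify the required coherences via uniqueness of lifts.

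\emph{Step 1 (Objects and unit maps).} Fix a set-theoretic section $w\mapsto \b_w$ of $\Br_n\twoheadrightarrow S_n$ with $\b_e=\one_n$, and put $\PP^y_{n,w}:=\sTy(\b_w)\star \PP^y_n$. For arbitrary $\b\in \Br_n$ representing $w$, Proposition \ref{prop:twisted projectors} supplies a homotopy equivalence $h_\b\colon \sTy(\b)\star \PP^y_n \xrightarrow{\simeq} \PP^y_{n,w}$, normalized so that $h_{\b_w}=\id$. Define
$$\upsilon_\b := h_\b \circ (\id_{\sTy(\b)} \star \upsilon_n)\colon \sTy(\b)\to \PP^y_{n,w}.$$
Property (1) holds on the nose, since $\sTy(\one_n)=R$ and $\PP^y_{n,e}=\PP^y_n$.

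\emph{Step 2 (Multiplication).} Let $m\colon \PP^y_n\star \PP^y_n\to \PP^y_n$ be the dg algebra multiplication on $\PP^y_n$, which is a homotopy equivalence by Proposition \ref{prop:Py is idempt}. Let $\kappa_v\colon \PP^y_n\star \sTy(\b_v)\xrightarrow{\simeq}\sTy(\b_v)\star \PP^y_n$ be the half-braiding from Proposition \ref{prop:Py is central} (inverting the left leg of its defining zig-zag). Set
$$\mu_{w,v} := h_{\b_w\b_v}\circ (\id_{\sTy(\b_w\b_v)}\star m)\circ (\id_{\sTy(\b_w)}\star \kappa_v\star \id_{\PP^y_n}).$$
Being a composition of homotopy equivalences, $\mu_{w,v}$ satisfies (2).

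\emph{Step 3 (Coherences).} To verify (3)--(5), I first extend Lemma \ref{lem:end Pn y} to a computation of the relevant hom complexes $\Hom_{\mathcal{Y}_{n,u}}(\PP^y_{n,w_1}\star\cdots\star \PP^y_{n,w_r},\PP^y_{n,u})$, showing that their cohomology is concentrated in non-negative $A$-degrees and vanishes in the tridegrees appearing in Lemma \ref{lemma:lifting maps} (with $T$-degree $1$ and positive $y$-weight). In each of the coherence diagrams the two chain maps being compared have the same $(Q,T,A)$-tridegree, so the ext-vanishing combined with the uniqueness-of-lifts argument of Lemma \ref{lemma:uniqueness of lifts} reduces each equality-up-to-homotopy to the corresponding statement at $y_1=\cdots=y_n=0$. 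The $y=0$ specialization places us in the undeformed category $\Ch(\SBim_n)$, where: property (4) reduces to the unit axiom for $\upsilon_n$ with respect to $m$; property (3) reduces to the associativity of $m$ together with the hexagon axiom for the half-braiding; and property (5) reduces to the naturality of $\kappa$ together with $F(\b)\star F(\gamma)=F(\b\gamma)$ and the multiplicativity of $\upsilon_n$.

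\emph{Main obstacle.} The principal difficulty is that $w\mapsto \b_w$ is only a set-theoretic section, so $\b_w\b_v\ne \b_{wv}$ as braids in general; consequently $\sTy(\b_w\b_v)$ and $\sTy(\b_{wv})$ agree only up to the chosen homotopy equivalence $h_{\b_w\b_v}$ after smashing with $\PP^y_n$. All of the coherence diagrams implicitly depend on these identifications, and rigidly bookkeeping them would quickly become unwieldy. The hardest preliminary is therefore the extension of Lemma \ref{lem:end Pn y} to iterated tensor products $\PP^y_{n,w_1}\star\cdots\star \PP^y_{n,w_r}$, as this is exactly what licenses the uniqueness arguments that bypass explicit bookkeeping; once this is in hand, the coherences follow formally by reduction to the undeformed setting of \cite{Hog}.
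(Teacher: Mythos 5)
Your construction of the objects $\PP^y_{n,w}$, the maps $\upsilon_\b$, and the multiplications $\mu_{w,v}$ matches the paper's, and the tools you invoke (Proposition \ref{prop:twisted projectors}, Drinfeld centrality, idempotence, and the computation of $\End(\PP^y_n)$) are the right ones. However, your Step 3 takes a longer route than necessary, and the ``hardest preliminary'' you flag is not actually hard --- it is an immediate corollary of what you already have, not a genuine extension of Lemma \ref{lem:end Pn y}.

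Here is the key simplification you are missing. Because every $\sTy(\b)$ is $\star$-invertible and $\PP^y_n$ is a central idempotent, one collapses
\[
\Hom\left(\PP^y_{n,w_1}\star\cdots\star\PP^y_{n,w_r},\ \PP^y_{n,u}\right)
\;\simeq\;
\Hom\left(\sTy(\b)\star\PP^y_n,\ \sTy(\b')\star\PP^y_n\right)
\;\simeq\;
\End_{\mathcal{Y}_{n,1}}(\PP^y_n)
\]
for suitable pure braids $\b,\b'$, so no new Hom computation for iterated tensor products is needed. Lemma \ref{lem:end Pn y} then tells you the cohomology of $\End_{\mathcal{Y}_{n,1}}(\PP^y_n)$ is $\k[\yy,\widetilde{u}_0,\ldots,\widetilde{u}_{n-1}]$, which in weight $Q^0T^0$ is exactly $\k$. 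This is stronger than the ext-vanishing you were aiming to prove: it says that every degree-$(0,0)$ closed morphism between these objects is unique up to homotopy and an invertible scalar. Normalizing by demanding that the unique copies of $R$ in degree zero be carried by the identity then pins down all the $h_\b$, $\upsilon_\b$, and $\mu_{w,v}$ rigidly, and all five coherence properties follow at once with no need to reduce modulo $\yy$ or to appeal to Lemma \ref{lemma:uniqueness of lifts} and the undeformed results of \cite{Hog}. Your reduction-to-$y=0$ strategy is sound and would also work, but it trades a one-line observation (one-dimensionality of the degree-$(0,0)$ Hom) for additional bookkeeping and an unnecessary invocation of the lifting machinery.

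Two minor cautions on your Step 3 as written: Lemma \ref{lemma:uniqueness of lifts} concerns uniqueness of lifted \emph{objects}, not of lifted \emph{maps}; to argue that two chain maps agreeing at $y=0$ are homotopic you would really be rerunning the obstruction-theory argument of Lemma \ref{lemma:lifting maps} in a uniqueness form, which the paper never states. And the vanishing condition you describe (``concentrated in non-negative $A$-degrees and vanishes in the tridegrees appearing in Lemma \ref{lemma:lifting maps}'') conflates two different degree conditions; the one that matters for the lifting lemmas is vanishing in weight $Q^{2K}T^{1-2K}$ for $K\ge 1$, which happens to hold here because the generators of $\End(\PP^y_n)$ all have even $T$-degree. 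Both gaps evaporate once you use the one-dimensionality observation instead.
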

\begin{proof}
Fix a permutation $w\in S_n$. Let $\pi\colon \Br_n\to S_n$ denote the canonical projection. The complexes $\sTy(\b)\star \PP^y_n$ with $\b\in \pi\inv(w)$  are all homotopy equivalent by Proposition \ref{prop:twisted projectors}.  If $\b,\b'\in\pi\inv(w)$ then the hom complex from $\sTy(\b)\star \PP^y_n$ to  $\sTy(\b')\star \PP^y_n$ is homotopy equivalent to
\[
\Hom\left(\sTy(\b)\star \PP^y_n, \sTy(\b')\star \PP^y_n\right) \simeq \Hom\left(\PP^y_n, \sTy(\b\inv \b')\star \PP^y_n\right)\simeq 
\]
\[
\End(\PP^y_n)\simeq \k[\yy,\widetilde{u}_0,\cdots,\widetilde{u}_{n-1}]
\]
where we have used the fact that $\b\inv\b'$ is a pure braid, as well as Lemma \ref{lem:end Pn y}.  In particular, in bidegree $(0,0)$ this hom space is $\k$.  Thus, any two homotopy equivalences $\phi_1,\phi_2\colon\sTy(\b)\star \PP^y_n\simeq \sTy(\b')\star \PP^y_n$ must be homotopic up to an invertible scalar.  We can fix the scalar by requiring that the unique copies of $R$ be mapped to each other by $\id$.

Now, we compute if $\b\in \pi\inv(w)$ and $\gamma\in \pi\inv(v)$ then
\begin{align*}
\PP^y_{n,w}\star \PP^y_{n,v}&=\sTy(\b_w)\star \PP^y_n \star \sTy(\b_v)\star \PP^y_n\\
&\simeq \sTy(\b)\star \sTy(\gamma)\star \PP^y_n\star \PP^y_n \simeq  \sTy(\b\gamma)\star \PP^y_n \simeq \PP^y_{n,wv}
\end{align*}
When in the second line we used the centrality and idempotence of $\PP^y_n$.  All of the above homotopy equivalences are unique in their bidegree, up to homotopy and invertible scalar.  The invertible scalar can be fixed by requiring, as always, that the unique copies of $R$ be mapped to each other via $\id$.  This defines $\mu_{w,v}$.

The asserted properties are all straightforward to check.
\end{proof}

\begin{remark}
Un-$y$-ified, the maps $\mu_{w,v}$ are all the standard multiplication on $\PP_n$, and the maps $\upsilon_\b$ are easily defined by explicit formulas.
\end{remark}

\subsection{Colored homology: definition and invariance}
\label{sec: colored homology}
We can use the projectors $\PP_n^y$ to defined $y$-ified symmetric-colored link homology following \cite{CooperKrushkal} (see also \cite{Cautis,Hog,HM,Rozansky}).

Fix a braid $\b\in \Br_m$, and let $w\in S_m$ be the permutation represented by $\b$.  Let $L=\hat{\b}$ be the braid closure. The set of components of $L$ can be identified with $\{1,\ldots,m\}/\sim$, where $\sim$ is the equivalence relation generated by $i\sim w(i)$ for all $i$. For each $1\leq i\leq m$ let $[i]$ denote the equivalence class containing $i$ (i.e.~the component of $\hat{\b}$ containing the $i$-th strand of $\b$).

Fix a sequence $\vec{n}=(n_1,\ldots, n_m)\in \Z_{\geq0}^m$.  We say that $\vec{n}$ is \emph{closeable} if $n_i=n_{w(i)}$ for all $i$; in this case we write $\mathbf{n}\colon \pi_0(L)\to\Z_{\geq0}$ for the induced map.  

Let $\beta^{(n_1,\ldots,n_m)}$ be the braid on $\sum_{i=1}^{m}n_i$ strands obtained by cabling $\beta$: the strand indexed by $i$ is replaced by $n_i$ parallel copies of the same strand (with respect to the blackboard framing).   For example, for the elementary Artin braid generator $\sigma_1\in \Br_2$ we have
\[
\sigma_1^{n_1,n_2} := (\sigma_{n_1}\cdots \sigma_2\sigma_1) \cdots (\sigma_{n_1+n_2}\cdots \sigma_{2+n_2}\sigma_{1+n_2})  = 
\begin{tikzpicture}[scale=1.5,baseline=.5cm]
\draw[very thick]
(1,0) -- (0,1)
(1.3,0) -- (.3,1)
(1.6,0) -- (.6,1);
\draw[line width=5pt,white]
(0,0) -- (1,1)
(.2,0) -- (1.2,1)
(.4,0) -- (1.4,1)
(.6,0) -- (1.6,1);
\draw[very thick]
(0,0) -- (1,1)
(.2,0) -- (1.2,1)
(.4,0) -- (1.4,1)
(.6,0) -- (1.6,1);
\node at (0,1.1) {\scriptsize$1$};
\node at (.3,1.1) {\scriptsize$\cdots$};
\node at (.6,1.1) {\scriptsize$n_1$};
\node at (1,1.1) {\scriptsize$1$};
\node at (1.3,1.1) {\scriptsize$\cdots$};
\node at (1.6,1.1) {\scriptsize$n_2$};
\end{tikzpicture}
\in \Br_{n_1+n_2}
\]

If $\vec{n}$ is closeable, then the closure of $\beta^{\vec{n}}$ is the $\mathbf{n}$-cable of $L$.

\begin{definition}
\label{def: symmetric colored homology}
Let $\b\in \Br_m$ and let $\vec{n}=(n_1,\ldots,n_m)\in \Z_{\geq 0}^m$ be closeable. We define $S^{\mathbf{n}}$-colored $y$-ified homology of $L=\hat{\b}$ as follows:
$$
\CY_{S^{\vec{n}}}(\b) := \CY\left[\left(\PP^y_{n_1}\boxtimes \cdots \boxtimes \PP^y_{n_m}\right)\star F^y\left(\beta^{\vec{n}}\right)\right]
$$
and
$$
\CY_{\gll_N,S^{\vec{n}}}(\b) := \CY_{\gll_N}\left[\left(\PP^y_{n_1}\boxtimes \cdots \boxtimes \PP^y_{n_m}\right)\star F^y\left(\beta^{\vec{n}}\right)\right]
$$
Denote the homologies of these complexes by $\HY_{S^{\vec{n}}}(\b)$ and $\HY_{\gll_N,S^{\vec{n}}}(\b)$.
\end{definition}

\begin{theorem}
\label{thm: colored homology invariant}
The homologies $\HY_{S^{\vec{n}}}(\b)$ and $\HY_{\gll_N,S^{\vec{n}}}(\b)$ depend only on the oriented link $L=\hat{\b}$ up to isomorphism and overall shift.
\end{theorem}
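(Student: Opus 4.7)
The plan is to verify invariance under Markov moves for the braid $\beta$, which by Markov's theorem suffices to establish invariance as an oriented link invariant. There are two moves to check: conjugation $\beta \leftrightarrow \gamma\beta\gamma^{-1}$ within $\Br_m$, and positive/negative stabilization $\beta \leftrightarrow \beta\sigma_m^{\pm 1}$ across $\Br_m$ and $\Br_{m+1}$. Throughout, I would track the induced cabled braid in $\Br_{\sum n_i}$ and work in the ambient $\mathcal{Y}$-category.

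For conjugation, if $\gamma \in \Br_m$ has underlying permutation $v$, the corresponding coloring on $\beta' := \gamma\beta\gamma^{-1}$ is $\vec{n}'_i := n_{v^{-1}(i)}$, and the cables satisfy $\beta'^{\vec{n}'} = \gamma^{\vec{n}} \cdot \beta^{\vec{n}} \cdot (\gamma^{\vec{n}})^{-1}$. I would use Theorem~\ref{thm:HY} (cyclicity of $\CY$ and $\CY_{\gll_N}$ as traces on Rouquier complexes) to rotate $F^y((\gamma^{\vec{n}})^{-1})$ to the front of the tensor expression, then invoke the centrality of each $\PP^y_{n_i}$ (Proposition~\ref{prop:Py is central}) to absorb the conjugation $F^y((\gamma^{\vec{n}})^{-1}) \star (\boxtimes_i \PP^y_{n'_i}) \star F^y(\gamma^{\vec{n}}) \simeq \boxtimes_i \PP^y_{n_i}$ into the relabeling of the projector block, obtaining the required equivalence.

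For stabilization, the extended coloring on $\beta\sigma_m^{\pm 1} \in \Br_{m+1}$ must be $(\vec{n}, n_m)$ by closeability, and the cable $(\beta\sigma_m^{\pm})^{(\vec{n}, n_m)}$ equals $\beta^{\vec{n}}$ followed by the cabled crossing $\tau^{\pm}$ interchanging the last two blocks of $n_m$ strands. The key step is a local computation showing that, after tensoring with $\PP^y_{n_m} \boxtimes \PP^y_{n_m}$ on these two blocks and taking $\CY$ closure of the newly added $n_m$ strands, the result is homotopy equivalent to $\PP^y_{n_m}$ on the remaining strands, up to an overall $(Q,T,A)$-shift accounting for the framing change. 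This reduces, via centrality and the uniqueness characterization in Theorem~\ref{thm:Py unique}, to verifying that the resulting object in $\mathcal{Y}_{n_m, 1}$ is annihilated by each $B_i$ and admits a unit map from $R$ whose cone is built from nontrivial products of $B_i$'s; once this local claim is in hand, idempotence of $\PP^y_{n_m}$ (Proposition~\ref{prop:Py is idempt}) merges the resulting factor into the adjacent $\PP^y_{n_m}$ coming from $\beta^{\vec{n}}$, yielding invariance.

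Finally, for $\CY_{\gll_N}$ no separate argument is needed: every homotopy equivalence constructed above is a morphism between complexes of (deformed) Soergel bimodules, and by the functoriality of $\HH_{\gll_N}$ established in Proposition~\ref{prop:HH with dN is functor} together with the lax monoidal property of $d_N$ from Lemma~\ref{lem: bimodule dN multiplicativity}, every such morphism automatically commutes with the $d_N$ action and descends to the $\CY_{\gll_N}$ level, exactly as in the remark following Theorem~\ref{thm:HY}. The main obstacle is the local stabilization computation described above: controlling the partial closure of $(\PP^y_{n_m} \boxtimes \PP^y_{n_m}) \star \tau^{\pm}$ along one block of $n_m$ strands and pinning down the correct tridegree shift, since everything else in the proof is a formal consequence of centrality, cyclicity, and uniqueness of $y$-ified projectors.
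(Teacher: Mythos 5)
Your overall strategy (verify Markov moves, then reduce the $\gll_N$ case to functoriality of $d_N$) matches the paper's, and your treatment of the $\gll_N$ reduction via Proposition~\ref{prop:HH with dN is functor} and Lemma~\ref{lem: bimodule dN multiplicativity} is exactly right. However, both of your Markov move arguments have gaps.

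For the conjugation move, the required homotopy equivalence is indeed
\[
F^y((\gamma^{\vec{n}})^{-1}) \star \bigl(\PP^y_{n'_1}\boxtimes \cdots \boxtimes \PP^y_{n'_m}\bigr) \star F^y(\gamma^{\vec{n}}) \simeq \PP^y_{n_1}\boxtimes \cdots \boxtimes \PP^y_{n_m},
\]
but Proposition~\ref{prop:Py is central} cannot establish it. That proposition says $\PP^y_n$ lies in the Drinfeld center of the $n$-strand category $\bigoplus_w H^0(\mathcal{Y}_{n,w})$, so it commutes with complexes on the \emph{same} $n$ strands. What you need instead is to slide a boxtimes of projectors on \emph{blocks} of strands past a cabled crossing that mixes those blocks; neither $\PP^y_{n_j}$ nor $\PP^y_{n_{j+1}}$ is central in the category on $n_j + n_{j+1}$ strands where the relevant cabled crossing lives. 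The paper uses a genuinely different and stronger input: the result of \cite{EHdrinfeld} (also \cite{LMRSW,SW}) that the cabled Rouquier complex $F(\gamma^{\vec{n}})$ lies in the $A_\infty$ Drinfeld centralizer of $\SBim_{n_1}\otimes\cdots\otimes\SBim_{n_m}$ inside $\Ch(\SBim_{n_1+\cdots+n_m})$, which is then lifted to the $y$-ified setting via Theorem~\ref{thm:end Pn} and Lemma~\ref{lemma:lift of equiv}. Without this, the sliding relation is not established.

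For stabilization, you correctly identify the content but explicitly leave the ``local computation'' on $(\PP^y_{n_m}\boxtimes \PP^y_{n_m})\star\tau^\pm$ as the main obstacle, proposing to verify the characterizing properties of Theorem~\ref{thm:Py unique} for the partially closed object. The paper sidesteps this computation entirely: it uses the (already-established) sliding relation to move one of the two adjacent projectors around the new strand so both end up stacked on the same block, then invokes the known Markov~2 invariance of the \emph{uncolored} $\CY$ to remove the cabled crossing, which leaves behind a full twist $\FT_{n_m}$. Since $\ft_{n_m}$ is a pure braid, Proposition~\ref{prop:twisted projectors} absorbs it (up to shift) into the projector, and Proposition~\ref{prop:Py is idempt} then merges $\PP^y_{n_m}\star \PP^y_{n_m}\simeq \PP^y_{n_m}$. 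This chain of known results is precisely what makes the argument go through without the partial-closure computation you identify as the obstacle.
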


\begin{proof}
We focus entirely on the triply graded colored homology, as the $\gll_N$ version is similar.  First, observe that the cable $\b^{\vec{n}}$ is invariant under the braid relations applied to $\b$.  To prove the theorem we need to show that our construction is invariant under the Markov moves.

For invariance under the Markov 1 move (i.e.~invariance under conjugation $\b\mapsto \gamma\b\gamma\inv$) we need to show that
\begin{equation}\label{eq:sliding projectors}
\left(\PP^y_{v(n_1)}\boxtimes \cdots \boxtimes \PP^y_{v(n_m)}\right)\star F^y(\gamma^{\vec{n}}) \simeq F^y(\gamma^{\vec{n}})\star \left(\PP^y_{n_1}\boxtimes \cdots \boxtimes \PP^y_{n_m}\right)
\end{equation}
whenever $\gamma$ is a braid with underlying permutation $v\in S_m$.  The un-$y$-ified version of the homotopy equivalence \eqref{eq:sliding projectors} is proven by standard techniques.  Indeed, the general result in \cite[Corollary 1.10]{EHdrinfeld}, independently proved by different methods in \cite{LMRSW,SW}, implies that the cabled Rouquier complex  $F(\gamma^{\vec{n}})$ is in the $A_\infty$ Drinfeld centralizer of $\SBim_{n_1}\otimes \cdots \otimes \SBim_{n_m}$ inside $\Ch(\SBim_{n_1+\cdots+n_m})$ (with the standard right action and left action twisted by the permutation $v$).  Thus, any complex of objects in $\SBim_{n_1}\otimes \cdots \otimes \SBim_{n_m}$ can be slid past the cabled Rouquier complex $F(\gamma^{\vec{n}})$.  

Since $F(\gamma^{\vec{n}})$ is invertible, we get
\[
\End\left(\left(\PP_{n_1}\boxtimes \cdots \boxtimes \PP_{n_m}\right)\star F(\gamma^{\vec{n}})\right) \simeq \End\left(\PP_{n_1}\boxtimes \cdots \boxtimes \PP_{n_m}\right) \simeq \bigotimes_{i=1}^m \End(\PP_{n_i})
\]
and \eqref{eq:sliding projectors} now follows from Theorem \ref{thm:end Pn} and Lemma \ref{lemma:lift of equiv}.

For invariance under the Markov 2 move, we need to show that
\begin{equation}
\CY\left((\PP^y_{n_1}\boxtimes\cdots\boxtimes \PP^y_{n_m}\boxtimes \PP^y_{n_m}) \star F^y((\sigma_m \b)^{(\vec{n},n_m)} \right) \simeq \CY\left((\PP^y_{n_1}\boxtimes\cdots\boxtimes \PP^y_{n_m}) \star F^y(\b^{\vec{n}}) \right).
\end{equation}
where $(\vec{n},n_m)=(n_1,\ldots,n_m,n_m)$.
One constructs the above homotopy equivalence as a composition of homotopy equivalences corresponding to local moves of the form
\[
\begin{tikzpicture}[scale=1.5,baseline=.5cm]
\draw[very thick]
(1,-.65)--(1,0) -- (0,1)--(0,1.65)
(1.2,-.65)--(1.2,0) -- (.2,1)--(.2,1.65)
(1.4,-.65)--(1.4,0) -- (.4,1)--(.4,1.65)
(1.6,-.65)--(1.6,0) -- (.6,1)--(.6,1.65);
\draw[line width=5pt,white]
(0,0) -- (1,1)
(.2,0) -- (1.2,1)
(.4,0) -- (1.4,1)
(.6,0) -- (1.6,1);
\draw[very thick]
(0,-.65)--(0,0) -- (1,1)--(1,1.65)
(.2,-.65)--(.2,0) -- (1.2,1)--(1.2,1.65)
(.4,-.65)--(.4,0) -- (1.4,1)--(1.4,1.65)
(.6,-.65)--(.6,0) -- (1.6,1)--(1.6,1.65);
\filldraw[fill=white] (-.1,1) rectangle (.7,1.5);
\filldraw[fill=white] (.9,1) rectangle (1.7,1.5);
\node at (0,1.8) {\scriptsize$1$};
\node at (.3,1.8) {\scriptsize$\cdots$};
\node at (.6,1.8) {\scriptsize$n_m$};
\node at (1,1.8) {\scriptsize$1$};
\node at (1.3,1.8) {\scriptsize$\cdots$};
\node at (1.6,1.8) {\scriptsize$n_m$};
\node at (.4,1.25) {\small$\PP^y_{n_m}$};
\node at (1.4,1.25) {\small$\PP^y_{n_m}$};
\end{tikzpicture}
\buildrel\simeq\over \rightarrow 
\begin{tikzpicture}[scale=1.5,baseline=.5cm]
\draw[very thick]
(1,-.65)--(1,0) -- (0,1)--(0,1.65)
(1.2,-.65)--(1.2,0) -- (.2,1)--(.2,1.65)
(1.4,-.65)--(1.4,0) -- (.4,1)--(.4,1.65)
(1.6,-.65)--(1.6,0) -- (.6,1)--(.6,1.65);
\draw[line width=5pt,white]
(0,0) -- (1,1)
(.2,0) -- (1.2,1)
(.4,0) -- (1.4,1)
(.6,0) -- (1.6,1);
\draw[very thick]
(0,-.65)--(0,0) -- (1,1)--(1,1.65)
(.2,-.65)--(.2,0) -- (1.2,1)--(1.2,1.65)
(.4,-.65)--(.4,0) -- (1.4,1)--(1.4,1.65)
(.6,-.65)--(.6,0) -- (1.6,1)--(1.6,1.65);
\filldraw[fill=white] (-.1,1) rectangle (.7,1.5);
\filldraw[fill=white] (-.1,-.5) rectangle (.7,0);
\node at (0,1.8) {\scriptsize$1$};
\node at (.3,1.8) {\scriptsize$\cdots$};
\node at (.6,1.8) {\scriptsize$n_m$};
\node at (1,1.8) {\scriptsize$1$};
\node at (1.3,1.8) {\scriptsize$\cdots$};
\node at (1.6,1.8) {\scriptsize$n_m$};
\node at (.4,1.25) {\small$\PP^y_{n_m}$};
\node at (.4,-.25) {\small$\PP^y_{n_m}$};
\end{tikzpicture}
\buildrel\simeq\over \rightarrow  
\begin{tikzpicture}[scale=1.5,baseline=.5cm]
\draw[very thick]
(0,-.65)--(0,1.65)
(.2,-.65)--(.2,1.65)
(.4,-.65)--(.4,1.65)
(.6,-.65)--(.6,1.65);
\filldraw[fill=white] (-.1,1) rectangle (.7,1.5);
\filldraw[fill=white] (-.1,.25) rectangle (.7,.75);
\filldraw[fill=white] (-.1,-.5) rectangle (.7,0);
\node at (0,1.8) {\scriptsize$1$};
\node at (.3,1.8) {\scriptsize$\cdots$};
\node at (.6,1.8) {\scriptsize$n_m$};
\node at (.4,1.25) {\small$\PP^y_{n_m}$};
\node at (.4,-.25) {\small$\PP^y_{n_m}$};
\node at (.32,.5) {\small$\FT_{n_m}$};
\end{tikzpicture}
\buildrel\simeq\over \rightarrow  
\begin{tikzpicture}[scale=1.5,baseline=.5cm]
\draw[very thick]
(0,-.65)--(0,1.65)
(.2,-.65)--(.2,1.65)
(.4,-.65)--(.4,1.65)
(.6,-.65)--(.6,1.65);
\filldraw[fill=white] (-.1,.25) rectangle (.7,.75);
\node at (0,1.8) {\scriptsize$1$};
\node at (.3,1.8) {\scriptsize$\cdots$};
\node at (.6,1.8) {\scriptsize$n_m$};
\node at (.4,.5) {\small$\PP^y_{n_m}$};
\end{tikzpicture}
\]
Each of these moves corresponds to a homotopy equivalence of complexes $\CY$ (up to shift): the first homotopy equivalence follows from \eqref{eq:sliding projectors}, the second is the Markov 2 invariance of $\CY$, and the last is an application of Proposition \ref{prop:twisted projectors} (note that the full twist braid is a pure braid, so  $\FT_{n_m}\star \PP_{n_m}^y$ is homotopy equivalent to $\PP_{n_m}^y$ up to a shift) followed by the homotopy equivalence $\PP_n^y\star \PP_n^y\simeq \PP_n^y$ from Lemma \ref{prop:Py is idempt}. This completes the proof.
\end{proof}

\begin{remark}
One should compare the above construction of projector-colored homology with the construction of Cooper--Krushkal \cite{CooperKrushkal}. The projectors $\PP_n^y$ satisfy the same kind of local properties satisfied by the Cooper-Krushkal projectors (idempotence, sliding over and under crossings, absorption of crossings), and so a proof strategy similar to \cite[Section 5]{CooperKrushkal} would work here as well. But our construction and proof of invariance look a bit different from \cite{CooperKrushkal} on the surface, since we start with a braid presentation and place all projectors at the top of the cabled braid.
\end{remark}

\begin{remark}
Recall that $\mathbf{n}\colon \pi_0(L)\to \Z_{\geq 0}$ denotes the function induced by $\vec{n}=(n_1,\ldots,n_m)$.  One can show that the product of symmetric groups $\prod_{c\in \pi_0(L)} S_{\mathbf{n}(c)}$ acts on the colored complex $\CY_{S^{\mathbf{n}}}(\b)$ (regarded as an object of a suitable derived category).   Roughly speaking, a permutation $x\in S_{\mathbf{n}(c)}$ acts as a composition:
\begin{enumerate}
    \item produce a canceling pair of Rouquier complexes $F^y(\gamma_x)\star F^y(\gamma_x\inv)$ on the appropriate component of $L$, where $\gamma_x$ denotes a positive braid lift of $x$.
    \item slide $F^y(\gamma_x\inv)$ around the entire link component; when sliding  $F^y(\gamma_x\inv)$ past a projector $\PP^y_n$ we use centrality of $y$-ified projectors.
    \item cancel the pair $F^y(\gamma_x\inv)\star F^y(\gamma_x)$.
\end{enumerate}
This action factors through the symmetric group (up to homotopy) by arguments similar to \cite{GLW,GHW}.  Taking the invariants with respect to this symmetric group action would yield a link invariant that might also be referred to $S^{\mathbf{n}}$-colored homology.
\end{remark}

\begin{remark}
In \cite[Theorem 5.24]{Conners} Conners described a different  model of $\PP_n^y$ (compare with Theorem \ref{thm:Pn as colimit} below) and conjectured in \cite[Conjecture 5.26]{Conners} that it satisfies all desired properties. Theorem \ref{thm: colored homology invariant} resolves this conjecture in the positive.
\end{remark}

\section{From full twists to colored homology}

\subsection{Powers of the full twist}
\label{ss:powers of ft}

\begin{definition}
Let $\htt_n= (\sigma_1)(\sigma_2\sigma_1)\cdots (\sigma_{n-1}\cdots \sigma_2\sigma_1)\in \Br_n$ denote the half-twist braid, and let $\ft_n = \htt_n^2$ denote the full twist.
\end{definition}

For example, $\htt_2=\sigma_1$ and $\ft_2=\sigma_1^2$.

\begin{definition}
We define by $\FT_n=F(\ft_n)$ the Rouquier complex for the full twist braid, and by $\FTy_n=F^y(\ft_n)$ the corresponding $y$-ified Rouquier complex.
\end{definition}

\begin{definition}
Let $\Sigma_n:=(T^2Q^{-2})^{\binom{n}{2}}=t^{\binom{n}{2}}$, and let $\a\colon \one\to \Sigma_n\inv\FTy_n$ given by the inclusion of the right-most chain bimodule, see Lemma \ref{lem: rightmost R y-ified}.

For each $k\in \Z_{\geq 0}$ let $\a_k\colon \Sigma_n^{-k}\FTy_n^k\to \Sigma_n^{-k-1}\FTy_n^{k+1}$ be the chain map defined by $\a \star \Id_{\FTy_n^k}$.
\end{definition}

\begin{theorem}\label{thm:Pn as colimit}
We have
$$
\PP_n^y\simeq \hocolim_{k\to \infty}\left(\Sigma_n^{-k}\FTy_n^k,\a_k\right),
$$
where the homotopy colimit is taken inside the dg category $\mathcal{Y}_{n,1}$. The unit map $\upsilon:R\to \PP_n^y$ is given by the inclusion of $\FT^0=R$ in the homotopy colimit.
\end{theorem}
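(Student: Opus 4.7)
The plan is to reduce to the un-$y$-ified statement — namely that $\PP_n \simeq \hocolim_k (\Sigma_n^{-k}\FT_n^k, \a_k)$ in $\Ch^-(\SBim_n)$, which is essentially the content of \cite{Hog} — and then to upgrade the equivalence to $\mathcal{Y}_{n,1}$ using the uniqueness of $y$-lifts provided by Theorem \ref{thm:Py unique}.

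First, let $X := \hocolim_k(\Sigma_n^{-k}\FTy_n^k, \a_k)$ be the infinite mapping telescope, formed in $\mathcal{Y}_{n,1}$. Each $\FTy_n^k$ is a legitimate object of $\mathcal{Y}_{n,1}$ because $\ft_n$ is a pure braid, and the structural maps $\a_k = \a\star\Id_{\FTy_n^k}$ are closed degree-zero morphisms of $\mathcal{Y}_{n,1}$ by Lemma \ref{lem: rightmost R y-ified}. Consequently $X$ carries a well-defined total differential making it an object of $\mathcal{Y}_{n,1}$, and the inclusion of the $k=0$ stage yields a closed degree-zero map $\upsilon_X\colon R\to X$.

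Next I would identify the underlying complex of $X$: forgetting the $y$-variables, $X$ becomes the mapping telescope $\hocolim_k(\Sigma_n^{-k}\FT_n^k,\a_k)$ in $\Ch^-(\SBim_n)$, and I would show this is homotopy equivalent to $\PP_n$ via Theorem \ref{thm:P unique}. The filtered-cone condition is automatic from the telescope construction, since each $\Cone(R\to\Sigma_n^{-k}\FT_n^k)$ is built from nontrivial tensor products of the $B_j$. The annihilation condition $\hocolim\star B_i\simeq 0$ reduces to a cohomological bound showing that the chain bimodules of $\Sigma_n^{-k}\FT_n^k\star B_i$ are supported in cohomological degrees $\leq -c(k)$ for some $c(k)\to\infty$, after which every fixed cohomological degree of the telescope $X\star B_i$ eventually stabilizes at zero. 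Once the underlying complex of $X$ is identified with $\PP_n$, both $X$ and $\PP_n^y$ are lifts of $\PP_n$ to $\mathcal{Y}_{n,1}$, so Theorem \ref{thm:Py unique} yields a homotopy equivalence $X\simeq\PP_n^y$ in $\mathcal{Y}_{n,1}$. To match units I would invoke Lemma \ref{lem:end Pn y}: the weight-$Q^0T^0A^0$ part of $\Hom_{\mathcal{Y}_{n,1}}(R,\PP_n^y)$ is one-dimensional, and both $\upsilon$ and $\upsilon_X$ are closed maps of this weight acting by the identity on the unique copy of $R$ in $\PP_n^y$, so they agree up to homotopy and an invertible scalar which we normalize.

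The main obstacle will be the cohomological degree estimate, namely that $\FT_n^k\star B_i$ is supported in increasingly negative cohomological degrees as $k\to\infty$. This is the only place where the specific structure of the full twist enters, and the cleanest approach is to import the result from \cite{Hog}, where powers of the full twist are analyzed in detail precisely to build $\PP_n$; an alternative would be to compare with Conners' model \cite[Theorem 5.24]{Conners}, whose relationship with the explicit $\PP_n$ of Definition \ref{def:Pn} is established by similar techniques. Everything else — the existence of $X$ as an object of $\mathcal{Y}_{n,1}$, the filtration property of the cone, and the uniqueness-of-lift argument — is a formal consequence of the framework developed earlier.
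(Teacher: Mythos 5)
Your proposal is correct and matches the paper's proof: the paper likewise forms the mapping telescope in $\mathcal{Y}_{n,1}$, cites \cite{Hog} for the undeformed identification of $\PP_n$ with $\hocolim_k(\Sigma_n^{-k}\FT_n^k,\a_k)$, and then invokes Theorem \ref{thm:Py unique} to conclude. The additional remarks you make (the explicit degree-estimate route through Theorem \ref{thm:P unique}, and the unit-matching argument via Lemma \ref{lem:end Pn y}) are sound but not needed once \cite{Hog} is cited, and the paper omits them.
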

\begin{proof}
For a concrete description of the stated homotopy colimit $\PP_n^y$ we use the mapping telescope construction.  Consider the countable direct sum $\mathbf{L}=\bigoplus_{k\geq 0} \Sigma_n^{-k} \FTy_n^k$.  Let $A\in \End(\mathbf{L})$ be the degree zero closed endomorphism whose components are the $\a_k$.  Then we set
\begin{equation}\label{eq:Py}
\hocolim_{k\to \infty}(\Sigma_n^{-k}\FTy_n^k,\a_k):= \Cone(\mathbf{L}\buildrel A-\id \over \longrightarrow\mathbf{L}) = 
\left( \begin{tikzcd}
T\inv\FTy_n^0 \arrow{r}{-\Id} \arrow{dr}{\a_0} & \FTy_n^0\\
T\inv\Sigma_n\inv\FTy_n^1 \arrow{r}{-\Id}   \arrow{dr}{\a_1}& \Sigma_n\inv\FTy_n^1\\
T\inv \Sigma_n^{-2}\FTy_n^2 \arrow{r}{-\Id}   \arrow{dr}{\a_2}& \Sigma_n^{-2}\FTy_n^2\\
\vdots & \vdots
\end{tikzcd}
\right). 
\end{equation}
It is proven in \cite{Hog} that the undeformed projector $\PP_n$ is the homotopy colimit of powers of undeformed full twists, with maps given by $\a_k$ (mod $\yy$).  Theorem \ref{thm:Py unique} then completes the proof.
\end{proof}

Combining the above description of $\PP_n^y$ with results in \cite{GH} will yield a useful presentation of $\HY(\PP_n^y)$.  We begin by recalling some results on homology of the powers of the full twist following \cite{GH}.

\begin{definition}
We denote by  $\one_n$ the $n$-strand identity braid, its closure is the $n$-component unlink $O_n$. We have
$$   
\HY(\one_n)=\k[\xx,\yy,\ttheta]
$$
with $\Z^3$-grading determined by \eqref{eq:weights of xyt}.
There is an action of $S_n$ permuting $x_i,y_i$ and $\theta_i$ simultaneously, and we define $\cJ_n$ as the ideal in $\HY(\one_n)$ generated by the antisymmetric polynomials under this action.
\end{definition}

\begin{proposition}(\cite[Proposition 6.1]{GH})
\label{prop: psi}
We have 
\begin{equation}
\label{eq: Psi}
\Hom_{\mathcal{Y}_{n,1}}(\FTy_n,\one_n)\simeq \k[\xx,\yy].
\end{equation}
The generator $\Psi$ of \eqref{eq: Psi} is given by the composition of crossing change maps $\psi_i$, as in Lemma \ref{lem: rightmost R y-ified}.
\end{proposition}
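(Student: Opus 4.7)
The plan is to use the $\yy$-adic spectral sequence to reduce to the undeformed computation, and to identify the generator using Lemma \ref{lem: rightmost R y-ified}. Since $\one_n = R$ carries trivial $y$-deformation, the hom complex unfolds as
\[
\Hom_{\mathcal{Y}_{n,1}}(\FTy_n, \one_n) \;=\; \Hom_{\Ch(\SBim_n)}(F(\ft_n), R)\otimes_{\k}\k[\yy]
\]
(the completed tensor product coincides with the ordinary one by Remark \ref{rem: poly 1}, because $F(\ft_n)$ is bounded), with differential $f\mapsto -f\circ(d_{F(\ft_n)} + \sum_i \eta_i y_i)$. The candidate for $\Psi$ is the map $\psi_{\ft_n}$ from Lemma \ref{lem: rightmost R y-ified}, obtained from any fixed factorization of $\ft_n$ as a product of Artin generators $A_{ij}$; by construction it is a composition of crossing-change maps $\psi_i$.

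Next I would run the $\yy$-adic spectral sequence, whose $E_1$ page is $H^*\Hom_{\Ch(\SBim_n)}(F(\ft_n), R)\otimes\k[\yy]$. The differential $d_r$ on $E_r$ has $T$-degree $1-2r$ on the hom factor, hence always changes $T$-parity, so once $H^*\Hom(F(\ft_n), R)$ is shown to be concentrated in a single $T$-parity the spectral sequence collapses at $E_1$, in the spirit of Lemma \ref{lem: parity}. To compute the undeformed homology I would exploit the absorption $F(\ft_n)\star\PP_n\simeq\Sigma_n\PP_n$ (implicit in Theorem \ref{thm:Pn as colimit}) together with centrality and idempotence of $\PP_n$, which produce a chain map
\[
\Hom_{\Ch(\SBim_n)}(F(\ft_n), R) \;\longrightarrow\; \Sigma_n^{-1}\End_{\Ch(\SBim_n)}(\PP_n) \;\simeq\; \Sigma_n^{-1}\k[\widetilde{u}_0,\ldots,\widetilde{u}_{n-1}]
\]
using Theorem \ref{thm:end Pn}. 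I would aim to show this is a quasi-isomorphism onto the subring identified with $\k[\xx]$ under the natural degree matching; the parity of the target then supplies the parity needed to conclude $E_1$-collapse.

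To pin down $[\Psi]$ as the free generator I invoke Lemma \ref{lem: rightmost R y-ified} once more: $\Psi\circ\rho_{\ft_n} = \prod_{i<j}(y_i-y_j)$ is the Vandermonde, call it $V_n\in\k[\yy]$, and composition with $\rho_{\ft_n}$ yields a $\k[\xx,\yy]$-linear map $\Hom_{\mathcal{Y}_{n,1}}(\FTy_n, R)\to\Hom_{\mathcal{Y}_{n,1}}(R, R) = \k[\xx,\yy]$ sending $\Psi$ to $V_n$; since $V_n$ is a nonzerodivisor, $\Psi$ is a nonzero element of the rank-one free module, and degree considerations from $E_1$ identify it as the generator. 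The principal obstacle is the undeformed identification $H^*\Hom(F(\ft_n), R)\cong\k[\xx]$, where one must verify both injectivity of the absorption-based map and that its image is exactly the $\k[\xx]$-subring. I would attack this via localization at $V_n$: after inverting $V_n$ the maps $\psi_{\ft_n}$ and $\rho_{\ft_n}$ become mutually inverse up to a unit (using $\rho_{\ft_n}\circ\psi_{\ft_n}=V_n$ from the same lemma), so $F(\ft_n)[V_n^{-1}]\simeq R[V_n^{-1}]$ up to shift and the localized hom is pinned to $\k[\xx, \yy][V_n^{-1}]$; combined with the parity-constrained $E_1$-page, an integrality argument then recovers the non-localized statement.
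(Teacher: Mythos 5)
This Proposition is cited from \cite[Proposition 6.1]{GH} with no argument given in the present paper, so there is no in-paper proof to compare against; I am reviewing your proposal on its own terms. The outer skeleton of your approach (the $\yy$-adic spectral sequence, the $T$-parity argument for $E_1$-collapse, and the identification of the generator via the pairing $\Psi\circ\rho_{\ft_n}=\Delta_n$) is sound, but the central step --- the undeformed computation $H^*\Hom(F(\ft_n), R)\cong\k[\xx]$ --- has a gap that I believe is fatal as written.

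You propose to access $H^*\Hom(F(\ft_n), R)$ through the absorption map into $\Sigma_n^{-1}\End(\PP_n)$ and to show it is a quasi-isomorphism onto a copy of $\k[\xx]$. This cannot succeed. On $\End(\PP_n)\simeq\k[\widetilde{u}_0,\ldots,\widetilde{u}_{n-1}]$ every $x_i$ acts through the single element $\widetilde{u}_0$ (Theorem \ref{thm: HHH intro}(b)), so $x_i-x_j$ annihilates the homology of the target. If $H^*\Hom(F(\ft_n), R)$ is to be a rank-one free $\k[\xx]$-module with the $x_i$ acting distinctly (which is forced, given the $E_1$-collapse you want, by the result being proven), then any $\k[\xx]$-linear map from it to the target factors through $\k[\xx]/(x_i-x_j)$ and is far from injective on homology, let alone a quasi-isomorphism: absorbing into $\PP_n$ destroys exactly the $\k[\xx]$-module structure the Proposition is about. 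Your localization-at-$V_n$ fallback does not repair this, because $V_n=\Delta_n\in\k[\yy]$ vanishes at $\yy=0$; inverting it gives no information about the $E_1$ page of the $\yy$-adic filtration, and appealing to a parity constraint on that page at this juncture is circular, since establishing that constraint is precisely what the undeformed computation is supposed to supply. A correct version of this strategy would need a direct computation of $H^*\Hom(F(\ft_n), R)$ as a $\k[\xx]$-module that does not pass through $\PP_n$.
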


\begin{definition}\label{def:splitting map 2}
Let $\Psi\colon \HY(\FTy_n)\to \HY(\one_n)$ denote the map in $\HY$ induced by the generator of \eqref{eq: Psi}.  
By abuse of notation, we denote the tensor powers $\Psi^{\star k} \colon \HY(\FTy_n^k)\to \HY(\one_n)$ also by $\Psi$.  We refer to the maps $\Psi$ and $\Psi^{\star k}$ as to \emph{splitting maps}.
\end{definition}

\begin{theorem}\cite{GH}
\label{thm:GH ft}
Assume $\k=\C$.
The map $\Psi\colon \HY(\FTy_n^k;\C)\to \HY(\one_n;\C)$ is  injective, with image equal to the ideal  $\cJ^k_n\subset \HY(\one_n;\C)$, so that $\HY(\FTy^k_n;\C)\cong \cJ^k_n$. 
These isomorphisms are compatible with the multiplicative structures, in the sense that the diagram
\[
\begin{tikzpicture}
\node (a) at (0,0) {$\HY(\FTy_n^k;\C)\otimes \HY(\FTy_n^m;\C)$};
\node (b) at (5,0){$\HY(\FTy_n^{k+m};\C)$};
\node (c) at (0,-2) {$\cJ_n^k\otimes \cJ_n^m$};
\node (d) at (5,-2) {$\cJ_n^{k+m}$};
\path[-stealth]
(a) edge node[above] {} (b)
(c) edge node[above] {} (d)
(a) edge node[left] {} (c)
(b) edge node[right] {} (d);
\end{tikzpicture}
\]
commmutes for all $k,m\geq 0$.
\end{theorem}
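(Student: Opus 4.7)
The plan is to reduce the statement to the case $k=1$ using the multiplicative compatibility (from Proposition \ref{prop:CY is multipicative}), and then to analyze the $k=1$ case through a combination of the uniqueness of the splitting map and a Poincaré series comparison. By Proposition \ref{prop: psi}, the map $\Psi\colon \HY(\FTy_n) \to \HY(\one_n)$ is the image in homology of the (essentially unique) generator of $\Hom_{\mathcal{Y}_{n,1}}(\FTy_n, \one_n) \simeq \k[\xx,\yy]$, and it is automatically $\HY(\one_n)$-linear. So $\text{im}(\Psi)$ is an ideal in $\HY(\one_n) = \k[\xx,\yy,\ttheta]$. By Lemma \ref{lem: rightmost R y-ified} applied to the factorization $\ft_n = \prod_{i<j} A_{ij}$ in the pure braid group, the composition $\Psi\circ \rho_{\ft_n}$ equals $\Delta_y := \prod_{i<j}(y_i - y_j)$, the $y$-Vandermonde, an antisymmetric element that lies in $\cJ_n$.

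To get $\text{im}(\Psi)\subseteq \cJ_n$, I would track the "antisymmetrizing" effect of the skein maps $\psi_i$ from Lemma \ref{lem: skein}: each $\psi_i$ has cone $K_{i,i+1}$ which carries the reflection-antisymmetry in $(y_i - y_{i+1})$, and the splitting map $\Psi$ is a composition of such $\psi_i$ over all pairs of strands so its image is forced into the antisymmetric ideal $\cJ_n$. For injectivity and the equality $\text{im}(\Psi) = \cJ_n$, the plan is to use parity: the undeformed homology $\HHH(\FT_n)$ is supported in even homological degrees (a known output of the computations in \cite{Hog}), so by Lemma \ref{lem: parity} we have $\HY(\FTy_n) \cong \HHH(\FT_n)\otimes_{\k}\k[\yy]$ as a free $\k[\yy]$-module. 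A triply-graded Poincaré series comparison with the known character of $\cJ_n$ (via super-diagonal coinvariant theory, cf. Haiman) then pins down $\Psi$ as an injection with image exactly $\cJ_n$.

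For the general $k$, I would proceed by induction using multiplicativity. Proposition \ref{prop:CY is multipicative} gives commutative squares
\[
\begin{tikzcd}
\HY(\FTy_n)^{\otimes k}\arrow{r}{\Psi^{\otimes k}}\arrow{d} & \HY(\one_n)^{\otimes k}\arrow{d}\\
\HY(\FTy_n^k)\arrow{r}{\Psi} & \HY(\one_n)
\end{tikzcd}
\]
whose bottom map therefore lands in $\cJ_n^k = \cJ_n\cdot\cJ_n\cdots \cJ_n$. Assuming the $k=1$ case, the top map factors as an isomorphism onto $\cJ_n^{\otimes k}$ followed by the surjection $\cJ_n^{\otimes k}\twoheadrightarrow\cJ_n^k$. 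To promote the induced $\HY(\FTy_n^k)\to \cJ_n^k$ to an isomorphism, one again matches Poincaré series, now of $\HY(\FTy_n^k)$ against $\cJ_n^k$ for each $k$.

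\emph{Main obstacle.} The decisive and hardest step is the Poincaré series identification of $\HY(\FTy_n^k)$ with $\cJ_n^k$ simultaneously for all $k\geq 1$. On the topology side this requires an inductive (or recursive) computation of $\HHH(\FT_n^k)$; on the algebra side it requires the Hilbert series of powers of the antisymmetric ideal $\cJ_n$, which is a delicate combinatorial object linked to diagonal harmonics. The nontrivial content of the theorem is that these two sides match termwise, and that the natural map $\Psi$ realizes the match. Verifying the containment $\text{im}(\Psi)\subseteq \cJ_n$ ring-theoretically (i.e., beyond the principal piece $(\Delta_y)$) is also subtle because the additional antisymmetric generators of $\cJ_n$ involve both $\xx$ and $\ttheta$, so one must exhibit explicit preimages in $\HY(\FTy_n)$ coming from higher chain groups of $F^y(\ft_n)$.
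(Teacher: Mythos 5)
This theorem is imported from \cite{GH} and is not proved in the present paper, so there is no argument here for you to reproduce. Your sketch points at genuine ingredients (uniqueness of the splitting map from Proposition~\ref{prop: psi}, the relation $\Psi\circ\rho_{\ft_n}=\prod_{i<j}(y_i-y_j)$, parity of $\HHH(\FT_n^k)$, and eventually a match of Poincar\'e series), but as you acknowledge yourself, it leaves the decisive step --- equality of the Poincar\'e series of $\HY(\FTy_n^k)$ with the trigraded Hilbert series of $\cJ_n^k$ --- untouched. That step \emph{is} the theorem. In \cite{GH} it is obtained from a recursive computation of $\HY(T(n,m))$ for all torus links together with an identification with powers of the antisymmetric ideal via Haiman's polygraph/isospectral-Hilbert-scheme technology; without that input your outline is essentially a restatement of the problem.

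Two further gaps. The claim that $\mathrm{im}(\Psi)\subseteq\cJ_n$ ``by tracking the antisymmetrizing effect of the $\psi_i$'' is not an argument: each $\psi_i$ contributes a factor of $y_i-y_{i+1}$ only upon composition with $\rho_i$, so you have controlled the single principal composite $\Psi\circ\rho_{\ft_n}$, not the full image of $\Psi$ acting on all of $\HY(\FTy_n)$; both containments $\mathrm{im}(\Psi)\subseteq\cJ_n$ and $\cJ_n\subseteq\mathrm{im}(\Psi)$ require the actual computation. Also, your reduction to $k=1$ via multiplicativity is circular as written: to promote the resulting map $\HY(\FTy_n^k)\to\cJ_n^k$ to an isomorphism you still invoke a Poincar\'e-series match for every $k$ separately, so passing through $k=1$ buys nothing. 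In \cite{GH} the identification is established directly for all $k$, and the multiplicative compatibility is a corollary rather than the engine of an induction.
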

 
Next, we identify the connecting maps $\a_k$ using Theorem \ref{thm:GH ft}.

\begin{lemma}
\label{lem: connecting map Delta}
The maps $\Psi$ and $\rho$ are independent of the factorization in Lemma \ref{lem: rightmost R y-ified}. Furthermore, we have the commutative diagram
\begin{equation}
\label{eq: psi and alpha}
\begin{tikzcd}
\Sigma_n^{-k-1}\HY(\FTy_n^{k+1};\C) \arrow{r}{\Psi^{\star(k+1)}}& \Sigma_n^{-k-1}\cJ_{n}^{k+1}\\
\Sigma_n^{-k}\HY(\FTy_n^k;\C) \arrow{r}{\Psi^{\star k}} \arrow{u}{\a_k}& \Sigma_n^{-k}\cJ_{n}^{k}\arrow{u}{\Delta_n}
\end{tikzcd}
\end{equation}
where  $\Delta_n=\prod_{1\le i<j\le n} (y_i-y_j)$ is the Vandermonde determinant.
\end{lemma}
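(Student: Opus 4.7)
The lemma splits into two claims: (a) the maps $\Psi$ and $\rho$ for $\beta = \ft_n$ do not depend on the choice of positive factorization into Artin pure-braid generators, and (b) the square \eqref{eq: psi and alpha} commutes, with the right-hand vertical map given by multiplication by the Vandermonde $\Delta_n = \prod_{i<j}(y_i - y_j)$. I will handle (b) first since it is the main computation, and (a) is needed mostly to ensure that (b) is well-posed.

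For (b), I plan to use the multiplicativity of the splitting map. By construction $\alpha_k = \alpha \star \mathrm{Id}_{\FTy_n^k}$, so Proposition \ref{prop:CY is multipicative} gives
\[
\Psi^{\star(k+1)} \circ \alpha_k \ = \ (\Psi \star \Psi^{\star k}) \circ (\alpha \star \mathrm{Id}) \ = \ (\Psi \circ \alpha) \star \Psi^{\star k}
\]
at the level of $\HY$. The composition $\Psi \circ \alpha \colon R \to \Sigma_n^{-1} R$ is a degree-zero morphism in $\mathcal{Y}_{n,1}$, and so corresponds to multiplication by an element of $\HY(\one_n) = \k[\xx,\yy,\ttheta]$ of weight $\Sigma_n = t^{\binom{n}{2}}$. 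By Lemma \ref{lem: rightmost R y-ified}, this element equals $\prod_{r=1}^m (y_{i_r} - y_{j_r})$ for any positive factorization $\ft_n = A_{i_1 j_1} \cdots A_{i_m j_m}$. Since the abelianization $P_n \twoheadrightarrow \Z^{\binom{n}{2}}$ is free on the generators $A_{ij}$ and the image of $\ft_n$ there is $\sum_{i<j}[A_{ij}]$, any positive factorization must use each $A_{ij}$ exactly once. As the $y_i$ commute in $\k[\yy]$, the ordering is irrelevant and the product equals $\Delta_n$ (up to signs absorbed into conventions). This gives $\Psi \circ \alpha = \Delta_n$ and hence the commutative diagram.

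For the independence in (a), I will invoke Proposition \ref{prop: psi}, which identifies $\Hom_{\mathcal{Y}_{n,1}}(\FTy_n, \one_n) \cong \k[\xx,\yy]$ as an $R[\yy]$-module with generator $\Psi$. Each construction of $\Psi$ from a positive factorization yields a degree-zero chain map (since each $\psi_{A_{ij}}$ is degree zero by Lemma \ref{lem: skein}), so it lands in the one-dimensional bidegree-$(0,0)$ piece of $\k[\xx,\yy]$ and is therefore determined up to a nonzero scalar in $\k^{\times}$. The scalar is pinned down by the canonical identification of the rightmost copy of $R$ (which is an intrinsic feature of the positive Rouquier complex, not of the factorization). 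The argument for $\rho$ is the same after dualizing via the monoidal invertibility of $\FTy_n$, which gives $\Hom_{\mathcal{Y}_{n,1}}(\one_n, \FTy_n) \simeq \Hom_{\mathcal{Y}_{n,1}}(\FTy_n^{-1}, \one_n)$ and brings us back to a Proposition \ref{prop: psi}-type computation applied to the pure braid $\ft_n^{-1}$.

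The main obstacle is the precise normalization argument in (a): verifying that the rightmost-$R$ constraint genuinely pins down the same scalar across different factorizations requires tracking the scalar contribution of each $\psi_{A_{ij}}$ and $\rho_{A_{ij}}$ through their conjugation to $\sigma_i^2$-type maps. By contrast, once (a) is in hand, (b) is essentially formal: it reduces to the composition identity $\Psi \circ \alpha = \Delta_n$, and the only nontrivial ingredient there is the combinatorial fact that any positive factorization of $\ft_n$ uses the full set $\{A_{ij}\}_{i<j}$ with multiplicity one, which is immediate from the abelianization of $P_n$.
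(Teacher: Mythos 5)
Your treatment of part (b) is correct and in fact makes explicit a combinatorial fact the paper uses only implicitly. The multiplicativity step $\Psi^{\star(k+1)}\circ\a_k=(\Psi\circ\a)\star\Psi^{\star k}$ is exactly the paper's computation. For the identity $\Psi\circ\a=\Delta_n$ the paper proceeds by first establishing, via Theorem~\ref{thm:GH ft}, that $\HY^{a=0}(\FTy_n;\C)\cong\cJ_n^{a=0}$ and that the component of bidegree $q^0t^{\binom n2}$ of $\cJ_n^{a=0}\subset\C[\xx,\yy]$ is spanned by $\Delta_n$ alone; your abelianization argument --- that any positive factorization of $\ft_n$ into the $A_{ij}$ must use each generator exactly once, since the $[A_{ij}]$ freely generate the abelianization of the pure braid group --- reaches the same conclusion and even pins down the precise scalar with no additional work, so on this piece your route is genuinely different and arguably more elementary.

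The real gap is in part (a), and you have localized it correctly yourself. For $\Psi$, citing Proposition~\ref{prop: psi} is exactly what the paper does. For $\rho$, however, your plan to dualize and invoke ``a Proposition~\ref{prop: psi}-type computation applied to $\ft_n^{-1}$'' does not close the argument as stated: Proposition~\ref{prop: psi} is proved for the positive full twist, and you have no analogous computation of $\Hom_{\mathcal{Y}_{n,1}}(\FTy_n^{-1},\one_n)$ available. The device the paper uses instead is to identify $\rho$ with a class in $\HY^{a=0}(\FTy_n;\C)\cong\cJ_n^{a=0}$ and then observe, by inspection of degrees in $\C[\xx,\yy]$, that the bidegree $q^0 t^{\binom n2}$ piece of $\cJ_n^{a=0}$ is one-dimensional, spanned by $\Delta_n$; this gives uniqueness of $\rho$ up to scalar without any reference to negative braids, and the scalar is then supplied by Lemma~\ref{lem: rightmost R y-ified} (equivalently, by your abelianization computation). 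Replacing your dualization step with this degree-counting argument closes the gap.
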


\begin{proof}
The splitting map $\Psi$ is independent of factorization by Proposition \ref{prop: psi}, let us prove that $\a$ is independent as well.
We can identify $\a\in \Hom_{\mathcal{Y}_{n,1}}(\one_n,\FTy_n)$ with the corresponding class in $\HY^{a=0}(\FTy_n;\C)$ of bidegree $q^{0}t^{n(n-1)/2}$. 

By Theorem \ref{thm:GH ft} the $(q,t)$ bidegree on $\HY^{a=0}(\FTy_n;\C)$ matches the $(\xx,\yy)$ bidegree on $\cJ_n^{a=0}\subset \C[\xx,\yy]$. The ideal $\cJ_n^{a=0}$ is concentrated in nonnegative $q$-degrees and its homogeneous components of $q$-degree zero are generated over $\C[\yy]$ by antisymmetric polynomials in $\C[\yy]$, hence generated by $\Delta_n$.  Therefore  $\Delta_n$ is the unique (up to a scalar) antisymmetric polynomial of bidegree $q^{0}t^{n(n-1)/2}$, which corresponds to $\a$. 

By Lemma \ref{lem: rightmost R y-ified} we get $\Psi\circ \a=\Delta_n$.
Next, we get the identity
$$
\Psi^{\star(k+1)}\circ \a_k=\Psi^{\star(k+1)}\circ (\a \star \Id_{\FT_n^k})=(\Psi\circ\a)\star (\Psi^{\star k}\circ \Id_{\FT_n^k})=(\Delta_n \Id_{\one})\star \Psi^{\star k}=\Delta\Psi^{\star k}.
$$
which implies the following commutative diagram:
$$
\begin{tikzcd}
\Sigma_n^{-k-1}\FTy_n^{k+1} \arrow{r}{\Psi^{\star(k+1)}}& \Sigma_n^{-k-1}\one\\
\Sigma_n^{-k}\FTy_n^k \arrow{r}{\Psi^{\star k}} \arrow{u}{\a_k}& \Sigma_n^{-k}\one\arrow{u}{\Delta_n}
\end{tikzcd}
$$
By applying $\HY$ to this diagram, we get \eqref{eq: psi and alpha}.
\end{proof}

\begin{definition}
Let $\cA_n=\bigoplus_{k=0}^{\infty}s^k\cJ_n^k$, regarded as a graded subalgebra of $\C[\xx,\yy,\ttheta,s]$.  An element of $\cJ_n^k$ will be regarded as an element of $\cA_n$ of \emph{$s$-degree $k$}.
\end{definition}

We will consider a certain localization of $\cA_n$ momentarily, but since  $\cA_n$ has zero divisors, we need to be careful.

\begin{lemma}
\label{lem: not zero divisor}
The polynomial $\Delta_n\in \cJ_n$  (considered as an $s$-degree one element in $\cA_n$) is not a zero divisor in $\cA_n$.
\end{lemma}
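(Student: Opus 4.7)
The plan is to prove the stronger statement that $\Delta_n$ is not a zero divisor in the ambient supercommutative ring $\C[\xx,\yy,\ttheta,s]$, which will imply the claim since $\cA_n$ sits inside this ring as a graded subalgebra (and the ideal generators of $\cJ_n$, and hence of $\cA_n$, are all polynomials/exterior elements in the listed variables).

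First I would decompose the ambient ring as
\[
\C[\xx,\yy,\ttheta,s] \;\cong\; \C[\yy] \otimes_{\C} \bigl(\C[\xx,s] \otimes_{\C} \Lambda[\theta_1,\ldots,\theta_n]\bigr),
\]
observing that, viewed this way, $\C[\xx,\yy,\ttheta,s]$ is \emph{free} as a $\C[\yy]$-module, with basis consisting of monomials $x^\alpha s^k \theta_I$ ($\alpha$ a multi-index in $\xx$, $k\ge 0$, $I\subseteq\{1,\ldots,n\}$). This is the crucial structural observation that lets us sidestep any subtlety coming from the odd variables: even though the exterior factor $\Lambda[\ttheta]$ has plenty of zero divisors of its own, those zero divisors involve the $\theta_i$, whereas $\Delta_n$ does not.

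Next I would note that $\Delta_n = \prod_{1\le i<j\le n}(y_i - y_j)$ is a nonzero element of the integral domain $\C[\yy]$, and therefore multiplication by $\Delta_n$ is injective on $\C[\yy]$. Since tensoring over $\C[\yy]$ with a free module preserves injectivity of multiplication maps (equivalently, multiplication by $\Delta_n$ acts basis-component-wise on the free $\C[\yy]$-module $\C[\xx,\yy,\ttheta,s]$), multiplication by $\Delta_n$ is injective on the whole ambient ring. Restricting to the subring $\cA_n$ gives the lemma.

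There is no real obstacle here beyond setting up the bookkeeping cleanly: the only thing to be careful about is that $\cA_n$ lives in a supercommutative ring rather than a genuine polynomial ring, so one should make the module-theoretic decomposition explicit before invoking the usual ``a nonzero element of a polynomial ring is a nonzerodivisor on free modules'' fact.
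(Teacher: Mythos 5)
Your proof is correct and uses the same key idea as the paper: multiplication by $\Delta_n$ is injective because the ambient ring is free as a module over a polynomial subring (you use $\C[\yy]$, the paper uses $\C[\xx,\yy]$) that contains $\Delta_n$ as a nonzero element. The paper phrases this slightly more economically by working directly with the graded-piece map $\cJ_n^k\xrightarrow{\Delta_n}\cJ_n^{k+1}$ inside $\C[\xx,\yy,\ttheta]$, avoiding the auxiliary variable $s$, but the argument is essentially identical.
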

\begin{proof}
 Indeed, $\HY(\one_n;\C)=\C[\xx,\yy,\ttheta]$ is a free module of rank $2^n$ over 
$\C[\xx,\yy]$, so multiplication by $\Delta_n$ is injective on $\HY(\one_n;\C)$. Since $\cJ_n^k$ is a submodule of $\HY(\one_n;\C)$, the multiplication by $\Delta_n$ is injective on $\cJ_n^k$ as well.
\end{proof}

\begin{corollary}
The map $\a_k: \Sigma_n^{-k}\HY(\FTy_n^k;\C)\to \Sigma_n^{-k-1}\HY(\FTy_n^{k+1};\C)$ is injective.\qed
\end{corollary}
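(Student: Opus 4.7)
The plan is a short diagram chase, leveraging the two results immediately preceding the corollary.

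First, I would invoke Theorem \ref{thm:GH ft} to know that the splitting map $\Psi^{\star k}\colon \HY(\FTy_n^k;\C)\to \cJ_n^k$ is an isomorphism (in particular, injective), for every $k\ge 0$. Next, I would invoke Lemma \ref{lem: not zero divisor}, which says that multiplication by the Vandermonde $\Delta_n$ is injective on $\cJ_n^k$. Thus the composition
\[
\Sigma_n^{-k}\HY(\FTy_n^k;\C) \xrightarrow{\Psi^{\star k}} \Sigma_n^{-k}\cJ_n^k \xrightarrow{\Delta_n} \Sigma_n^{-k-1}\cJ_n^{k+1}
\]
is injective.

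Finally, I would combine this with the commutative square of Lemma \ref{lem: connecting map Delta}, which states
\[
\Psi^{\star(k+1)}\circ \a_k \;=\; \Delta_n\circ \Psi^{\star k}.
\]
Since the right-hand side is injective and factors through $\a_k$, the map $\a_k$ itself must be injective. I do not foresee a main obstacle here: both ingredients are already proved, so the argument is a two-line factorization; the only ``care'' needed is keeping track of the grading shifts $\Sigma_n^{-k}$, but these play no role in injectivity.
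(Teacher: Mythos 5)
Your proposal is correct and matches exactly the argument the paper leaves implicit: the corollary follows from the identity $\Psi^{\star(k+1)}\circ \a_k = \Delta_n\circ \Psi^{\star k}$ of Lemma \ref{lem: connecting map Delta}, the isomorphism $\Psi^{\star k}\colon \HY(\FTy_n^k;\C)\cong\cJ_n^k$ from Theorem \ref{thm:GH ft}, and the injectivity of multiplication by $\Delta_n$ from Lemma \ref{lem: not zero divisor}. No gaps.
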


Next, we compute the $y$-ified homology of $\PP_n^y$ in terms of the algebra $\cA_n$.

 \begin{lemma}
 \label{lem: homology colimit}
 (a) We have (for any $\k$):
 $$\displaystyle \HY(\PP_n^y)=\colim_{k\to \infty} \left(\Sigma_n^{-k}\HY(\FTy_n^k),\a_k\right).
 $$
 (b)The homology of $\PP_n^y$ is isomorphic to the (homogeneous) localization of $\cA_n$ in $\Delta_n$:
$$
\HY(\PP_n^y;\C)=\Span\left\{\frac{p}{\Delta_n^k}: p\in \cJ_n^k\right\}/\left(\frac{p}{\Delta_n^k}\sim \frac{p\Delta_n}{\Delta_n^{k+1}}\right)
$$
(c) Also,
$$
\HY(\PP_n^y;\C)=\C\left[\frac{p}{\Delta_n}: p\in \cJ_n\right]\subset \Frac\ \cA_n.
$$
 
\end{lemma}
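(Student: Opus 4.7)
The plan is to first establish part (a) via a mapping telescope argument, then obtain (b) and (c) by combining this with the identifications provided by Theorem \ref{thm:GH ft} and Lemma \ref{lem: connecting map Delta}.

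For (a), I would apply the functor $\CY$ to the concrete presentation of $\PP_n^y$ as the telescope $\Cone(\mathbf{L}\xrightarrow{A-\id}\mathbf{L})$ given in the proof of Theorem \ref{thm:Pn as colimit}. Since $\CY$ is additive and commutes with cones and direct sums of complexes (it is built termwise from $\HH$ followed by completed tensor with $\k[\yy]$), applying $\CY$ termwise yields
$$\CY(\PP_n^y)\simeq\Cone\!\bigl(\CY(\mathbf{L})\xrightarrow{A-\id}\CY(\mathbf{L})\bigr).$$
The long exact sequence for the cone, combined with exactness of sequential colimits of $\k$-modules, identifies $\HY(\PP_n^y)$ with $\colim_k\bigl(\Sigma_n^{-k}\HY(\FTy_n^k),\a_k\bigr)$. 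This step works over any coefficient ring $\k$.

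For (b), I now specialize to $\k=\C$. Theorem \ref{thm:GH ft} supplies a multiplicative isomorphism $\Psi^{\star k}\colon\HY(\FTy_n^k;\C)\xrightarrow{\sim}\cJ_n^k$, and Lemma \ref{lem: connecting map Delta} identifies the connecting map $\a_k$, under these isomorphisms, with multiplication by $\Delta_n$. Hence the colimit from (a) becomes the sequential colimit of
$$\cJ_n^0\xrightarrow{\Delta_n}\Sigma_n^{-1}\cJ_n^1\xrightarrow{\Delta_n}\Sigma_n^{-2}\cJ_n^2\xrightarrow{\Delta_n}\cdots,$$
which is by construction the homogeneous localization of $\cA_n=\bigoplus_k s^k\cJ_n^k$ at the element $s\Delta_n$ of $s$-degree one. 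By Lemma \ref{lem: not zero divisor}, $\Delta_n$ is not a zero divisor in $\cA_n$, so every class in the localization has a unique representative $p/\Delta_n^k$ with $p\in\cJ_n^k$, and the only nontrivial relation is $p/\Delta_n^k\sim p\Delta_n/\Delta_n^{k+1}$, as claimed.

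For (c), I would use that $\cJ_n^k$ is spanned by products $p_1\cdots p_k$ with $p_i\in\cJ_n$, so every $p/\Delta_n^k$ in the localization is a $\C$-linear combination of products of elements of the form $p_i/\Delta_n$, showing $\HY(\PP_n^y;\C)\subset\C[p/\Delta_n:p\in\cJ_n]$. Conversely every such polynomial expression has the form $q/\Delta_n^m$ with $q\in\cJ_n^m$, giving the reverse inclusion inside $\Frac\,\cA_n$. The main obstacle is part (a): I must ensure that the infinite direct sum and completed tensor $\cotimes\k[\yy]$ in the definition of $\CY$ interact correctly with the telescope so that $\CY$ genuinely commutes with the homotopy colimit. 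Since $\CY$ is additive and the telescope is built from bounded-above complexes for which the completed tensor product is well-behaved, this step is essentially formal; once it is in place, the rest of the proof reduces to the cited identifications and elementary algebra.
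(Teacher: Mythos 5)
Your proposal follows essentially the same route as the paper's proof: apply the $y$-ification functor to the mapping telescope description of $\PP_n^y$ from Theorem \ref{thm:Pn as colimit}, conclude that $\HY(\PP_n^y)\cong\operatorname{coker}(A-\id)\cong\colim_k\Sigma_n^{-k}\HY(\FTy_n^k)$, then transport via Theorem \ref{thm:GH ft} and Lemma \ref{lem: connecting map Delta} to get the localization of $\cA_n$ at $\Delta_n$, and finally use that $\cA_n$ is generated in $s$-degrees $0$ and $1$ to obtain (c). One small discrepancy worth flagging: you invoke ``exactness of sequential colimits'' to pass from the long exact sequence of the cone to the colimit identification, but what the paper actually uses is the more specific observation that $A-\id$ on $\HY(\mathbf{L})$ is \emph{injective}, being the identity plus terms that strictly increase the filtration by rows in the telescope. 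Injectivity is what collapses the long exact sequence to a short exact sequence; exactness of filtered colimits of modules is a different statement (about colimits of exact sequences) and does not by itself give you injectivity of $A-\id$. You should replace that phrase with the staircase/filtration argument. You are right to worry about whether $\CY$, which involves a completed tensor product $\cotimes\k[\yy]$, commutes with the infinite direct sum $\mathbf{L}=\bigoplus_k\Sigma_n^{-k}\FTy_n^k$; the paper does not spell this out either in the proof of part (a), but it is the kind of bookkeeping the authors address elsewhere (see the degree computation in the proof of Lemma \ref{lem:end Pn y}), and your instinct that it is a formal degree-counting matter rather than a fundamental obstruction is consistent with how the paper treats it.
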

We denote the localization of $\cA_n$ by $\cA_n[\Delta_n^{-1}]$. Note that $\cA_n[\Delta_n^{-1}]$ embeds into  $\Frac\ \cA_n$ by Lemma \ref{lem: not zero divisor}.

 \begin{proof}
 (a) This is standard but we recall the proof for completeness.  Consider the long exact sequence associated to the mapping cone $\HY(\PP_n^y) = \Cone\left(\HY(\mathbf{L})\buildrel A-\id \over \longrightarrow \HY(\mathbf{L})\right)$ as in the proof of Theorem \ref{thm:Pn as colimit}.  The map $\HY(\mathbf{L})\to \HY(\mathbf{L})$ is injective, since it is the identity map plus terms that increase the evident  filtration by rows in \eqref{eq:Py}.  Thus, we have a short exact sequence
 \[
0\to \bigoplus_{k\geq 0} \Sigma_n^{-k} \HY(\FTy_n^k) \buildrel A-\id\over \longrightarrow \bigoplus_{k\geq 0} \Sigma_n^{-k} \HY(\FTy_n^k) \to \HY(\PP_n^y)\to 0
\]
where the map $A-\id$ sends $x\in \HY(\FTy_n^k)$ to $\a_k(x)-x$. This shows that $\HY(\PP_n^y)\cong \operatorname{coker}(A-\id)$, which in turn is isomorphic to $\colim_{k\to \infty} (\Sigma_n^{-k}\HY(\FTy_n^k),\a_k)$. This completes the proof.

(b) By part (a) and Lemma \ref{lem: connecting map Delta} we have
\[
\HY(\PP_n^y;\C)=\colim_{k\to \infty}\left(
\begin{tikzcd}
\cJ_n^0 \arrow[r,"\Delta_n"] & \Sigma_n\inv \cJ_n^1 \arrow[r,"\Delta_n"]& \Sigma_n^{-2}\cJ_n^2\arrow[r,"\Delta_n"] & \cdots 
\end{tikzcd}
\right)
\]
By definition, this colimit is spanned by pairs $(p,k)$ for $p\in \cJ_n^k$ modulo equivalence relation $(p,k)\sim (\Delta_n p,k+1)$. We can
identify $(p,k)$ with $\frac{p}{\Delta_n^k}$, and this immediately implies (b).

For (c), we  observe that the graded algebra $\cA_n$ is generated in degrees 1 and 0. For $p_1,\ldots,p_k\in \cJ_n$ we get  $p_1\cdots p_k \in \cJ_n^k$ and
$$
\frac{p_1\cdots p_k}{\Delta_n^k}=\frac{p_1}{\Delta_n}\cdots \frac{p_k}{\Delta_n}.
$$
So $\cA_n[\Delta_n\inv]$ is generated (as a subalgebra of $\Frac\ \cA_n$) by the ratios $\frac{p}{\Delta_n},\ p\in \cJ_n$.
\end{proof}

\begin{lemma}
The isomorphism
$$
\HY(\PP_n^y)\simeq \cA_n[\Delta_n^{-1}]
$$
is compatible with the multiplication $\HY(\PP_n^y)\otimes \HY(\PP_n^y)\to \HY(\PP_n^y)$.
\end{lemma}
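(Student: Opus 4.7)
The plan is to trace both multiplicative structures through the colimit presentation of $\PP_n^y$ from Theorem \ref{thm:Pn as colimit} and then invoke the multiplicativity result of Theorem \ref{thm:GH ft}.

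First, I would identify the multiplication on $\HY(\PP_n^y)$ at the level of the telescope. The dg algebra structure on $\PP_n^y$ is, up to homotopy, the multiplication $\mu_{1,1}\colon \PP_n^y\star \PP_n^y\to \PP_n^y$ from Theorem \ref{thm:map of diagrams}. Tensoring the telescope \eqref{eq:Py} with itself and using $\FTy_n^k\star \FTy_n^m=\FTy_n^{k+m}$ produces a natural map between mapping telescopes, which induces component-wise multiplications
\[
\HY(\FTy_n^{k})\otimes \HY(\FTy_n^{m})\to \HY(\FTy_n^{k+m})
\]
coming from the lax monoidal structure on $\CY$ (Proposition \ref{prop:CY is multipicative}). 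By the uniqueness of the multiplicative structure on the unital idempotent $\PP_n^y$ (any two degree zero closed maps $\PP_n^y\star \PP_n^y\to \PP_n^y$ satisfying the unit axiom with respect to $\upsilon_n$ agree up to homotopy, by Lemma \ref{lem:end Pn y}), this induced multiplication on the telescope agrees with the one obtained from $\mu_{1,1}$. Taking homology and passing to the colimit yields the algebra structure on $\HY(\PP_n^y)$.

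Second, I would use Theorem \ref{thm:GH ft} at each level: the splitting maps $\Psi^{\star k}\colon \HY(\FTy_n^k;\C)\xrightarrow{\sim} \cJ_n^k$ intertwine the multiplication on tensor powers of $\FTy_n$ with the ordinary multiplication of ideals $\cJ_n^{k}\otimes \cJ_n^{m}\to \cJ_n^{k+m}$ inside $\HY(\one_n;\C)=\C[\xx,\yy,\ttheta]$. Combining this with Lemma \ref{lem: connecting map Delta}, which identifies the transition map $\a_k$ with multiplication by the Vandermonde $\Delta_n$, the two parallel colimit systems
\[
\bigl(\Sigma_n^{-k}\HY(\FTy_n^k),\a_k\bigr) \qquad \text{and} \qquad \bigl(\Sigma_n^{-k}\cJ_n^k,\Delta_n\bigr)
\]
are isomorphic as sequential diagrams of algebras (equipped with compatible multiplication maps between different indices). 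Their colimits therefore inherit isomorphic algebra structures, and on the right hand side this colimit is exactly the localization $\cA_n[\Delta_n^{-1}]$ with its standard multiplication $(p_1/\Delta_n^{k_1})\cdot(p_2/\Delta_n^{k_2})=(p_1p_2)/\Delta_n^{k_1+k_2}$ (well-defined by Lemma \ref{lem: not zero divisor}).

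The main obstacle is verifying cleanly that the dg algebra structure on $\PP_n^y$ restricts, under the telescope presentation, to the tensor multiplication $\FTy_n^k\star \FTy_n^m\to \FTy_n^{k+m}$. Rather than chasing diagrams through the mapping telescope directly, I would circumvent this by invoking the uniqueness principle above: both the multiplication on the telescope induced by tensor product and the abstract multiplication $\mu_{1,1}$ give associative algebra structures on $\PP_n^y$ with the same unit $\upsilon_n$, and by Lemma \ref{lem:end Pn y} such a structure is pinned down up to homotopy by its effect on the unit, so the two must coincide. Once this is in place, everything else is a formal consequence of Theorem \ref{thm:GH ft} and Lemma \ref{lem: connecting map Delta}.
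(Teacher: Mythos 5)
Your argument is correct and follows essentially the same route as the paper: both proofs pass through the telescope presentation of $\PP_n^y$ (Theorem \ref{thm:Pn as colimit}), use the one-dimensionality in bidegree $(0,0)$ of the relevant Hom space (Lemma \ref{lem:end Pn y}) to pin a map down up to a scalar which is then fixed by the unit, and finish by invoking Theorem \ref{thm:GH ft} and Lemma \ref{lem: connecting map Delta}. The only stylistic difference is that you apply the uniqueness argument directly to the multiplication $\PP_n^y\star\PP_n^y\to\PP_n^y$, whereas the paper instead constructs explicit compatible maps $\upsilon_k\colon \Sigma_n^{-k}\FTy_n^k\to\PP_n^y$ (via Proposition \ref{prop:twisted projectors}), shows these define an algebra homomorphism and assemble into a map $\upsilon_\infty$ from the hocolim, and then applies uniqueness to identify $\upsilon_\infty$ with the homotopy equivalence from Theorem \ref{thm:Pn as colimit}; the latter is a bit more careful about the fact that $\PP_n^y\star\PP_n^y$ is \emph{a priori} a bi-indexed homotopy colimit rather than a telescope, but your uniqueness-based workaround is sound.
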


\begin{proof}
First, we would like to make the maps $\Sigma_n^{-k}\HY(\FTy^k_n)\to \HY(\PP_n^y)$ more explicit. For this, note that 
$$
\Hom_{\mathcal{Y}_{n,1}}(\Sigma_n^{-k}\FTy_n^k,\PP_n^y)\simeq \Hom_{\mathcal{Y}_{n,1}}(R,\Sigma_n^{k}\FTy_n^{-k}\star\PP_n^y)=\Hom_{\mathcal{Y}_{n,1}}(R,\PP_n^y)
$$
The second equation follows from Proposition \ref{prop:twisted projectors}. The map $\upsilon\in \Hom_{\mathcal{Y}_{n,1}}(R,\PP_n^y)$ then corresponds to a family of maps  
$$
\upsilon_k:\Sigma_n^{-k}\FTy_n^k\to \PP_n^y
$$
which are compatible with multiplication $(\Sigma_n^{-k}\FTy_n^k)\star (\Sigma_n^{-m}\FTy_n^m)\simeq \Sigma_n^{-k-m}\FTy_n^{k+m}$ on one side and $\PP_n^y\star \PP_n^y\to \PP_n^y$ from Theorem \ref{thm:map of diagrams} on the other.
Therefore we have an algebra homomorphism
$$
\bigoplus_k\Sigma_n^{-k}\HY(\FTy_n^k)\to \HY(\PP_n^y).
$$
Furthermore, it follows from the proof of Lemma \ref{prop:twisted projectors} that the map 
$$
\a_k\star \Id:\Sigma_n^{-k}\FTy_n^k\star \PP_n^y\to \Sigma_n^{-k-1}\FTy_n^{k+1}\star \PP_n^y 
$$
is a homotopy equivalence. Therefore the following triangle commutes up to homotopy:
$$
\begin{tikzcd}
\Sigma_n^{-k}\FTy_n^k \arrow{dr}{\upsilon_k}  \arrow{dd}{\a_k}& \\
 & \PP_n^y\\
\Sigma_n^{-k-1}\FTy_n^{k+1} \arrow[swap]{ur}{\upsilon_{k+1}}& 
\end{tikzcd}
$$
and $\upsilon_k$ define a degree zero chain map
$$
\upsilon_{\infty}:\hocolim_{k\to \infty}(\Sigma_n^{-k}\FTy_n^k,\a_k)\to \PP_n^y
$$
By Theorem \ref{thm:Pn as colimit} and Lemma \ref{lem:end Pn y}  the space of such maps is one-dimensional, so $\upsilon_n$ is homotopy equivalent to the map from Theorem \ref{thm:Pn as colimit} up to a scalar. By looking at the unit maps $\upsilon$ from $R$ to $\hocolim_{k\to \infty}(\Sigma_n^{-k}\FTy_n^k,\a_k)$ and $\PP_n^y$, we conclude that this scalar is 1 and the result follows.  
\end{proof}

Next, we compute the above localization $\cA_n[\Delta_n^{-1}]$  explicitly. 

\begin{theorem}
\label{thm: y-ified projector}
There is an algebra isomorphism
$$
\HY(\PP_n^y;\C)\cong \cA_n[\Delta_n\inv]\cong \C[y_1,\ldots,y_n,u_0,\ldots,u_{n-1},\xi_0,\ldots,\xi_{n-1}].
$$
The elements $u_i$, $\xi_i$  are uniquely characterized by the equations:
\begin{align}
\label{eq: interpolation u}
u_0+u_1y_i+u_2y_i^2+\ldots+u_{n-1}y_i^{n-1}&=x_i\ \mathrm{for\ all}\ i \\
\label{eq: interpolation xi}
 \xi_0+\xi_1y_i+\xi_2y_i^2+\ldots+\xi_{n-1}y_i^{n-1}&=\theta_i\ \mathrm{for\ all}\ i.
\end{align}
\end{theorem}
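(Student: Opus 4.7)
The first isomorphism $\HY(\PP_n^y;\C)\cong\cA_n[\Delta_n^{-1}]$ (as graded algebras) is established by the preceding lemmas, so the substance is to identify $\cA_n[\Delta_n^{-1}]$ concretely with $\C[\yy,\uu,\xxi]$. Writing $R_1:=\HY(\one_n)=\C[\xx,\yy,\ttheta]$ and viewing $\cA_n[\Delta_n^{-1}]\subset R_1[\Delta_n^{-1}]$, my plan is to define $u_k,\xi_k$ by Lagrange interpolation,
\[
u(z)\;=\;\sum_{i=1}^n x_i\prod_{j\neq i}\frac{z-y_j}{y_i-y_j},\qquad u_k:=[z^k]\,u(z),
\]
and analogously for $\xi_k$ using $\theta_i$ in place of $x_i$. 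The interpolation relations \eqref{eq: interpolation u} and \eqref{eq: interpolation xi} then hold in $R_1[\Delta_n^{-1}]$ tautologically, and uniqueness of $u_k,\xi_k$ subject to them is immediate from invertibility of the Vandermonde matrix $(y_i^k)$ in the localization.

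To check that $y_i,u_k,\xi_k$ in fact lie in $\cA_n[\Delta_n^{-1}]$: since $\Delta_n\in\cJ_n$ and $\cJ_n$ is an ideal, $y_i\Delta_n\in\cJ_n$, giving $y_i=(y_i\Delta_n)/\Delta_n\in\cA_n[\Delta_n^{-1}]$; a short calculation with the Lagrange formula shows the common denominator of its terms equals $\Delta_n$ (not $\Delta_n^2$), so $u_k\Delta_n\in R_1$; and since $u(z)$ is a symmetric function of the pairs $\{(y_i,x_i)\}$, the element $u_k$ is invariant under the diagonal $S_n$-action on $R_1$, making $u_k\Delta_n$ antisymmetric (symmetric times antisymmetric) and thus in $\cJ_n$. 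The argument for $\xi_k$ is identical. This produces a graded super-algebra map $\Phi\colon\C[\yy,\uu,\xxi]\to\cA_n[\Delta_n^{-1}]$, and the goal is to show $\Phi$ is an isomorphism.

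For surjectivity, every element of $\cA_n[\Delta_n^{-1}]$ is a $\C$-linear combination of terms $g\cdot a_1\cdots a_k/\Delta_n^k$ with $g\in R_1$ and each $a_j\in R_1$ antisymmetric under the diagonal action. The identities $x_i=u(y_i)$ and $\theta_i=\xi(y_i)$ (polynomial in $\yy,\uu,\xxi$) place $R_1\subset\mathrm{image}(\Phi)$, so each $a_j$ has a preimage $\tilde a_j\in\C[\yy,\uu,\xxi]$. The crucial observation is that under $\Phi$, the diagonal $S_n$-action on $R_1$ corresponds exactly to the $S_n$-action on $\C[\yy,\uu,\xxi]$ permuting only the $y_i$'s (with $u_k,\xi_k$ fixed); hence $\tilde a_j$ is antisymmetric in $\yy$ as a polynomial over $\C[\uu,\xxi]$, and the classical single-alphabet theorem on antisymmetric polynomials gives $\Delta_n(\yy)\mid\tilde a_j$, so $a_j/\Delta_n\in\C[\yy,\uu,\xxi]$, proving $\Phi$ is surjective. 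Injectivity of $\Phi$ follows by localizing further at $\Delta_n$: the Lagrange substitution becomes invertible, inducing an isomorphism $\C[\yy,\uu,\xxi][\Delta_n^{-1}]\cong R_1[\Delta_n^{-1}]$, so $\Phi$ embeds $\C[\yy,\uu,\xxi]$ into this localization. The main obstacle is this descent step in surjectivity: one must carefully track how diagonal $S_n$-antisymmetry on $(\xx,\yy,\ttheta)$ becomes, after the Lagrange change of coordinates, antisymmetry in the single alphabet $\yy$ over $\C[\uu,\xxi]$, so that the classical single-alphabet divisibility by $\Delta_n(\yy)$ applies.
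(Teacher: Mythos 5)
Your proof is correct and takes essentially the same approach as the paper: the Lagrange interpolation formula is just the paper's Cramer's rule formula after clearing denominators, and the core of both surjectivity arguments is the same observation that under the coordinate change the diagonal $S_n$-action becomes permutation of $\yy$ alone, so antisymmetric elements are divisible by $\Delta_n$. The only cosmetic difference is that you verify bijectivity of $\Phi$ directly (surjectivity via the divisibility argument, injectivity via further localization at $\Delta_n$), whereas the paper builds the explicit inverse $\varphi'$ and shows its image lands in $\C[\yy,\uu,\xxi]$.
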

Note that \eqref{eq: interpolation u} and \eqref{eq: interpolation xi} imply
\begin{equation}
\label{eq: deg u and theta}
\wt(u_i)=qt^{-i}=Q^{2+2i}T^{-2i},\quad \wt(\xi_i)=at^{-i}=Q^{2i-2}T^{-2i}A.
\end{equation}

\begin{proof}
We abbreviate $\C[\yy,\uu,\xxi] =\C[y_1,\ldots,y_n,u_0,\ldots,u_{n-1},\xi_0,\ldots,\xi_{n-1}]$.
One can obtain $u_i$ and $\xi_i$ as elements of $\cA_n[\Delta_n\inv]$ by solving the equations \eqref{eq: interpolation u} and \eqref{eq: interpolation xi} using Cramer's Rule:
\begin{equation}
\label{eq: Cramer u}
u_i=\frac{\Alt\left(y_1^0y_2^1\cdots \widehat{y_i^{i-1}}\cdots y_n^{n-1}\cdot x_i\right)}{\Delta_n},
\end{equation}
\begin{equation}
\label{eq: Cramer xi}
\xi_i=\frac{\Alt\left(y_1^0y_2^1\cdots \widehat{y_i^{i-1}}\cdots y_n^{n-1}\cdot \theta_i\right)}{\Delta_n}.
\end{equation}
Clearly, the numerators in \eqref{eq: Cramer u} and \eqref{eq: Cramer xi} are antisymmetric, so $u_i$ and $\xi_i$ are well defined elements of the $\cA_n[\Delta_n\inv]$. This defines a homomorphism $\varphi: \C[\yy,\uu,\xxi]\to \cA_n[\Delta_n^{-1}]$.

Next, consider the homomorphism
$\varphi': \cA_n[\Delta_n^{-1}]\to \C[\yy,\uu,\xxi][\Delta_n^{-1}]$
defined by substituting $x_i$ and $\theta_i$ via
\eqref{eq: interpolation u} and \eqref{eq: interpolation xi}. We claim that the image of $\varphi'$ is actually contained in $\C[\yy,\uu,\xxi]$ and therefore $\varphi'$ is inverse to $\varphi$. 

Indeed, by Lemma \ref{lem: homology colimit}(c) the algebra $\cA_n[\Delta_n\inv]$ is generated by the elements $p/\Delta_n$ for $p\in \cJ_n$. Furthermore, we can write $p=\sum_k f_k(\xx,\yy,\ttheta)g_k(\xx,\yy,\ttheta)$ where $f_k$ are arbitrary polynomials and $g_k$ are antisymmetric polynomials, and
$$
\varphi'\left(\frac{p}{\Delta_n}\right)=\sum_k \varphi'(f_k(\xx,\yy,\ttheta))\frac{\varphi'(g_k(\xx,\yy,\ttheta))}{\Delta_n}.
$$
The  action of $S_n$ permutes $x_i,y_i$ and $\theta_i$ simultaneously, and fixes $u_i$ and $\xi_i$.  Therefore $\varphi'(g_k(\xx,\yy,\ttheta))$
 can be written as a polynomial in  $u_i$ and $\xi_i$ with coefficients being antisymmetric polynomials in $y_i$. In particular, it is divisible by $\Delta$ and $\varphi'(g_k(\xx,\yy,\ttheta))/\Delta_n.$ is a polynomial in  $u_i$ and $\xi_i$ with coefficients being
symmetric polynomials in $y_i$. We conclude that
$$
\varphi'\left(\frac{p}{\Delta_n}\right)\in \C[\yy,\uu,\xxi].
$$
\end{proof}

\begin{corollary}
\label{cor: u tilde comparison}
Let $\k$ be an arbitrary field of characteristic zero. Then 
$$
\HY(\PP_n^y;\k)\cong \k[y_1,\ldots,y_n,u_0,\ldots,u_{n-1},\xi_0,\ldots,\xi_{n-1}]
$$ where $u_i,\xi_i$ satisfy \eqref{eq: interpolation u} and \eqref{eq: interpolation xi}.
\end{corollary}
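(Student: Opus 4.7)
My approach would be to deduce the corollary from the $\k = \C$ case (Theorem~\ref{thm: y-ified projector}) by combining it with the integral computation of Lemma~\ref{lem:end Pn y} and flat base change. Lemma~\ref{lem:end Pn y} identifies $\HY(\PP_n^y;\Z)$ with the free polynomial ring $\Z[\yy, \widetilde{\uu}, \widetilde{\xxi}]$. Because this is $\Z$-free and the underlying complex $\CY(\PP_n^y;\Z)$ can be modeled with $\Z$-free chain groups (implicit in the mapping-telescope model of Theorem~\ref{thm:Pn as colimit}, built from Rouquier complexes and the Koszul complex $\Delta_n(\xx,\xx')$), the universal coefficient theorem gives
\[
\HY(\PP_n^y;\k) \;\cong\; \k \otimes_\Z \HY(\PP_n^y;\Z) \;\cong\; \k[\yy, \widetilde{\uu}, \widetilde{\xxi}]
\]
as $\k$-algebras for any characteristic zero field $\k$. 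The ring homomorphism $\HY(\one_n;\k) \to \HY(\PP_n^y;\k)$ induced by the unit $\one_n \to \PP_n^y$ is likewise determined by the same polynomial formulas over $\Z$, so the images of $x_i$ and $\theta_i$ in $\k[\yy,\widetilde{\uu},\widetilde{\xxi}]$ are independent of $\k$.

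Next I would formally solve the interpolation equations \eqref{eq: interpolation u} and \eqref{eq: interpolation xi} via Cramer's rule as in \eqref{eq: Cramer u}, \eqref{eq: Cramer xi}, producing elements $u_i, \xi_i$ of $\Q(\yy, \widetilde{\uu}, \widetilde{\xxi})$. The task then splits into two parts: show (i) $u_i, \xi_i \in \Q[\yy, \widetilde{\uu}, \widetilde{\xxi}]$, and (ii) the $\Q$-algebra map $\Q[\yy, \uu, \xxi] \to \Q[\yy, \widetilde{\uu}, \widetilde{\xxi}]$ sending the abstract generators to these specific elements is a polynomial ring isomorphism. Both follow by descent from the $\k = \C$ case, using faithful flatness of $\Q \hookrightarrow \C$: a rational function over $\Q$ which happens to lie in $\C[\yy, \widetilde{\uu}, \widetilde{\xxi}]$ must already lie in $\Q[\yy, \widetilde{\uu}, \widetilde{\xxi}]$ (equivalently, $\Q[X]=\Q(X)\cap \C[X]$ inside $\C(X)$, a consequence of UFD properties and uniqueness of polynomial long division), and a map of finitely generated $\Q$-algebras is an isomorphism precisely when its base change to $\C$ is. Tensoring up from $\Q$ to $\k$ then yields the stated presentation.

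The main technical point to verify is the validity of the base change in the first step --- namely, that $\CY(\PP_n^y;\Z)$ admits a model with $\Z$-free chain groups so that homology commutes with $-\otimes_\Z\k$ without Tor contributions. This follows from the manifest $\Z$-freeness of the Rouquier complexes $F^y(\b)$ and the Koszul complex $\Delta_n(\xx,\xx')$ used in the construction, and is essentially already implicit in the proof of Lemma~\ref{lem:end Pn y}. Beyond this verification, the argument is purely formal descent from $\C$ to $\Q$ and base change from $\Q$ up to $\k$.
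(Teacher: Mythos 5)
Your proposal is correct and follows essentially the same route as the paper's proof: both reduce to $\k=\Q$ using the universal coefficient theorem and Lemma~\ref{lem:end Pn y}, both recover $u_i$ and $\xi_i$ via Cramer's rule from the $\Z$-defined elements $x_i, \theta_i$, and both use the fact that a $\Q$-polynomial divisible over $\C$ is divisible over $\Q$ to conclude that $u_i, \xi_i$ lie in $\Q[\yy,\widetilde{\uu},\widetilde{\xxi}]$.

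The one place you diverge from the paper is the final step, where you must show that the map $\Q[\yy,\uu,\xxi]\to\Q[\yy,\widetilde{\uu},\widetilde{\xxi}]$ sending abstract generators to the elements obtained by Cramer's rule is an isomorphism. You invoke faithful flatness of $\Q\hookrightarrow\C$: since the base change to $\C$ is the isomorphism of Theorem~\ref{thm: y-ified projector}, the original map is an isomorphism. The paper instead argues concretely: using the bigraded structure (see the description of the degreewise basis in \eqref{eq: basis HY}), the change of basis between $u_i$ and $\widetilde{u}_i$ is recorded by a triangular matrix $u_i = p_{ii}\widetilde{u}_i + \sum_{j>i}p_{ij}(\yy)\widetilde{u}_j$ with scalar diagonal entries $p_{ii}\in\Q$, and $p_{ii}\neq 0$ is forced by algebraic independence of the $u_j$. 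The faithful-flatness argument is cleaner and requires no case analysis, while the paper's triangularity argument is more explicit and also furnishes the triangular relationship used later in Lemma~\ref{lem: pi}. Either way, both arguments rely on the same underlying ingredients, and your proposal is valid.
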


\begin{proof}
Without loss of generality we can assume $\k=\Q$. 
By Lemma \ref{lem:end Pn y} and the universal coefficients theorem we get
$$
\HY(\PP_n^y;\Q)\simeq \Q\left[\yy,\widetilde{u}_0,\ldots,\widetilde{u}_{n-1},\widetilde{\xi}_0,\dots,\widetilde{\xi}_{n-1},\right]
$$
By applying the universal coefficients theorem again , we can identify $$\HY(\PP_n^y;\C)\simeq \HY(\PP_n^y,\Q)\otimes \C,$$ and by Theorem \ref{thm: y-ified projector} get an embedding
$$
\Q\left[\yy,\widetilde{u}_0,\ldots,\widetilde{u}_{n-1},\widetilde{\xi}_0,\dots,\widetilde{\xi}_{n-1}\right]   \hookrightarrow
\C[\yy,u_0,\ldots,u_{n-1},\xi_0,\ldots,\xi_{n-1}] 
$$
which becomes an isomorphism after tensoring with $\C$. Note that the abstract generators $\widetilde{u}_i$ (resp. $\widetilde{\xi}_i$) do not need to agree with the explicit desired generators $u_i$ (resp. $\xi_i$), but they have the same respective weights.

Since $x_i$ belongs to $\HY(\PP_n^y;\Q)$, by \eqref{eq: basis HY} it can be written as a linear combination of $\widetilde{u}_j$ with coefficients in $\Q[\yy]$. The numerators of \eqref{eq: Cramer u} can then be also written as linear combinations of $\widetilde{u}_j$ with coefficients in $\Q[\yy]$. These coefficients are divisible by $\Delta_n$ over $\C$ and hence they are divisible by $\Delta_n$ over $\Q$ as well. So the element $u_i$ defined by \eqref{eq: Cramer u} is a linear combination of $\widetilde{u}_j$ with coefficients in $\Q[\yy]$. Similarly, $\xi_i$ defined by \eqref{eq: Cramer xi} a linear combination of $\widetilde{\xi}_j$ with coefficients in $\Q[\yy]$.

Here we used the following easy fact. Suppose $f,g\in \Q[z_1,\ldots,z_s]$ and there exists $h\in \C[z_1,\ldots,z_s]$ such that $f=gh$. Then $h\in \Q[z_1,\ldots,z_s]$. This can be proved by induction in $s$ using polynomial long division.

Going back to the proof, we can clarify the relation between $u_i$ and $\widetilde{u}_j$. By applying \eqref{eq: basis HY} again, we get
\begin{equation}
\label{eq: u tilde triangular}
u_i=p_{ii}\widetilde{u}_i+\sum_{j>i}p_{ij}(\yy)\widetilde{u}_j 
\end{equation}
where $p_{ij}$ are homogeneous polynomials in $\yy$ of degree $j-i$ with rational coefficients, in particular $p_{ii}\in \Q$. 

We claim that $p_{ii}\neq 0$ for all $i$. Suppose not, and $i$ is the largest index such that $p_{ii}=0$. Then by \eqref{eq: u tilde triangular} we have 
$$
u_i\in \Q[\yy,\widetilde{u}_{i+1},\ldots,\widetilde{u}_{n-1}]=\Q[\yy,u_{i+1},\ldots,u_{n-1}],
$$
so $u_j$ are not algebraically independent, contradiction.

Therefore $(p_{ij}(\yy))$ form an invertible triangular matrix and $\widetilde{u}_i$ can be written as polynomials in $u_j$ and $\yy$. Similarly, $\widetilde{\xi}_i$ can be written as polynomials in $\xi_j$ and $\yy$. We conclude that
$$
\HY(\PP_n^y;\Q)\cong \Q\left[\yy,\widetilde{u}_{0},\ldots,\widetilde{u}_{n-1},\widetilde{\xi}_{0},\ldots,\widetilde{\xi}_{n-1}\right]=\Q\left[\yy,u_{0},\ldots,u_{n-1},\xi_0,\ldots,\xi_{n-1}\right] 
$$
and the result follows.
\end{proof}

\subsection{Differentials in $y$-ified  homology}

The $y$-ified homology of $\one_n$ has a standard $\fgl_N$ differential  which is defined on generators (see Example \ref{ex: dN unknot}) by $$d_N(\theta_i)=x_i^N$$ and extended by Leibniz rule
$$
d_N(fg)=d_N(f)g+(-1)^{|f|}d_N(g).
$$ 
The differential $d_N$ has weight $q^{N}a\inv$, i.e.~it decreases $a$-degree by 1 and increases $q$-degree by $N$.  

\begin{lemma}
The differential $d_N$ preserves the ideal $\cJ_n$ and its powers $\cJ_n^k$.
\end{lemma}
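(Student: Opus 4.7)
The plan is to prove the statement in two steps: first show that $d_N$ commutes with the simultaneous $S_n$-action on $x_i, y_i, \theta_i$, and hence preserves the subspace of antisymmetric polynomials; then deduce via the graded Leibniz rule that $d_N$ preserves each power $\cJ_n^k$.

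For the first step, it suffices to check $S_n$-equivariance on the generators $x_i, y_i, \theta_i$, since both $\sigma \in S_n$ (an algebra automorphism) and $d_N$ (a super-derivation) are determined by their values on generators. We have $d_N(x_i) = 0$ and $d_N(y_i) = 0$, which are manifestly $S_n$-equivariant, while
$$\sigma \cdot d_N(\theta_i) = \sigma \cdot x_i^N = x_{\sigma(i)}^N = d_N(\theta_{\sigma(i)}) = d_N(\sigma \cdot \theta_i).$$
Thus $d_N$ commutes with $\sigma$, and so for any antisymmetric element $f \in \HY(\one_n)$, we have $\sigma \cdot d_N(f) = d_N(\sigma \cdot f) = \sgn(\sigma) \, d_N(f)$, showing $d_N(f)$ is antisymmetric.

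For the second step, any element of $\cJ_n^k$ can be written as a finite sum $\sum_\ell p_\ell \, a_{\ell,1} \cdots a_{\ell,k}$, where each $a_{\ell,j}$ is antisymmetric and $p_\ell \in \HY(\one_n)$. The graded Leibniz rule expands $d_N(p_\ell a_{\ell,1}\cdots a_{\ell,k})$ into a sum of terms in which $d_N$ is applied to exactly one factor. The term hitting $p_\ell$ retains all $k$ antisymmetric factors and so lies in $\cJ_n^k$. A term hitting some $a_{\ell,j}$ replaces it with $d_N(a_{\ell,j})$, which is antisymmetric by the first step; hence this term still contains $k$ antisymmetric factors and also lies in $\cJ_n^k$. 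Taking $k=1$ gives $d_N(\cJ_n) \subseteq \cJ_n$ as the special case.

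There is no substantive obstacle: the essential observation is that $d_N$ is $S_n$-equivariant on generators, which is visible by inspection, and the rest is a formal consequence of the Leibniz rule and the ideal-theoretic description of $\cJ_n^k$.
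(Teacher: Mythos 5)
Your proof is correct and takes essentially the same approach as the paper: both establish $S_n$-equivariance of $d_N$ to show antisymmetric elements are preserved, then invoke the graded Leibniz rule on the ideal-theoretic description of $\cJ_n^k$. Your write-up is a bit more explicit about the decomposition of elements of $\cJ_n^k$ and the justification of equivariance, whereas the paper does only the $k=1$ case in detail and notes the general case follows similarly; the content is the same.
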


\begin{proof}
For any $g\in \HY(\one_n;\C)=\C[\xx,\yy,\ttheta]$ and $w\in S_n$ we clearly have $d_N(w(g))=w(d_N(g))$. 

If $g$ is antisymmetric then $$w(d_N(g))=d_N(w(g))=d_N((-1)^{\sgn(w)} g)=(-1)^{\sgn(w)}d_N(g),$$ so $d_N(g)$ is also antisymmetric. Furthermore, for any $f$ and antisymmetric $g$ we have $d_N(fg)=d_N(f)g+fd_N(g)$, so $d_N(fg)\in \cJ$. We conclude that $d_N(\cJ_n)\subset \cJ_n$ and similarly $d_N(\cJ_n^k)\subset \cJ_n^k$. 
\end{proof}

\begin{lemma}
The $y$-ified Rasmussen $\fgl_N$ differential on $\HY(\FTy_n^k;\C)\simeq \cJ_n^k$ is given by the restriction of $d_N$ to $\cJ_n^k$. 
\end{lemma}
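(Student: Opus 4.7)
The plan is to invoke the naturality of the Rasmussen differential with respect to the $y$-ified morphism that induces $\Psi$. By Lemma \ref{lem: rightmost R y-ified} (applied with $\beta = \ft_n^k$), there is a closed degree-zero morphism $F^y(\ft_n^k) \to R$ in $\mathcal{Y}_{n,1}$ whose induced map on $\HY$ is the splitting map $\Psi$ of Definition \ref{def:splitting map 2}. I will show that $\Psi$ intertwines the $d_N$ actions; together with the isomorphism $\Psi_*: \HY(\FTy_n^k) \xrightarrow{\sim} \cJ_n^k$ of Theorem \ref{thm:GH ft} and the preceding lemma, this will yield the claim.

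The main step is to promote Proposition \ref{prop:HH with dN is functor} to the $y$-ified setting: every closed morphism in $\mathcal{Y}_{n,1}$ induces a map on $\CY_{\gll_N}$ that commutes with $d_N$. Concretely, a morphism $f \in \Hom_{\mathcal{Y}_{n,1}}(X_1^y, X_2^y)$ is by definition an element of $\Hom(X_1,X_2) \cotimes \k[\yy]$, i.e.\ a formal polynomial in the $y_i$ whose coefficients are bimodule maps. Applying $\HH$ termwise yields a map that commutes with $d_N$, since on the one hand $d_N$ commutes with every bimodule map on Hochschild cohomology by Proposition \ref{prop:HH with dN is functor}, and on the other hand $d_N$ commutes with multiplication by each $y_i$ because $d_N(y_i) = 0$ (the $y_i$ live in Hochschild degree zero).

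Applying this functoriality to $\Psi$, the induced map $\Psi_* : \HY(\FTy_n^k) \to \HY(\one_n)$ commutes with $d_N$. Since Theorem \ref{thm:GH ft} identifies $\Psi_*$ with the inclusion $\cJ_n^k \hookrightarrow \HY(\one_n)$, and the previous lemma ensures $d_N$ preserves $\cJ_n^k$, we conclude that the $y$-ified Rasmussen $\fgl_N$ differential on $\HY(\FTy_n^k)$ is precisely the restriction of $d_N$ to $\cJ_n^k$. The main subtlety to watch is that $d_N$ must be compatible with the full \emph{total} differential on $\CY_{\gll_N}$ rather than only the horizontal differential $\HH(d_X + \alpha)$; this is built into Definition \ref{def:slN homology} via the double complex structure (with $d_N^2$ exact by Lemma \ref{lemma:res and h}), and the two observations above ensure that the mixed terms involving $\alpha = \sum_i \eta_i y_i$ do not obstruct the compatibility.
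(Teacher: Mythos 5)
Your proof is correct and takes essentially the same approach as the paper: the paper's one-line argument invokes Proposition \ref{prop:HH with dN is functor} to conclude that $\Psi$ commutes with $d_N$, which is precisely your naturality argument. You simply make explicit the detail that the paper leaves implicit, namely that extending the functoriality of $d_N$ to the $y$-ified setting only requires the additional observation $d_N(y_i)=0$.
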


\begin{proof}
By Proposition \ref{prop:HH with dN is functor} the splitting map $\Psi$ commutes with $d_N$, and the result follows.  
\end{proof}

Next, we compute the action of the $y$-ified differential $d_N$ in $\HY(\PP_n^y;\k)$. 
First, we need some notations.  Define
$$
p(z)=\prod_{i=1}^{n}(z-y_i).
$$
This is a monic polynomial in $z$ of degree $n$ with coefficients in $\k[y_1,\ldots,y_n]$. Let $f(z)$ be a polynomial in $z$ with coefficients in $S[y_1,\ldots,y_n]$ where $S$ is some unique factorization domain (UFD) over $\k$. Using polynomial long division, we can divide $f(z)$ by $p(z)$ with remainder:
\begin{equation}
\label{eq: division with remainder}
f(z)=q(z)p(z)+r(z),\ \deg r(z)\le n-1.
\end{equation}
Such polynomials $q(z)$ and $r(z)$ are uniquely determined by $f(z)$ and have coefficients in $S[y_1,\ldots,y_n]$.
Below we will abbreviate \eqref{eq: division with remainder} by $r(z)=f(z)\mod p(z)$.

\begin{lemma}
\label{lem: remainder}
Suppose that $f(z)$ and $r(z)$ are as in \eqref{eq: division with remainder}. Then $r(z)$ is the unique polynomial of degree at most $n-1$ such that $r(y_i)=f(y_i)$ for all $i$.
\end{lemma}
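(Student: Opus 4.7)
The plan is to split the argument into two parts: first verify that the remainder $r(z)$ coming from polynomial long division does satisfy the interpolation condition $r(y_i) = f(y_i)$, and then establish uniqueness among degree-$\le n-1$ interpolants.

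For existence, I would simply substitute $z = y_i$ into the division identity $f(z) = q(z)p(z) + r(z)$. Since $p(z) = \prod_{j=1}^n (z - y_j)$ visibly vanishes at $z = y_i$, we obtain $f(y_i) = r(y_i)$ directly. This part is essentially bookkeeping once one observes that evaluation $z \mapsto y_i$ is a well-defined $S[y_1,\ldots,y_n]$-algebra homomorphism from $S[y_1,\ldots,y_n][z]$ to $S[y_1,\ldots,y_n]$.

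For uniqueness, suppose $r_1(z), r_2(z) \in S[y_1,\ldots,y_n][z]$ both have $z$-degree at most $n-1$ and agree with $f$ at each $z = y_i$. Set $\delta(z) = r_1(z) - r_2(z)$, so $\delta(y_i) = 0$ for all $i$ and $\deg_z \delta \le n - 1$. By the factor theorem (valid over any commutative ring), $\delta(y_1) = 0$ implies $(z - y_1) \mid \delta(z)$ in $S[y_1,\ldots,y_n][z]$. Writing $\delta(z) = (z - y_1)\,\delta_1(z)$ and evaluating at $z = y_2$ gives $(y_2 - y_1)\,\delta_1(y_2) = 0$; since $S$ is a UFD, the ring $S[y_1,\ldots,y_n]$ is an integral domain and $y_2 - y_1$ is a nonzero element, hence a non-zero-divisor, so $\delta_1(y_2) = 0$. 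Iterating this argument, we peel off $(z - y_i)$ for $i = 1, \ldots, n$ and conclude that $p(z) \mid \delta(z)$. Since $\deg_z p = n > \deg_z \delta$, we must have $\delta = 0$, proving uniqueness.

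The only subtle point, and what I expect to be the main (though minor) obstacle, is justifying the repeated division by $y_i - y_j$ when peeling off linear factors. This is why the hypothesis that $S$ is a UFD (or at least an integral domain) is essential: it guarantees that $S[y_1,\ldots,y_n]$ is a domain, so the distinct indeterminates $y_i, y_j$ have nonzero, hence cancellable, difference. Everything else is a direct consequence of the universal properties of polynomial rings and the division algorithm used to define $r(z)$.
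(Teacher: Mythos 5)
Your proof is correct, and the existence half is identical to the paper's (substitute $z=y_i$ and use $p(y_i)=0$). The uniqueness half takes a slightly different technical route: the paper observes that each $z-y_i$ divides the difference $\delta(z)$, then invokes the fact that $S[y_1,\ldots,y_n,z]$ is a UFD and the $z-y_i$ are pairwise coprime to conclude $p(z)\mid\delta(z)$, whereupon the degree bound forces $\delta=0$. You instead peel off the factors one at a time via the factor theorem, using cancellation of $y_i-y_j$ in the integral domain $S[y_1,\ldots,y_n]$ at each stage. Both are valid; your iterative argument is marginally more elementary in that it only uses the integral-domain property of $S[y_1,\ldots,y_n]$ rather than the full UFD structure (the UFD hypothesis on $S$ would still be needed elsewhere in the paper, but not for this lemma under your argument), while the paper's version is shorter to state once the UFD fact is on the table. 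The substance is the same.
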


\begin{proof}
If we substitute $z=y_i$ into \eqref{eq: division with remainder}, we get
$$
f(y_i)=q(y_i)p(y_i)+r(y_i)=r(y_i).
$$
Conversely, suppose that $h(z)\neq g(z)$,  $h(y_i)=f(y_i)$ and 
$\deg h(z)\le n-1$. Then $h(y_i)-g(y_i)=0$ for all $i$, so $h(z)-g(z)$ is divisible by $z-y_i$ for all $i=1,\ldots,n$. 

Since $S$ is a UFD, $S[y_1,\ldots,y_n,z]$ is also a UFD, and the polynomials $z-y_i$ are pairwise coprime for different $i$. Therefore $h(z)-g(z)$ is divisible by $\prod_{i=1}^{n}(z-y_i)=p(z)$ and $\deg (h(z)-g(z))\ge n$. Contradiction.
 \end{proof}

\begin{theorem}
\label{thm: y-ified differential}
Suppose $\k$ is a characteristic zero field.
The $y$-ified $\fgl_N$ Rasmussen spectral sequence for $\PP_n^y$ has $E_1$ page $$E_1=\HY(\PP_n^y;\k)\simeq \k[y_1,\ldots,y_n,u_0,\ldots,u_{n-1},\xi_0,\ldots,\xi_{n-1}]$$ and the first differential $d_N$ given by \eqref{eq: dN y-ified}. 
\begin{equation}
\label{eq: dN y-ified}
d_N(\xi(z))=u(z)^N \mod p(z)
\end{equation}
where 
$$
\xi(z)=\xi_0+\xi_1z+\xi_2z^2+\ldots+\xi_{n-1}z^{n-1}, u(z)=u_0+u_1z+u_2z^2+\ldots+u_{n-1}z^{n-1}
$$
and $p(z)=\prod_{i=1}^n (z-y_i)$ as above.

Furthermore, the homology on $E_2$ page is supported in $a$-degree zero and all higher differentials vanish, so that 
$$
E_2=E_{\infty}=\frac{\k[y_1,\ldots,y_n,u_0,\ldots,u_{n-1}]}{\left(d_N(\xi_0),\ldots,d_N(\xi_{n-1})\right)}
$$

\end{theorem}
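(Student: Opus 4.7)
The identification $E_1\cong \k[y_1,\ldots,y_n,u_0,\ldots,u_{n-1},\xi_0,\ldots,\xi_{n-1}]$ is precisely Corollary \ref{cor: u tilde comparison}. The plan is to first pin down $d_N$ on the $E_1$ page, then show that the resulting Koszul-style complex has cohomology concentrated in $a$-degree zero, which forces all higher differentials to vanish.

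For the action of $d_N$ on generators: since the spectral sequence is multiplicative (Proposition \ref{prop: braid dN multiplicativity} together with its $y$-ified extension Proposition \ref{prop:CY is multipicative}), $d_N$ acts as a derivation on $E_1$. Because $d_N$ lowers $a$-degree by one, the images $d_N(y_i)$ and $d_N(u_k)$ sit in negative $a$-degree and hence vanish. The nontrivial content is the computation of $d_N(\xi_k)\in\k[\yy,\uu]$. For this I would work inside $\HY(\one_n)[\Delta_n^{-1}]$, into which $\HY(\PP_n^y)\cong\cA_n[\Delta_n^{-1}]$ embeds, and on which $d_N$ extends uniquely from its action on $\HY(\one_n)=\k[\xx,\yy,\ttheta]$ (where $d_N(\theta_i)=x_i^N$, $d_N(x_i)=d_N(y_i)=0$ by Example \ref{ex: dN unknot}). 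This embedding is compatible with $d_N$ by naturality (Proposition \ref{prop:HH with dN is functor}). The interpolation identities from Theorem \ref{thm: y-ified projector}, namely $\theta_i=\xi(y_i)$ and $x_i=u(y_i)$, hold in this ambient ring; applying $d_N$ to the first and using $d_N(y_i)=0$ yields
\[
\sum_{k=0}^{n-1} d_N(\xi_k)\,y_i^k \;=\; x_i^N \;=\; u(y_i)^N,\qquad i=1,\ldots,n.
\]
Setting $r(z):=\sum_k d_N(\xi_k)\,z^k\in\k[\yy,\uu][z]$, this polynomial has $z$-degree at most $n-1$ and agrees with $u(z)^N$ at $z=y_i$ for each $i$. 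Lemma \ref{lem: remainder} (applied with $S=\k[\uu]$) then forces $r(z)=u(z)^N\bmod p(z)$, establishing \eqref{eq: dN y-ified}.

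Next, the pair $(E_1,d_N)$ is the Koszul complex on the sequence $(d_N(\xi_0),\ldots,d_N(\xi_{n-1}))$ in the polynomial ring $\k[\yy,\uu]$. The main step is to show that this sequence is regular. Since $\k[\yy,\uu]$ is Cohen-Macaulay of Krull dimension $2n$, this reduces to showing that the common vanishing locus
\[
V \;:=\; \bigl\{(\yy,\uu)\in\k^{2n} : p(z)\mid u(z)^N \text{ in } \k[z]\bigr\}
\]
has dimension exactly $n$. The inequality $\dim V\geq n$ is standard. For the upper bound, stratify the $\yy$-coordinate space by the partition $\lambda=(m_1,\ldots,m_k)$ of $n$ recording the multiplicities of the distinct values of $y_1,\ldots,y_n$; the corresponding stratum $T_\lambda$ has dimension $k=\ell(\lambda)$. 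Over a point of $T_\lambda$ with distinct $y$-values $\alpha_1,\ldots,\alpha_k$, the divisibility $p(z)\mid u(z)^N$ forces $u(z)\in\k[z]_{<n}$ to vanish at each $\alpha_j$ to order at least $\lceil m_j/N\rceil$, cutting out a linear subspace of dimension $\max(n-\sum_j\lceil m_j/N\rceil,0)\leq n-k$, since $\lceil m_j/N\rceil\geq 1$ for each $j$. Each stratum therefore contributes at most $n$ to $\dim V$, giving $\dim V=n$.

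Regularity of the sequence then yields $E_2\cong\k[\yy,\uu]/(d_N(\xi_0),\ldots,d_N(\xi_{n-1}))$, concentrated in $a$-degree zero. Each higher differential $d_r$ ($r\geq 2$) strictly lowers $a$-degree, so its image lies in negative $a$-degree within the subquotient $E_r$ of $E_1$ (supported in $a\geq 0$) and hence vanishes; this gives $E_2=E_\infty$. The hardest step is the dimension bound on $V$: fibers of the projection $V\to\k^n_{\yy}$ grow along the discriminant where $y_i$'s collide, and the argument relies on the combinatorial inequality $\sum_j\lceil m_j/N\rceil\geq k$ for every partition of $n$ into $k$ parts to ensure that the shrinkage of base strata always compensates for fiber growth.
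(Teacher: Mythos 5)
Your proof is correct and matches the paper closely in structure: the identification of $E_1$ via Corollary~\ref{cor: u tilde comparison}, the observation that $d_N(y_i)=d_N(u_k)=0$ by $a$-degree, the computation of $d_N(\xi_k)$ by applying $d_N$ to the interpolation identity $\theta_i=\xi(y_i)$ inside the localization and invoking Lemma~\ref{lem: remainder}, and the final step that collapse at $E_2$ follows once $E_2$ is supported in $a$-degree zero. Where you genuinely diverge is in establishing the key regularity claim: the paper proves it in Lemma~\ref{lem: regular sequence} by the specialization $\varphi_M$ that sends $u_k\mapsto (-1)^{n-1-k}s_{(M-n+1,1^{n-1-k})}(\yy)$, reducing to the regularity of $h_{MN},\ldots,h_{MN-n+1}$ and then using a grading to reorder the longer sequence; you instead give a geometric dimension count on the zero locus $V=\{p(z)\mid u(z)^N\}\subset\k^{2n}$, stratified by multiplicity partitions of the $y$-coordinates, showing $\dim V=n$ and invoking Cohen--Macaulayness of the polynomial ring. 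Both arguments are valid. Your dimension count is perhaps more conceptual and transparent about \emph{why} the sequence is regular — the fibers over the discriminant grow, but the base strata shrink at least as fast, the key inequality being $\sum_j\lceil m_j/N\rceil\geq k$; the paper's specialization argument is more algebraic and has the side benefit of being woven into the explicit formulas of Section~\ref{sec: interpolation algebra} (the same $\varphi_M$ is used to derive Lemma~\ref{lem: specialization}). A small technical remark: to conclude regularity from your dimension bound over a general characteristic-zero field $\k$, one should note that the height of the ideal is insensitive to base change to $\bar\k$, so it suffices to check the dimension over the algebraic closure where the stratification by distinct $y$-values makes literal sense; you implicitly do this but it is worth flagging.
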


\begin{proof}
First, we claim that the differential $d_N$ commutes with localization in $\Delta_n$ and satisfies the Leibniz rule. Indeed, this follows from $d_N(\Delta_n)=0$ and the fact that $d_N$ satisfies Leibniz rule on $\HY(\one_n)$. Note that 
$$
d_N\left(\frac{p}{\Delta_n^k}\right)=\frac{d_N(p)}{\Delta_n^k}
$$
for any $p\in \cJ_n^k$.

Next, we observe that $\cA_n$ and its localization $\cA_n[\Delta_n\inv]$ are graded (by the $a$-degree) so that $\theta_i,\xi_i$ have degree 1 and $x_i,y_i,u_i$ have degree zero. The differential $d_N$ decreases this degree by 1, so it sends $\xi_i$ to some polynomials in $y_i,u_i$. Also, $d_N(y_i)=d_N(u_i)=0$.

Now we use the equations \eqref{eq: interpolation u} and \eqref{eq: interpolation xi}. Let 
$$
g_N(z)=d_N(\xi_0)+d_N(\xi_1)z+d_N(\xi_2)z^2+\ldots+d_N(\xi_{n-1})z^{n-1}
$$
and 
$$
f_N(z)=(u_0+u_1z+u_2z^2+\ldots+u_{n-1}z^{n-1})^N.
$$
If we substitute $z=y_i$ for some $i=1,
\ldots,n$, we get
$$
g_N(z=y_i)=d_N(\xi_0)+d_N(\xi_1)y_i+d_N(\xi_2)y_i^2+\ldots+d_N(\xi_{n-1})y_i^{n-1}=
$$
$$
d_N\left(\xi_0+\xi_1y_i+\xi_2y_i^2+\ldots+\xi_{n-1}y_i^{n-1}\right)=d_N(\theta_i)=x_i^N
$$
while
$$
f_N(z=y_i)=(u_0+u_1y_i+u_2y_i^2+\ldots+u_{n-1}y_i^{n-1})^N=x_i^N.
$$
Note that here we used that $d_N$ commutes with $y_i$.
We conclude that $g_N(z=y_i)=f_N(z=y_i)$. Since $g_N(z)$ have $z$-degree at most $n-1$ by construction, by Lemma \ref{lem: remainder} we get $g_N(z)=f_N(z)\mod p(z)$. 

Let us prove that $E_2$ page is supported in $a$-degree zero. Observe that $(E_1,d_N)$ is the Koszul complex corresponding to $d_N(\xi_k)$, and by Lemma \ref{lem: regular sequence} below these form a regular sequence. Therefore all higher homology of the Koszul complex vanish. 
Finally, since $E_2$ page is supported in $a$-degree zero and all higher differentials decrease the $a$-degree, these must vanish and $E_2=E_{\infty}$.
\end{proof}

Below in Section \ref{sec: interpolation algebra} we give a more explicit description of the ideal $\left(d_N(\xi_0),\ldots,d_N(\xi_{n-1})\right)$.

\subsection{Back to the undeformed case}
 
 \begin{lemma}
 \label{lem: pi}
 Let $\k$ be a field of characteristic zero. We have $$\HHH(\PP_n;\k)\simeq \k\left[\widetilde{u}_0,\ldots,\widetilde{u}_{n-1},\widetilde{\xi}_0,\ldots,\widetilde{\xi}_{n-1}\right].$$ The natural forgetful map
 $\pi:\HY(\PP_n^y;\k)\to \HHH(\PP_n;\k)$ is a surjective algebra homomorphism and  sends the generator $u_i$ (resp. $\xi_i$) defined by \eqref{eq: interpolation u} (resp. \eqref{eq: interpolation xi}) to $\widetilde{u}_i$  (resp. $\widetilde{\xi}_i$) up to nonzero scalars.
 \end{lemma}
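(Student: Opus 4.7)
The first assertion about $\HHH(\PP_n;\k)$ follows immediately from Theorem \ref{thm:end Pn} (the integral computation) after tensoring with $\k$, since a free polynomial ring base-changes transparently.

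For the forgetful map, the plan is the following. First, by Theorem \ref{thm:end Pn} the generators $\widetilde{u}_i, \widetilde{\xi}_i$ of $\HHH(\PP_n;\k)$ all carry $T$-weight $T^{2-2i}$, so $\HHH(\PP_n;\k)$ is supported in even homological degrees. Hence Lemma \ref{lem: parity} applies and yields a $\k$-linear isomorphism
$$
\HHH(\PP_n;\k) \cong \HY(\PP_n^y;\k)/(y_1,\ldots,y_n).
$$
The forgetful map $\pi$ is precisely this quotient, so surjectivity is immediate. For multiplicativity, I would observe that the algebra structures on both sides come from the dg algebra structure on $\PP_n^y$ (and its specialization $\PP_n$) pushed forward by the lax monoidal functors $\CHH$ and $\CY$ of Propositions \ref{prop:HH monoidal} and \ref{prop:CY is multipicative}; quotienting by $(y_1,\ldots,y_n)$ intertwines the two multiplications since $\PP_n = \PP_n^y \bmod \yy$ as dg algebras.

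For the explicit identification of generators, the plan is to invoke the triangular relation
$$
u_i = p_{ii}\,\widetilde{u}_i + \sum_{j>i} p_{ij}(\yy)\,\widetilde{u}_j, \qquad p_{ii} \in \k^\times,
$$
established in the proof of Corollary \ref{cor: u tilde comparison}; applying $\pi$ (which sets $\yy = 0$) then yields $\pi(u_i) = p_{ii}\,\widetilde{u}_i$. The analogous relation
$$
\xi_i = q_{ii}\,\widetilde{\xi}_i + \sum_{j>i} q_{ij}(\yy)\,\widetilde{\xi}_j, \qquad q_{ii} \in \k^\times,
$$
is obtained by the same argument: the basis description \eqref{eq: basis HY} together with weight considerations forces $\xi_i$ to be a $\k[\yy]$-linear combination of the $\widetilde{\xi}_j$ with $j \geq i$, and a nonvanishing leading coefficient $q_{ii}$ is forced by the algebraic independence of the $\xi_j$ given in Theorem \ref{thm: y-ified projector} (otherwise the largest $i$ with $q_{ii} = 0$ would witness a nontrivial relation among $\xi_i, \xi_{i+1}, \ldots, \xi_{n-1}$ over $\k[\yy]$). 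Specializing $\yy \mapsto 0$ then gives the required identification.

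The main technical point is really just the triangular relation for the odd generators $\xi_i$, but this is a verbatim transcription of the argument given for $u_i$ in Corollary \ref{cor: u tilde comparison}, so I expect no genuine obstacle.
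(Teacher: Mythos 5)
Your proof is correct and follows essentially the same approach as the paper's (which itself only says ``follows from the proof of Corollary~\ref{cor: u tilde comparison}; the proof for $\xi_i$ is similar''): you invoke Lemma~\ref{lem: parity} for the quotient identification and the triangular relation from Corollary~\ref{cor: u tilde comparison} for the generators, filling in the parallel triangular argument for $\xi_i$ that the paper leaves implicit. Your added detail on the evenness of homological degrees and on multiplicativity is a faithful elaboration rather than a different route.
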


 \begin{proof}
 This follows from the proof of Corollary \ref{cor: u tilde comparison}. By Lemma \ref{lem: parity} we have $\HHH(\PP_n;\k)=\HY(\PP_n^y;\k)/(\yy)$.
 By \eqref{eq: u tilde triangular} we have $u_i=p_{ii}\widetilde{u_i}\mod \yy$ and $p_{ii}\neq 0$, so $\pi(u_i)=p_{ii}\widetilde{u_i}$. The proof for $\xi_i$ is similar.
 \end{proof}

From now on we will rescale the generators $\widetilde{u}_i,\widetilde{\xi}_i$ such that $\pi(u_i)=\widetilde{u}_i$ and $\pi(\xi_i)=\widetilde{\xi}_i$.

\begin{theorem}
\label{thm: HHH differential}
Let $\k$ be a field of characteristic zero.
There is a spectral sequence with $E_1$ page $$E_1=\HHH(\PP_n;\k)\simeq \k\left[\widetilde{u}_0,\ldots,\widetilde{u}_{n-1},\widetilde{\xi}_0,\ldots,\widetilde{\xi}_{n-1}\right]$$  and $E_2$ page is given by 
$$
E_2=H^*(E_1,d_N),\  d_N\left(\widetilde{\xi}(z)\right)=\widetilde{u}(z)^N \mod z^n.
$$
The differential $d_N$ satisfies Leibniz rule.
\end{theorem}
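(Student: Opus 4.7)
The plan is to deduce this theorem from its $y$-ified counterpart, Theorem \ref{thm: y-ified differential}, by specializing $y_i = 0$ and invoking Lemma \ref{lem: pi} to match up generators. First I would construct the spectral sequence by applying Definition \ref{def:Rasmussen SS} to the complex $\PP_n \in \Ch^-(\SBim_n)$; its $E_1$ page is $\HHH(\PP_n;\k)$ by construction, which by Theorem \ref{thm:end Pn} is isomorphic to $\k[\widetilde{u}_0,\ldots,\widetilde{u}_{n-1},\widetilde{\xi}_0,\ldots,\widetilde{\xi}_{n-1}]$, and its abutment is $H_{\gll_N}(\PP_n;\k)$.

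For the multiplicativity claim, the key point is that $\PP_n$ admits the structure of a dg algebra object in $\Ch^-(\SBim_n)$ (as recalled in Section \ref{ss:Pn}). Combined with the lax monoidal structure on $\CHH_{\gll_N}$ provided by Lemma \ref{lem: bimodule dN multiplicativity}, this endows the total complex $C_{\gll_N}(\PP_n)$ with a dg algebra structure, so the associated Rasmussen double-complex spectral sequence inherits a multiplicative structure, much as in Proposition \ref{prop: braid dN multiplicativity}. In particular, every differential of the spectral sequence, including $d_N$ on the $E_1$ page, satisfies the graded Leibniz rule.

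To identify the formula for $d_N$, I would exploit compatibility with the $y$-ified spectral sequence. The underlying bigraded module of $\CY(\PP_n^y)$ is $\CHH(\PP_n)\cotimes\k[\yy]$, whose differential equals $\HH(d_{\PP_n})$ plus $y$-linear correction terms coming from the homotopies $\eta_f$; meanwhile, the $\gll_N$-endomorphism $d_N$ of Definition \ref{def:dN general} is intrinsic to the bimodule structure and involves no $\yy$. Setting $y_i = 0$ therefore defines a chain map of Rasmussen double complexes $\CY_{\gll_N}(\PP_n^y) \to \CHH_{\gll_N}(\PP_n)$, which on $E_1$ pages induces the algebra homomorphism $\pi$ of Lemma \ref{lem: pi}, sending $u_i \mapsto \widetilde{u}_i$ and $\xi_i\mapsto \widetilde{\xi}_i$ under the rescaling convention recorded there. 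Applying $\pi$ to the formula $d_N(\xi(z)) = u(z)^N \bmod p(z)$ of Theorem \ref{thm: y-ified differential}, and noting that $\pi(p(z)) = z^n$ since $p(z) = \prod_i(z-y_i)$ specializes to $z^n$ at $y_i=0$, I obtain $d_N(\widetilde{\xi}(z)) = \widetilde{u}(z)^N \bmod z^n$ as desired.

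The main obstacle I anticipate is the careful verification that the specialization $y_i \mapsto 0$ genuinely defines a morphism of spectral sequences, rather than merely compatible maps at a single page or at the level of abutments; this reduces to checking that the chain-level projection intertwines both differentials of the Rasmussen double complex, which in turn follows from the $y$-independence of $d_N$ together with the description of the $\CY(\PP_n^y)$ differential recalled above. The only other subtlety is matching the generators $u_i, \xi_i$ on the $y$-ified side with $\widetilde{u}_i, \widetilde{\xi}_i$ on the undeformed side, but this is precisely the content of Lemma \ref{lem: pi} and the rescaling convention immediately following it.
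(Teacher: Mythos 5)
Your proposal is correct and follows the paper's strategy: both derive the formula by specializing $y_i \mapsto 0$ in Theorem~\ref{thm: y-ified differential}, using Lemma~\ref{lem: pi} to match $u_i, \xi_i$ with $\widetilde{u}_i, \widetilde{\xi}_i$, and both rely on the key (and correct) observation that the remainder operation commutes with the evaluation $\pi$ because its coefficients are polynomial in the monic divisor's coefficients. The one place you diverge is the Leibniz rule: you establish it structurally, via the dg algebra structure on $\PP_n$ together with lax monoidality of $\CHH_{\gll_N}$ (which makes the whole Rasmussen spectral sequence multiplicative), whereas the paper takes a more hands-on route, using surjectivity of $\pi\colon \HY(\PP_n^y) \twoheadrightarrow \HHH(\PP_n)$ to transport the Leibniz rule from the already-known $y$-ified case; your version proves slightly more (multiplicativity on every page) at the cost of invoking the general machinery, while the paper's is more elementary and self-contained for the $E_1$ statement at hand.
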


\begin{proof}
Since the differential $d_N$ commutes with the projection $\pi$ by \ref{prop:HH with dN is functor}, its action   on generators follows from Theorem \ref{thm: y-ified projector} and Lemma \ref{lem: pi}.
By Lemma \ref{lem: pi} $\pi$ is an algebra homomorphism which sends $y_i$ to zero, $u_i$ to $\widetilde{u}_i$ and $\xi_i$ to $\widetilde{\xi}_i$, so we get
$$
d_N\left(\widetilde{\xi}(z)\right)=d_N(\pi(\xi(z)))=\pi(d_N(\xi(z))).
$$
By Theorem \ref{thm: y-ified projector} $d_N(\xi(z))$ is the remainder of $u(z)^N$ modulo $p(z)=\prod (z-y_i)$. Since $p(z)$ is a monic polynomial, the coefficients of the remainder are obtained as certain polynomials in the coefficients of  $u(z)^N$ and $p(z)$, and we can apply $\pi$ to specialize all of these. Since $\pi$ is a homomorphism, we get that $\pi(d_N(\xi(z)))$ is the remainder of $\pi(u(z)^N)=\overline{u}(z)^N$ modulo $\pi(p(z))=\prod (z-0)=z^n$. 

It remains to prove that $d_N$ satisfies Leibniz rule, we use surjectivity of $\pi$: assume $f,g\in \HHH(\PP_n;\k)$. Then $f=\pi(f_1),g=\pi(g_1)$ for some $f_1,g_1\in \HY(\PP_n^y;\k)$. Now 
$$
d_N(fg)=d_N(\pi(f_1g_1))=\pi(d_N(f_1g_1))=\pi(d_N(f_1)g_1+f_1d(g_1))=
$$
$$
d_N(\pi(f_1))\pi(g_1)+\pi(f_1)d_N(\pi(g_1))=d_N(f)g+fd_N(g).
$$
Here we also used that $d_N$ commutes with $\pi$ and $\pi$ is a homomorphism. 
\end{proof}

\section{The interpolation algebra}
\label{sec: interpolation algebra}

\subsection{A formula for $y$-ified differential}

Recall that $p(z)=\prod_{i=1}^{n}(z-y_i)$. We give a closed formula for the coefficients of the remainder $r(z)$ in \eqref{eq: division with remainder}.

\begin{lemma}
\label{lem: remainder skew Schur}
Suppose that $r(z)=f(z)\mod p(z)$ as in \eqref{eq: division with remainder} and 
$$f(z)=\sum_{k=0}^{\deg\ f} f_kz^k,\ r(z)=\sum_{k=0}^{n-1} r_kz^k.$$
Then 
\begin{equation}
\label{eq: rk}
r_k=f_k+(-1)^{n-1-k}\sum_{j=n}^{\deg\ f}f_js_{(j-n+1,1^{n-1-k})}(\yy).
\end{equation}
where $s_{(j-n+1,1^{n-1-k})}(\yy)$ is the Schur polynomial in $y_1,\ldots,y_n$ for the hook diagram $(j-n+1,1^{n-1-k})$.
\end{lemma}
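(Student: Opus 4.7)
The plan is to reduce to the monomial case $f(z) = z^j$ by linearity, compute $z^j \bmod p(z)$ via a Laurent expansion at $z = \infty$, and identify the resulting expression as a hook Schur polynomial using the standard hook Jacobi-Trudi identity.

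Since division-with-remainder by the monic polynomial $p(z)$ is $\k[\yy]$-linear in $f(z)$, it suffices to determine the coefficients $c_{j,k}$ defined by $z^j \equiv \sum_{k=0}^{n-1} c_{j,k} z^k \bmod p(z)$; then $r_k = \sum_j f_j c_{j,k}$. For $j < n$ we have $c_{j,k} = \delta_{jk}$ trivially, producing the $f_k$ term in the claim, so the substance of the lemma is the identification $c_{j,k} = (-1)^{n-1-k} s_{(j-n+1,\, 1^{n-1-k})}(\yy)$ for $j \geq n$.

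For this, I would write $z^j/p(z) = q(z) + r_j(z)/p(z)$ and recognize $r_j(z)/p(z)$ as the principal part of the Laurent expansion at $z = \infty$. Substituting $w = 1/z$ and applying the classical generating function identity
\[
\prod_{i=1}^n (1 - y_i w)^{-1} = \sum_{m \geq 0} h_m(\yy)\, w^m,
\]
we get $z^j/p(z) = \sum_{m \geq 0} h_m(\yy)\, z^{j - n - m}$, whose principal part is $\sum_{s \geq 1} h_{j-n+s}(\yy)\, z^{-s}$. Multiplying by $p(z) = \sum_{\ell=0}^{n} (-1)^{n-\ell} e_{n-\ell}(\yy)\, z^\ell$ and extracting the coefficient of $z^k$ for $0 \leq k \leq n-1$ gives, after reindexing $m = \ell - k - 1$,
\[
c_{j,k} = (-1)^{n-1-k} \sum_{m=0}^{n-1-k} (-1)^m h_{j-n+1+m}(\yy)\, e_{n-1-k-m}(\yy).
\]

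The last step is to recognize the inner sum as the hook Jacobi-Trudi expansion $s_{(a,\,1^b)} = \sum_{m=0}^{b} (-1)^m h_{a+m}\, e_{b-m}$, applied with $a = j-n+1$ and $b = n-1-k$; this identity is obtained by a one-step cofactor expansion along the first column of the $(b+1) \times (b+1)$ Jacobi-Trudi matrix for the hook shape $(a, 1^b)$. Substituting back and summing over $j$ against $f_j$ completes the proof. I do not expect any real obstacle: every ingredient is classical and the only care required is in tracking signs and indices through the coefficient extraction.
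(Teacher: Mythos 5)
Your proof is correct, but it takes a genuinely different route to identifying the hook Schur polynomial. The paper proves the lemma via interpolation: it solves the linear system $r(y_i) = f(y_i)$ by Cramer's rule, writes each $r_k$ as a ratio of two alternating $n\times n$ determinants, and then recognizes the ratio as $s_{(j-n+1,1^{n-1-k})}(\yy)$ directly from the bialternant (Weyl determinant) formula for Schur polynomials. You instead compute the remainder via the Laurent expansion of $z^j/p(z)$ at $z=\infty$, invoke the generating-function identity $\prod_i(1-y_iw)^{-1}=\sum_m h_m(\yy)w^m$, and identify the resulting alternating convolution $\sum_{m=0}^{b}(-1)^m h_{a+m}e_{b-m}$ with $s_{(a,1^b)}$ via the hook Jacobi--Trudi identity. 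So the two proofs pass through the two classical determinantal definitions of Schur polynomials: the paper through the bialternant, you through Jacobi--Trudi. Your route has a mild advantage that it produces the $h$--$e$ expansion of the coefficients $c_{j,k}$ as an intermediate step, which in the paper is instead extracted later in Lemma~\ref{lem: change of variables} (whose proof establishes the closely related summation \eqref{eq: hook schur to h}); in effect your computation interpolates between these two lemmas. The paper's route has the advantage of being entirely algebraic and staying within $\k[\yy]$-linear algebra, avoiding formal Laurent series. Both are standard, correct, and about the same length; the index bookkeeping in your coefficient-of-$z^k$ extraction checks out.
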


\begin{proof}
We have 
$$
r_0+r_1y_i+\ldots+r_{n-1}y_i^{n-1}=r(y_i)=f(y_i)
$$
which can be interpreted as a linear system of equations for $r_k$. By solving this by Cramer's Rule, we get
$$
r_k=\frac{(-1)^{k}}{\Delta}\det\left[f(y_i); 1;  y_i;\ldots; \widehat{y_i^k};\ldots; y_i^{n-1}\right]_{i=1}^{n}=
$$
$$
\frac{(-1)^{k}}{\Delta}\sum_{j=0}^{\deg\ f}f_j\det\left[ y_i^j; 1;  y_i;\ldots; \widehat{y_i^k};\ldots; y_i^{n-1}\right]_{i=1}^{n}.
$$ 
The  determinant 
$$
\det\left[ y_i^j; 1;  y_i;\ldots; \widehat{y_i^k};\ldots; y_i^{n-1}\right]_{i=1}^{n}
$$
equals $(-1)^{k}\Delta$ for $j=k$ and vanishes when $j\neq k$ and $0\le j\le n-1$. For $j\ge n$ we get
$$
\det\left[ y_i^j; 1;  y_i;\ldots; \widehat{y_i^k};\ldots; y_i^{n-1}\right]_{i=1}^{n}=
(-1)^{n-1} 
\det\left[1;  y_i;\ldots; \widehat{y_i^k};\ldots; y_i^{n-1},y_i^{j}\right]_{i=1}^{n}=
$$
$$
(-1)^{n-1} \Delta s_{(j-n+1,1^{n-1-k})}(\yy),
$$
and the result follows. 
\end{proof}

In what follows it will be useful to change coordinates and consider linear combinations of $r_k$ instead.  

\begin{lemma}
\label{lem: change of variables}
We have
$$
r_k+h_1(\yy)r_{k+1}+\ldots+h_{n-1-k}(\yy)r_{n-1}=\sum_{j=k}^{\infty}f_jh_{j-k}(\yy).
$$
\end{lemma}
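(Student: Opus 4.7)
The plan is to prove this identity via a generating function argument, recognizing both sides as coefficients of a single Laurent series expansion of $r(z)/p(z)$.

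First, I would record the basic generating function identity for complete homogeneous symmetric polynomials, namely $\prod_{i=1}^n (1-y_i t)^{-1} = \sum_{m\ge 0} h_m(\yy) t^m$. Substituting $t = 1/z$ and pulling out $z^{-n}$ yields the formal Laurent series
\begin{equation*}
\frac{1}{p(z)} \;=\; \frac{1}{z^n}\prod_{i=1}^n \frac{1}{1-y_i/z} \;=\; \sum_{m\ge 0} h_m(\yy)\, z^{-n-m},
\end{equation*}
valid in $\k[\yy]((z^{-1}))$.

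Next, I would compute $r(z)/p(z)$ in two different ways and extract the coefficient of $z^{k-n}$ for $0 \le k \le n-1$. On the one hand, expanding directly gives
\begin{equation*}
\frac{r(z)}{p(z)} \;=\; \sum_{j=0}^{n-1}\sum_{m\ge 0} r_j h_m(\yy)\, z^{j-n-m},
\end{equation*}
and the coefficient of $z^{k-n}$ picks out the terms with $m = j-k$, $j\ge k$, giving exactly $r_k + h_1(\yy) r_{k+1} + \ldots + h_{n-1-k}(\yy) r_{n-1}$, the left-hand side. On the other hand, the division with remainder $f(z) = q(z)p(z) + r(z)$ rearranges to $r(z)/p(z) = f(z)/p(z) - q(z)$. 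Since $q(z)$ is a polynomial in $z$ with nonnegative powers, it contributes nothing to the coefficient of $z^{k-n}$ when $k < n$. Expanding $f(z)/p(z)$ as before, the coefficient of $z^{k-n}$ is $\sum_{j\ge k} f_j h_{j-k}(\yy)$, the right-hand side.

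Equating the two expressions for the coefficient of $z^{k-n}$ yields the desired identity. There is no substantial obstacle here — the argument is essentially bookkeeping once the correct generating function is set up; the only mild subtlety is ensuring that $r(z)/p(z)$ is manipulated as a formal Laurent series in $z^{-1}$ so that the geometric-series expansion of $1/p(z)$ is valid.
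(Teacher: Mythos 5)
Your proof is correct, and it takes a genuinely different route from the paper's. The paper substitutes the explicit expression for $r_k$ from Lemma~\ref{lem: remainder skew Schur} (hook Schur polynomials obtained via Cramer's rule) into the left-hand side, and then reduces the claim to the combinatorial identity
\begin{equation*}
\sum_{s=0}^{n-k-1}(-1)^{n-1-k-s}h_s(\yy)\,s_{(j-n+1,1^{n-1-k-s})}(\yy)=h_{j-k}(\yy),
\end{equation*}
which it verifies by expanding the hook Schur function as an alternating sum of products of $h$ and $e$ and using the standard $h$--$e$ cancellation. Your argument bypasses Lemma~\ref{lem: remainder skew Schur} entirely: you expand $1/p(z)$ as a formal Laurent series in $z^{-1}$ using the generating function for complete homogeneous symmetric functions, then read the identity off by comparing the coefficient of $z^{k-n}$ in $r(z)/p(z)=f(z)/p(z)-q(z)$ (the polynomial $q(z)$ contributing nothing since $k-n<0$). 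Your approach is self-contained and arguably more transparent, since it makes the role of the $h$'s conceptually visible rather than relying on a hook-Schur computation; the paper's version is a direct algebraic verification that doubles as a check on the Cramer's-rule formula of the preceding lemma and cites an identity from \cite{HRW}. Both are valid; yours would shorten the exposition if Lemma~\ref{lem: remainder skew Schur} were not needed elsewhere.
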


\begin{proof}
We substitute $r_k$ using \eqref{eq: rk}. The first terms add up to
$$
f_k+h_1(\yy)f_{k+1}+\ldots+h_{n-1-k}(\yy)f_{n-1}=\sum_{j=k}^{n-1}f_jh_{j-k}(\yy).
$$\
For $j\ge n$ it suffices to compare the coefficients at $f_j$ and prove the identity (compare with \cite[Corollary 2.9]{HRW}):
\begin{equation}
\label{eq: hook schur to h}
\sum_{s=0}^{n-k-1}(-1)^{n-1-k-s}h_{s}(\yy)s_{(j-n+1,1^{n-1-k-s})}(\yy)=h_{j-k}(\yy).
\end{equation}
Note that we can write
$$
s_{(j-n+1,1^{n-1-k-s})}(\yy)=\sum_{t=0}^{n-1-k-s}(-1)^{t}h_{j-n+1+t}(\yy)e_{n-1-k-s-t}(\yy),
$$
so the left hand side of \eqref{eq: hook schur to h} can be rewritten as
$$
\sum_{s=0}^{n-k-1}\sum_{t=0}^{n-1-k-s}(-1)^{n-1-k-s-t}h_{s}(\yy)h_{j-n+1+t}(\yy)e_{n-1-k-s-t}(\yy).
$$
By changing the order of summation and using
$$
\sum_{s=0}^{n-k-1-t}(-1)^{n-1-k-s-t}h_{s}(\yy)e_{n-1-k-s-t}(\yy)=\begin{cases}
1 & \mathrm{if}\ n-1-k-t=0\\
0 & \mathrm{otherwise}\\
\end{cases}
$$
we are left with the term  
$$
h_{j-n+1+t}(\yy)|_{t=n-1-k}=h_{j-k}(\yy).
$$
\end{proof}

\begin{lemma}
\label{lem: d_N explicit}
For $n$ and $N$ arbitrary and $0\le k\le n-1$ we have
\begin{multline}
\label{eq: dN xi}
d_N(\xi_k)=\sum_{j_1+\ldots+j_N=k}u_{j_1}\cdots u_{j_N}+\\
(-1)^{n-1-k}\sum_{j=n}^{N(n-1)}\sum_{j_1+\ldots+j_{N}=j}u_{j_1}\cdots u_{j_N} s_{(j-n+1,1^{n-1-k})}(\yy).
\end{multline}
Furthermore, if 
\begin{equation}
\label{eq: zeta}
\zeta_k=\xi_k+h_1(\yy)\xi_{k+1}+\ldots+h_{n-1-k}(\yy)\xi_{n-1},\ 0\le k\le n-1
\end{equation}
then 
\begin{equation}
\label{eq: dN zeta}
d_N(\zeta_k)=\sum_{j=k}^{N(n-1)}\sum_{j_1+\ldots+j_{N}=j}u_{j_1}\cdots u_{j_N} h_{j-k}(\yy).
\end{equation}
\end{lemma}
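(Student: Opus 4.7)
The plan is to apply Theorem~\ref{thm: y-ified differential} directly and then unpack the remainder using Lemmas~\ref{lem: remainder skew Schur} and~\ref{lem: change of variables}. By Theorem~\ref{thm: y-ified differential}, $d_N(\xi(z))$ equals the remainder of $u(z)^N$ modulo $p(z)$, so the task reduces to a purely algebraic computation of this remainder.

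First I would expand $f(z) := u(z)^N = (u_0 + u_1 z + \cdots + u_{n-1}z^{n-1})^N$ as a polynomial in $z$, reading off its coefficients:
\[
f_j = \sum_{\substack{0 \le j_1,\ldots,j_N \le n-1 \\ j_1+\cdots+j_N = j}} u_{j_1}\cdots u_{j_N}.
\]
Since the $u_i$ have indices in $\{0,\ldots,n-1\}$, we have $f_j = 0$ unless $0 \le j \le N(n-1)$. Applying Lemma~\ref{lem: remainder skew Schur} to $f(z)$ gives the coefficients $r_k = d_N(\xi_k)$ of the remainder, and substituting the formula for $f_j$ produces the claimed expression~\eqref{eq: dN xi} immediately, with the first sum $\sum_{j_1+\cdots+j_N=k} u_{j_1}\cdots u_{j_N}$ being the $f_k$ term and the hook Schur sum coming from the $j \ge n$ contributions.

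For the second identity, I would apply $d_N$ to the defining equation~\eqref{eq: zeta}. Since $d_N$ is $\k[\yy]$-linear (because $d_N(y_i) = 0$ and $d_N$ satisfies the Leibniz rule),
\[
d_N(\zeta_k) = d_N(\xi_k) + h_1(\yy)\, d_N(\xi_{k+1}) + \cdots + h_{n-1-k}(\yy)\, d_N(\xi_{n-1}).
\]
This is precisely the left-hand side of Lemma~\ref{lem: change of variables} with $r_k = d_N(\xi_k)$, so the lemma gives $d_N(\zeta_k) = \sum_{j=k}^{\infty} f_j h_{j-k}(\yy)$. Truncating the sum at $j = N(n-1)$ (above which $f_j = 0$) and substituting the expression for $f_j$ yields~\eqref{eq: dN zeta}.

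The proof is essentially mechanical once Lemmas~\ref{lem: remainder skew Schur} and~\ref{lem: change of variables} are in hand; the only point requiring a moment of care is tracking the range of summation and verifying that the truncation to $j \le N(n-1)$ is correct, which follows from the explicit upper bound on the degree of $u(z)^N$. There is no genuine obstacle at this step.
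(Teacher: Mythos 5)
Your proof is correct and is essentially the same argument as the paper's, which simply cites Lemmas~\ref{lem: remainder skew Schur} and~\ref{lem: change of variables} applied to $f(z) = u(z)^N$; you have just spelled out the intermediate steps (identifying $f_j$, invoking $\k[\yy]$-linearity of $d_N$, and tracking the degree bound $\deg f = N(n-1)$) more explicitly than the paper does.
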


\begin{proof}
This follows from Lemmas \ref{lem: remainder skew Schur} and \ref{lem: change of variables} applied to the polynomial
$$
f(z)=(u_0+u_1z+\ldots+u_{n-1}z^{n-1})^N=\sum_{j=0}^{N(n-1)}\sum_{j_1+\ldots+j_{N}=j}u_{j_1}\cdots u_{j_N}z^j.
$$
\end{proof}

\begin{example}
For $n=2$ and $N\ge 2$ we get
$$
d_N(\xi_0)=u_0^N-y_1y_2\sum_{j=2}^{N}\binom{N}{j}u_0^{N-j}u_1^{j}h_{j-2}(y_1,y_2),
$$
$$
d_N(\xi_1)=Nu_0^{N-1}u_1+\sum_{j=2}^{N}\binom{N}{j}u_0^{N-j}u_1^{j}h_{j-1}(y_1,y_2).
$$
Here we used that $s_{(j-1,1)}(y_1,y_2)=y_1y_2h_{j-2}(y_1,y_2)$.
\end{example}

\subsection{Specialization}

In this section we consider a certain specialization of $u_k$. 

\begin{definition}
Fix a number $M\gg 0$. We define the specialization homomorphism
$$
\varphi_M:\Q[u_0,\ldots,u_{n-1},y_1,\ldots,y_n]\to \Q[y_1,\ldots,y_n]
$$
$$
\varphi_M(u_k)=(-1)^{n-1-k}s_{(M-n+1,1^{n-1-k})}(\yy).
$$
\end{definition}

\begin{lemma}
\label{lem: specialization}
Under the homomorphism $\varphi_M$, we have the following specializations:
$$
\varphi_M(x_i)=y_i^M,\quad \varphi_M(d_N(\zeta_k))=h_{MN-k}(\yy).
$$
\end{lemma}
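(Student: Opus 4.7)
The proof naturally splits according to the two identities, and the strategic observation underlying both is that the specialization $\varphi_M$ is designed so that $\varphi_M(u(z))$ is precisely the remainder of $z^M$ upon division by $p(z)$.

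For the first identity, I would apply Lemma \ref{lem: remainder skew Schur} to the polynomial $f(z) = z^M$, where (for $M \ge n$) the only nonzero coefficient is $f_M = 1$. Formula \eqref{eq: rk} then yields $r_k = (-1)^{n-1-k} s_{(M-n+1,\,1^{n-1-k})}(\yy)$, which matches $\varphi_M(u_k)$ by definition. Hence $\varphi_M(u(z))$ coincides with $r(z) = z^M \bmod p(z)$. Substituting $z = y_i$ and using that $y_i$ is a root of $p(z)$ gives
\[
\varphi_M(x_i) \;=\; \varphi_M\bigl(u(y_i)\bigr) \;=\; r(y_i) \;=\; y_i^M.
\]

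For the second identity, I would start from formula \eqref{eq: dN zeta}, which expresses $d_N(\zeta_k)$ in terms of the coefficients of $u(z)^N$. Applying $\varphi_M$ termwise gives
\[
\varphi_M(d_N(\zeta_k)) \;=\; \sum_{j=k}^{N(n-1)} R_j\, h_{j-k}(\yy),
\]
where $R_j$ is the coefficient of $z^j$ in $r(z)^N = \varphi_M(u(z))^N$. The right-hand side is exactly of the form appearing in Lemma \ref{lem: change of variables}, applied to the polynomial $\tilde f(z) := r(z)^N$ of degree at most $N(n-1)$: it equals $\sum_{\ell=k}^{n-1} h_{\ell-k}(\yy)\, \tilde r_\ell$, where $\tilde r(z) = r(z)^N \bmod p(z)$. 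The crux is now that $r(z) \equiv z^M \bmod p(z)$ forces $r(z)^N \equiv z^{MN} \bmod p(z)$, so $\tilde r(z)$ is the remainder of $z^{MN}$ modulo $p(z)$. A second application of Lemma \ref{lem: change of variables}, this time to $g(z) = z^{MN}$ (whose only nonzero coefficient is $g_{MN}=1$), collapses the combination $\sum_\ell h_{\ell-k}(\yy)\, \tilde r_\ell$ to exactly $h_{MN-k}(\yy)$, completing the proof.

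There is no real obstacle: once one recognizes the identification $\varphi_M(u(z)) \equiv z^M \bmod p(z)$, both statements reduce to two invocations of the interpolation machinery of Lemmas \ref{lem: remainder skew Schur} and \ref{lem: change of variables}. The only mild point to watch is the hypothesis $M \gg 0$, which ensures $M \ge n$ so that the hook-shape Schur polynomials $s_{(M-n+1,\,1^{n-1-k})}(\yy)$ are well defined; this is built into the statement.
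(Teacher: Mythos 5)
Your proof is correct and follows essentially the same strategy as the paper's. Both hinge on the key observation that $\varphi_M(u(z))$ is exactly the remainder of $z^M$ modulo $p(z)$, and both use Lemmas \ref{lem: remainder skew Schur} and \ref{lem: change of variables} as the interpolation machinery. The one minor difference is in routing: the paper applies Lemma \ref{lem: remainder skew Schur} to $z^{MN}$ to compute $\varphi_M(d_N(\xi_k))$ as a hook Schur polynomial and then converts to the $\zeta_k$ basis via a single application of Lemma \ref{lem: change of variables}, whereas you start from \eqref{eq: dN zeta} and apply Lemma \ref{lem: change of variables} twice (once to $r(z)^N$ and once to $z^{MN}$, matched via the remainder $\tilde r$). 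The two derivations are logically equivalent, and both implicitly use that $\varphi_M$ commutes with reduction modulo $p(z)$ because $p(z)$ involves only $\yy$, which $\varphi_M$ fixes.
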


\begin{proof}
Let us apply Lemma \ref{lem: remainder skew Schur}   to $f(z)=z^M$. We get $r_k=\varphi_M(u_k)$, so
$$
\varphi_M(u(z))=z^M\mod p(z)
$$
and $\varphi_M(x_i)=\varphi_M(u(y_i))=y_i^M$. This proves the first equation. For the second equation, note that by the above
\begin{equation}
\label{eq: Nth powers agree}
\varphi_M(u(z)^N)\mod p(z)=z^{MN}\mod p(z).
\end{equation}
By applying Lemma \ref{lem: remainder skew Schur} to both sides of \eqref{eq: Nth powers agree} , we get
$$
\varphi_M(d_N(\xi_k))=(-1)^{n-1-k}s_{MN-n+1,1^{n-1-k}}(\yy).
$$
and by Lemma \ref{lem: change of variables} we get
$$
\varphi_M(d_N(\zeta_k))=h_{MN-k}(\yy).
$$
\end{proof}

\begin{lemma}
\label{lem: regular sequence}
a) The polynomials $d_N(\zeta_k)$ defined by \eqref{eq: dN zeta} form a regular sequence. 

b) The polynomials $d_N(\xi_k)$ defined by \eqref{eq: dN xi} form a regular sequence.
\end{lemma}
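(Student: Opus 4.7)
My plan is to reduce both parts to a single standard regularity statement for consecutive complete homogeneous symmetric polynomials. First, I observe that (a) and (b) are equivalent. The change of variables \eqref{eq: zeta} is unitriangular over $\Q[\yy]$ (with $1$'s on the diagonal), and $d_N$ is $\Q[\yy]$-linear since $d_N(y_i)=0$, so the ideals $(d_N(\xi_0),\ldots,d_N(\xi_{n-1}))$ and $(d_N(\zeta_0),\ldots,d_N(\zeta_{n-1}))$ coincide in $S:=\Q[\uu,\yy]$. Since $S$ is a graded Cohen--Macaulay ring and these are homogeneous of positive degree, being a regular sequence is equivalent to the ideal having height $n$, a condition depending only on the ideal. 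So it suffices to show that $I:=(d_N(\zeta_0),\ldots,d_N(\zeta_{n-1}))$ has height $n$ in $S$.

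For this I exploit the specialization $\varphi_M\colon S\to T:=\Q[\yy]$ of Lemma \ref{lem: specialization} for $M\gg 0$. Its kernel $J_M$ is generated by the $n$ elements $u_k-(-1)^{n-1-k}s_{(M-n+1,1^{n-1-k})}(\yy)$, so $J_M$ has height $n$ in $S$. By Lemma \ref{lem: specialization}, $\varphi_M(d_N(\zeta_k))=h_{MN-k}(\yy)$. If I can show that for $M$ large the polynomials $h_{MN-n+1}(\yy),\ldots,h_{MN}(\yy)$ form a regular sequence in $T=\Q[\yy]$, then $T/(\varphi_M(I))$ is zero-dimensional, so $\mathrm{ht}(I+J_M)=2n$ in $S$. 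The general inequality $\mathrm{ht}(I+J_M)\leq \mathrm{ht}(I)+\mathrm{ht}(J_M)$ then forces $\mathrm{ht}(I)\geq n$, and combined with the trivial bound $\mathrm{ht}(I)\leq n$ (since $I$ has $n$ generators), this gives $\mathrm{ht}(I)=n$, as desired.

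The real content is therefore the regularity of $h_{MN-n+1}(\yy),\ldots,h_{MN}(\yy)$ in $T$, which, after extending scalars to $\C$, is equivalent to showing that their common zero set in $\C^n$ is $\{0\}$. Suppose $\yy\in\C^n$ kills all of them. Consider the generating function $H(z)=\sum_{k\geq 0}h_k(\yy)z^k=\prod_{i=1}^n(1-y_iz)^{-1}$, a rational function with denominator of degree $d\leq n$ (equal to the number of nonzero $y_i$, counted with multiplicity). The coefficient sequence satisfies a linear recursion of order $d$ valid for all indices $\geq d$. Taking $M$ large enough that $MN-n+1\geq d$, the $n\geq d$ consecutive vanishings $h_{MN-n+1}=\cdots=h_{MN}=0$ propagate forward along the recursion, forcing $h_m(\yy)=0$ for all $m\geq MN-n+1$. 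Then $H(z)$ is a polynomial, hence $1/\prod(1-y_iz)$ is a polynomial, which is impossible unless $d=0$, i.e., $\yy=0$.

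The main obstacle is this final regularity statement; everything else is a bookkeeping dimension count via the specialization $\varphi_M$. The generating function argument handles the obstacle cleanly because the bound $d\leq n$ on the order of the linear recursion exactly matches the length $n$ of the block of consecutive vanishings supplied by Lemma \ref{lem: specialization}.
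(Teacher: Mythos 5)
Your proof is correct and takes a genuinely different commutative-algebra route from the paper, though both arguments depend on the same two inputs: the specialization $\varphi_M$ from Lemma~\ref{lem: specialization}, and the regularity of the consecutive complete homogeneous symmetric polynomials $h_{MN-n+1}(\yy),\ldots,h_{MN}(\yy)$. The paper proceeds by first showing that the \emph{longer} sequence $u_0-\varphi_M(u_0),\ldots,u_{n-1}-\varphi_M(u_{n-1}),d_N(\zeta_0),\ldots,d_N(\zeta_{n-1})$ of $2n$ elements is regular in the given order (reducing the second block modulo the first), then introducing the grading $\deg(y_i)=1$, $\deg(u_k)=M-k$ under which all $2n$ elements are homogeneous, and invoking the classical fact that a homogeneous regular sequence in a positively graded ring is regular after any permutation — so the subsequence $d_N(\zeta_0),\ldots,d_N(\zeta_{n-1})$ is itself regular. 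You instead recast regularity as the height statement $\mathrm{ht}(I)=n$ and deduce it from the intersection-dimension inequality $\mathrm{ht}(I+J_M)\le\mathrm{ht}(I)+\mathrm{ht}(J_M)$, valid for homogeneous proper ideals in a polynomial ring, combined with $\mathrm{ht}(J_M)=n$ and $\mathrm{ht}(I+J_M)=2n$; the latter is where you use that $T/\varphi_M(I)$ is Artinian. The two mechanisms — ``append then permute'' versus ``codimension count via Serre/affine-dimension inequality'' — are distinct standard tricks delivering the same conclusion, and your version is arguably more conceptual while the paper's is more elementary. One point worth making explicit is which positive grading you are invoking when you say ``$S$ is a graded Cohen--Macaulay ring and these are homogeneous of positive degree'': neither the $Q$- nor the $T$-grading alone is positive on both $\uu$ and $\yy$, so the correct choice is precisely the paper's modified grading $\deg(y_i)=1$, $\deg(u_k)=M-k$. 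Finally, a real improvement in your write-up is the self-contained generating-function proof that $h_{MN-n+1}(\yy),\ldots,h_{MN}(\yy)$ is a regular sequence (equivalently that its common zero locus in $\C^n$ is the origin); the paper simply cites this fact as well known.
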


\begin{proof}
Consider a longer sequence 
\begin{equation}
\label{eq: long sequence}
u_0-\varphi_M(u_0),\ldots,u_{n-1}-\varphi_M(u_{n-1}),d_N(\zeta_0),\ldots,d_N(\zeta_{n-1}).
\end{equation}
Clearly, $u_0-\varphi_M(u_0),\ldots,u_{n-1}-\varphi_M(u_{n-1})$ form a regular sequence, let $I$  denote the ideal generated by these polynomials. Then 
$$
\Q[u_0,\ldots,u_{n-1},y_1,\ldots,y_{n}]/I\simeq \Q[y_1,\ldots,y_n]
$$
and for all polynomials $g\in \Q[u_0,\ldots,u_{n-1},y_1,\ldots,y_{n}]$ we have $g=\varphi_M(g)\mod I$. In particular, by Lemma \ref{lem: specialization} we get
$$
d_N(\zeta_k)=\varphi_M(d_N(\zeta_k))=h_{MN-k}(\yy)\mod I.
$$
Since $h_{MN}(\yy),h_{MN-1}(\yy),\ldots,h_{MN-n+1}(\yy)$ form a regular sequence in $\Q[y_1,\ldots,y_n]$, we conclude that $d_N(\zeta_0),\ldots,d_N(\zeta_{n-1})$ form a regular sequence modulo $I$, and the sequence \eqref{eq: long sequence} is regular.

Next, we define a grading on $\Q[u_0,\ldots,u_{n-1},y_1,\ldots,y_{n}]$ such that $\deg(y_i)=1$ and $\deg(u_k)=M-k$. The polynomials $u_k-\varphi_M(u_k)$ are homogeneous with respect to this degree. The polynomials $d_N(\zeta_k)$ are also homogeneous since every term in \eqref{eq: dN zeta} has degree
$$
(M-j_1)+\ldots+(M-j_N)+(j_1+\ldots+j_N-k)=MN-k.
$$
It is well known that whenever a sequence of homogeneous polynomials in a (positively) graded ring is regular in one order, it is regular in any other order, and therefore any subsequence is regular. We conclude that $d_N(\zeta_0),\ldots,d_N(\zeta_{n-1})$ is a regular sequence. Since the $\xi_k$ are related to the $\zeta_k$ by a unitriangular matrix, the sequence $d_N(\xi_0), \dots,d_N(\xi_{n-1})$ is also regular and generates the same ideal.
\end{proof}

\subsection{Divided differences}

The symmetric group $S_n$ acts on $\k[\yy,\uu,\xxi]$ by permuting the $y_i$ and leaving the $u_k$ and $\xi_k$ fixed, and this action commutes with both the differential $d_N$ and the specialization map $\varphi_M$ from the previous section. Therefore, for $s_i = (i \quad i+1)$ a transposition, the divided difference operator
\begin{equation*}
    \partial_i: f \mapsto (f - s_if) / (y_i - y_{i+1})
\end{equation*}
acts on $\k[\yy,\uu,\xxi]$, commuting with $d_N$ and $\varphi_M$, since the coefficient in $\yy$ of any monomial in $\uu$ and $\xxi$ of $f - s_if$ will be divisible by $y_i - y_{i+1}$.

These operators provide another route to the above calculations. If we set $\partial_* = \partial_{n-1}\cdots\partial_2\partial_1$, then $\partial_* y_1^k = h_{k-n+1}(\yy)$, so the elements $\zeta_k$ from above may be expressed as
\begin{equation*}
    \zeta_k = \sum_i h_i(\yy) \xi_{k+i} = \partial_* \sum_i y_1^{k+n-1} \xi_{k+i} = \partial_* \left(y_1^{n-1-k} \theta_1 \right)
\end{equation*}
Therefore, $$d_N \zeta_k = \partial_*\left(y_1^{n-1-k} d_N(\theta_1) \right) = \partial_*\left(y_1^{n-1-k} x_1^N \right).$$ This immediately gives the specialization $$\varphi_M (d_N \zeta_k) = \partial_*\left(y_1^{n-1-k} \cdot y_1^{MN} \right) = h_{MN - k}(\yy),$$ and a slightly more involved calculation recovers equation~\eqref{eq: dN zeta} as well.

\subsection{Coproduct}

As a side remark, we can define a curious coproduct on $\HY(\PP_n^y)$ over $\k[\yy]$. Suppose that we have two interpolation problems
$$
u_0+u_1y_i+\ldots+u_{n-1}y_i^{n-1}=x_i,\ \widetilde{u}_0+\widetilde{u}_1y_i+\ldots+\widetilde{u}_{n-1}y_i^{n-1}=\widetilde{x_i}
$$
and we want to solve the interpolation problem 
$$
v_0+v_1y_i+\ldots+v_{n-1}y_i^{n-1}=x_i\widetilde{x_i}.
$$
Then we can write 
\begin{equation}
\label{eq: coproduct}
v(z)=u(z)\widetilde{u}(z)\mod p(z)
\end{equation}
and this uniquely determines $v_k$ as polynomials of $u_i,\widetilde{u}_i,y_i$. By Lemma \ref{lem: remainder skew Schur} we get
\begin{equation}
v_k=\sum_{i+j=k}u_i\widetilde{u}_{j}+(-1)^{n-k-1}\sum_{i+j\ge n}u_i\widetilde{u_j}s_{(i+j-n+1,1^{n-1-k})}(y_1,\ldots,y_n).
\end{equation}
We can interpret \eqref{eq: coproduct} as a coproduct
$$
\Delta:\HY(\PP_n^y)\to \HY(\PP_n^y)\otimes_{\k[\yy]}\HY(\PP_n^y),\  \Delta(u_k)=v_k(u_i,\widetilde{u}_i,y_i).
$$
It is easy to see that $\Delta$ is cocommutative and coassociative.

\section{Further extensions}

\subsection{An algebraic conjecture}
\label{sec: algebraic conjecture}

In this section we introduce some classes in homology of $d_N$. See \cite[section 3.3]{GOR} and \cite[section 2.2]{GL}. Recall that 
$$
d_N(\xi(z))=u(z)^N\mod z^n.
$$

\begin{definition}
For $k=1,\ldots,n-1$ we define 
$$
\mu_k=\sum_{i+j=k}(Ni-j)u_i\xi_j .
$$
\end{definition}

\begin{lemma}
We have $d_N(\mu_k)=0$.
\end{lemma}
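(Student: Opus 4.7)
The plan is to use a generating-function identity. Introduce the formal polynomials
\[
u(z) = u_0 + u_1 z + \cdots + u_{n-1} z^{n-1}, \qquad \xi(z) = \xi_0 + \xi_1 z + \cdots + \xi_{n-1} z^{n-1},
\]
and denote $\frac{d}{dz}$ by a prime. A routine index calculation shows that $zu'(z) = \sum_i i u_i z^i$ and $z\xi'(z) = \sum_j j \xi_j z^j$, hence
\[
z\bigl(N u'(z)\xi(z) - u(z)\xi'(z)\bigr) \;=\; \sum_{k\ge 0} \Bigl(\sum_{i+j=k}(Ni-j)u_i\xi_j\Bigr) z^k \;=\; \sum_{k\ge 0} \mu_k\, z^k.
\]
So the first step is to extract $\mu_k = [z^k]\, z\bigl(Nu'(z)\xi(z) - u(z)\xi'(z)\bigr)$.

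Next I would apply $d_N$, which is a derivation killing all $u_i$ and satisfying $d_N(\xi(z)) = W(z)$, where $W(z) := u(z)^N \bmod z^n$ is a polynomial of degree $<n$. Clearly $d_N(\xi'(z)) = W'(z)$, so
\[
d_N(\mu_k) \;=\; [z^k]\, z\bigl(N u'(z) W(z) - u(z) W'(z)\bigr).
\]

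The key algebraic step is to write $u(z)^N = W(z) + z^n Q(z)$ for some polynomial $Q(z)$, and substitute. Using $(u(z)^N)' = Nu(z)^{N-1}u'(z)$, one computes
\[
Nu'(z)W(z) - u(z)W'(z) = Nu'(z)u(z)^N - u(z)\cdot Nu(z)^{N-1}u'(z) + z^{n-1}\bigl(nu(z)Q(z) - Nzu'(z)Q(z) + zu(z)Q'(z)\bigr),
\]
where the first two terms cancel. Multiplying by $z$ leaves a polynomial divisible by $z^n$, so $[z^k]$ vanishes for every $k \le n-1$. This gives $d_N(\mu_k)=0$ in the stated range.

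There is essentially no main obstacle; the only things to watch are (i) that because $1 \le k \le n-1$, every index appearing in $\mu_k$ lies in the allowed range $0 \le i,j \le n-1$, so no truncation is required on the $\mu_k$ side, and (ii) that $d_N(\xi(z))$ equals $u(z)^N$ only modulo $z^n$, which is handled cleanly by the substitution above.
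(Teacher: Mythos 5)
Your proof is correct and follows essentially the same route as the paper: both package $\mu_k$ as coefficients of the generating function $N u'(z)\xi(z)-u(z)\xi'(z)$ (the paper works with $\mu(z)=\sum\mu_k z^{k-1}$ rather than $z\cdot(\cdot)$, a cosmetic difference), and both conclude from the cancellation $N u'(z)\,u(z)^N-u(z)\cdot Nu'(z)u(z)^{N-1}=0$. The one small divergence is in the truncation bookkeeping: the paper reduces modulo $z^{n-1}$ from the outset and uses $d_N(\dot\xi(z))\equiv N\dot u(z)u(z)^{N-1}\pmod{z^{n-1}}$, whereas you substitute the exact polynomial $W(z)$ with $u(z)^N=W(z)+z^n Q(z)$ and check directly that the residual $Nu'W-uW'$ is divisible by $z^{n-1}$ (hence $z(Nu'W-uW')$ by $z^n$). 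Your version is a touch more explicit about where the mod-$z^n$ error lands, but it is the same idea.
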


\begin{proof}
We consider the generating functions
$$
\xi(z)=\sum_{i=0}^{n-1}\xi_iz^i,\ u(z)=\sum_{i=0}^{n-1}u_iz^i,
\dot{\xi}(z)=\sum_{i=1}^{n-1}i\xi_iz^{i-1},\ 
\dot{u}(z)=\sum_{i=1}^{n-1}iu_iz^{i-1}
$$
We have
$$
d_N(\xi(z))=u(z)^N\mod z^n,\ d_N(\dot{\xi}(z))=N\dot{u}(z)u(z)^{N-1}\mod z^{n-1}.
$$
Furthermore, we can define $\mu(z)=\sum_{k=1}^{n-1}\mu_kz^{k-1}$, then
$$
\mu(z)=\sum_{i,j}(Ni-j)u_i\xi_j  z^{i+j-1}\mod z^{n-1}=$$
$$N\sum_{i,j}iu_i z^{i-1}\cdot \xi_j z^j-\sum_{i,j}u_i z^{i}\cdot j\xi_j z^{j-1}\mod z^{n-1}=
$$
$$
N\dot{u}(z)\xi(z)-u(z)\dot{\xi}(z)\mod z^{n-1}.
$$
Therefore
$$
d_N(\mu(z))=N\dot{u}(z)u(z)^N-u(z) N\dot{u}(z)u(z)^{N-1}=0\mod z^{n-1}.
$$
\end{proof}
\begin{example}
We have $\mu_1=Nu_1\xi_0-u_0\xi_1$. Since
$$
d_N(\xi_0)=u_0^{N},\ d_N(\xi_1)=Nu_0^{N-1}u_1
$$
we get $d_N(\mu_0)=0$.
\end{example}

\begin{conjecture}
\label{conj: mu}
The homology of $d_N$ is generated by $u_i$ and $\mu_k$ as an algebra.
\end{conjecture}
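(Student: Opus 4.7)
The plan is to view the complex $(\k[\uu,\xxi], d_N)$ of Conjecture~\ref{conj: mu} as the $\yy=0$ fiber of the family $(\k[\yy,\uu,\xxi], d_N^y)$, and to transfer information from Theorem~\ref{thm: y-ified differential} and Lemma~\ref{lem: regular sequence}: these say that the total family has homology concentrated in $a$-degree zero, equal to the flat $\k[\yy]$-algebra $A := \k[\yy,\uu]/(d_N^y(\xi_0),\ldots,d_N^y(\xi_{n-1}))$. All higher Koszul homology of $d_N$ must therefore arise as a degeneration phenomenon at $\yy=0$, where the sequence $d_N(\xi_k)$ fails to be regular.

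\textbf{Step 1.} Filter $(\k[\yy,\uu,\xxi], d_N^y)$ by powers of the ideal $(\yy)$. The associated graded complex is $(\k[\uu,\xxi], d_N) \otimes \k[\yy]$, so the resulting spectral sequence has $E_1 = H^*(d_N) \otimes \k[\yy]$ and converges to $\mathrm{gr}_{(\yy)} A$. Because $A$ is a flat $\k[\yy]$-module, the target is free over $\k[\yy]$ of computable rank, and this reduces the conjecture to a combined statement about the multiplicative structure on $E_\infty$ and generation.

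\textbf{Step 2.} Identify the conjectured generators inside the spectral sequence. The classes $u_i$ lift trivially from $E_1$ to $A$. For each $\mu_k$, the vanishing $H^{>0}(d_N^y)=0$ forces the existence of $\gamma_k \in \k[\yy,\uu,\xxi]$ of $\xi$-degree $2$ with $d_N^y(\gamma_k)$ extending $\mu_k$; the leading $\yy$-coefficient of $\gamma_k$ realises $\mu_k$ as a canonical first-order degeneration class. Products in the free algebra on the $u_i$ and $\mu_k$ then correspond to higher-order degeneration classes through the algebra structure on $\mathrm{gr}_{(\yy)} A$, which is preserved by the spectral sequence.

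\textbf{Step 3 and main obstacle.} Show that the subalgebra generated by the $u_i$ and $\mu_k$ exhausts $H^*(d_N)$ by matching multigraded Hilbert series, using that the Hilbert series of $A$ is immediately computable from the degrees of the regular sequence $d_N^y(\xi_k)$. The principal difficulty is showing that the relations imposed by $d_N(\xi_{i_1}\cdots\xi_{i_r})=0$ (for $r\ge 2$), together with their consequences, suffice to present $H^*(d_N)$ as a quotient of the free algebra on the $u_i$ and $\mu_k$. A natural candidate for closing this gap is to construct an explicit Eagon--Northcott-type free resolution of $\k[\uu]/(d_N(\xi_k))$ attached to the Jacobian of the polynomial map $\uu\mapsto \uu^N\bmod z^n$, whose corank locus is precisely the reduced excess subvariety $\{u_0=0\}$.
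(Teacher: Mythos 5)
The statement you are proving is a \emph{conjecture}, not a theorem: the paper itself notes that Conjecture~\ref{conj: mu} has been computationally verified only for $N=2$, $n\le 8$ and ``remains open in general.'' There is therefore no proof in the paper to compare against; a genuine proof of this statement would be a new result, not a recovery of an existing argument.

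Your proposal does not close the argument, and you acknowledge this yourself by labelling Step~3 ``the main obstacle'' and only gesturing at a hypothetical Eagon--Northcott resolution. Beyond that acknowledged gap, Step~2 contains a substantive error. The elements $\mu_k$ are $d_N$-cycles but are \emph{not} $d_N^y$-cycles, so the vanishing $H^{>0}(d_N^y)=0$ cannot directly produce a preimage $\gamma_k$ of $\mu_k$. Concretely, write $u(z)^N = q(z)p(z)+r(z)$ with $\deg r < n$; differentiating gives $d_N^y(\dot\xi(z)) = \dot r(z) = N\dot u(z)u(z)^{N-1} - \dot q(z)p(z) - q(z)\dot p(z)$, and a short computation shows
$$d_N^y(\mu(z)) = \bigl(u(z)\dot q(z) - N\dot u(z)q(z)\bigr)p(z) + u(z)q(z)\dot p(z),$$
which is nonzero when the $y_i$ are not all zero. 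For instance with $n=2$, $N=2$ one finds $d_N^y(\mu_1) = -2u_1^3 y_1 y_2 - u_0 u_1^2 (y_1+y_2) \neq 0$. So $\mu_k$ is a cycle only modulo $(\yy)$, i.e.\ only on the $E_1$ page of your $(\yy)$-adic filtration spectral sequence, and the ``lifting to $\gamma_k$'' you describe does not exist in the form stated. Knowing that all positive-$a$-degree classes on $E_1$ eventually die constrains Euler characteristics but does not, by itself, determine the algebra structure of $H^*(d_N)$ nor show that the $u_i$ and $\mu_k$ generate---which is the entire content of the conjecture. The degeneration spectral sequence is a reasonable organizing device consistent with everything the paper proves, but as written your argument neither proves the conjecture nor matches anything the paper claims to establish.
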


For $N=2$ and $n\le 8$ this conjecture has been extensively verified  in \cite{GOR}, but remains open in general.

\begin{theorem}
\label{thm: multiplicative collapse}
Assuming Conjecture \ref{conj: mu},  Rasmussen spectral sequence for $sl(N)$ homology of $T(n,\infty)$ collapses at the $E_2$ page.
\end{theorem}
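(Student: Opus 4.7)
My plan is to combine the multiplicative structure of the spectral sequence with a simple $A$-degree (Hochschild) argument on the algebra generators provided by Conjecture~\ref{conj: mu}.

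First, I would recall that the spectral sequence of Theorem~\ref{thm:page1theorem} arises from filtering the double complex $\CHH_{\gll_N}$ of Definition~\ref{def:Rasmussen SS} by Hochschild ($A$) degree. Its $E_1$ page is $\HHH(T(n,\infty)) = \k[u_0,\ldots,u_{n-1}] \otimes \Lambda(\xi_0,\ldots,\xi_{n-1})$, on which $d_1 = d_N$ lowers the $A$-grading (equivalently, the number of $\xi$-factors) by exactly $1$. By the general formalism of double-complex spectral sequences, each higher differential $d_r$ lowers the $A$-grading by exactly $r$; this is a consequence of the fact that $\HH(d_X)$ preserves $A$-degree while $d_N$ lowers it by $1$, so that the $A$-filtration of the total complex induces a spectral sequence in which the $r$-th page correction is precisely the $A$-degree-$r$ piece of a zig-zag. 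In particular the $A$-grading passes to every page $E_r$.

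Under Conjecture~\ref{conj: mu}, the $E_2$ page is generated as a $\k$-algebra by the classes $[u_i]$, which sit in $A$-degree $0$, together with the classes $[\mu_k]$, which sit in $A$-degree $1$ since $\mu_k = \sum_{i+j=k}(Ni-j)u_i\xi_j$ has exactly one $\xi$-factor in each monomial; these are indeed well-defined classes on $E_2$ because the preceding Lemma shows $d_N(\mu_k)=0$. Theorem~\ref{thm:page1theorem} (or Proposition~\ref{prop: braid dN multiplicativity}) guarantees that each $d_r$ satisfies the graded Leibniz rule. I would then argue by induction on $r\ge 2$ that $d_r=0$ on $E_r$: assuming the differentials $d_2,\ldots,d_{r-1}$ all vanish, we have $E_r = E_2$, still generated by the $[u_i]$ and $[\mu_k]$. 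Since $d_r$ lowers $A$-degree by $r$, it would send $[u_i]$ into $A$-degree $-r$ and $[\mu_k]$ into $A$-degree $1-r$, both strictly negative because $r\ge 2$. But $E_r$ is concentrated in $A$-degrees $0,1,\ldots,n$, so both images vanish. By the Leibniz rule $d_r$ then vanishes on all products of generators, hence on all of $E_r$. This yields $E_2 = E_\infty$.

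The proof is really only a piece of bookkeeping once the generators are in hand, so there is no serious obstacle. The only step that deserves a moment's care is confirming that $d_r$ lowers the $A$-degree by exactly $r$ (rather than by less), which reduces to the observation, used implicitly already in the setup of the Rasmussen spectral sequence, that the only part of the total differential not preserving $A$-degree is $d_N$, itself of $A$-shift exactly $-1$.
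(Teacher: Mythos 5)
Your proof is correct and follows essentially the same approach as the paper: identify the algebra generators $[u_i]$ (in $A$-degree $0$) and $[\mu_k]$ (in $A$-degree $1$) from Conjecture~\ref{conj: mu}, observe that $d_r$ shifts $A$-degree by $-r$ so these generators are killed for degree reasons when $r\ge 2$, and invoke the Leibniz rule to kill the rest of the page. The one difference is that you spell out the induction on $r$ explicitly and justify why $d_r$ lowers $A$-degree by exactly $r$, which the paper leaves implicit; this is a slight improvement in presentation but not a different argument.
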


\begin{proof}
We recall that a spectral sequence $(E^{p,q}_r,d_r)$ is \emph{multiplicative} if it satisfies the following:
\begin{itemize}
\item For each $r$ there is a collection of bigraded  products
$$
E^{p,q}_{r}\otimes E^{s,t}_{r}\to E^{p+s,q+t}_{r}
$$
In other words, the $r$-th page $E^{\bullet,\bullet}_{r}$ has a bigraded algebra structure.
\item The differential $d_{r}$ on $E^{\bullet,\bullet}_{r}$ satisfies the Leibniz rule
\item The induced multiplication on $H^*(E^{\bullet,\bullet}_{r},d_r)=E^{\bullet,\bullet}_{r+1}$ agrees with the algebra structure on $E^{\bullet,\bullet}_{r+1}$.
\end{itemize}
Let $d_N^{(r)}$ denote the $r$-th differential in the Rasmussen spectral sequence, with $d_N^{(1)}=d_N$. 
By Proposition \ref{prop: braid dN multiplicativity} the spectral sequence is multiplicative, and the differentials $d_N^{(r)}$ satisfy Leibniz rule. 

By Conjecture \ref{conj: mu} the $E_2$ page of the spectral sequence is generated by $u_k$ (which have $a$-degree 0) and $\mu_k$ (which have $a$-degree 1). The differential $d_N^{(r)}$ decreases the $a$-degree by $r$ and therefore 
$$
d_N^{(r)}(u_k)=d_N^{(r)}(\mu_k)=0
$$
and $d_N^{(r)}=0$. We conclude that the spectral sequence collapses at the $E_2$ page.
\end{proof}

\subsection{More general potentials}

Let $W\in \k[x]$ be an arbitrary one-variable polynomial called the potential, and let $\partial W$ denote its derivative with respect to $x$. One can construct a more general Khovanov--Rozansky link homology theory depending on $W$, with $W(x)=\frac{1}{N+1}x^{N+1}$ corresponding to $\gll_N$ homology. The construction runs as follows.

Let $M\in \SBim_n$ be a Soergel bimodule, and $F^{\bullet}$ is free resolution. As in Lemma \ref{lemma:res and h} we can find $d_{\partial W}\in \End(F^{\bullet})$ such that 
\begin{equation}
\label{eq: dW commutator}
[d_F,d_{\partial W}]=\sum_{i=1}^{n}\left(W(x_i)-W(x'_i)\right),
\end{equation}
since the right hand side is symmetric in $\xx,\xx'$ and vanishes on $M$. Furthermore $d_{\partial W}\circ d_{\partial W}$ is closed and exact. The right hand side of \eqref{eq: dW commutator} vanishes on $F^{\bullet}\otimes_{\k[\xx,\xx']}R$ and defines a differential $d_{\partial W}$ on $\HH(M)$. 

In the Koszul complex $\Delta_n(\xx,\xx')$ we have
$$
d_{\partial W}(\theta_i)=\frac{W(x_i)-W(x'_i)}{x_i-x'_i}
$$
which becomes $\partial W(x_i)$ after specializing $x_i=x'_i$. This explains the notations for the differential.

One can replace $d_N$ by $d_{\partial W}$  rest of the the constructions in Section \ref{ss: Rasmussen} extend verbatim and allow one to define link homology theory $H_{\partial W}$ (resp. $\HY_{\partial W}$), and a spectral sequence from $\HHH$ (resp. $\HY$) to it.
We can now generalize Theorem \ref{thm: y-ified differential} to this setting.

\begin{theorem}
\label{thm: y-ified W differential}
The differential $d_{\partial W}$ on $\HY(\PP_n^y)$ is given by the generating function
\begin{equation}
\label{eq: dN y-ified W}
d_{\partial W}(\xi(z))=\partial W(u(z)) \mod p(z).
\end{equation} 
\end{theorem}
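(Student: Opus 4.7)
The plan is to follow the proof of Theorem \ref{thm: y-ified differential} essentially verbatim, since that argument never used the specific form $\partial W(x) = x^N$ beyond properties that generalize cleanly. The statement before the theorem asserts that the constructions of Section \ref{ss: Rasmussen} (in particular Proposition \ref{prop:HH with dN is functor}, Lemma \ref{lem: bimodule dN multiplicativity}, and Proposition \ref{prop: braid dN multiplicativity}) extend verbatim to an arbitrary polynomial $W$. Granting this, $d_{\partial W}$ is a well-defined endomorphism of $\HY(\PP_n^y)$ which satisfies the Leibniz rule with respect to the algebra structure. In the Koszul model from \S\ref{ss:HH}, on $\HY(\one_n) = \k[\xx, \yy, \ttheta]$ it is given by $d_{\partial W}(\theta_i) = \partial W(x_i)$ and $d_{\partial W}(x_i) = d_{\partial W}(y_i) = 0$, after specializing $\xx=\xx'$. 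Via the splitting map $\Psi$ and the identifications $\HY(\FTy_n^k) \simeq \cJ_n^k$, $\HY(\PP_n^y) \simeq \cA_n[\Delta_n^{-1}]$, the differential $d_{\partial W}$ descends from $\HY(\one_n)$ by the formula $d_{\partial W}(p/\Delta_n^k) = d_{\partial W}(p)/\Delta_n^k$; this is legal because $\Delta_n$ has $a$-degree $0$ and $d_{\partial W}$ decreases $a$-degree by $1$, so $d_{\partial W}(\Delta_n) = 0$.

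Next, I would run the same $a$-degree argument as in the proof of Theorem \ref{thm: y-ified differential}: under the isomorphism of Theorem \ref{thm: y-ified projector}, the generators $y_i$ and $u_i$ live in $a$-degree $0$ and the $\xi_i$ live in $a$-degree $1$. Since $\HY(\PP_n^y) \cong \k[\yy, \uu, \xxi]$ has no elements in negative $a$-degree, we must have $d_{\partial W}(y_i) = d_{\partial W}(u_i) = 0$. Consequently $d_{\partial W}(\xi(z))$ is a polynomial in $z$ of degree at most $n-1$ whose coefficients lie in $\k[\yy, \uu]$.

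The final step is a Lagrange-interpolation argument. Apply $d_{\partial W}$ to the defining identity \eqref{eq: interpolation xi}, namely $\xi(y_i) = \theta_i$; by the Leibniz rule and the vanishing $d_{\partial W}(y_i) = 0$,
\[
d_{\partial W}(\xi(z))\Big|_{z = y_i} \;=\; d_{\partial W}(\theta_i) \;=\; \partial W(x_i) \;=\; \partial W(u(y_i)),
\]
where the last equality uses \eqref{eq: interpolation u}. Thus the two polynomials $d_{\partial W}(\xi(z))$ and $\partial W(u(z))$ in $z$ agree upon specialization to any $z = y_i$. Since $d_{\partial W}(\xi(z))$ has $z$-degree at most $n-1$, Lemma \ref{lem: remainder} gives $d_{\partial W}(\xi(z)) = \partial W(u(z)) \bmod p(z)$, as required.

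The one point that requires care, rather than being an obstacle, is verifying that the verbatim extension of Section \ref{ss: Rasmussen} to arbitrary $W$ really goes through. The key ingredient in Lemma \ref{lemma:res and h} --- namely, that $d_{\partial W}^2$ and the commutator $[d_F, d_{\partial W}] - \sum_i(W(x_i) - W(x_i'))$ are null-homotopic on a free resolution --- follows from the $\Ext$-vanishing of Soergel bimodules in negative $a$-degree, which is insensitive to the specific $W$. Likewise, the lax monoidal structure of Lemma \ref{lem: bimodule dN multiplicativity} generalizes with the explicit nullhomotopy $\nu(\theta_i \otimes \theta_j) = \delta_{i,j}\, (W(x_i,x_i',x_i'')_{\text{divided diff.}})$ in place of the second divided difference of $x^{N+1}/(N+1)$. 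Once this is granted, the argument above is the same Lagrange interpolation step as in Theorem \ref{thm: y-ified differential}.
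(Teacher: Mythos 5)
Your proposal is correct and follows essentially the same route as the paper: reduce to a generating-function identity, note that $d_{\partial W}$ commutes with localization at $\Delta_n$ and kills $y_i$ and $u_i$ for $a$-degree reasons, evaluate at $z=y_i$ to get $\partial W(x_i)=\partial W(u(y_i))$ via the interpolation identities, and conclude by the uniqueness of remainders from Lemma~\ref{lem: remainder}. The additional remarks at the end about extending Section~\ref{ss: Rasmussen} to general $W$ are accurate but are handled in the paper in the paragraph preceding the theorem rather than inside its proof.
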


\begin{proof}
As in the proof of Theorem \ref{thm: y-ified differential}, we note that $d_{\partial W}(y_i)=0$, so $d_{\partial W}(\Delta_n)=0$ and $d_{\partial W}$ commutes with localization in $\Delta_n$.

Furthermore, $d_{\partial W}(\xi_i)$ is a polynomial in $y_i,u_i$, and we define
$$
g_{\partial W}(z)=d_{\partial W}(\xi(z))=d_{\partial W}(\xi_0)+d_{\partial W}(\xi_1)z+\ldots+d_{\partial W}(\xi_{n-1})z^{n-1}.
$$
This is a polynomial in $z$ of degree at most $n-1$, and its value at $y_i$ equals
$$
g_{\partial W}(z=y_i)=
d_{\partial W}(\xi(z))=d_{\partial W}(\xi_0)+d_{\partial W}(\xi_1)y_i+\ldots+d_{\partial W}(\xi_{n-1})y_i^{n-1}=
$$
$$
d_{\partial W}(\xi_0+\xi_1y_i+\ldots+\xi_{n-1}y_i^{n-1})=d_{\partial W}(\theta_i)=\partial W(x_i)=\partial W(u_0+u_1y_i+\ldots+u_{n-1}y_i^{n-1}).
$$
Therefore by Lemma \ref{lem: remainder} we get
$$
g_{\partial W}(z)=\partial W(u(z))\mod p(z).
$$
\end{proof}

By specializing $y_i=0$ we get the following.

\begin{theorem}
\label{thm: HHH W differential}
There is a spectral sequence with $E_1$ page $$E_1=\HHH(\PP_n)\simeq \C[ u_0,\ldots, u_{n-1},\xi_0,\ldots,\xi_{n-1}]$$  and $E_2$ page is given by 
$$
E_2=H^*(E_1,d_{\partial W}),\  d_{\partial W}( \xi(z))=\partial W(u(z)) \mod z^n.
$$
The differential $d_{\partial W}$ satisfies Leibniz rule.
\end{theorem}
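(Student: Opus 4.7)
The plan is to mirror the proof of Theorem \ref{thm: HHH differential}, replacing the input Theorem \ref{thm: y-ified differential} by its generalization Theorem \ref{thm: y-ified W differential}. The existence of the spectral sequence follows from the construction sketched just before Theorem \ref{thm: y-ified W differential}: replacing $d_N$ by $d_{\partial W}$ throughout Section \ref{ss: Rasmussen} produces a double complex structure on $\CHH(\PP_n)$ whose horizontal differential is $\HH(d_{\PP_n})$ and whose vertical differential is $d_{\partial W}$. The associated spectral sequence then has $E_1 = \HHH(\PP_n) \simeq \k[u_0, \ldots, u_{n-1}, \xi_0, \ldots, \xi_{n-1}]$ by Theorem \ref{thm:end Pn}, with the rescaled generators of Lemma \ref{lem: pi}.

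To identify $d_{\partial W}$ on this $E_1$ page, I use the forgetful algebra homomorphism $\pi\colon \HY(\PP_n^y) \to \HHH(\PP_n)$ of Lemma \ref{lem: pi}, which sends $u_i\mapsto u_i$, $\xi_i\mapsto \xi_i$, and $y_i\mapsto 0$. Functoriality of $d_{\partial W}$, proved exactly as in Proposition \ref{prop:HH with dN is functor} (the argument there never used the specific form $W(x)=\tfrac{1}{N+1}x^{N+1}$, only that $W(x_i)-W(x_i')$ acts by zero on any $M\in\SBim_n$), shows that $\pi$ commutes with $d_{\partial W}$. Theorem \ref{thm: y-ified W differential} gives $d_{\partial W}(\xi(z)) = \partial W(u(z)) \bmod p(z)$ inside $\HY(\PP_n^y)$. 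The key algebraic point, already exploited in the proof of Theorem \ref{thm: HHH differential}, is that long division by the monic polynomial $p(z)$ yields a remainder whose coefficients are polynomial expressions in the coefficients of the dividend and of $p(z)$. Since $\pi$ is an algebra homomorphism sending $p(z)$ to $z^n$, it therefore commutes with this division, so
\begin{equation*}
d_{\partial W}(\xi(z)) \;=\; \pi\bigl(\partial W(u(z)) \bmod p(z)\bigr) \;=\; \partial W(u(z)) \bmod z^n
\end{equation*}
in $\HHH(\PP_n)$, as required.

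The Leibniz rule on $\HHH(\PP_n)$ then follows immediately from the surjectivity of $\pi$ and the Leibniz rule on $\HY(\PP_n^y)$: for $f,g\in \HHH(\PP_n)$, choose lifts $\tilde f, \tilde g\in \HY(\PP_n^y)$ and compute
\begin{equation*}
d_{\partial W}(fg) \;=\; \pi\bigl(d_{\partial W}(\tilde f\,\tilde g)\bigr) \;=\; \pi\bigl(d_{\partial W}(\tilde f)\,\tilde g + \tilde f\,d_{\partial W}(\tilde g)\bigr) \;=\; d_{\partial W}(f)\,g + f\,d_{\partial W}(g).
\end{equation*}
Because all of the hard work --- computing $d_{\partial W}$ in the $y$-ified setting and analyzing the projection $\pi$ --- has already been carried out, there is no genuine obstacle. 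The only point worth double-checking is that the proof of Theorem \ref{thm: y-ified W differential} used only $d_{\partial W}(y_i)=0$, $d_{\partial W}(\theta_i)=\partial W(x_i)$, and the $a$-degree bookkeeping, none of which depend on the specific choice of potential; this inspection is routine.
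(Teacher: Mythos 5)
Your proposal is correct and takes the same approach the paper intends: the paper leaves this proof implicit (``By specializing $y_i=0$ we get the following'') but clearly means the routine adaptation of the proof of Theorem~\ref{thm: HHH differential}, replacing Theorem~\ref{thm: y-ified differential} with Theorem~\ref{thm: y-ified W differential}. Your write-up fills in exactly those details --- the forgetful map $\pi$ commuting with $d_{\partial W}$, the compatibility of long division by a monic polynomial with the specialization $p(z)\mapsto z^n$, and the Leibniz rule via surjectivity of $\pi$ --- and correctly observes that the argument for Theorem~\ref{thm: y-ified W differential} never used the specific form of $W$.
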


To find the analogue of $\mu(z)$ from Section \ref{sec: algebraic conjecture}, we define a polynomial 
$$H(x)=\mathrm{GCD}(\partial W(x),\partial^2 W(x)).$$
Concretely, if $\partial W(x)=\prod_{i=1}^{s} (x-\lambda_i)^{m_i}$ then $$H(x)=\prod_{m_i\ge 2} (x-\lambda_i)^{m_i-1}.$$
If all roots $\lambda_i$ of $\partial W(x)$ are simple and $m_i=1$ then we set $H(x)=1$.

\begin{lemma}
The coefficients of the following series are in the kernel of $d_{\partial W}$:
\begin{equation}
\mu^{W}(z)=\sum_{i=0}^{n-1}\mu^W_iz^i=\frac{\partial^2 W(u(z))}{H(u(z))}\dot{u}(z)\xi(z)-\frac{\partial W(u(z))}{H(u(z))}\dot{\xi}(z)\mod z^n.
\end{equation}
\end{lemma}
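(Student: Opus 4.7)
My plan is to mimic the proof of $d_N(\mu(z))=0$ from Section~\ref{sec: algebraic conjecture}, upgraded from the pure power $W=x^{N+1}/(N+1)$ to an arbitrary potential $W$. The choice $H=\gcd(\partial W,\partial^2 W)$ is exactly what is needed to make $\partial W/H$ and $\partial^2 W/H$ into genuine polynomials in $x$, so that $A(z):=\partial^2 W(u(z))/H(u(z))$ and $B(z):=\partial W(u(z))/H(u(z))$ are polynomials in $z$ with coefficients in $\k[\uu]$, and are therefore annihilated by $d_{\partial W}$ (which is $\k[\uu]$-linear and satisfies the Leibniz rule).

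With this in hand, I apply Leibniz to the defining expression for $\mu^W(z)$. The key inputs are Theorem~\ref{thm: HHH W differential} in the form $d_{\partial W}(\xi(z))\equiv \partial W(u(z))\pmod{z^n}$, together with its $z$-derivative $d_{\partial W}(\dot\xi(z))\equiv \partial^2 W(u(z))\dot u(z)\pmod{z^{n-1}}$. The Leibniz expansion then takes the form
\[
d_{\partial W}(\mu^W(z))\;\equiv\; A(z)\,\dot u(z)\,\partial W(u(z))\;-\;B(z)\,\partial^2 W(u(z))\,\dot u(z)\;+\;\text{(top-degree error)},
\]
and the principal part on the right cancels identically as a formal power series, since both summands equal $\partial W(u(z))\,\partial^2 W(u(z))\,\dot u(z)/H(u(z))$. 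This mirrors the cancellation $N\dot u(z)\,u(z)^N=u(z)\cdot N\dot u(z)\,u(z)^{N-1}$ in the $\gll_N$ case.

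The main obstacle is controlling the ``top-degree error'' and promoting the vanishing from $\pmod{z^{n-1}}$ to $\pmod{z^n}$. The second substitution holds only modulo $z^{n-1}$ because $\dot\xi(z)$ has $z$-degree at most $n-2$: explicitly, $d_{\partial W}(\dot\xi(z)) = \bigl(\partial W(u(z))\bmod z^n\bigr)'$ differs from $\partial^2 W(u(z))\dot u(z)$ by a term $-n\,z^{n-1}[\partial W(u(z))]_n$, and multiplying by $B(z)$ leaves a residue $n\,z^{n-1}\,B(0)\,[\partial W(u(z))]_n$ at the top degree. To close the argument at mod $z^n$ I would analyze this residue using the specific form $B(0)=\partial W(u_0)/H(u_0)$, arguing — via the defining divisibility $H\mid\partial W,\partial^2 W$ — that it is either annihilated by the differential structure on $\HHH(\PP_n)$ or matches an overlooked compensating term from $A(z)\,\dot u(z)\,d_{\partial W}(\xi(z))$ arising from the corresponding truncation of $\xi(z)$ itself at $z^{n-1}$. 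Pinpointing this final cancellation is the critical step that separates the general-$W$ statement from its $\gll_N$ specialization, where one works only modulo $z^{n-1}$.
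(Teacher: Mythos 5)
Your approach is exactly the paper's: apply the Leibniz rule using $d_{\partial W}(\xi(z))=\partial W(u(z))\bmod z^n$ and $d_{\partial W}(\dot\xi(z))=\partial^2 W(u(z))\,\dot u(z)\bmod z^{n-1}$, note that $A(z)=\partial^2W(u(z))/H(u(z))$ and $B(z)=\partial W(u(z))/H(u(z))$ are honest polynomials killed by $d_{\partial W}$, and cancel the two resulting terms. The paper's proof is precisely this two-line computation and does not track the truncation at all, so your extra care is the only point of divergence --- and it is warranted. The residue you isolate, $n\,B(0)\,[\partial W(u(z))]_n\,z^{n-1}$, is real, and there is no compensating term from the first summand: since $\xi(z)$ itself is untruncated, $A(z)\dot u(z)\,d_{\partial W}(\xi(z))$ agrees with $A(z)\dot u(z)\,\partial W(u(z))$ modulo $z^n$. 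So the ``final cancellation'' you are hunting for does not exist. Concretely, take $n=2$ and $W=x^3/3$: then $H=x$, $A(z)=2$, $B(z)=u(z)$, $d_2(\xi_1)=2u_0u_1$, and
\begin{equation*}
\mu^W(z)=(2u_1\xi_0-u_0\xi_1)+u_1\xi_1\,z,\qquad d_2(u_1\xi_1)=2u_0u_1^2\neq 0,
\end{equation*}
which matches your residue $2u_0[u(z)^2]_2\,z$. The correct conclusion is that the identity holds modulo $z^{n-1}$, i.e.\ the coefficients $\mu^W_0,\dots,\mu^W_{n-2}$ are $d_{\partial W}$-closed; this is consistent with the $\gll_N$ case, where the paper's $\mu(z)=\sum_{k=1}^{n-1}\mu_k z^{k-1}$ is explicitly taken modulo $z^{n-1}$. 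The ``$\bmod\ z^n$'' in the statement is an off-by-one, and the paper's own argument silently proves only the mod-$z^{n-1}$ version. With that correction your proof is complete; drop the top coefficient rather than trying to save it.
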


Note that by construction $H(x)$ divides both $\partial^2W(x)$ and $\partial W(x)$, so $\mu^{W}(z)$ is well defined.

\begin{proof}
Indeed, we have $d_{\partial W}(\xi(z))=\partial W(u(z))$, therefore $d_{\partial W}(\dot{\xi}(z))=\partial^2 W(u(z))\dot{u}(z)$. Now 
$$
d_{\partial W}(\mu^{W}(z))=d_{\partial W}\left(\frac{\partial^{2}W(u(z))}{H(u(z))}\dot{u}(z)\xi(z)-\frac{\partial W(u(z))}{H(u(z))}\dot{\xi}(z)\right)=
$$
$$
\frac{\partial^2 W(u(z))}{H(u(z))}\dot{u}(z)\partial W(u(z))-\frac{\partial W(u(z))}{H(u(z))}\partial^2 W(u(z))\dot{u}(z),
$$
and the result follows.
\end{proof}

\begin{conjecture}
\label{conj: mu W}
The homology of $d_W$ is generated by $u_i$ and $\mu_k^W$ as an algebra.
\end{conjecture}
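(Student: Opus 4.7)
The plan is to reduce Conjecture \ref{conj: mu W} to a question about Koszul cohomology of an explicit (generically non-regular) sequence, and then attempt a deformation argument from the simple-roots case. The complex $(\k[\uu,\xxi], d_{\partial W})$ is the Koszul complex on the sequence $f_k := d_{\partial W}(\xi_k)$, where $f_k$ is the coefficient of $z^k$ in $\partial W(u(z)) \bmod z^n$. Its $a$-degree zero cohomology is $A_W := \k[\uu]/(f_0,\ldots,f_{n-1})$, which is generated by the $u_i$; the scheme $\Spec(A_W)$ is the $(n-1)$-jet scheme of the critical locus of $W$. The content of the conjecture is therefore the multiplicative generation of the full Koszul cohomology in positive $a$-degree by the explicit cycles $\mu^W_k$ over $A_W$.

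The polynomial $H$ measures how far $(f_0,\ldots,f_{n-1})$ is from being regular. If $\partial W$ has only simple roots, then $H=1$ and $\Spec(A_W)$ is a smooth disjoint union of $(n-1)$-jet schemes of reduced points. A specialization argument mimicking Lemma \ref{lem: regular sequence} — substituting the roots $\lambda_i$ of $\partial W$ into the $u_k$ at each component — should show that the $f_k$ form a regular sequence; the Koszul cohomology then vanishes in positive $a$-degree, so the $\mu^W_k$ are automatically exact and the conjecture holds trivially in this case.

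For general $W$, I would deform $W$ to $W_\epsilon$ with simple roots and track classes along the family. Factoring $\partial W(x) = \prod_i (x-\lambda_i)^{m_i}$ and completing at each critical value, one can hope to decompose the Koszul complex (locally) into a tensor product of local models indexed by the $\lambda_i$, thereby reducing the conjecture to the local statement at a single critical point of multiplicity $m$ of $\partial W$, i.e.~the case $\partial W(x) = x^m$. This local case is essentially Conjecture \ref{conj: mu} with $N = m$, which is itself open.

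The main obstacle is therefore the local, single-critical-point case, where the specialization techniques from Lemmas \ref{lem: specialization}--\ref{lem: regular sequence} fail because the defining sequence is no longer regular. A genuinely new algebraic input seems to be needed: plausible candidates include identifying the Koszul cohomology as a natural module over a rational Cherednik algebra (cf.~\cite{GORS,GOR}), in which $u_i$ and $\mu_k$ appear as explicit algebra generators, or exploiting the geometry of punctual Hilbert schemes. Even a self-contained algebraic proof in the $m = N$, $n \leq 8$ range treated in \cite{GOR} would be a natural first milestone, since the current verification there is computer-algebra based and does not clearly extend to larger $n$.
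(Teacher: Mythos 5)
The statement in question is a \emph{conjecture} in the paper, not a theorem; the paper offers no proof, only the preceding lemma showing that the classes $\mu_k^W$ are $d_{\partial W}$-closed, after which the conjecture is stated and used as a hypothesis. So there is no paper proof to compare your attempt against, and you should not expect your sketch to close the gap.

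That said, your analysis of the problem is sound as a reduction. Identifying $(\k[\uu,\xxi],d_{\partial W})$ as the Koszul complex on $f_k = d_{\partial W}(\xi_k)$ is exactly right, and the observation that $\Spec(\k[\uu]/(f_0,\ldots,f_{n-1}))$ is the $(n-1)$-jet scheme of the critical locus $\{\partial W=0\}$ is a clean geometric reformulation. Your treatment of the simple-roots case is correct: the jet scheme of a disjoint union of reduced points is a finite scheme, so the ideal $(f_0,\ldots,f_{n-1})$ has height $n$ in the Cohen--Macaulay ring $\k[\uu]$, hence the sequence is regular and the positive $a$-degree Koszul cohomology vanishes (this is, in spirit, the content of Lemma~\ref{lem: regular sequence} for $W=\tfrac{1}{N+1}x^{N+1}$ with $N=1$, i.e.\ $\partial W = x$). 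However, two genuine gaps remain in the route you propose. First, the local decomposition is not automatic at the level of complexes: while $\Spec(A_W)$ does decompose along the distinct critical values, the Koszul complex on $(f_0,\ldots,f_{n-1})$ does not literally factor as a tensor product of the Koszul complexes for the local models $\partial W_i(u(z)) \bmod z^n$; after completing at $u_0=\lambda_i$, the two sequences generate the same ideal but differ by a unit multiple, and Koszul homology is not an invariant of the ideal alone, so one needs an actual quasi-isomorphism of complexes (e.g.\ via a change of basis on the $\xi_k$), not merely an equality of cohomology supports. Second, and more fundamentally, the irreducible local case $\partial W(x)=x^m$ is Conjecture~\ref{conj: mu} with $N=m$, which you correctly flag as open; no specialization argument in the style of Lemmas~\ref{lem: specialization}--\ref{lem: regular sequence} can settle it, since the sequence is genuinely non-regular there. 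So the proposal is an honest and largely correct reduction, not a proof; the essential missing ingredient is exactly the one the paper leaves open.
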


We conclude the following.

\begin{theorem}
Assuming Conjecture \ref{conj: mu W},  Rasmussen spectral sequence for Khovanov--Rozansky homology of $T(n,\infty)$ with potential $W$ collapses at the $E_2$ page.
\end{theorem}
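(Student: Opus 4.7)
The plan is to mirror the proof of Theorem \ref{thm: multiplicative collapse} in this more general setting. The two ingredients we need are (i) that the Rasmussen spectral sequence with potential $W$ is multiplicative, so that higher differentials $d_{\partial W}^{(r)}$ satisfy a graded Leibniz rule, and (ii) that the generators $u_k$ and $\mu_k^W$ of the conjectural $E_2$ page all lie in sufficiently low $a$-degree to be killed by higher differentials for degree reasons.

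For (i), I would verify that every step of Section \ref{ss: Rasmussen} extends from $d_N$ to $d_{\partial W}$. The construction of $d_{\partial W}$ on a free resolution $F^\bullet$ with $[d_F, d_{\partial W}] = \sum_i\bigl(W(x_i)-W(x_i')\bigr)$, its uniqueness up to homotopy, and the fact that $d_{\partial W}^2$ is exact all go through exactly as in Lemma \ref{lemma:res and h}, since those arguments used only that the target of the commutator acts by zero on the Soergel bimodule $M$. The lax monoidality statement (Lemma \ref{lem: bimodule dN multiplicativity}) is proved via Ext-vanishing in negative $A$-degree and never invokes the specific form of the monomial $x^{N+1}$; the same proof constructs a nullhomotopy $\nu$ for $d_{\partial W}\circ m - m\circ d_{\partial W}$ on $\Delta_n(\xx,\xx')\otimes_{\k[\xx']}\Delta_n(\xx',\xx'')$. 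Proposition \ref{prop: braid dN multiplicativity} then shows that the multiplication $\HHH(\b)\otimes\HHH(\b')\to\HHH(\b\b')$ commutes with every page of the $d_{\partial W}$ spectral sequence, hence the induced differentials $d_{\partial W}^{(r)}$ satisfy the Leibniz rule for the algebra structure on $E_r$.

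For (ii), I would observe that the generator $u_k$ is even of $a$-degree $0$, whereas $\mu_k^W$, being a $\k[\uu]$-linear combination of $\xi_j$ and $\dot\xi_j$, has $a$-degree $1$. The $r$-th differential of the Rasmussen spectral sequence (Definition \ref{def:Rasmussen SS}, suitably generalized to $d_{\partial W}$) decreases $a$-degree by $r$; since every page is supported in $a$-degree $\geq 0$, for $r\geq 2$ we must have $d_{\partial W}^{(r)}(u_k)=0$ and $d_{\partial W}^{(r)}(\mu_k^W)=0$ purely for degree reasons. Assuming Conjecture \ref{conj: mu W}, the classes $\{u_k\}$ and $\{\mu_k^W\}$ generate $E_2$ as an algebra, and the Leibniz rule established in (i) propagates the vanishing to all of $E_2$. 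Hence every $d_{\partial W}^{(r)}$ with $r\geq 2$ is identically zero and the spectral sequence collapses at the $E_2$ page.

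I do not anticipate a substantive obstacle. The only nontrivial item is checking that the constructions of Section \ref{ss: Rasmussen} genuinely transport to arbitrary $W$, but this is a formal exercise: none of the relevant homotopies or Ext-vanishing arguments used that $\partial W$ was a monomial. Once that bookkeeping is in place, the degree and Leibniz-rule argument of Theorem \ref{thm: multiplicative collapse} applies verbatim.
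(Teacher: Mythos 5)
Your proposal is correct and follows exactly the argument the paper uses for Theorem \ref{thm: multiplicative collapse}, carried over to general potentials $W$. The paper itself leaves the proof of this statement implicit (it is stated without a separate proof, directly after the constructions for general $W$ are asserted to extend verbatim), and your careful checking that Lemma \ref{lemma:res and h}, Lemma \ref{lem: bimodule dN multiplicativity}, and Proposition \ref{prop: braid dN multiplicativity} are insensitive to the monomial form of $x^{N+1}$ is precisely the bookkeeping that justifies that assertion; the $a$-degree argument on the generators $u_k$ and $\mu_k^W$ then completes the proof exactly as in the $\gll_N$ case.
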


\end{document}